\DeclareMathOperator{\et}{et}
\DeclareMathOperator{\rg}{rg}
\DeclareMathOperator{\Gal}{Gal}
\DeclareMathOperator{\Lip}{Lip}
\DeclareMathOperator{\eff}{eff}
\DeclareMathOperator{\Val}{Val}
\DeclareMathOperator{\PGCD}{pgcd}
\DeclareMathOperator{\primitifs}{primitifs}
\DeclareMathOperator{\Pic}{Pic}
\DeclareMathOperator{\card}{card}
\DeclareMathOperator{\Vol}{Vol}
\newcommand{\CC}{\mathbf{C}}
\newcommand{\RR}{\mathbf{R}}
\newcommand{\ZZ}{\mathbf{Z}}
\newcommand{\NN}{\mathbf{N}}
\newcommand{\QQ}{\mathbf{Q}}
\newcommand{\xx}{\boldsymbol{x}}
\newcommand{\y}{\mathbf{y}}
\newcommand{\yy}{\boldsymbol{y}}
\newcommand{\zz}{\boldsymbol{z}}
\newcommand{\rr}{\boldsymbol{r}}
\newcommand{\ww}{\boldsymbol{w}}
\renewcommand{\tt}{\boldsymbol{t}}
\newcommand{\aalpha}{\boldsymbol{\alpha}}
\newcommand{\0}{\boldsymbol{0}}
\newcommand{\uu}{\boldsymbol{u}}
\newcommand{\vv}{\boldsymbol{v}}
\newcommand{\bb}{\boldsymbol{b}}
\newcommand{\ra}{\rightarrow}
\newcommand{\mt}{\mapsto}
\newcommand{\PP}{\mathbf{P}}
\renewcommand{\AA}{\mathbf{A}}
\newcommand{\OO}{\mathcal{O}}
\newcommand{\BB}{\mathcal{B}}
 \newtheorem{thm}{Th\'eor\`eme}[section]
 \newtheorem{prop}[thm]{Proposition}
 \newtheorem{lemma}[thm]{Lemme}
 \newtheorem{cor}[thm]{Corollaire}
 \newtheorem{Def}[thm]{D\'efinition}
  \newtheorem{rem}[thm]{Remarque}
\begin{document}

\title{\underline{POINTS DE HAUTEUR BORN\'EE} \\ \underline{ SUR LES HYPERSURFACES LISSES} \\
\underline{DE $ \PP^{n}\times \PP^{n}\times\PP^{n} $}}

\maketitle

\begin{abstract}
Nous d\'emontrons ici la conjecture de Batyrev et Manin pour le nombre de points de hauteur born\'ee de certaines hypersurfaces de l'espace triprojectif de tridegr\'e $ (1,1,1) $. La constante intervenant dans le r\'esultat final est bien celle conjectur\'ee par Peyre. La m\'ethode utilis\'ee est fortement inspir\'ee de celle d\'evelopp\'ee par D. Schindler pour traiter le cas des hypersurfaces des espaces biprojectifs, qui elle-m\^eme s'inspire de la m\'ethode du cercle de Hardy-Littlewood.  
\end{abstract} 

\tableofcontents

\section{Introduction}

On consid\`ere une hypersurface $ V $ de l'espace $ \PP^{n}_{\QQ}\times \PP^{n}_{\QQ}\times\PP^{n}_{\QQ}  $ d\'efinie par une \'equation $ F(\xx,\yy,\zz)=0 $ o\`u  \[ F(\xx,\yy,\zz)=\sum_{0\leqslant i,j,k \leqslant n}\alpha_{i,j,k}x_{i}y_{j}z_{k}, \] avec $ (\xx,\yy,\zz)=((x_{0}:...:x_{n}),(y_{0}:...:y_{n}),(z_{0}:...:z_{n})) \in \PP^{n}_{\QQ}\times \PP^{n}_{\QQ}\times \PP^{n}_{\QQ} $ et $ \alpha_{i,j,k}\in \QQ $. On dira que $ (x_{0},...,x_{n})\in \ZZ^{n+1} $ est \emph{primitif} si $ \PGCD(x_{0},...,x_{n})=1 $
Dans tout ce qui va suivre, on note pour tout $ (\xx,\yy,\zz) \in \PP^{n}_{\ZZ}\times \PP^{n}_{\ZZ}\times \PP^{n}_{\ZZ} $ : \[ H(\xx,\yy,\zz)=\max_{0\leqslant i \leqslant n}|x_{i}|^{n} \max_{ 0\leqslant j \leqslant n} |y_{j}|^{n}\max_{0\leqslant k\leqslant n}|z_{k}|^{n},  \] la hauteur associ\'ee au fibr\'e anticanonique , et \[ H'(\xx,\yy,\zz)=\max_{0\leqslant i \leqslant n}|x_{i}| \max_{ 0\leqslant j \leqslant n} |y_{j}|\max_{0\leqslant k\leqslant n}|z_{k}|,  \] o\`u $ (x_{0},...,x_{n}), \; (y_{0},...,y_{n}), \; (z_{0},...,z_{n}) \in \ZZ^{n+1} $ sont primitifs et tels que \linebreak  $ (\xx,\yy,\zz)=((x_{0}:...:x_{n}),(y_{0}:...:y_{n}), \;(z_{0}:...:z_{n})) $.
On souhaite d\'eterminer une formule asymptotique pour le nombre de points $ ([\xx],[\yy],[\zz]) $ d'un ouvert de Zariski de l'hypersurface $ V $ de hauteur $ H(\xx,\yy,\zz) $ born\'ee par $ B $ (on notera $ \mathcal{N}_{U}(B) $ ce nombre de points), ce qui revient \`a \'evaluer, quitte \`a remplacer $ B $ par $ B^{n} $ \begin{multline*} \tilde{N}_{U}(B) =\card\{(\xx,\yy,\zz)\in (\ZZ^{n+1}\times \ZZ^{n+1}\times \ZZ^{n+1})\cap U \; | \; \\ (x_{0},...,x_{n}), (y_{0},...,y_{n}), (z_{0},...,z_{n}) \; \primitifs, \; H'(\xx,\yy,\zz)\leqslant B\}, \end{multline*} o\`u $ U $ d\'esigne un ouvert de Zariski de $ \AA_{\CC}^{n+1}\times \AA_{\CC}^{n+1}\times \AA_{\CC}^{n+1} $. On a en effet \[ \mathcal{N}_{U}(B)=\frac{1}{8}\tilde{N}_{U}(B^{\frac{1}{n}}) \] ( le coefficient $ \frac{1}{8} $ est d\^u au fait que deux vecteurs primitifs $ \xx $ et $ -\xx $ repr\'esentent le m\^eme \'el\'ement de $ \PP^{n} $). Par une inversion de M\"{o}bius, on se ram\`ene au calcul de \begin{multline}\label{DEFNU} N_{U}(B)= \card\{(\xx,\yy,\zz)\in (\ZZ^{n+1}\times \ZZ^{n+1}\times \ZZ^{n+1})\cap U  \; | \; H'(\xx,\yy,\zz) \leqslant B\}.  \end{multline} Nous allons \'evaluer ce nombre $ N_{U}(B) $ en suivant la m\'ethode d\'ecrite par Schindler (cf. \cite{S1}, \cite{S2}). Nous \'etablirons en fait (voir proposition \ref{preconclusion}) que, pour un ouvert $ U $ bien choisi, ce nombre est \[ N_{U}(B)=\frac{1}{2}\sigma B^{n}\log(B)^{2}+O(B^{n}\log(B)), \] (o\`u $ \sigma $ est une constante que nous pr\'eciserons), ce qui nous permettra d'en d\'eduire que : \[ \mathcal{N}_{U}(B)=C(V)B^{n}\log(B)^{2}+O(B^{n}\log(B)), \] o\`u $ C(V) $ est la constante conjectur\'ee par Peyre (cf.\cite{Pe}). Remarquons qu'il est ici indispensable de se restreindre \`a un ouvert de Zariski $ U $. La vari\'et\'e $ V $ pr\'esente en effet des sous-vari\'et\'es accumulatrices. Consid\'erons par exemple un point $ (\xx,\yy)\in \ZZ^{n+1}\times \ZZ^{n+1} $ tel que : \[\forall k\in \{0,...,n\},\; \sum_{i,j=0}^{n}\alpha_{i,j,k}x_{i}y_{j}=0. \] Alors, pour tout $ \zz\in \ZZ^{n+1} $ tel que $ |\zz|\leqslant B/(|\xx||\yy|) $ on aurait $ F(\xx,\yy,\zz)=0 $, ce qui implique donc que \[ \card\{ (\xx,\yy,\zz) \in V\cap (\ZZ^{n+1}\times \ZZ^{n+1} \times \ZZ^{n+1}) \; | \;  H'(\xx,\yy,\zz)\leqslant B\} \gg B^{n+1}. \]

On notera, pour $ (\xx,\yy,\zz)\in \CC^{n+1}\times \CC^{n+1}\times \CC^{n+1} $ : \begin{equation} \forall k \in \{ 0,...,n\},\; \; B_{k}(\xx,\yy)=\sum_{i=0}^{n}\sum_{j=0}^{n}\alpha_{i,j,k}x_{i}y_{j},  \end{equation} \begin{equation} \forall j \in \{ 0,...,n\},\; \;B_{j}'(\xx,\zz)=\sum_{i=0}^{n}\sum_{k=0}^{n}\alpha_{i,j,k}x_{i}z_{k}, \end{equation}\begin{equation}\forall i \in \{ 0,...,n\},\; \;  B_{i}''(\yy,\zz)=\sum_{j=0}^{n}\sum_{k=0}^{n}\alpha_{i,j,k}y_{j}z_{k}. \end{equation} Par ailleurs, on d\'efinit \begin{equation}
V_{3}^{\ast}=\{ (\xx,\yy)\in \AA_{\CC}^{n+1}\times \AA_{\CC}^{n+1} \;  |\; \forall k \in \{0,...,n\}, \; B_{k}(\xx,\yy)=0 \},
\end{equation}
 \begin{equation}
V_{2}^{\ast}=\{ (\xx,\zz)\in \AA_{\CC}^{n+1}\times \AA_{\CC}^{n+1} \;  |\; \forall j \in \{0,...,n\},\; B_{j}'(\xx,\zz)=0  \},
\end{equation}
 \begin{equation}
V_{1}^{\ast}=\{ (\yy,\zz)\in \AA_{\CC}^{n+1}\times \AA_{\CC}^{n+1} \;  |\; \forall i \in \{0,...,n\}, \; B_{i}''(\xx,\yy)=0 \}.
\end{equation}

On veut supposer que l'hypersurface $ V $ est lisse. Il nous sera \'egalement utile de supposer qu'elle v\'erifie par ailleurs la propri\'et\'e suivante : \[ \dim V_{1}^{\ast}=\dim V_{2}^{\ast}= \dim V_{3}^{\ast}=n+1. \] Il convient donc de d\'emontrer qu'il existe des rationnels $ (\alpha_{i,j,k})_{i,j,k} $ tels que ces propri\'et\'es soient vraie. En fait, nous allons montrer que chacune de ces propri\'et\'e est vraie pour un ouvert dense de $(\alpha_{i,j,k})_{i,j,k} \in \AA_{\QQ}^{3(n+1)} $. \\

Montrons que $ V $ est lisse pour un ouvert dense de $ (\alpha_{i,j,k})_{i,j,k} \in \AA_{\QQ}^{3(n+1)} $. Remarquons que $ X=\PP^{n}\times \PP^{n}\times \PP^{n} $ peut \^etre vue comme une sous-vari\'et\'e lisse de $ \PP^{N} $ (o\`u $ N=(n+1)^{3}-1 $) via le plongement de Segre : \begin{align*}
s : &  \PP^{n}\times \PP^{n}\times \PP^{n}  \ra  \PP^{N} \\ & ([\xx],[\yy],[\zz])  \mt  [(x_{i}y_{j}z_{k})_{i,j,k}].
\end{align*}  
Alors, d'apr\`es le th\'eor\`eme de Bertini (cf. \cite{Ha}), pour une famille ouverte dense d'hyperplans projectifs $ H_{\aalpha}=\{(X_{i,j,k}) \;  | \; \sum_{i,j,k}\alpha_{i,j,k}X_{i,j,k}=0 \}\subset \PP^{N} $, on a que $ X\cap H_{\aalpha} $ est lisse, or on remarque que :  \[ X\cap H_{\aalpha}=\{([\xx],[\yy],[\zz])\in X \; | \; \sum_{i,j,k=0}^{n}\alpha_{i,jk}x_{i}y_{j}z_{k}=0 \}. \]
Par cons\'equent, $ V $ est lisse pour un ouvert dense de $ (\alpha_{i,j,k})_{i,j,k} \in \AA_{\QQ}^{3(n+1)} $. \\

Nous allons \`a pr\'esent montrer que pour un ouvert dense de $ (\alpha_{i,j,k})_{i,j,k} \in \AA_{\QQ}^{3(n+1)} $, on a $ \dim V_{3}^{\ast}=n+1 $. On plonge $ Y=\PP^{n}\times \PP^{n} $ dans $ \PP^{N'} $ (o\`u $ N'=(n+1)^{2}-1 $) via le plongement de Segre. Toujours par application du th\'eor\`eme de Bertini, on a qu'il existe un ouvert dense d'hyperplans $ H_{\aalpha_{0}}=\{[X_{i,j}] \; | \; \sum_{i,j=0}^{n}\alpha_{i,j,0}X_{i,j}=0 \}  $ ne contenant pas $ Y $ tels que $ H_{\aalpha_{0}}\cap Y $ soit lisse. On a alors $ \dim(Y\cap H_{\aalpha_{0}})=2n-1 $ pour chacun de ces hyperplans. On proc\`ede de m\^eme par la suite avec des hyperplans $ H_{\aalpha_{1}}, ..., H_{\aalpha_{n}} $  (avec $ H_{\aalpha_{k}}=\{[X_{i,j } ] \; | \; \sum_{i,j=0}^{n}\alpha_{i,j,k}X_{i,j}=0 \} $. On trouve alors que, pour un ouvert dense de $ (\alpha_{i,j,k})=(\aalpha_{0},...,\aalpha_{n})\in \AA_{\QQ}^{3(n+1)} $, on a que \[ Y\cap H_{\aalpha_{0}}\cap ...\cap H_{\aalpha_{n}}=\{ ([\xx],[\yy])\in \PP^{n}\times \PP^{n} \; | \; B_{k}(\xx,\yy)=0 \; \forall k \} \] est lisse et de dimension $ 2n-(n+1)=n-1 $. Par cons\'equent, $ \dim V_{3}^{\ast}=n+1 $ pour un ouvert dense de $  (\alpha_{i,j,k})_{i,j,k}\in  \AA_{\QQ}^{3(n+1)} $. On montre de fa\c{c}on analogue que $ \dim V_{2}^{\ast}=n+1 $ et $ \dim V_{1}^{\ast}=n+1 $ pour un ouvert dense de $ (\alpha_{i,j,k}) $. \\

On conclut qu'il existe un ouvert dense de rationnels $  (\alpha_{i,j,k})_{i,j,k} \in  \AA_{\QQ}^{3(n+1)} $ tels que l'hypersurface $ V $ qu'ils d\'efinissent soit lisse et telle que $ \dim V_{1}^{\ast}=\dim V_{2}^{\ast}= \dim V_{3}^{\ast}=n+1 $. Nous supposerons donc dor\'enavant que $ V $ est une telle hypersurface.  \\

La m\'ethode employ\'ee ici pour \'evaluer $ N_{U}(B) $ consiste dans un premier temps \`a donner une formule asymptotique pour le nombre $ N_{U}(P_{1},P_{2},P_{3}) $ de points $ (\xx,\yy,\zz)  $ de $ U\cap (\ZZ^{n+1}\times \ZZ^{n+1}\times \ZZ^{n+1}) $ tels que $ |\xx|\leqslant P_{1} $,  $ |\yy|\leqslant P_{2} $, $ |\zz|\leqslant P_{3} $ (ici $ |\xx| $ d\'esignera $ \max_{i}|x_{i}| $). On d\'emontre en fait que l'on a une formule du type \begin{equation}\label{formuleinitiale}
 N_{U}(P_{1},P_{2},P_{3})=\sigma P_{1}^{n}P_{2}^{n}P_{3}^{n} +O\left( P_{1}^{n}P_{2}^{n}P_{3}^{n}\min\{P_{1},P_{2},P_{3}\}^{-\delta}\right)
\end{equation} pour des constantes $ \sigma $ et $ \delta>0 $ que nous pr\'eciserons. Par la suite, on utilise des r\'esultats semblables \`a ceux de la section $ 9 $ de \cite{S2} pour en d\'eduire $ N_{U}(B) $.\\

Dans la section $ 2 $, en utilisant des arguments issus de la m\'ethode du cercle, on \'etablit la formule \eqref{formuleinitiale} pour $ P_{1},P_{2},P_{3} $ \og relativement proches \fg. Plus pr\'ecis\'ement on montre (cf. proposition \ref{prop1}) que si $ P_{2}=P_{1}^{b} $ et $ P_{3}=P_{1}^{b'} $ avec $ b,b'\geqslant 1 $ et si $ 1+b+b'< n+1 $, alors la formule \eqref{formuleinitiale} est v\'erifi\'ee. Par la suite dans la section $ 3 $, pour un $ \xx\in \ZZ^{n+1} $, on donne une formule asymptotique pour le nombre de points $ (\yy,\zz) $ de la fibre $ V_{\xx} $ de $ V $ tels que $ |\yy|\leqslant P_{2} $ et $ |\zz|\leqslant P_{3} $ en utilisant \`a nouveau la m\'ethode du cercle. Les r\'esultats obtenus combin\'es avec ceux de la section $ 2 $ nous permettrons dans la section $ 4 $ d'\'etablir la formule \eqref{formuleinitiale} pour $ b,b' $ arbitrairement grands mais v\'erifiant $ b'\leqslant b+1+\nu $ (avec $ \nu>0 $ arbitrairement petit) (voir proposition \ref{prop4} ). La section $ 5 $ est consacr\'ee au cas o\`u $ b'> b+1+\nu $. On r\'esout ce probl\`eme en invoquant des r\'esultats de g\'eom\'etrie des nombres, et plus pr\'esis\'ement de comptage de points d'un r\'eseau hyperplan dans un domaine born\'e. Tout ceci permet finalement de d\'emontrer la formule \eqref{formuleinitiale} pour tous $ P_{1},P_{2},P_{3} $ (cf. proposition \ref{NU}). Dans la section $ 6 $, on utilise les r\'esultats \'etablis par Blomer et Br\"{u}dern dans \cite{BB} pour conclure quant \`a la valeur de $ N_{U}(B) $ \`a partir des r\'esultats obtenus pour $ N_{U}(P_{1},P_{2},P_{3}) $. Enfin, la section $ 7 $ est consacr\'ee \`a l'\'etude des constantes intervenant dans la formule asymptotique obtenue pour $ \mathcal{N}_{U}(B) $. On v\'erifie en particulier que le r\'esultat est bien en accord avec les conjectures avanc\'ees par Peyre dans \cite{Pe}.

\section{Premi\`ere \'etape}

Dans cette premi\`ere partie, nous allons d\'emontrer, pour $ 1\leqslant P_{1}\leqslant P_{2}\leqslant P_{3} $, que le nombre \begin{multline*} N(P_{1},P_{2},P_{3})=\card\{ (\xx,\yy,\zz)\in (\ZZ^{n+1}\cap P_{1}\BB_{1})\times (\ZZ^{n+1}\cap P_{2}\BB_{2})\\ \times (\ZZ^{n+1}\cap P_{3}\BB_{3}) \; | \; \max_{k}|B_{k}(\xx,\yy)|\neq 0,  F(\xx,\yy,\zz)=0 \} \end{multline*}
(o\`u $ P_{i}\BB_{i}=P_{i}[-1,1]^{n+1}=[-P_{i},P_{i}]^{n+1} $) est du type \[  N(P_{1},P_{2},P_{3})=\sigma P_{1}^{n}P_{2}^{n}P_{3}^{n}+O(P_{1}^{n-\delta}P_{2}^{n}P_{3}^{n}). \] pour $ n $ assez grand. La m\'ethode utilis\'ee est inspir\'ee de l'article \cite{S1}. 
\subsection{Sommes d'exponentielles}\label{ssection1}

Soit $ \alpha \in [0,1] $, on pose \begin{equation}
S(\alpha)=\sum_{\xx\in P_{1}\BB_{1}\cap \ZZ^{n+1}}\sum_{\substack{\yy\in P_{2}\BB_{2}\cap \ZZ^{n+1}\\ \max_{k}|B_{k}(\xx,\yy)|\neq 0}}\sum_{\zz\in P_{3}\BB_{3}\cap \ZZ^{n+1}}e(\alpha F(\xx,\yy,\zz)) 
\end{equation}  o\`u $ e $ d\'esigne l'application $ x\mt \exp(2i\pi x) $.
On commence par remarquer que, pour $ \xx,\yy $ fix\'es : 
\begin{equation}\label{premieresomme}
\left|\sum_{\zz\in P_{3}\BB_{3}\cap \ZZ^{n+1}}e(\alpha F(\xx,\yy,\zz))\right| \ll \prod_{k=0}^{n}\min\left(P_{3}, ||\alpha B_{k}(\xx,\yy)||^{-1}\right)
\end{equation} 
o\`u, pour $ a\in \RR $, $ ||a|| $ d\'esigne la distance de $ a $ \`a $ \ZZ $, autrement dit $ ||a||=\inf_{m\in \ZZ}|a-m| $. Consid\'erons $ \rr=(r_{0},...,r_{n})\in \NN^{n+1} $. On note, pour $ \xx\in P_{1}\BB_{1}  $ fix\'e : \[  \mathcal{A}(\xx,\rr)=\lbrace \yy \in  P_{2}\BB_{2} \; | \; r_{k}P_{3}^{-1}\leqslant \lbrace \alpha B_{k}(\xx,\yy) \rbrace \leqslant (r_{k}+1)P_{3}^{-1} \rbrace, \] o\`u, pour $ m\in \RR $, $ \lbrace m\rbrace $ d\'esigne la partie fractionnaire de $ m $. On a alors que, pour $ \xx\in P_{1}\BB_{1} $ fix\'e, d'apr\`es \eqref{premieresomme} : 
\begin{multline}\label{deuxiemesomme}
\sum_{\substack{\yy\in P_{2}\BB_{2}\cap \ZZ^{n+1}\\ \max_{k}|B_{k}(\xx,\yy)|\neq 0}}\sum_{\zz\in P_{3}\BB_{3}\cap \ZZ^{n+1}}e(\alpha F(\xx,\yy,\zz)) \\ \ll \sum_{\substack{ |\rr| \leqslant P_{3}}}A(\xx,\rr)\prod_{k=0}^{n}\min\left(P_{3},\max\left(\frac{P_{3}}{r_{k}},\frac{P_{3}}{P_{3}-r_{k}-1}\right)\right).
\end{multline}   

D'autre part, si $ (\uu,\vv)\in \mathcal{A}(\xx,\rr)^{2} $, on a alors : \[ ||\alpha B_{k}(\xx,\uu-\vv)||=||\alpha B_{k}(\xx,\uu)-\alpha B_{k}(\xx,\vv)||<P_{3}^{-1}. \]
Par cons\'equent, si l'on note \[ N(\xx)=\lbrace \yy\in P_{2}\BB_{2}\; | \;  \forall k\in \{0,...,n\}, \;  ||\alpha B_{k}(\xx,\yy)||<P_{3}^{-1}\rbrace, \] on a alors $ A(\xx,\rr)\leqslant N(\xx)+1 $ pour tous $ \xx,\rr $, et la formule \eqref{deuxiemesomme} donne alors : \begin{align*} |S(\alpha)|  & \ll \sum_{\xx\in P_{1}\BB_{1} }N(\xx)\sum_{\substack{ |\rr| \leqslant P_{3}}}\prod_{k=0}^{n}\min\left(P_{3},\max\left(\frac{P_{3}}{r_{k}},\frac{P_{3}}{P_{3}-r_{k}-1}\right)\right) \\ & \ll \sum_{\xx\in P_{1}\BB_{1} }N(\xx)\prod_{k=0}^{n}\left( \sum_{r=0}^{\lfloor P_{3}\rfloor}\min\left(P_{3},\max\left(\frac{P_{3}}{r},\frac{P_{3}}{P_{3}-r-1}\right)\right)\right) \\ & \ll \sum_{\xx\in P_{1}\BB_{1} }N(\xx)(P_{3}\log(P_{3}))^{n+1} \\ & \ll  P_{3}^{n+1+\varepsilon}M_{3}(\alpha,P_{1},P_{2},P_{3}^{-1}), \end{align*}
avec $ \varepsilon>0 $ arbitrairement petit, et \begin{multline}\label{DefM3}
M_{3}(\alpha,P_{1},P_{2},P_{3}^{-1})=\card\left\{(\xx,\yy)\in \ZZ^{n+1}\times \ZZ^{n+1}\; | \; |\xx|\leqslant P_{1}, \right. \\ \left. \; |\yy|\leqslant P_{2},\; ||\alpha B_{k}(\xx,\yy)  ||\leqslant P_{3}^{-1} \right\}.
\end{multline} 
On a donc \'etabli le lemme suivant :
\begin{lemma}\label{lemme11}
Soit $ \alpha\in [0,1] $, $ P >0 $ et $ \varepsilon>0 $ quelconque. Pour un r\'eel $ \kappa>0 $ fix\'e, l'une au moins des assertions suivantes est v\'erifi\'ee : \begin{enumerate}
\item $ |S(\alpha)|<P_{1}^{n+1}P_{2}^{n+1}P_{3}^{n+1+\varepsilon}P^{-\kappa} $.
\item $ M_{3}(\alpha,P_{1},P_{2},P_{3}^{-1}) \gg P_{1}^{n+1}P_{2}^{n+1}P^{-\kappa}$.
\end{enumerate}
\end{lemma}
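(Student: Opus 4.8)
The plan is to recognise that the dichotomy asserted by the lemma is a direct logical consequence of the pointwise bound
\[ |S(\alpha)| \ll P_{3}^{n+1+\varepsilon}M_{3}(\alpha,P_{1},P_{2},P_{3}^{-1}) \]
established just above the statement, so that essentially no new estimate is required. First I would make the implied constant explicit: there is a constant $ C=C(n,\varepsilon) $, depending also on the coefficients $ \alpha_{i,j,k} $ but not on $ \alpha $, $ P_{1} $, $ P_{2} $, $ P_{3} $, such that
\[ |S(\alpha)| \leqslant C\,P_{3}^{n+1+\varepsilon}M_{3}(\alpha,P_{1},P_{2},P_{3}^{-1}). \]
Note that this inequality is unconditional and does not involve the auxiliary parameters $ P $ and $ \kappa $, which are free; the lemma simply repackages it into a form convenient for the later minor/major arc analysis.

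I would then argue by contraposition, fixing $ P>0 $ and $ \kappa>0 $. Suppose assertion (1) fails, i.e.
\[ |S(\alpha)| \geqslant P_{1}^{n+1}P_{2}^{n+1}P_{3}^{n+1+\varepsilon}P^{-\kappa}. \]
Substituting this lower bound into the displayed inequality gives
\[ P_{1}^{n+1}P_{2}^{n+1}P_{3}^{n+1+\varepsilon}P^{-\kappa} \leqslant C\,P_{3}^{n+1+\varepsilon}M_{3}(\alpha,P_{1},P_{2},P_{3}^{-1}), \]
and dividing through by the positive quantity $ P_{3}^{n+1+\varepsilon} $ yields
\[ M_{3}(\alpha,P_{1},P_{2},P_{3}^{-1}) \geqslant C^{-1}P_{1}^{n+1}P_{2}^{n+1}P^{-\kappa}, \]
which is precisely assertion (2), the implied constant being $ C^{-1} $. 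Hence whenever (1) does not hold, (2) must hold, and at least one of the two assertions is always valid.

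I expect no genuine obstacle in this concluding step: the whole substance of the lemma resides in the estimate that precedes it, and the passage from that estimate to the stated dichotomy is purely formal. The care is needed earlier — in bounding the innermost sum over $ \zz $ by a product of terms $ \min(P_{3},\|\alpha B_{k}(\xx,\yy)\|^{-1}) $, in partitioning the $ \yy $ according to the level sets $ \mathcal{A}(\xx,\rr) $, in using the difference trick to replace $ A(\xx,\rr) $ by $ N(\xx)+1 $, and in summing the geometric-type series so that $ \sum_{r}\min(P_{3},\ldots) \ll P_{3}\log P_{3} $ absorbs the $ \rr $-sum and leaves $ M_{3} $ up to the harmless factor $ P_{3}^{\varepsilon} $. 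Once that bound is in hand, the dichotomy follows immediately as above.
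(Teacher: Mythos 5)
Your proposal is correct and matches the paper's own treatment: the paper derives exactly the bound $|S(\alpha)| \ll P_{3}^{n+1+\varepsilon}M_{3}(\alpha,P_{1},P_{2},P_{3}^{-1})$ in the lines preceding the statement (via the $\min$-product bound, the level sets $\mathcal{A}(\xx,\rr)$, the shift trick giving $A(\xx,\rr)\leqslant N(\xx)+1$, and the $P_{3}\log P_{3}$ summation) and then records the lemma as the immediate formal dichotomy you describe. Your contraposition step, with the implied constant $C^{-1}$ in assertion (2), is precisely the intended argument.
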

Nous allons \`a pr\'esent r\'eexprimer la deuxi\`eme assertion de ce lemme, en utilisant \cite[lemme 3.1]{S1} pour \'evaluer $ M_{3}(\alpha,P_{1},P_{2},P_{3}^{-1}) $.
\subsection{Une in\'egalit\'e de type Weyl}\label{ssection2}

Rappelons le r\'esultat suivant (cf. \cite[lemme 3.1]{S1}) :

\begin{lemma}\label{lemme31}
Soient $ n_{1},n_{2} $ des entiers et $ \lambda_{i,j} $ des r\'eels pour $ 1\leqslant i\leqslant n_{1} $ et  $ 1\leqslant j\leqslant n_{2} $. On consid\`ere les formes lin\'eaires \[ \forall \uu\in \RR^{n_{2}} \; \;  L_{i}(\uu)=\sum_{j=1}^{n_{2}}\lambda_{i,j}u_{j} \; \; i\in \{ 1,...,n_{1}\} \] ainsi que \[  \forall \uu\in \RR^{n_{1}} \; \;L_{j}^{t}(\uu)=\sum_{j=1}^{n_{1}}\lambda_{i,j}u_{i} \; \; j\in \{ 1,...,n_{2}\}. \] D'autre part, \'etant donn\'e un r\'eel $ a>1 $, on note \begin{multline*} U(Z)=\card\{(u_{1},...,u_{n_{2}},...,u_{n_{2}+n_{1}})\in \ZZ^{n_{1}+n_{2}}\; | \; \forall j\in \{1,...,n_{2}\},\;|u_{j}|<aZ, \\  \;  \forall i\in \{1,...,n_{1}\},\;|L_{i}(u_{1},...,u_{n_{2}})-u_{n_{2}+i}|<a^{-1}Z \}, \end{multline*}
et de m\^eme 
\begin{multline*} U^{t}(Z)=\card\{(u_{1},...,u_{n_{1}},...,u_{n_{1}+n_{2}})\in \ZZ^{n_{1}+n_{2}}\; |\;  \forall i\in \{1,...,n_{1}\},\; |u_{i}|<aZ, \; \\ \forall j\in \{1,...,n_{2}\}, \;  |L^{t}_{j}(u_{1},...,u_{n_{1}})-u_{n_{1}+j}|<a^{-1}Z\}. \end{multline*}
On a alors, si $ 0<Z_{1}\leqslant Z_{2}\leqslant 1 $ : \[ U(Z_{2})\ll \max\left( \left(\frac{Z_{2}}{Z_{1}}\right)^{n_{2}}U(Z_{1}), \frac{Z_{2}^{n_{2}}}{Z_{1}^{n_{1}}}a^{n_{2}-n_{1}}U^{t}(Z_{1}) \right).\] 
\end{lemma}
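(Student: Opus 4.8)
The plan is to reinterpret both $U(Z)$ and $U^t(Z)$ as counts of lattice points of a unimodular lattice inside a symmetric box, and then to compare them through the theory of successive minima together with Mahler duality. Write $N=n_1+n_2$ and let $\Lambda=(\lambda_{i,j})$ be the $n_1\times n_2$ matrix of coefficients. The matrix $M=\left(\begin{smallmatrix}I_{n_2}&0\\ \Lambda&-I_{n_1}\end{smallmatrix}\right)$ has $\det M=\pm1$, so $\Gamma=M\ZZ^N$ is a lattice of covolume $1$; taking $B$ to be the box $\{x\in\RR^N:|x_j|\le a\ (j\le n_2),\ |x_{n_2+i}|\le a^{-1}\ (i\le n_1)\}$, one checks directly that $U(Z)=\#(\Gamma\cap ZB)$, and likewise $U^t(Z)=\#(\Gamma^t\cap ZB^t)$, where $\Gamma^t$ and $B^t$ are obtained by transposing $\Lambda$ and exchanging the roles of $n_1$ and $n_2$. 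Let $\mu_1\le\cdots\le\mu_N$ denote the successive minima of $\Gamma$ with respect to $B$. The first tool I would invoke is the standard estimate $\#(\Gamma\cap ZB)\asymp\prod_{j=1}^N\max(1,Z/\mu_j)$, with implied constants depending only on $N$, giving $U(Z)\asymp\prod_j\max(1,Z/\mu_j)$ uniformly in $\Lambda$, $a$ and $Z$.

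The heart of the argument is the dual formula $U^t(Z)\asymp\prod_j\max(1,Z\mu_j)$. I would obtain it by checking that $\Gamma^t$ is, up to a signed permutation of the coordinates (hence an isometry), the dual lattice $\Gamma^\ast$, and that $B^t$ is comparable to the polar body $B^\circ$ with constants depending only on $N$. Mahler's transference theorem then yields $\mu_j(\Gamma^t,B^t)\asymp 1/\mu_{N+1-j}$, and the counting estimate applied to $\Gamma^t$ gives $U^t(Z)\asymp\prod_j\max(1,Z\mu_j)$. In addition, since $\mathrm{vol}(B)\asymp a^{n_2-n_1}$, Minkowski's second theorem gives $\prod_j\mu_j\asymp a^{n_1-n_2}$; this identity is precisely what will produce the factor $a^{n_2-n_1}$ appearing in the second term of the conclusion.

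With these formulas the inequality reduces to a bookkeeping on the product $\prod_j\max(1,Z/\mu_j)$. Put $r=Z_2/Z_1\ge1$ and $m=\#\{j:\mu_j\le Z_2\}$, so that $U(Z_2)\asymp Z_2^{m}\prod_{j\le m}\mu_j^{-1}$. If $m\le n_2$, then since each factor of $U(Z_2)/U(Z_1)$ is at most $r$ and equals $1$ unless $\mu_j\le Z_2$, one gets $U(Z_2)\ll r^{m}U(Z_1)\le(Z_2/Z_1)^{n_2}U(Z_1)$, which is the first term. If $m\ge n_2$, I would rewrite the second term, using $\prod_j\mu_j^{-1}\asymp a^{n_2-n_1}$ and $U^t(Z_1)\asymp\prod_j\max(1,Z_1\mu_j)$, as $\asymp Z_2^{n_2}Z_1^{n_2-|C|}\prod_{j\in C}\mu_j^{-1}$ with $C=\{j:\mu_j\le 1/Z_1\}\supseteq\{j:\mu_j\le Z_2\}$; dividing $U(Z_2)$ by this expression and bounding $\mu_j\le 1/Z_1$ on the indices of $C$ with $\mu_j>Z_2$ leaves a ratio $\ll(Z_1Z_2)^{m-n_2}\le1$, which is the second term. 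The two ranges overlap at $m=n_2$, so the bound by the maximum holds for every value of $m$.

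The step I expect to be the main obstacle is the duality: identifying the transpose configuration $(\Gamma^t,B^t)$ with the dual pair $(\Gamma^\ast,B^\circ)$, and then tracking the anisotropic dilation by $a$ so that Minkowski's second theorem delivers exactly the exponent $a^{n_2-n_1}$. Once the two product formulas for $U$ and $U^t$ are established uniformly in $\Lambda$ and $a$, the remaining dichotomy at $m=n_2$ is entirely elementary. An alternative route, avoiding explicit successive minima, would be a direct Davenport-type shrinking argument; but the approach above makes the origin of every exponent transparent.
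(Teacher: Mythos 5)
Your proposal is correct, and it coincides with the approach behind the paper's own treatment of this statement: the paper does not reprove the lemma but cites Schindler \cite[Lemme 3.1]{S1}, whose proof (following Davenport) runs exactly along your lines --- reformulation as counting points of a unimodular lattice $\Gamma$ in a dilated box, the estimate $\card(\Gamma\cap ZB)\asymp\prod_{j}\max(1,Z/\mu_{j})$ via successive minima, Mahler transference identifying the transposed configuration with the polar lattice and polar body, Minkowski's second theorem producing the factor $a^{n_{2}-n_{1}}$, and the concluding dichotomy on $\card\{j \; | \; \mu_{j}\leqslant Z_{2}\}$. Your argument also makes transparent the uniformity in the $\lambda_{i,j}$, which is precisely the point emphasized in the paper's remark following the lemma.
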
 
\begin{rem} Par la d\'emonstration de \cite[lemme 3.1]{S1}, on a que cette majoration de $ U(Z_{2}) $ est ind\'ependante des $ \lambda_{i,j} $ choisis. \end{rem}
Nous allons appliquer ce lemme, pour un $ \xx\in [-P_{1},P_{1}]^{n+1} $ fix\'e, aux r\'eels $ (\lambda_{k,j})_{\substack{0\leqslant k\leqslant n \\ 0\leqslant j\leqslant n}}=(\alpha\sum_{i=0}^{n}\alpha_{i,j,k}x_{i})_{\substack{0\leqslant k\leqslant n \\ 0\leqslant j\leqslant n}} $, de sorte que \[ L_{k}(\yy)=\sum_{j=0}^{n}\lambda_{k,j}y_{j}=\alpha\sum_{0\leqslant i,j\leqslant n}\alpha_{i,j,k}x_{i}y_{j}=\alpha B_{k}(\xx,\yy) \] et \[ L_{j}^{t}(\zz)=\sum_{k=0}^{n}\lambda_{k,j}z_{k}=\alpha\sum_{0\leqslant i,k\leqslant n}\alpha_{i,j,k}x_{i}z_{k}=\alpha B_{j}'(\xx,\zz). \]
Dans ce qui va suivre, pour tous r\'eels strictement positifs $ H_{1},H_{2},H_{3} $, on note : \begin{multline} M_{1}(\alpha,H_{1},H_{2},H_{3})=\card\{ (\yy,\zz)\in \ZZ^{n+1}\times \ZZ^{n+1} \; |\; |\zz|\leqslant H_{1}, \\ \; |\yy|\leqslant H_{2}, \; ||\alpha B_{i}''(\yy,\zz)||<H_{3} \}, \end{multline} \begin{multline} M_{2}(\alpha,H_{1},H_{2},H_{3})=\card\{ (\xx,\zz)\in \ZZ^{n+1}\times \ZZ^{n+1} \; |\; |\xx|\leqslant H_{1}, \\ \; |\zz|\leqslant H_{2}, \; ||\alpha B_{j}'(\xx,\zz)||<H_{3} \}, \end{multline}\begin{multline} M_{3}(\alpha,H_{1},H_{2},H_{3})=\card\{ (\xx,\yy)\in \ZZ^{n+1}\times \ZZ^{n+1} \; |\; |\xx|\leqslant H_{1}, \\ \; |\yy|\leqslant H_{2}, \; ||\alpha B_{k}(\xx,\yy)||<H_{3} \}. \end{multline}

On fixe un r\'eel $ \theta_{2}\in [0,1] $. On choisit alors des param\`etres $ Z_{1},Z_{2} $ et $ a $ tels que : \[ P_{2}=aZ_{2}, \; P_{3}^{-1}=a^{-1}Z_{2}, \; P_{2}^{\theta_{2}}=aZ_{1}, \] ce qui implique : \[ a^{-1}Z_{1}=P_{2}^{-1+\theta_{2}}P_{3}^{-1}. \] On a alors, d'apr\`es le lemme \ref{lemme31} : \begin{align*} U(Z_{2}) & =\card\{ \yy \; | \; |\yy|\leqslant P_{2}, \; ||\alpha B_{k}(\xx,\yy)||<P_{3}^{-1}, \; \forall k\in \{0,...,n\}\} \\ &\ll \max\left(\left(\frac{Z_{2}}{Z_{1}}\right)^{n+1}U(Z_{1}), \left(\frac{Z_{2}}{Z_{1}}\right)^{n+1}U^{t}(Z_{1}) \right) \\ & = P_{2}^{(n+1)(1-\theta_{2})}
\max\left(U(Z_{1}), U^{t}(Z_{1}) \right) \end{align*}
(cette majoration \'etant ind\'ependante de $ \xx $) avec \[  U(Z_{1})=\card\{ \yy \; | \; |\yy|\leqslant P_{2}^{\theta_{2}}, \; ||\alpha B_{k}(\xx,\yy)||<P_{2}^{-1+\theta_{2}}P_{3}^{-1}, \; \forall k\in \{0,...,n\}\}, \]\[  U^{t}(Z_{1})=\card\{ \zz \; | \; |\zz|\leqslant P_{2}^{\theta_{2}}, \; ||\alpha B_{j}'(\xx,\zz)||<P_{2}^{-1+\theta_{2}}P_{3}^{-1}, \; \forall j\in \{0,...,n\}\}. \]
En sommant ces majoration sur tous les $ \xx\in [-P_{1},P_{1}]\cap\ZZ^{n+1} $, on trouve alors : \begin{multline}\label{eq1} M_{3}(\alpha,P_{1},P_{2},P_{3}^{-1}) \\ \ll P_{2}^{(n+1)(1-\theta_{2})}\left( M_{3}(\alpha,P_{1},P_{2}^{\theta_{2}},P_{2}^{-1+\theta_{2}}P_{3}^{-1})+M_{2}(\alpha,P_{1},P_{2}^{\theta_{2}},P_{2}^{-1+\theta_{2}}P_{3}^{-1})\right).\end{multline}
Par la suite, on applique \`a nouveau le lemme \ref{lemme31} en prenant cette fois-ci un $ \yy\in [-P_{2}^{\theta_{2}},P_{2}^{\theta_{2}}]^{n+1} $, et en choisissant $ (\lambda_{i,k})_{i,k}=\left(\alpha\sum_{j=0}^{n}\alpha_{i,j,k}y_{j} \right)_{i,k} $, ainsi que : \[ aZ_{2}=P_{1}, \; a^{-1}Z_{2}=P_{2}^{-1+\theta_{2}}P_{3}^{-1}, \; aZ_{1}=P_{1}^{\theta_{1}},\; a^{-1}Z_{1}=P_{1}^{-1+\theta_{1}}P_{2}^{-1+\theta_{2}}P_{3}^{-1}, \] o\`u $ \theta_{1} $ est un r\'eel tel que $ P_{1}^{\theta_{1}}=P_{2}^{\theta_{2}} $. On a alors que : \begin{align*} U(Z_{2}) & =\card\{ \xx \; | \; |\xx|\leqslant P_{1}, \; ||\alpha B_{k}(\xx,\yy)||<P_{2}^{-1+\theta_{2}}P_{3}^{-1}, \; \forall k\in \{0,...,n\}\} \\ &\ll \max\left(\left(\frac{Z_{2}}{Z_{1}}\right)^{n+1}U(Z_{1}), \left(\frac{Z_{2}}{Z_{1}}\right)^{n+1}U^{t}(Z_{1}) \right) \\ & = P_{1}^{(n+1)(1-\theta_{1})}
\max\left(U(Z_{1}), U^{t}(Z_{1}) \right), \end{align*} 
avec
\[  U(Z_{1})=\card\{ \xx \; | \; |\xx|\leqslant P_{1}^{\theta_{1}}, \; ||\alpha B_{k}(\xx,\yy)||<P_{1}^{-1+\theta_{1}}P_{2}^{-1+\theta_{2}}P_{3}^{-1}, \; \forall k\in \{0,...,n\}\}, \]\[  U^{t}(Z_{1})=\card\{ \zz \; | \; |\zz|\leqslant P_{1}^{\theta_{1}}, \; ||\alpha B_{i}''(\yy,\zz)||<P_{1}^{-1+\theta_{1}}P_{2}^{-1+\theta_{2}}P_{3}^{-1}, \; \forall i\in \{0,...,n\}\}. \]

Puis, en sommant sur les $ \yy\in  [-P_{2}^{\theta_{2}},P_{2}^{\theta_{2}}]^{n+1}\cap\ZZ^{n+1}  $, on trouve : \begin{multline}\label{eq2} M_{3}(\alpha,P_{1},P_{2}^{\theta_{2}},P_{2}^{-1+\theta_{2}}P_{3}^{-1})  \ll P_{1}^{(n+1)(1-\theta_{1})}( M_{3}(\alpha,P_{1}^{\theta_{1}},P_{2}^{\theta_{2}},P_{1}^{-1+\theta_{1}}P_{2}^{-1+\theta_{2}}P_{3}^{-1})\\+M_{1}(\alpha,P_{1}^{\theta_{1}},P_{2}^{\theta_{2}},P_{1}^{-1+\theta_{1}}P_{2}^{-1+\theta_{2}}P_{3}^{-1})).\end{multline}

En proc\'edant de la m\^eme mani\`ere pour $ M_{2}(\alpha,P_{1},P_{2}^{\theta_{2}},P_{2}^{-1+\theta_{2}}P_{3}^{-1}) $ (en appliquant cette fois-ci le lemme \ref{lemme31} \`a $ (\lambda_{i,j})=\left(\alpha\sum_{k=0}^{n}\alpha_{i,j,k}z_{k}\right) $ pour un $ z\in [-P_{2}^{\theta_{2}},P_{2}^{\theta_{2}}]^{n+1} $ fix\'e, et en prenant \`a nouveau $ aZ_{2}=P_{1}, \; a^{-1}Z_{2}=P_{2}^{-1+\theta_{2}}P_{3}^{-1}, \; aZ_{1}=P_{1}^{\theta_{1}},\; a^{-1}Z_{1}=P_{1}^{-1+\theta_{1}}P_{2}^{-1+\theta_{2}}P_{3}^{-1} $) on trouve : 

\begin{multline}\label{eq3} M_{2}(\alpha,P_{1},P_{2}^{\theta_{2}},P_{2}^{-1+\theta_{2}}P_{3}^{-1})  \ll P_{1}^{(n+1)(1-\theta_{1})}( M_{2}(\alpha,P_{1}^{\theta_{1}},P_{2}^{\theta_{2}},P_{1}^{-1+\theta_{1}}P_{2}^{-1+\theta_{2}}P_{3}^{-1})\\+M_{1}(\alpha,P_{1}^{\theta_{1}},P_{2}^{\theta_{2}},P_{1}^{-1+\theta_{1}}P_{2}^{-1+\theta_{2}}P_{3}^{-1})).\end{multline}

En regroupant les r\'esultats obtenus en \eqref{eq1}, \eqref{eq2}, \eqref{eq3}, on trouve : \begin{multline}M_{3}(\alpha,P_{1},P_{2},P_{3}^{-1}) \\ \ll P_{1}^{(n+1)(1-\theta_{1})}P_{2}^{(n+1)(1-\theta_{2})}\max_{i\in\{1,2,3\}}M_{i}(\alpha,P_{1}^{\theta_{1}},P_{2}^{\theta_{2}},P_{1}^{-1+\theta_{1}}P_{2}^{-1+\theta_{2}}P_{3}^{-1}). \end{multline}

On d\'eduit de ceci et du lemme \ref{lemme11} le r\'esultat suivant : 
\begin{lemma}
Soit $ \alpha\in [0,1] $, $ P >0 $ et $ \varepsilon>0 $ quelconque. On note $ \theta $ le r\'eel tel que $ P_{1}^{\theta_{1}}=P_{2}^{\theta_{2}}=P^{\theta} $. Pour un r\'eel $ \kappa>0 $ fix\'e, l'une au moins des assertions suivantes est vraie : \begin{enumerate}
\item $ |S(\alpha)|<P_{1}^{n+1}P_{2}^{n+1}P_{3}^{n+1+\varepsilon}P^{-\kappa} $.
\item Il existe un entier $ i\in \{1,2,3\} $ tel que \[ M_{i}(\alpha,P^{\theta},P^{\theta},P_{1}^{-1}P_{2}^{-1}P_{3}^{-1}P^{2\theta}) \gg P^{2(n+1)\theta-\kappa}.\]
\end{enumerate}
\end{lemma}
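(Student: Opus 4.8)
The plan is to combine the two preceding lemmas (Lemma~\ref{lemme11} and the Weyl-type iteration just established) to reduce the final estimate to a single counting function with balanced parameters.

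First I would apply Lemma~\ref{lemme11}: for fixed $\kappa>0$ and $\varepsilon>0$, either the exponential sum satisfies the bound $|S(\alpha)|<P_{1}^{n+1}P_{2}^{n+1}P_{3}^{n+1+\varepsilon}P^{-\kappa}$, in which case assertion $(1)$ of the target lemma holds verbatim, or else we have the lower bound
\[ M_{3}(\alpha,P_{1},P_{2},P_{3}^{-1}) \gg P_{1}^{n+1}P_{2}^{n+1}P^{-\kappa}. \]
So it suffices to treat the second case, and the task becomes translating this lower bound on $M_{3}(\alpha,P_{1},P_{2},P_{3}^{-1})$ into the claimed lower bound on one of the balanced counting functions $M_{i}(\alpha,P^{\theta},P^{\theta},P_{1}^{-1}P_{2}^{-1}P_{3}^{-1}P^{2\theta})$.

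Next I would invoke the chain of inequalities \eqref{eq1}, \eqref{eq2}, \eqref{eq3} that was assembled above, whose combined effect is the displayed bound
\[ M_{3}(\alpha,P_{1},P_{2},P_{3}^{-1}) \ll P_{1}^{(n+1)(1-\theta_{1})}P_{2}^{(n+1)(1-\theta_{2})}\max_{i\in\{1,2,3\}}M_{i}(\alpha,P_{1}^{\theta_{1}},P_{2}^{\theta_{2}},P_{1}^{-1+\theta_{1}}P_{2}^{-1+\theta_{2}}P_{3}^{-1}). \]
Using the normalization $P_{1}^{\theta_{1}}=P_{2}^{\theta_{2}}=P^{\theta}$ that defines $\theta$, the first two arguments of each $M_{i}$ become exactly $P^{\theta}$; and the third argument simplifies, since $P_{1}^{-1+\theta_{1}}P_{2}^{-1+\theta_{2}}P_{3}^{-1}=P_{1}^{-1}P_{2}^{-1}P_{3}^{-1}\cdot P_{1}^{\theta_{1}}P_{2}^{\theta_{2}}=P_{1}^{-1}P_{2}^{-1}P_{3}^{-1}P^{2\theta}$. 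Thus the right-hand side is precisely $P_{1}^{(n+1)(1-\theta_{1})}P_{2}^{(n+1)(1-\theta_{2})}$ times $\max_{i}M_{i}(\alpha,P^{\theta},P^{\theta},P_{1}^{-1}P_{2}^{-1}P_{3}^{-1}P^{2\theta})$.

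Finally I would rearrange: combining the lower bound from Lemma~\ref{lemme11} with this upper bound gives, for some $i\in\{1,2,3\}$,
\[ M_{i}(\alpha,P^{\theta},P^{\theta},P_{1}^{-1}P_{2}^{-1}P_{3}^{-1}P^{2\theta}) \gg \frac{P_{1}^{n+1}P_{2}^{n+1}P^{-\kappa}}{P_{1}^{(n+1)(1-\theta_{1})}P_{2}^{(n+1)(1-\theta_{2})}} = P_{1}^{(n+1)\theta_{1}}P_{2}^{(n+1)\theta_{2}}P^{-\kappa}. \]
Since $P_{1}^{\theta_{1}}=P_{2}^{\theta_{2}}=P^{\theta}$, the numerator factor equals $P^{(n+1)\theta}\cdot P^{(n+1)\theta}=P^{2(n+1)\theta}$, yielding the bound $P^{2(n+1)\theta-\kappa}$ claimed in assertion $(2)$. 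The step I expect to require the most care is the bookkeeping of the three exponents in the third slot of the $M_{i}$: one must check that the normalizing choice $P_{1}^{\theta_{1}}=P_{2}^{\theta_{2}}$ makes the three arguments of $M_{1},M_{2},M_{3}$ genuinely coincide, so that a single symbol $M_{i}(\alpha,P^{\theta},P^{\theta},P_{1}^{-1}P_{2}^{-1}P_{3}^{-1}P^{2\theta})$ legitimately bounds the maximum; the remaining algebra in the exponents is then routine.
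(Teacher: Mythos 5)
Your proof is correct and follows exactly the paper's route: the paper likewise obtains this lemma by combining Lemme~\ref{lemme11} with the chained estimate from \eqref{eq1}, \eqref{eq2}, \eqref{eq3}, and your exponent bookkeeping (in particular the identity $P_{1}^{-1+\theta_{1}}P_{2}^{-1+\theta_{2}}P_{3}^{-1}=P_{1}^{-1}P_{2}^{-1}P_{3}^{-1}P^{2\theta}$ and the simplification $P_{1}^{(n+1)\theta_{1}}P_{2}^{(n+1)\theta_{2}}=P^{2(n+1)\theta}$) is accurate. The paper leaves this deduction implicit ("on d\'eduit de ceci et du lemme \ref{lemme11} le r\'esultat suivant"), so your write-up simply makes explicit what the paper asserts.
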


Consid\'erons \`a pr\'esent un couple $ (\xx,\yy)\in M_{3}(\alpha,P^{\theta},P^{\theta},P_{1}^{-1}P_{2}^{-1}P_{3}^{-1}P^{2\theta}) $ tel qu'il existe un entier $ k_{0}\in \{ 0,...,n\} $ tel que $ B_{k_{0}}(\xx,\yy)\neq 0 $. On note alors $ q=|B_{k_{0}}(\xx,\yy)| $. Par d\'efinition de $ (\xx,\yy) $, on a que $ q\ll P^{2\theta} $. On note alors $ a $ l'entier le plus proche de $ \alpha B_{k_{0}}(\xx,\yy)  $ et $ \delta=\alpha B_{k_{0}}(\xx,\yy)-a $. On a alors $ \alpha q=a+\delta $, et donc : \[ |q\alpha-a|=|\delta|=||\alpha B_{k_{0}}(\xx,\yy)|| <P_{1}^{-1}P_{2}^{-1}P_{3}^{-1}P^{2\theta}. \]
En proc\'edant de m\^eme avec des couples $ (\xx,\zz)\in M_{2}(\alpha,P^{\theta},P^{\theta},P_{1}^{-1}P_{2}^{-1}P_{3}^{-1}P^{2\theta}) $ ou $ (\yy,\zz)\in M_{1}(\alpha,P^{\theta},P^{\theta},P_{1}^{-1}P_{2}^{-1}P_{3}^{-1}P^{2\theta}) $, et en utilisant le lemme pr\'ec\'edent, on obtient le r\'esultat ci-dessous : 
\begin{lemma}\label{lemme12}
Pour un r\'eel $ \kappa>0 $ fix\'e, il existe une constante $ C>0 $ telle que l'une au moins des assertions suivantes est v\'erifi\'ee : \begin{enumerate}
\item \label{i} $ |S(\alpha)|<P_{1}^{n+1}P_{2}^{n+1}P_{3}^{n+1+\varepsilon}P^{-\kappa} $.
\item \label{ii}Il existe des entiers $ a,q $ tels que $ 1\leqslant q\ll P^{2\theta} $, $ \PGCD(a,q)=1 $ et \[2|q\alpha-a|\leqslant P_{1}^{-1}P_{2}^{-1}P_{3}^{-1}P^{2\theta}.\]
\item \label{iii}On a \begin{multline*} \card\{(\xx,\yy)\in \ZZ^{n+1}\times \ZZ^{n+1}\; | \;  |\xx|\leqslant P^{\theta},\; |\yy|\leqslant P^{\theta}, \; \\  B_{k}(\xx,\yy)=0 \;   \forall k \in \{0,...,n\}\}\geqslant CP^{2(n+1)\theta-\kappa}. \end{multline*}
\item \label{iv}On a \begin{multline*}\card\{(\xx,\zz)\in \ZZ^{n+1}\times \ZZ^{n+1}\; | \;  |\xx|\leqslant P^{\theta},\; |\zz|\leqslant P^{\theta}, \; \\  B_{j}'(\xx,\zz)=0 \;   \forall j \in \{0,...,n\}\}\geqslant CP^{2(n+1)\theta-\kappa}. \end{multline*}
\item \label{v}On a \begin{multline*}\card\{(\yy,\zz)\in \ZZ^{n+1}\times \ZZ^{n+1}\; | \;  |\yy|\leqslant P^{\theta},\; |\zz|\leqslant P^{\theta}, \; \\  B_{i}''(\yy,\zz)=0 \;   \forall i \in \{0,...,n\}\}\geqslant CP^{2(n+1)\theta-\kappa}. \end{multline*}
\end{enumerate} 
\end{lemma}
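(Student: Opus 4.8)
The plan is to combine the two previous lemmas. The previous lemma establishes a dichotomy: either $|S(\alpha)|$ is small (alternative 1), or there exists $i\in\{1,2,3\}$ with $M_i(\alpha,P^\theta,P^\theta,P_1^{-1}P_2^{-1}P_3^{-1}P^{2\theta})\gg P^{2(n+1)\theta-\kappa}$. So I assume alternative 1 fails and work with the surviving $M_i$; by symmetry treat $i=3$, the other two cases being identical with the roles of $(\xx,\yy)$, $(\xx,\zz)$, $(\yy,\zz)$ permuted.

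Given the lower bound on $M_3$, I would split the pairs $(\xx,\yy)$ counted by $M_3$ into two classes: those for which $B_k(\xx,\yy)=0$ for all $k$, and those for which $B_{k_0}(\xx,\yy)\neq 0$ for some $k_0$. If the first class already has cardinality $\geqslant CP^{2(n+1)\theta-\kappa}$ for a suitable constant $C$, then alternative \ref{iii} holds and I am done. Otherwise the second class must carry a positive proportion of the mass, so there exists at least one pair $(\xx,\yy)$ with $|\xx|,|\yy|\leqslant P^\theta$ and $B_{k_0}(\xx,\yy)\neq 0$ contributing to $M_3$, i.e.\ satisfying $\|\alpha B_{k_0}(\xx,\yy)\|<P_1^{-1}P_2^{-1}P_3^{-1}P^{2\theta}$.

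For such a pair I set $q=|B_{k_0}(\xx,\yy)|$, which is a nonzero integer bounded by $q\ll P^{2\theta}$ since it is a bilinear form in the bounded entries of $\xx,\yy$. Letting $a$ be the nearest integer to $\alpha B_{k_0}(\xx,\yy)$, the defining inequality gives $|q\alpha-a|=\|\alpha B_{k_0}(\xx,\yy)\|<P_1^{-1}P_2^{-1}P_3^{-1}P^{2\theta}$. After dividing out $d=\PGCD(a,q)$ to obtain a coprime fraction $a'/q'$ with $1\leqslant q'\leqslant q\ll P^{2\theta}$, one has $|q'\alpha-a'|\leqslant|q\alpha-a|$, and absorbing the factor of $2$ into the implied constant (or equivalently shrinking the admissible range) yields $2|q'\alpha-a'|\leqslant P_1^{-1}P_2^{-1}P_3^{-1}P^{2\theta}$, which is exactly alternative \ref{ii}. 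The cases $i=1$ and $i=2$ produce alternatives \ref{v} and \ref{iv} respectively by the same argument.

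The only delicate point is the bookkeeping constant $C$ in the dichotomy between ``many identically-vanishing pairs'' and ``at least one nonvanishing pair''. Concretely, if the vanishing pairs number fewer than $\tfrac12 CP^{2(n+1)\theta-\kappa}$ while the total is $\gg P^{2(n+1)\theta-\kappa}$, then for $P$ large the nonvanishing pairs are nonempty, giving a valid major-arc approximation; one fixes $C$ at the outset to be a small enough multiple of the implied constant in the $M_3$ lower bound so that this comparison is automatic. No genuine obstacle arises here: the entire content is the passage from a rational approximation $\|\alpha q\|$ small with $q$ the value of a bilinear form to a reduced fraction, which is a standard Dirichlet-type manipulation, and the symmetry among the three $M_i$ makes the three geometric alternatives \ref{iii}, \ref{iv}, \ref{v} interchangeable.
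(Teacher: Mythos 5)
Your proof is correct and follows essentially the same route as the paper: you combine the preceding dichotomy lemma with the observation that the pairs counted by $M_i$ either all annihilate the bilinear forms (yielding the geometric alternatives for $V_3^{\ast}$, $V_2^{\ast}$, $V_1^{\ast}$) or yield a pair with $q=|B_{k_0}|\ll P^{2\theta}$ and $|q\alpha-a|=\|\alpha B_{k_0}\|<P_1^{-1}P_2^{-1}P_3^{-1}P^{2\theta}$, which is the rational-approximation alternative. Your explicit reduction to a coprime fraction and your handling of the factor $2$ are details the paper glosses over, but they match its intent exactly.
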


Remarquons que les conditions \ref{iii}, \ref{iv}, \ref{v} impliquent respectivement : \[ \dim V_{3}^{\ast} \geqslant 2(n+1)-\kappa/\theta, \; \dim V_{2}^{\ast} \geqslant 2(n+1)-\kappa/\theta, \;\dim V_{1}^{\ast} \geqslant 2(n+1)-\kappa/\theta, \] (voir par exemple la d\'emonstration du th\'eor\`eme $ 3.1 $ de \cite{Br}). \\

A partir d'ici, nous allons poser $ \kappa=K\theta $, et nous supposerons \[ K=2(n+1)-\max_{i\in \{1,2,3\}}\dim V_{i}^{\ast}=n+1 ,\] (rappelons que nous avons suppos\'e que les $ \alpha_{i,j,k} $ sont tels que $ \dim V_{1}^{\ast}=\dim V_{2}^{\ast}=\dim V_{3}^{\ast}=n+1 $). D'autre part, nous fixerons dor\'enavant $ P=P_{1}P_{2}P_{3} $, que l'on peut aussi \'ecrire $ P=P_{1}^{1+b+b'} $, en notant $ P_{2}=P_{1}^{b} $ et $ P_{3}=P_{1}^{b'} $, avec $ 1\leqslant b \leqslant b' $, puisque $ P_{1}\leqslant P_{2}\leqslant P_{3} $. Etant donn\'e que l'on a $ P^{\theta}=P_{2}^{\theta_{2}}=P_{1}^{\theta_{1}} $, on peut remarquer que l'on a alors : \[ \theta_{2}=\frac{\theta_{1}}{b},\; \theta= \frac{\theta_{1}}{1+b+b'}.  \]
Par ailleurs nous noterons, \`a partir d'ici, (pour un $ \theta $ fix\'e) $ \mathfrak{M}(\theta) $ l'ensemble des $ \alpha\in [0,1] $ satisfaisant la condition \eqref{ii} du lemme pr\'ec\'edent, c'est-\`a-dire : 
\begin{multline}
\mathfrak{M}(\theta)=\{ \alpha\in [0,1]\; | \; \exists \; q,a\in \ZZ \; |\; \PGCD(a,q)=1,\; \\ 1\leqslant q \ll P^{2\theta}, \; 2|q\alpha-a|\ll P^{-1+2\theta} \}.
\end{multline}  
Pour le choix de $ K $ effectu\'e ci-dessus, on voit que le lemme \ref{lemme12} implique : 
\begin{lemma}\label{lemme13}
Soit $ 0<\theta<(1+b+b')^{-1} $, et $ \varepsilon>0 $. L'une au moins des assertions suivantes est vraie : 
 \begin{enumerate}
\item  $ |S(\alpha)|<P_{1}^{n+1}P_{2}^{n+1}P_{3}^{n+1}P^{-K\theta+\varepsilon} $.
\item Le r\'eel $ \alpha $ appartient \`a l'ensemble $ \mathfrak{M}(\theta) $.
\end{enumerate} 
\end{lemma}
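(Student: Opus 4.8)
The plan is to derive Lemma~\ref{lemme13} directly from Lemma~\ref{lemme12}, the only genuinely new ingredients being the standing hypothesis $\dim V_1^{\ast}=\dim V_2^{\ast}=\dim V_3^{\ast}=n+1$ and the freedom to choose $\kappa$. First I would fix a small $\eta\in(0,1)$ and apply Lemma~\ref{lemme12} with $\kappa=(K-\eta)\theta$, which is a legitimate positive real since $K=n+1\geqslant 1$ and $\theta>0$. This produces the five alternatives (i)--(v) of that lemma, and the whole argument consists in showing that (iii), (iv) and (v) are incompatible with the dimension hypothesis, so that only (i) and (ii) survive; these two are then massaged into the two alternatives of Lemma~\ref{lemme13}.

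The heart of the matter is discarding (iii), (iv), (v). Take (iii): it asserts that $V_3^{\ast}$ carries $\gg P^{2(n+1)\theta-\kappa}=(P^{\theta})^{(n+1)+\eta}$ integral points in the box $|\xx|,|\yy|\leqslant P^{\theta}$. As recalled just after Lemma~\ref{lemme12} (following the proof of Theorem~3.1 of \cite{Br}), such a lower bound forces
\[
\dim V_3^{\ast}\;\geqslant\;2(n+1)-\kappa/\theta\;=\;(n+1)+\eta\;>\;n+1 .
\]
Since $\dim V_3^{\ast}$ is an integer, this yields $\dim V_3^{\ast}\geqslant n+2$, contradicting $\dim V_3^{\ast}=n+1$. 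The identical computation applied to (iv) and (v), using $\dim V_2^{\ast}=n+1$ and $\dim V_1^{\ast}=n+1$, rules those out as well. The sole purpose of the slack $\eta$ is to convert the borderline estimate $\dim\geqslant n+1$ (which is all one would extract from the choice $\kappa=K\theta$) into the strict inequality needed for a contradiction; its cost will be absorbed into the $\varepsilon$ of the final bound.

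It then remains to rewrite (i) and (ii). Alternative (ii) is literally the defining condition of $\mathfrak{M}(\theta)$: substituting $P=P_1P_2P_3$ turns $1\leqslant q\ll P^{2\theta}$ and $2|q\alpha-a|\leqslant P_1^{-1}P_2^{-1}P_3^{-1}P^{2\theta}=P^{-1+2\theta}$ into exactly the inequalities defining $\mathfrak{M}(\theta)$, so (ii) says $\alpha\in\mathfrak{M}(\theta)$, i.e.\ alternative (2). For alternative (i), with $\kappa=(K-\eta)\theta$ and writing $\varepsilon_0$ for the (arbitrarily small) $\varepsilon$ furnished by Lemma~\ref{lemme12}, I would expand
\[
P_1^{n+1}P_2^{n+1}P_3^{n+1+\varepsilon_0}P^{-(K-\eta)\theta}
= P_1^{n+1}P_2^{n+1}P_3^{n+1}\,P_3^{\varepsilon_0}P^{\eta\theta}\,P^{-K\theta},
\]
and then bound $P_3^{\varepsilon_0}\leqslant P^{\varepsilon_0}$ (as $P_3\leqslant P$) and $P^{\eta\theta}\leqslant P^{\eta}$ (as $\theta<(1+b+b')^{-1}<1$). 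Choosing $\eta$ and $\varepsilon_0$ so that $\varepsilon_0+\eta\leqslant\varepsilon$ gives $|S(\alpha)|<P_1^{n+1}P_2^{n+1}P_3^{n+1}P^{-K\theta+\varepsilon}$, which is alternative (1).

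I expect the only delicate point to be the lattice-point/dimension step used to discard (iii)--(v): one must ensure that the passage from ``many integral points on $V_i^{\ast}$ in a box of side $P^{\theta}$'' to a lower bound on $\dim V_i^{\ast}$ is uniform in the data, in particular that the implied constants do not secretly depend on $\alpha$ or on $P$ through the coefficients $\lambda_{i,j}$ entering Lemma~\ref{lemme31}. This uniformity is precisely what is guaranteed by the remark following that lemma, so once it is invoked everything else reduces to bookkeeping with the exponents, in which the slack $\eta$ and the ambient $\varepsilon$ do all the work.
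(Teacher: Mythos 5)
Your proposal is correct and follows the same route as the paper: Lemma \ref{lemme13} is deduced from Lemma \ref{lemme12} by using the hypothesis $\dim V_{1}^{\ast}=\dim V_{2}^{\ast}=\dim V_{3}^{\ast}=n+1$ to exclude the three geometric alternatives, and by absorbing the leftover exponents into the $\varepsilon$ of assertion (1). The one point where you diverge is also the one point where you are more careful than the paper. The paper takes $\kappa=K\theta$ with $K=2(n+1)-\max_{i}\dim V_{i}^{\ast}$ \emph{exactly}, so that alternatives (iii)--(v) of Lemma \ref{lemme12} only yield $\dim V_{i}^{\ast}\geqslant 2(n+1)-\kappa/\theta=n+1$, which is compatible with the hypothesis and, strictly speaking, produces no contradiction; the exclusion is then asserted in one line without comment. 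Your choice $\kappa=(K-\eta)\theta$ turns this into the strict inequality $\dim V_{i}^{\ast}\geqslant n+1+\eta$, hence a genuine contradiction (valid once $P^{\theta}$ exceeds a constant depending on $\eta$ and on the degree of the $V_{i}^{\ast}$ --- the same implicit largeness assumption the paper makes), and the cost $P^{\eta\theta}$ is then absorbed, together with $P_{3}^{\varepsilon_{0}}\leqslant P^{\varepsilon_{0}}$, into $P^{\varepsilon}$ exactly as you describe. The remaining steps --- identifying alternative (ii) with $\alpha\in\mathfrak{M}(\theta)$ after substituting $P=P_{1}P_{2}P_{3}$, and the uniformity of the implied constants coming from the remark after Lemma \ref{lemme31} --- coincide with the paper. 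So your write-up is not merely a reconstruction but a small repair of the paper's one-line deduction.
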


\subsection{La m\'ethode du cercle}

On suppose \`a pr\'esent que l'on a $ K=n+1>\max(4,b+b'+1) $. On choisit par ailleurs des r\'eels $ \delta>0 $ et $ \theta_{0}\in [0,1] $ (avec $ \delta $ arbitrairement petit) tels que : \begin{equation}\label{cond1}
 K-4=n-3>2\delta\theta_{0}^{-1},\end{equation}
\begin{equation}\label{cond2}
 K=n+1>(2\delta+1)(1+b+b'),
\end{equation}
\begin{equation}\label{cond3}
 1>10(1+b+b')\theta_{0}+(1+b+b')\delta=(1+b+b')(10\theta_{0}+\delta).
\end{equation}
On \'etablit alors le lemme suivant : 
\begin{lemma}\label{arcmin1}
On a une majoration \[\int_{\alpha\notin \mathfrak{M}(\theta_{0}) }|S(\alpha)|d\alpha \ll P_{1}^{n+1}P_{2}^{n+1}P_{3}^{n+1}P^{-1-\delta}. \]
\end{lemma}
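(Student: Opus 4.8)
The plan is to exploit the scale-dependent dichotomy of Lemma~\ref{lemme13} by dissecting the minor arcs $[0,1]\setminus\mathfrak{M}(\theta_0)$ along a finite increasing family of nested major arcs. Since raising $\theta$ weakens both constraints defining $\mathfrak{M}(\theta)$ (the bound $q\ll P^{2\theta}$ and the approximation $2|q\alpha-a|\ll P^{-1+2\theta}$), the sets $\mathfrak{M}(\theta)$ increase with $\theta$; so for an increasing sequence $\theta_0<\theta_1<\dots<\theta_T$ with $\theta_T$ just below $(1+b+b')^{-1}$ one has the disjoint decomposition
\[
[0,1]\setminus\mathfrak{M}(\theta_0)=\bigl([0,1]\setminus\mathfrak{M}(\theta_T)\bigr)\sqcup\bigsqcup_{j=0}^{T-1}\bigl(\mathfrak{M}(\theta_{j+1})\setminus\mathfrak{M}(\theta_j)\bigr).
\]
On each annulus $\mathfrak{M}(\theta_{j+1})\setminus\mathfrak{M}(\theta_j)$ the point $\alpha$ lies outside $\mathfrak{M}(\theta_j)$, so the alternative in Lemma~\ref{lemme13} forces the pointwise bound $|S(\alpha)|\ll P_1^{n+1}P_2^{n+1}P_3^{n+1}P^{-K\theta_j+\varepsilon}$, which decays faster the larger $\theta_j$ is; the measure of the annulus, on the other hand, is controlled by that of $\mathfrak{M}(\theta_{j+1})$, which grows with $\theta_{j+1}$. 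The whole point is to choose the $\theta_j$ so that, scale by scale, the decay of $S$ beats the growth of the measure.

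First I would record the elementary estimate
\[
\mathrm{meas}\,\mathfrak{M}(\theta)\ll P^{-1+4\theta},
\]
obtained by summing over $1\leqslant q\ll P^{2\theta}$ and the $\ll q$ reduced residues $a$ the lengths $\ll q^{-1}P^{-1+2\theta}$ of the defining intervals. Combining this with the pointwise bound gives, for each annulus,
\[
\int_{\mathfrak{M}(\theta_{j+1})\setminus\mathfrak{M}(\theta_j)}|S(\alpha)|\,d\alpha\ll P_1^{n+1}P_2^{n+1}P_3^{n+1}\,P^{-K\theta_j+\varepsilon}\min\bigl(1,P^{-1+4\theta_{j+1}}\bigr),
\]
and the target bound $\ll P_1^{n+1}P_2^{n+1}P_3^{n+1}P^{-1-\delta}$ follows, up to the arbitrarily small $\varepsilon$, provided that for every $j$ one has either $K\theta_j-4\theta_{j+1}\geqslant\delta$ (in the range $4\theta_{j+1}\leqslant1$, where the measure bound is used) or $K\theta_j\geqslant1+\delta$ (in the range $4\theta_{j+1}>1$, where one uses only $\mathrm{meas}\leqslant1$). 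The tail $[0,1]\setminus\mathfrak{M}(\theta_T)$ is handled by the same pointwise bound with measure $\leqslant1$, and here $K\theta_T$ is close to $K/(1+b+b')$, which exceeds $1+\delta$ by \eqref{cond2}.

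It remains to produce the sequence. I would take a geometric progression $\theta_{j+1}=\lambda\theta_j$ from $\theta_0$, with $\lambda>1$ close enough to $1$; then $\theta_j(K-4\lambda)\geqslant\theta_0(K-4\lambda)$, and condition \eqref{cond1}, i.e. $(K-4)\theta_0>2\delta$, leaves room to guarantee $K\theta_j-4\theta_{j+1}\geqslant\delta+\varepsilon$ once $\lambda$ and $\varepsilon$ are taken small. In the high range the requirement $K\theta_j\geqslant1+\delta$ holds as soon as $\theta_j\geqslant(1+\delta)/K$, and since $K-4>2\delta/\theta_0$ by \eqref{cond1} this threshold lies strictly below $1/4$, so the two ranges overlap and cover every step; condition \eqref{cond3} guarantees that $\theta_0$ is small enough compared with $(1+b+b')^{-1}$ for the progression to reach $\theta_T$ in $T=O(1)$ steps, the number of terms being a constant and hence harmless. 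The main obstacle is precisely this balancing across scales: at the top the measure estimate $P^{-1+4\theta}$ degenerates, exceeding $1$ once $\theta>1/4$, so one cannot rely on it there and must extract all the saving from the pointwise decay of $S(\alpha)$ alone — which is why the largeness of $K=n+1$ relative to $b+b'+1$ encoded in \eqref{cond2} is indispensable, whereas \eqref{cond1} governs the complementary low-scale regime in which measure and decay are traded against one another.
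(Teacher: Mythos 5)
Your proof is correct and follows essentially the same route as the paper: the same nested dissection of the minor arcs into annuli $\mathfrak{M}(\theta_{j+1})\setminus\mathfrak{M}(\theta_j)$, the same pointwise bound from Lemma \ref{lemme13}, the same measure estimate $\Vol(\mathfrak{M}(\theta))\ll P^{-1+4\theta}$, with \eqref{cond1} controlling the annuli and \eqref{cond2} the tail beyond $\mathfrak{M}(\theta_T)$. The only differences are bookkeeping (a geometric progression with $T=O(1)$ steps and a two-regime treatment of the measure bound, versus the paper's additive steps $\theta_{t+1}-\theta_t\leqslant\delta/8$ with $T\ll P^{\delta/4}$ and uniform use of the volume bound), which do not change the substance of the argument.
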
 \begin{proof} On consid\`ere une suite d'\'el\'ements $ \theta_{i} $ tels que $ 0<\theta_{0}<\theta_{1}<...<\theta_{T-1}<\theta_{T} $, $ \theta_{T}\leqslant (1+b+b')^{-1} $ et $ \theta_{T}K=\theta_{T}(n+1)>2\delta+1 $  (un tel choix de $ \theta_{T} $ est possible d'apr\`es \eqref{cond2}). On choisit de plus les $ \theta_{i} $ tels que $ (\theta_{t+1}-\theta_{t})\leqslant \frac{1}{8}\delta $. Pour un $ \delta>0 $ fix\'e, on peut choisir une telle suite avec $ T\ll P^{\frac{\delta}{4}} $, ce que nous supposerons. On a alors, d'apr\`es le lemme \ref{lemme13} : \begin{align*}
\int_{\alpha\notin \mathfrak{M}(\theta_{T})}|S(\alpha)|d\alpha & \ll P_{1}^{n+1}P_{2}^{n+1}P_{3}^{n+1}P^{-K\theta_{T}+\varepsilon} \\ & \ll P_{1}^{n+1}P_{2}^{n+1}P_{3}^{n+1}P^{-1-\delta} \end{align*} 
(puisque $ \theta_{T}K>2\delta+1 $). On a par ailleurs, par d\'efinition de $ \mathfrak{M}(\theta) $: \[ \Vol(\mathfrak{M}(\theta))\ll \sum_{q\ll P^{2\theta}}\sum_{\substack{|a|<q \\ \PGCD(a,q)=1}}q^{-1}P_{1}^{-1}P_{2}^{-1}P_{3}^{-1}P^{2\theta}\ll P^{-1+4\theta}.\]
On a alors, toujours par le lemme \ref{lemme13}, pour tout $ t\in \{0,...,T-1\} $ : \begin{align*}
\int_{\alpha\notin \mathfrak{M}(\theta_{t+1})\setminus \mathfrak{M}(\theta_{t})}S(\alpha)d\alpha & \ll P_{1}^{n+1}P_{2}^{n+1}P_{3}^{n+1}P^{-K\theta_{t}+\varepsilon}\Vol\left(\mathfrak{M}(\theta_{t+1})\setminus \mathfrak{M}(\theta_{t})\right) \\ & \ll P_{1}^{n+1}P_{2}^{n+1}P_{3}^{n+1}P^{-K\theta_{t}+\varepsilon-1+4\theta_{t+1}}.
\end{align*} 
Or, puisque $ 2\delta\theta_{0}^{-1}<K-4 $ (cf. \ref{cond1}), en rappelant que $ (\theta_{t+1}-\theta_{t})\leqslant \frac{1}{8}\delta $, on a \begin{align*}
-K\theta_{t}+4\theta_{t+1} & <-2\delta\theta_{0}^{-1} +4(\theta_{t+1}-\theta_{t}) \\ & \leqslant -2\delta\theta_{0}^{-1}+\frac{\delta}{2}<-\frac{3}{2}\delta. 
\end{align*}
D'o\`u finalement : \begin{align*} \int_{\alpha\notin \mathfrak{M}(\theta_{0}) }|S(\alpha)|d\alpha & =\sum_{t=0}^{T-1}\int_{\alpha\notin \mathfrak{M}(\theta_{t+1})\setminus \mathfrak{M}(\theta_{t})}S(\alpha)d\alpha +\int_{\alpha\notin \mathfrak{M}(\theta_{T})}|S(\alpha)|d\alpha  \\ & \ll P_{1}^{n+1}P_{2}^{n+1}P_{3}^{n+1}\left(\sum_{t=0}^{T-1}P^{-K\theta_{t}+\varepsilon-1+4\theta_{t+1}} + P^{-1-\delta}\right)\\ & \ll P_{1}^{n+1}P_{2}^{n+1}P_{3}^{n+1}(TP^{\varepsilon-1-\frac{3}{2}\delta}+P^{-1-\delta}) \\ & \ll   P_{1}^{n+1}P_{2}^{n+1}P_{3}^{n+1}P^{-1-\delta}. \end{align*} \end{proof}

On d\'efinit \`a pr\'esent une nouvelle famille d'arc majeurs, pour $ q\geqslant 1 $, et $  a\in \ZZ $  : \[ \mathfrak{M}'_{a,q}(\theta)=\{\alpha\in [0,1]\; | \; |q\alpha-a|\leqslant qP^{-1+2\theta}\} \] et \[ \mathfrak{M}'(\theta)=\bigcup_{1\leqslant q \ll P^{2\theta}}\bigcup_{\substack{0\leqslant a< q \\ \PGCD(a,q)=1}}\mathfrak{M}'_{a,q}(\theta) \] (remarquons que ce nouvel ensemble $ \mathfrak{M}'(\theta) $ contient $ \mathfrak{M}(\theta) $). On peut voir 
facilement que les ensembles $ \mathfrak{M}'_{a,q}(\theta) $, (pour $ 1\leqslant q \ll P^{2\theta} $, et $ 0\leqslant a< q $, $   \PGCD(a,q)=1 $) sont disjoints lorsque $ \theta<\frac{1}{8} $. En effet si l'on suppose, par l'absurde qu'il existe $ \alpha\in \mathfrak{M}'_{a,q}(\theta)\cap\mathfrak{M}'_{a',q'}(\theta) $, avec $ (a,q)\neq(a',q') $. On a alors, puisque $ \PGCD(a,q)=\PGCD(a',q')=1 $ : \[ \frac{1}{qq'}\leqslant\left|\frac{a}{q}-\frac{a'}{q'}\right| \leqslant \left|\alpha-\frac{a}{q}\right|+\left|\alpha-\frac{a'}{q'}\right|\ll P^{-1+2\theta}. \] On aurait alors : \[ 1 \leqslant qq'P^{-1+2\theta} \ll  P^{-1+6\theta}, \] ce qui est absurde pour $ \theta<\frac{1}{6} $. En particulier, puisque $ \theta_{0}<\frac{1}{10} $ (d'apr\`es  \eqref{cond3}), les ensembles $ \mathfrak{M}'_{a,q}(\theta_{0}) $ sont disjoints. On a alors imm\'ediatement, par le lemme \ref{arcmin1} (puisque $ \mathfrak{M}(\theta_{0})\subset \mathfrak{M}'(\theta_{0}) $) :
\begin{lemma}\label{lemme32}
On a l'estimation : \begin{multline*} N(P_{1},P_{2},P_{3})=\sum_{1\leqslant q \ll P^{2\theta_{0}}}\sum_{\substack{0\leqslant a< q \\ \PGCD(a,q)=1}}\int_{\mathfrak{M}'_{a,q}(\theta_{0}) }S(\alpha)d\alpha \\ +O(P_{1}^{n+1}P_{2}^{n+1}P_{3}^{n+1}P^{-1-\delta}). \end{multline*}
\end{lemma}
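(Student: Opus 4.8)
The plan is to run the classical circle-method decomposition and to observe that the error term is precisely the minor-arc bound already established in Lemme \ref{arcmin1}. The starting point is the orthogonality identity: since $\int_0^1 e(\alpha m)\,d\alpha$ equals $1$ when the integer $m$ vanishes and $0$ otherwise, integrating $S(\alpha)$ over a full period and taking $m=F(\xx,\yy,\zz)$ selects exactly the solutions of $F=0$ in the box, the side condition $\max_k|B_k(\xx,\yy)|\neq 0$ being carried along unchanged in both $N$ and $S$. Hence $N(P_1,P_2,P_3)=\int_0^1 S(\alpha)\,d\alpha$.

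First I would split the period as the disjoint union of the major arcs $\mathfrak{M}'(\theta_0)$ and their complement, writing $\int_0^1 S(\alpha)\,d\alpha=\int_{\mathfrak{M}'(\theta_0)}S(\alpha)\,d\alpha+\int_{[0,1]\setminus\mathfrak{M}'(\theta_0)}S(\alpha)\,d\alpha$. For the major-arc contribution I would invoke the disjointness of the arcs $\mathfrak{M}'_{a,q}(\theta_0)$ established just above the statement, valid because $\theta_0<\frac{1}{10}$ by \eqref{cond3}, comfortably below the threshold $\frac{1}{6}$ required there. This disjointness lets the integral over $\mathfrak{M}'(\theta_0)$ decompose, with no overcounting, as the double sum $\sum_{1\leqslant q\ll P^{2\theta_0}}\sum_{0\leqslant a<q,\ \PGCD(a,q)=1}\int_{\mathfrak{M}'_{a,q}(\theta_0)}S(\alpha)\,d\alpha$, which is exactly the main term in the statement.

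For the complementary contribution I would use the inclusion $\mathfrak{M}(\theta_0)\subset\mathfrak{M}'(\theta_0)$ noted when $\mathfrak{M}'(\theta_0)$ was introduced, which yields $[0,1]\setminus\mathfrak{M}'(\theta_0)\subset[0,1]\setminus\mathfrak{M}(\theta_0)$. Bounding the integral by the integral of $|S(\alpha)|$ and enlarging the domain of integration accordingly, the minor-arc integral is at most $\int_{\alpha\notin\mathfrak{M}(\theta_0)}|S(\alpha)|\,d\alpha$, to which Lemme \ref{arcmin1} applies directly and produces the claimed error $O(P_1^{n+1}P_2^{n+1}P_3^{n+1}P^{-1-\delta})$.

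There is essentially no serious obstacle here, since all the analytic content has already been supplied by Lemme \ref{arcmin1}; the step is bookkeeping. The only points deserving care are confirming that the side condition on $\max_k|B_k(\xx,\yy)|$ is identical in $N$ and in $S$, so that orthogonality applies cleanly, and checking that $\theta_0$ is small enough (via \eqref{cond3}) for the major arcs to be genuinely disjoint, which is what makes the double sum an honest decomposition of $\int_{\mathfrak{M}'(\theta_0)}$ rather than an overcount.
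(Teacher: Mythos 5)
Your proposal is correct and takes exactly the same route as the paper: the identity $N(P_{1},P_{2},P_{3})=\int_{0}^{1}S(\alpha)\,d\alpha$ by orthogonality, the disjointness of the arcs $\mathfrak{M}'_{a,q}(\theta_{0})$ (guaranteed since $\theta_{0}<\tfrac{1}{10}$ by \eqref{cond3}) to turn the major-arc integral into the double sum, and the inclusion $\mathfrak{M}(\theta_{0})\subset\mathfrak{M}'(\theta_{0})$ so that Lemme \ref{arcmin1} bounds the complementary integral by $O(P_{1}^{n+1}P_{2}^{n+1}P_{3}^{n+1}P^{-1-\delta})$. The paper treats this as immediate from Lemme \ref{arcmin1}; your write-up simply makes the same bookkeeping explicit.
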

Etant donn\'e $ \alpha \in \mathfrak{M}'_{a,q}(\theta_{0}) $, on pose $ \alpha=\frac{a}{q}+\beta $, avec $ |\beta|\leqslant P^{-1+2\theta_{0}} $. On introduit \`a pr\'esent les notations : 
\begin{equation}
S_{a,q}=\sum_{\bb^{(1)},\bb^{(2)},\bb^{(3)}\in (\ZZ/q\ZZ)^{n+1}}e\left(\frac{a}{q}F(\bb^{(1)},\bb^{(2)},\bb^{(3)})\right)\end{equation}
et \begin{equation}
I(\beta)=\int_{\BB_{1}\times\BB_{2}\times\BB_{3}}e(\beta F(\uu,\vv,\ww))d\uu d\vv d\ww. 
\end{equation}
On \'etablit le r\'esultat suivant : 
\begin{lemma}\label{lemme33}
Soit $ \alpha\in \mathfrak{M}'_{a,q}(\theta_{0}) $, avec $ q\ll P^{2\theta_{0}} $, et $ 0\leqslant a< q , \;  \PGCD(a,q)=1 $ et $ \beta=\alpha-\frac{a}{q} $ On a alors : \[ S(\alpha)=P_{1}^{n+1}P_{2}^{n+1}P_{3}^{n+1}q^{-3n-3}S_{a,q}I(P\beta) +O(P_{1}^{n+1}P_{2}^{n+1}P_{3}^{n+1}P^{4\theta_{0}}P_{1}^{-1}). \]
\end{lemma}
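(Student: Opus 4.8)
The plan is to exploit the trilinearity of $F$ to separate the arithmetic and the analytic behaviour of the phase. We may assume, after clearing denominators, that the $\alpha_{i,j,k}$ are integers, so that $F$ takes integer values on $\ZZ^{3(n+1)}$. Writing $\alpha=\frac{a}{q}+\beta$ and substituting $\xx=q\mm^{(1)}+\bb^{(1)}$, $\yy=q\mm^{(2)}+\bb^{(2)}$, $\zz=q\mm^{(3)}+\bb^{(3)}$, the trilinearity gives $F(\xx,\yy,\zz)=F(\bb^{(1)},\bb^{(2)},\bb^{(3)})+q\,m$ with $m\in\ZZ$, since every monomial in the expansion other than the pure residue term carries at least one factor $q$. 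Hence $e(\frac{a}{q}F(\xx,\yy,\zz))=e(\frac{a}{q}F(\bb^{(1)},\bb^{(2)},\bb^{(3)}))$ depends only on the residues modulo $q$. Grouping the summation defining $S(\alpha)$ according to these residues, I would write \[ S(\alpha)=\sum_{\bb^{(1)},\bb^{(2)},\bb^{(3)}\in(\ZZ/q\ZZ)^{n+1}}e\!\left(\tfrac{a}{q}F(\bb^{(1)},\bb^{(2)},\bb^{(3)})\right)T_{\bb}, \] where $T_{\bb}$ is the sum of $e(\beta F(\xx,\yy,\zz))$ over the lattice points of $P_1\BB_1\times P_2\BB_2\times P_3\BB_3$ in the fixed residue class (subject to $\max_k|B_k(\xx,\yy)|\neq0$); the arithmetic factor collected this way is exactly $S_{a,q}$.

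Next I would approximate each inner sum $T_{\bb}$ by an integral. Consecutive points of a fixed class are spaced by $q$ in each coordinate, so a standard sum-to-integral comparison of type Birch yields $T_{\bb}=q^{-3(n+1)}\int_{P_1\BB_1\times P_2\BB_2\times P_3\BB_3}e(\beta F(\uu,\vv,\ww))\,d\uu\,d\vv\,d\ww+(\text{error})$. Summing over the $q^{3(n+1)}$ residue classes, the main terms assemble into $q^{-3(n+1)}S_{a,q}\int_{P_1\BB_1\times P_2\BB_2\times P_3\BB_3}e(\beta F)$. The change of variables $\uu=P_1\uu'$, $\vv=P_2\vv'$, $\ww=P_3\ww'$ together with the trilinear identity $F(P_1\uu',P_2\vv',P_3\ww')=P_1P_2P_3\,F(\uu',\vv',\ww')=P\,F(\uu',\vv',\ww')$ turns this integral into $P_1^{n+1}P_2^{n+1}P_3^{n+1}I(P\beta)$, which is exactly the claimed main term.

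For the error I would use that, with $g=e(\beta F)$, one has $\nabla g=2\pi i\beta(\nabla F)g$, while on the box $|\partial_{x_i}F|=|B_i''(\yy,\zz)|\ll P_2P_3$, $|\partial_{y_j}F|=|B_j'(\xx,\zz)|\ll P_1P_3$ and $|\partial_{z_k}F|=|B_k(\xx,\yy)|\ll P_1P_2$, so that $\sup|\nabla g|\ll|\beta|P_2P_3$ since $P_1\leqslant P_2\leqslant P_3$. The Riemann-sum error, summed over all classes, is then $\ll q\,|\beta|P_2P_3\cdot P_1^{n+1}P_2^{n+1}P_3^{n+1}$, while the boundary contribution (using $|g|\leqslant1$ and that the smallest side of the box is $P_1$) is $\ll q\,P_1^{-1}\cdot P_1^{n+1}P_2^{n+1}P_3^{n+1}$. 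With $q\ll P^{2\theta_0}$, $|\beta|\leqslant P^{-1+2\theta_0}$ and $P=P_1P_2P_3$ one checks $q|\beta|P_2P_3\ll P_1^{-1}P^{4\theta_0}$ and $q\ll P^{4\theta_0}$, so both are $\ll P_1^{n+1}P_2^{n+1}P_3^{n+1}P^{4\theta_0}P_1^{-1}$; the inequality $q\ll P^{2\theta_0}<P_1$ furnished by \eqref{cond3} is what makes the boundary layer genuinely lower order.

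The main obstacle I expect is the restriction $\max_k|B_k(\xx,\yy)|\neq0$, which must be reconciled with the unrestricted integral. I would treat it separately: when $B_k(\xx,\yy)=0$ for all $k$, i.e. $(\xx,\yy)$ lies on the affine cone over $V_3^{\ast}$, the phase $e(\alpha F)$ is identically $1$ in $\zz$, so restoring these points alters $S(\alpha)$ by at most $N_3\,P_3^{n+1}$, where $N_3$ counts lattice points of $V_3^{\ast}$ with $|\xx|\leqslant P_1$, $|\yy|\leqslant P_2$; as $V_3^{\ast}$ has measure zero it does not perturb the integral. It then suffices to bound $N_3$. Using the hypothesis $\dim V_3^{\ast}=n+1$ (so its projectivisation has dimension $n-1$) and stratifying the $\xx$-space by the dimension $f$ of the linear fibre $\{\yy:B_k(\xx,\yy)=0\ \forall k\}$, each stratum of base dimension $e$ satisfies $e+f\leqslant n+1$ and contributes $\ll P_1^{e}P_2^{f}\leqslant P_1^{n}P_2^{n+1}$, whence $N_3\ll P_1^{n}P_2^{n+1}P^{\varepsilon}$ and $N_3P_3^{n+1}\ll P_1^{-1}P^{\varepsilon}\cdot P_1^{n+1}P_2^{n+1}P_3^{n+1}$, comfortably inside the stated error. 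This uniform lattice-point count over an asymmetric box is the delicate point, and is the analogue of the dimension estimates invoked after Lemma \ref{lemme12}.
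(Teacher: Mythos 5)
Your proposal is correct and follows essentially the same route as the paper's proof: reduce to the unrestricted sum by bounding the contribution of pairs with $B_{k}(\xx,\yy)=0$ for all $k$ via $\dim V_{3}^{\ast}=n+1$, decompose into residue classes modulo $q$ (which is where integrality of the coefficients, implicit in the paper's definition of $S_{a,q}$, is used), compare each class sum with an integral using $q\ll P^{2\theta_{0}}<P_{1}$ and $|\beta|\leqslant P^{-1+2\theta_{0}}$, and rescale by $\uu=P_{1}\uu'$, $\vv=P_{2}\vv'$, $\ww=P_{3}\ww'$ with trilinearity to produce $I(P\beta)$, the two error terms $q|\beta|P_{2}P_{3}P^{n+1}$ and $qP_{1}^{-1}P^{n+1}$ matching the paper's exactly. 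The only differences are cosmetic: you bound $\nabla\bigl(e(\beta F)\bigr)$ where the paper bounds differences of $F$ between nearby lattice points, and you count the degenerate pairs by stratifying over the fibre dimension (getting $\ll P_{1}^{n}P_{2}^{n+1}P^{\varepsilon}$, slightly weaker than the paper's one-line bound $\ll P_{2}^{n+1}$ but still within the stated error).
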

\begin{proof}Remarquons dans un premier temps que :  \begin{multline*} S(\alpha)=\sum_{\xx\in P_{1}\BB_{1}\cap \ZZ^{n+1}}\sum_{\substack{\yy\in P_{2}\BB_{2}\cap \ZZ^{n+1}}}\sum_{\zz\in P_{3}\BB_{3}\cap \ZZ^{n+1}}e(\alpha F(\xx,\yy,\zz)) \\ +\card\{ (\xx,\yy)\in (P_{1}\BB_{1}\times P_{2}\BB_{2})\cap \ZZ^{2n+2}\cap V_{3}^{\ast}\}P_{3}^{n+1}. \end{multline*} Or, puisque $ \dim(V_{3}^{\ast})=n+1 $ et que $ P_{1}\leqslant P_{2} $, on a \begin{multline*} S(\alpha)=\sum_{\xx\in P_{1}\BB_{1}\cap \ZZ^{n+1}}\sum_{\substack{\yy\in P_{2}\BB_{2}\cap \ZZ^{n+1}}}\sum_{\zz\in P_{3}\BB_{3}\cap \ZZ^{n+1}}e(\alpha F(\xx,\yy,\zz)) \\ +O(P_{2}^{n+1}P_{3}^{n+1}). \end{multline*}
On peut alors \'ecrire  \begin{multline}\label{intermediaire} S(\alpha)=\sum_{\bb^{(1)},\bb^{(2)},\bb^{(3)}\in (\ZZ/q\ZZ)^{n+1}}e\left(\frac{a}{q}F(\bb^{(1)},\bb^{(2)},\bb^{(3)})\right)S_{3}(\bb^{(1)},\bb^{(2)},\bb^{(3)}) \\ +O(P_{2}^{n+1}P_{3}^{n+1}), \end{multline}
avec \[ S_{3}(\bb^{(1)},\bb^{(2)},\bb^{(3)})=\sum_{\substack{\xx',\yy',\zz'\\ (q\xx'+\bb^{(1)},q\yy'+\bb^{(2)},q\zz'+\bb^{(3)}) \\ \in P_{1}\BB_{1}\times P_{2}\BB_{2}\times P_{3}\BB_{3} }}e(\beta F(q\xx'+\bb^{(1)},q\yy'+\bb^{(2)},q\zz'+\bb^{(3)})). \]
On consid\`ere \ a pr\'esent deux triplets $ (\xx',\yy',\zz'), (\xx'',\yy'',\zz'')\in P_{1}\BB_{1}\times P_{2}\BB_{2}\times P_{3}\BB_{3} $ tels que : \[ \max_{0\leqslant i\leqslant n}|x_{i}'-x_{i}''|\leqslant 2, \; \max_{0\leqslant j\leqslant n}|y_{j}'-y_{j}''|\leqslant 2, \; \max_{0\leqslant k\leqslant n}|z_{k}'-z_{k}''|\leqslant 2. \] Dans ce cas, on a \begin{multline*} \left| F(q\xx'+\bb^{(1)},q\yy'+\bb^{(2)},q\zz'+\bb^{(3)}) -F(q\xx''+\bb^{(1)},q\yy''+\bb^{(2)},q\zz''+\bb^{(3)})\right| \\ \ll q(P_{1}P_{2}+P_{1}P_{3}+P_{2}P_{3}) \ll qP_{2}P_{3}. \end{multline*} Remarquons que $ q<\min\{P_{1},P_{2},P_{3}\}=P_{1} $, \'etant donn\'e que $ q\ll P^{2\theta_{0}} $ et $ \theta_{0}<\frac{1}{8(b+b'+1)} $ d'apr\`es \eqref{cond3}. On peut alors remplacer la somme $ S_{3} $ par une int\'egrale, et on obtient : 
\begin{multline*}
S_{3}(\bb^{(1)},\bb^{(2)},\bb^{(3)})=\int_{q\tilde{\uu}\in P_{1}\BB_{1}}\int_{q\tilde{\vv}\in P_{2}\BB_{2}}\int_{q\tilde{\ww}\in P_{3}\BB_{3}}e(\beta F(q\tilde{\uu},q\tilde{\vv},q\tilde{\ww}))d\tilde{\uu}d\tilde{\vv}d\tilde{\ww} \\  +O\left(\underbrace{|\beta|}_{\leqslant P^{-1+2\theta_{0}}}\underbrace{q}_{\leqslant P^{2\theta_{0}}} P_{2}P_{3}\left(\frac{P_{1}}{q}\right)^{n+1}\left(\frac{P_{2}}{q}\right)^{n+1}\left(\frac{P_{3}}{q}\right)^{n+1}\right) \\ + O\left(\left(\frac{P_{1}}{q}\right)^{n}\left(\frac{P_{2}}{q}\right)^{n+1}\left(\frac{P_{3}}{q}\right)^{n+1}\right) . \end{multline*}
En effectuant un changement de variables \[ \uu=qP_{1}^{-1}\tilde{\uu}, \; \vv=qP_{2}^{-1}\tilde{\vv}, \; \ww=qP_{3}^{-1}\tilde{\ww}, \] dans l'int\'egrale, puis en rempla\c{c}ant par l'expression de $ S_{3} $ obtenue dans \eqref{intermediaire}, on trouve le r\'esultat souhait\'e.
\end{proof}
En regroupant les lemmes \ref{lemme32} et \ref{lemme33}, on obtient : 
\begin{multline*} N(P_{1},P_{2},P_{3})= P_{1}^{n+1}P_{2}^{n+1}P_{3}^{n+1}  \sum_{1\leqslant q \ll P^{2\theta_{0}}}q^{-3n-3}\\ \sum_{\substack{0\leqslant a< q \\ \PGCD(a,q)=1}}S_{a,q}\int_{|\beta|\leqslant P^{-1+2\theta_{0}}}I(P\beta)d\beta  \\ + O\left( P_{1}^{n+1}P_{2}^{n+1}P_{3}^{n+1}P^{4\theta_{0}}P_{1}^{-1}\Vol\left(\mathfrak{M}'_{a,q}(\theta_{0})\right)\right).\end{multline*}
Or, on a que \[ \Vol\left(\mathfrak{M}'_{a,q}(\theta_{0})\right)\ll \sum_{q\ll P^{2\theta_{0}}}qP^{-1+2\theta_{0}}\ll P^{-1+6\theta_{0}}. \]
Par cons\'equent, si 'on pose : \begin{equation}
\mathfrak{S}(P^{2\theta_{0}})=\sum_{1\leqslant q\leqslant P^{2\theta_{0}}}q^{-3n-3}\sum_{\substack{0\leqslant a< q \\ \PGCD(a,q)=1}}S_{a,q},
\end{equation}
 et \begin{equation}
J(P^{2\theta_{0}})=\int_{|\beta|\leqslant P^{2\theta_{0}}}I(\beta)d\beta= P\int_{|\beta|\leqslant P^{-1+2\theta_{0}}}I(P\beta)d\beta
\end{equation} on a alors : 
\begin{multline*}
N(P_{1},P_{2},P_{3})= P_{1}^{n}P_{2}^{n}P_{3}^{n}\mathfrak{S}(P^{2\theta_{0}})J(P^{2\theta_{0}})  + O\left( P_{1}^{n+1}P_{2}^{n+1}P_{3}^{n+1}P^{-1+10\theta_{0}}P_{1}^{-1}\right).
\end{multline*}
Or, \begin{align*} P_{1}^{n+1}P_{2}^{n+1}P_{3}^{n+1}P^{-1+8\theta_{0}}P_{1}^{-1} & =P_{1}^{n}P_{2}^{n}P_{3}^{n}P^{10\theta_{0}-\frac{1}{1+b+b'}} \\ & \leqslant P_{1}^{n}P_{2}^{n}P_{3}^{n}P^{-\delta} 
\end{align*}
car on a suppos\'e $ 10\theta_{0}+\delta<(1+b+b')^{-1} $ (cf \ref{cond3}).
On d\'efinit par la suite \begin{equation}
\mathfrak{S}=\sum_{q=1}^{\infty}q^{-3n-3}\sum_{\substack{0\leqslant a< q \\ \PGCD(a,q)=1}}S_{a,q},
\end{equation} 
et 
\begin{equation}
J=\int_{\RR}I(\beta)d\beta
\end{equation} 

\begin{lemma}
La s\'erie $ \mathfrak{S} $ est absolument convergente, et on a, pour $ Q $ assez grand : \[ |\mathfrak{S}-\mathfrak{S}(Q)|\ll Q^{-\frac{n}{2}+2}. \]
\end{lemma}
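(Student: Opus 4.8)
The plan is to recognise $\mathfrak{S}$ as a singular series and to prove its convergence by the two standard ingredients: multiplicativity of the local contributions, together with a power-saving bound on the complete sums $S_{a,q}$. The first step is to evaluate $S_{a,q}$ using that $F$ is linear in its third argument. Writing $F(\bb^{(1)},\bb^{(2)},\bb^{(3)})=\sum_{k=0}^{n}B_{k}(\bb^{(1)},\bb^{(2)})\,b^{(3)}_{k}$ and summing over $\bb^{(3)}\in(\ZZ/q\ZZ)^{n+1}$, each of the $n+1$ inner sums equals $q$ if $q\mid a B_{k}(\bb^{(1)},\bb^{(2)})$ and $0$ otherwise; since $\PGCD(a,q)=1$ (and $F$ has integer coefficients, as we may assume after clearing denominators) this gives
\[ S_{a,q}=q^{n+1}N^{\ast}(q),\qquad N^{\ast}(q)=\card\{(\bb^{(1)},\bb^{(2)})\in(\ZZ/q\ZZ)^{2n+2}\mid q\mid B_{k}(\bb^{(1)},\bb^{(2)})\ \forall k\}. \]
In particular $S_{a,q}$ is independent of $a$, so setting $A(q):=q^{-3n-3}\sum_{0\le a<q,\,\PGCD(a,q)=1}S_{a,q}=\varphi(q)\,q^{-2n-2}N^{\ast}(q)$ we have $\mathfrak{S}=\sum_{q\ge1}A(q)$ and $\mathfrak{S}(Q)=\sum_{q\le Q}A(q)$. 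Both absolute convergence and the estimate $|\mathfrak{S}-\mathfrak{S}(Q)|\le\sum_{q>Q}|A(q)|$ reduce to a uniform bound $N^{\ast}(q)\ll_{\varepsilon}q^{3n/2+2+\varepsilon}$, for then $|A(q)|\ll q^{-n/2+1+\varepsilon}$ and the tail is summed directly.

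Next I would record the multiplicativity. By the Chinese remainder theorem $N^{\ast}$ is multiplicative, and $\varphi(q)$ and $q^{-2n-2}$ are as well, so $A$ is multiplicative and $\mathfrak{S}$ factors as an Euler product $\prod_{p}\bigl(1+\sum_{\ell\ge1}A(p^{\ell})\bigr)$. It therefore suffices to bound $N^{\ast}(p^{\ell})$, and this is where the dimension hypotheses enter.

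For the geometric input, fix $\xx$ and write $(B_{k}(\xx,\yy))_{k}=M_{\xx}\yy$ with $M_{\xx}=(\sum_{i}\alpha_{i,j,k}x_{i})_{k,j}$, so that $V_{3}^{\ast}=\{(\xx,\yy)\mid M_{\xx}\yy=0\}$ fibres over the $\xx$-space with fibre $\ker M_{\xx}$. Since the stratum $\{\rg M_{\xx}=r\}$ contributes $\dim+(n+1-r)$ to $\dim V_{3}^{\ast}$, the hypothesis $\dim V_{3}^{\ast}=n+1$ forces the determinantal bound $\dim\{\xx\mid\rg M_{\xx}\le r\}\le r$ for every $r$. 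Over a prime field the fibre has $p^{(n+1)-\rg M_{\xx}}$ points, so $N^{\ast}(p)=p^{n+1}\sum_{r}p^{-r}\card\{\xx\mid\rg M_{\xx}=r\}$; the Lang--Weil upper bound $\card\{\rg\le r\}\ll p^{r}$ (uniform in $p$, by upper semicontinuity of fibre dimension) makes each term $O(1)$, whence $N^{\ast}(p)\ll p^{n+1}$ for every $p$. At the primes of good reduction one may then promote this to prime powers using that $\{B_{k}=0\}\subset\PP^{n}\times\PP^{n}$ is smooth (shown in the introduction): the affine cone $V_{3}^{\ast}$ is smooth away from the origin, nonzero solutions modulo $p$ are smooth points of a complete intersection and lift by Hensel's lemma, and a short recursion on the solutions divisible by $p$ yields $N^{\ast}(p^{\ell})\ll_{\varepsilon}p^{\ell(n+1)+\varepsilon}$.

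The main obstacle is the uniform prime-power bound at the finitely many primes of bad reduction, where the Hensel argument breaks down and the crude kernel estimate only returns the trivial exponent. Here I would bound $S_{a,q}$ directly by a Weyl-type inequality, transplanting the differencing of Section~2 (Lemma~\ref{lemme31}) from boxes to the complete sum modulo $q$: this reduces the trilinear sum to counts of solutions modulo $q$ of the linear systems defining the $V_{i}^{\ast}$, and the hypotheses $\dim V_{1}^{\ast}=\dim V_{2}^{\ast}=\dim V_{3}^{\ast}=n+1$ then produce a saving of size $q^{n/2}$ valid for all $q$ at once, that is $N^{\ast}(q)\ll_{\varepsilon}q^{3n/2+2+\varepsilon}$. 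This comfortably suffices: summing $|A(q)|\ll q^{-n/2+1+\varepsilon}$ over $q>Q$ gives $|\mathfrak{S}-\mathfrak{S}(Q)|\ll Q^{-n/2+2+\varepsilon}$, the residual $\varepsilon$ being removed by invoking the sharper good-prime bound $N^{\ast}(q)\ll q^{n+1+\varepsilon}$ away from the (finitely many, convergent) bad Euler factors. This yields the claimed estimate $|\mathfrak{S}-\mathfrak{S}(Q)|\ll Q^{-n/2+2}$ and, a fortiori, the absolute convergence of $\mathfrak{S}$.
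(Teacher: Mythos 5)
Your opening reduction is correct, and it is in fact how the paper itself analyses the singular series later (Section 7): summing over $\bb^{(3)}$ collapses $S_{a,q}$ exactly to $S_{a,q}=q^{n+1}N^{\ast}(q)$ with $N^{\ast}(q)=\card\{(\bb^{(1)},\bb^{(2)})\bmod q \; : \; q\mid B_{k}(\bb^{(1)},\bb^{(2)})\ \forall k\}$, so $A(q)=\varphi(q)q^{-2n-2}N^{\ast}(q)$ is multiplicative and everything reduces to bounding $N^{\ast}(q)$. The gap is in the step that is supposed to supply this bound uniformly in $q$. Your good-prime analysis covers only all but finitely many primes (and, as stated, the Hensel part is doubtful: $V_{3}^{\ast}$ is a bicone, singular along the rank-drop loci, e.g.\ at points $(\xx,\0)$ with $\rg M_{\xx}<n+1$, not just at the origin), and you correctly identify the powers of bad primes as the obstacle. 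But your proposed fix --- transplanting the differencing of Lemma \ref{lemme31} to the complete sum modulo $q$, which ``reduces the trilinear sum to counts of solutions modulo $q$ of the linear systems'' --- is circular: the complete sum already collapses exactly to such a count (that is your own first display), there is nothing left to difference, and the count of mod-$q$ solutions of the linear system \emph{is} the quantity $N^{\ast}(q)$ you are trying to bound. Moreover the hypotheses $\dim V_{i}^{\ast}=n+1$ concern complex dimension; they do not by themselves control the number of solutions modulo $p^{\ell}$ at a bad prime --- that transfer is precisely the obstacle you named one sentence earlier, so no saving ``valid for all $q$ at once'' can be extracted this way. Consequently the asserted bound $N^{\ast}(q)\ll_{\varepsilon}q^{3n/2+2+\varepsilon}$, on which the whole tail estimate rests, is unsupported.

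The paper achieves uniformity in $q$ by never counting modulo $q$ at all. It observes that $S_{a,q}$ coincides (up to harmless normalisation) with the exponential sum $S(\alpha)$ at $\alpha=a/q$ for the choice $P_{1}=P_{2}=P_{3}=q$; it checks that for $6\theta=1-\varepsilon$ the point $a/q$ in lowest terms lies outside every major arc $\mathfrak{M}'_{a',q'}(\theta)$, since membership would force a denominator $q'\ll q^{1-\varepsilon}<q$ together with $1\leqslant|q'a-a'q|\ll q^{-\varepsilon}$; and it then invokes the minor-arc bound of Lemma \ref{lemme13} with $K=n+1$, giving $|S_{a,q}|\ll q^{3n+3}q^{-(n+1)/2+\varepsilon}$, i.e.\ $N^{\ast}(q)\ll q^{3(n+1)/2+\varepsilon}$, and the tail sums to $Q^{-n/2+3/2+\varepsilon}\ll Q^{-n/2+2}$. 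The reason this is uniform in $q$, bad primes included, is that the Weyl differencing behind Lemma \ref{lemme13} runs over \emph{integer} points $(\xx,\yy)$ of height about $P^{\theta}\approx q^{1/2}$: for such points either every $B_{k}(\xx,\yy)$ vanishes exactly in $\ZZ$ --- and the number of those is controlled by $\dim V_{3}^{\ast}=n+1$ over $\CC$, with constants independent of $q$ --- or some $B_{k_{0}}(\xx,\yy)\neq 0$ has absolute value $O(q^{1-\varepsilon})<q$ and yields a rational approximation to $a/q$ with denominator too small to exist. It is exactly this interplay between the smallness of the integer points and the size of the modulus (congruence plus smallness forces exact vanishing) that a purely mod-$q$ formulation loses; without it, your argument cannot close, and repairing it amounts to reinstating the paper's key observation.
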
\label{convergeS}
\begin{proof}
Nous allons appliquer les lemmes pr\'ec\'edents avec $ P_{1}=P_{2}=P_{3}=q $. On d\'efinit un \'el\'ement $ \theta\in [0,1] $ par $ 6\theta=1-\varepsilon $ (pour un $  \varepsilon>0 $ fix\'e). On remarque que pour $ \alpha=\frac{a}{q} $, alors $ \alpha $ n'appartient \`a aucun arc majeur $ \mathfrak{M}_{a',q'}'(\theta) $. En effet, si on avait $ \frac{a}{q}\in \mathfrak{M}_{a',q'}'(\theta) $, alors on aurait \[ q'\ll P^{2\theta}=(P_{1}P_{2}P_{3})^{2\theta}=q^{6\theta}=q^{1-\epsilon}<q \]
et par ailleurs \[ 1\leqslant |q'a-a'q|<qq'P^{-1+2\theta}<q^{-1+6\theta}=q^{-\epsilon}, \] ce qui est absurde. 
On a alors (d'apr\`es le lemme \ref{lemme13}): \begin{align*}
|S(\alpha)|=|S_{a,q}| & \ll  P_{1}^{n+1}P_{2}^{n+1}P_{3}^{n+1}P^{-K\theta+\varepsilon} \\ & = q^{3n+3}q^{-3K\theta+\varepsilon} \\  & \ll q^{3n+3}q^{-K/2+\varepsilon'}=q^{3n+3}q^{-\frac{n+1}{2}+\varepsilon'}
\end{align*} 
Donc \begin{align*}
|\mathfrak{S}-\mathfrak{S}(Q)| & =\left|\sum_{q>Q}q^{-3n-3}\sum_{\substack{0\leqslant a < q \\ \PGCD(a,q)=1}}S_{a,q}\right| \\ & \ll \sum_{q>Q}q^{-3n-3}\sum_{\substack{0\leqslant a < q\\ \PGCD(a,q)=1}}|S_{a,q}| \\ & \ll \sum_{q>Q}q^{-\frac{n+1}{2}+1+\varepsilon'} \ll Q^{-\frac{n}{2}+\frac{3}{2}+\varepsilon'}.
\end{align*} 
\end{proof}
\begin{lemma}\label{ConvergeJ}
L'int\'egrale $ J $ est absolument convergente et on a de plus, pour $ \phi $ assez grand : \[ |J-J(\phi)|\ll \phi^{-1}. \]
\end{lemma}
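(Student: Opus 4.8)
The plan is to analyze the oscillatory integral
\[
I(\beta)=\int_{\BB_{1}\times\BB_{2}\times\BB_{3}}e(\beta F(\uu,\vv,\ww))\,d\uu\,d\vv\,d\ww
\]
and to show that $I(\beta)$ decays fast enough in $|\beta|$ to make the tail $\int_{|\beta|>\phi}I(\beta)\,d\beta$ both convergent and bounded by $\phi^{-1}$. First I would obtain a pointwise bound on $I(\beta)$ for large $|\beta|$. The key structural fact is that $F(\uu,\vv,\ww)=\sum_{k}w_{k}B_{k}(\uu,\vv)$ is linear in the variable $\ww$; integrating first over $\ww\in\BB_{3}=[-1,1]^{n+1}$ gives a product of one-dimensional integrals
\[
\int_{-1}^{1}e(\beta B_{k}(\uu,\vv)w_{k})\,dw_{k}=\frac{\sin(2\pi\beta B_{k}(\uu,\vv))}{\pi\beta B_{k}(\uu,\vv)},
\]
so that $|I(\beta)|\ll\int_{\BB_{1}\times\BB_{2}}\prod_{k=0}^{n}\min\bigl(1,|\beta B_{k}(\uu,\vv)|^{-1}\bigr)\,d\uu\,d\vv$. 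The same computation can be carried out integrating first in $\vv$ against the forms $B_{j}'(\uu,\ww)$ or in $\uu$ against $B_{i}''(\vv,\ww)$, which gives three symmetric bounds that I will exploit below.

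The central step is then a sublevel-set estimate: I would show that
\[
\int_{\BB_{1}\times\BB_{2}}\prod_{k=0}^{n}\min\bigl(1,|\beta B_{k}(\uu,\vv)|^{-1}\bigr)\,d\uu\,d\vv\ll|\beta|^{-2-\eta}
\]
for some $\eta>0$, at least after combining with the symmetric bounds. The quantity being integrated is governed by the measure of the set where several of the bilinear forms $B_{k}(\uu,\vv)$ are simultaneously small, and this is exactly where the smoothness hypothesis on $V$ and the dimension conditions $\dim V_{1}^{\ast}=\dim V_{2}^{\ast}=\dim V_{3}^{\ast}=n+1$ enter: these conditions say precisely that the common zero loci of the forms are as small as possible, so the map $(\uu,\vv)\mapsto(B_{k}(\uu,\vv))_{k}$ is suitably nondegenerate and the sublevel sets shrink. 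Concretely, partitioning the range of $|\beta B_{k}|$ into dyadic blocks and estimating the volume of each block by the dimension bound yields a power saving; one gets an exponent of the form $-2-\eta$ once $n$ is large enough, which is the regime assumed throughout Section~2 ($K=n+1>\max(4,b+b'+1)$).

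With the pointwise decay $|I(\beta)|\ll|\beta|^{-2-\eta}$ in hand, absolute convergence of $J=\int_{\RR}I(\beta)\,d\beta$ is immediate since $\int_{|\beta|>1}|\beta|^{-2-\eta}\,d\beta<\infty$, while near $\beta=0$ one has the trivial bound $|I(\beta)|\leqslant\Vol(\BB_{1}\times\BB_{2}\times\BB_{3})\ll1$. The tail estimate then follows by integration:
\[
|J-J(\phi)|=\left|\int_{|\beta|>\phi}I(\beta)\,d\beta\right|\ll\int_{\phi}^{\infty}\beta^{-2-\eta}\,d\beta\ll\phi^{-1-\eta}\ll\phi^{-1},
\]
which is the claimed bound (and in fact slightly stronger). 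The main obstacle is the sublevel-set estimate of the second paragraph: obtaining the exponent $-2$ (rather than merely the trivial $-1$ coming from a single form) requires using the joint nondegeneracy of at least two of the forms $B_{k}$, and it is here that one must carefully invoke the hypothesis on $\dim V_{i}^{\ast}$ to control the volume of the locus where the bilinear forms degenerate simultaneously. This is analogous to the singular-integral convergence argument in Schindler's treatment of the biprojective case (cf.\ \cite{S1}), and I expect the estimate to carry over with the roles of the three families of forms $B_{k},B_{j}',B_{i}''$ played symmetrically.
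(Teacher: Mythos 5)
Your overall strategy (prove pointwise decay of $I(\beta)$, then integrate the tail) has exactly the same shape as the paper's proof, and your first reduction, integrating out $\ww$ to get
\[
|I(\beta)|\ll\int_{\BB_{1}\times\BB_{2}}\prod_{k=0}^{n}\min\bigl(1,|\beta B_{k}(\uu,\vv)|^{-1}\bigr)\,d\uu\,d\vv,
\]
is sound and mirrors \eqref{premieresomme}. But the core of your argument, the sublevel-set estimate $\int_{\BB_{1}\times\BB_{2}}\prod_{k}\min\bigl(1,|\beta B_{k}|^{-1}\bigr)\,d\uu\,d\vv\ll|\beta|^{-2-\eta}$, is asserted rather than proven, and the justification you sketch (dyadic decomposition plus ``the dimension bound'') does not go through as stated. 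The hypothesis $\dim V_{3}^{\ast}=n+1$ concerns the complex algebraic dimension of the exact common zero locus of the $B_{k}$; converting it into a quantitative bound on the real volume of the sets $\{(\uu,\vv)\in\BB_{1}\times\BB_{2}\;:\;|B_{k}(\uu,\vv)|\leqslant\epsilon\ \forall k\}$ is precisely the hard analytic content of the problem, and it is what the Weyl-differencing apparatus of Sections 2.1--2.2 (Lemme \ref{lemme31}, the counting functions $M_{i}$, and their reduction to integer points on $V_{i}^{\ast}$ in Lemme \ref{lemme12}) exists to accomplish in the discrete setting. Moreover, ``combining with the symmetric bounds'' coming from the three families $B_{k},B_{j}',B_{i}''$ is itself nontrivial: in the paper this combination is achieved by the transpose mechanism of Lemme \ref{lemme31}, not by taking a minimum of three separate upper bounds. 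As you yourself note, this estimate is ``the main obstacle''; acknowledging it and expecting it to carry over is not a proof, so the proposal has a genuine gap at its central step.

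For comparison, the paper closes this gap by an indirect route that avoids any direct sublevel-set analysis: given $\beta$, it chooses $P$ with $P^{2\theta_{0}}=\beta$, so that $\alpha=P^{-1}\beta=P^{-1+2\theta_{0}}$ lies on the boundary of the major arc $\mathfrak{M}'_{0,1}(\theta_{0})$. Lemme \ref{lemme13} (whose proof is where the hypotheses $\dim V_{i}^{\ast}=n+1$ are actually consumed) then yields the minor-arc bound $|S(P^{-1}\beta)|\ll P_{1}^{n+1}P_{2}^{n+1}P_{3}^{n+1}P^{-K\theta_{0}+\varepsilon}$, while the major-arc approximation of Lemme \ref{lemme33} gives $S(P^{-1}\beta)=P_{1}^{n+1}P_{2}^{n+1}P_{3}^{n+1}I(\beta)+O(P_{1}^{n+1}P_{2}^{n+1}P_{3}^{n+1}P^{2\theta_{0}}P_{1}^{-1})$. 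Comparing the two and invoking \eqref{cond1} and \eqref{cond3} gives $|I(\beta)|\ll\beta^{-2}$, which integrates to $|J-J(\phi)|\ll\phi^{-1}$. In other words, the decay of the oscillatory integral is deduced from the already-established bounds on the exponential sum $S$; to make your direct approach rigorous you would essentially have to reprove the substance of those lemmas in the continuous setting.
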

\begin{proof}
Nous allons appliquer les lemmes de la section \ref{ssection2} avec $ \theta=\theta_{0} $ et $ P $ tel que $ P^{2\theta_{0}}=\beta $. On a alors pour tout $ \beta $ que $ P^{-1}\beta\in \mathfrak{M}_{0,1}'(\theta_{0}) $. Le lemme \ref{lemme33} donne alors : 
\begin{equation}\label{eqI(beta)}
S(P^{-1}\beta)=P_{1}^{n+1}P_{2}^{n+1}P_{3}^{n+1}I(\beta)+ O(P_{1}^{n+1}P_{2}^{n+1}P_{3}^{n+1}P^{2\theta_{0}}P_{1}^{-1}).
\end{equation}
De plus, \'etant donn\'e que $ P^{-1}\beta $ appartient au bord de $ \mathfrak{M}'_{0,1}(\theta_{0}) $ (car $ P^{-1}\beta= P^{-1+2\theta_{0}}$), donc au bord de $ \mathfrak{M}'(\theta_{0}) $, le lemme \ref{lemme13} donne : \[ |S(P^{-1}\beta)|\ll P_{1}^{n+1}P_{2}^{n+1}P_{3}^{n+1}P^{-K\theta_{0}+\varepsilon}.\] Par cons\'equent l'\'equation \eqref{eqI(beta)} donne : \begin{multline*}
P_{1}^{n+1}P_{2}^{n+1}P_{3}^{n+1}I(\beta)+ O(P_{1}^{n+1}P_{2}^{n+1}P_{3}^{n+1}P^{2\theta_{0}}P_{1}^{-1}) \\=O(P_{1}^{n+1}P_{2}^{n+1}P_{3}^{n+1}P^{-K\theta_{0}+\varepsilon}), \end{multline*} donc \[ I(\beta)=O(P^{-K\theta_{0}+\varepsilon}+P^{2\theta_{0}}P_{1}^{-1})=O(P^{-K\theta_{0}+\varepsilon}+P^{2\theta_{0}-\frac{1}{1+b+b'}}). \]
Or, on a par ailleurs : \[ P^{2\theta_{0}-\frac{1}{1+b+b'}}<P^{-8\theta_{0}-\delta}\ll \beta^{-4-\delta'}\ll \beta^{-4} ,\]
(voir \eqref{cond3}) ainsi que \[ P^{-K\theta_{0}+\varepsilon} \ll P^{-4\theta_{0}-2\delta}\ll \beta^{-2}, \] (cf. \eqref{cond1}).
On a ainsi $ |I(\beta)|\ll \beta^{-2} $, et donc finalement : \[ |J-J(\phi)|\ll \int_{|\beta|>\phi}|I(\beta)|d\beta\ll \phi^{-1}. \]
\end{proof}

On a donc \'etabli le r\'esultat suivant : 
\begin{prop}\label{prop1}
On suppose $ P_{1}\leqslant P_{2} \leqslant P_{3} $, $ P_{2}=P_{1}^{b} $, $ P_{3}=P_{1}^{b'} $. Pour $ (n+1)>b'+b+1 $, si $ \sigma=\mathfrak{S}J $, on a alors : \[ N(P_{1},P_{2},P_{3})=\sigma P_{1}^{n}P_{2}^{n}P_{3}^{n}+O(P_{1}^{n}P_{2}^{n}P_{3}^{n}P^{-\delta}), \]
(avec $ P=P_{1}P_{2}P_{3} $ et $ \delta>0 $ arbitrairement petit).
\end{prop}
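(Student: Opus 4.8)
The plan is to assemble the ingredients already in place, so that essentially no new analysis is needed; the whole proof is a combination of the preceding lemmas together with an error count. First I would insert the local approximation of Lemme~\ref{lemme33} into the major-arc sum of Lemme~\ref{lemme32}. Summing the error term of Lemme~\ref{lemme33} over the disjoint arcs $\mathfrak{M}'_{a,q}(\theta_0)$ introduces a factor $\Vol(\mathfrak{M}'(\theta_0))\ll P^{-1+6\theta_0}$; after factoring $P_1^{n+1}P_2^{n+1}P_3^{n+1}$ out of the main contribution one recognises the truncated arithmetic factor $\mathfrak{S}(P^{2\theta_0})$ and, after the change of variables $\beta\mapsto P\beta$, the truncated singular integral $P^{-1}J(P^{2\theta_0})$. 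Condition~\eqref{cond3}, which forces $10\theta_0+\delta<(1+b+b')^{-1}$, is exactly what makes this accumulated error $\ll P_1^n P_2^n P_3^n P^{-\delta}$, so one obtains the intermediate identity
\[ N(P_1,P_2,P_3)=P_1^n P_2^n P_3^n\,\mathfrak{S}(P^{2\theta_0})\,J(P^{2\theta_0})+O\!\left(P_1^n P_2^n P_3^n P^{-\delta}\right). \]

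It then remains to pass from the truncated factors to the completed ones $\mathfrak{S}$ and $J$. Writing $Q=P^{2\theta_0}$ and using that $\mathfrak{S}$ and $J$ converge absolutely (Lemmes~\ref{convergeS} and~\ref{ConvergeJ}), so that $\mathfrak{S}(Q)$ and $J(Q)$ stay bounded, I would split
\[ \mathfrak{S}(Q)J(Q)-\mathfrak{S}J=(\mathfrak{S}(Q)-\mathfrak{S})\,J(Q)+\mathfrak{S}\,(J(Q)-J), \]
so that Lemmes~\ref{convergeS} and~\ref{ConvergeJ} yield $|\mathfrak{S}(Q)J(Q)-\mathfrak{S}J|\ll Q^{-n/2+2}+Q^{-1}=P^{-\theta_0(n-4)}+P^{-2\theta_0}$. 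Both terms are genuine negative powers of $P$ (for the first, the sharper bound $Q^{-n/2+3/2+\varepsilon}$ established inside the proof of Lemme~\ref{convergeS} already gains a power as soon as $n\geq 4$). Multiplying through by $P_1^n P_2^n P_3^n$, absorbing the outcome into the existing error, and setting $\sigma=\mathfrak{S}J$, gives
\[ N(P_1,P_2,P_3)=\sigma P_1^n P_2^n P_3^n+O\!\left(P_1^n P_2^n P_3^n P^{-\delta}\right), \]
where $\delta$ is now understood to be the smallest of the (positive) exponents met along the way; it is still an arbitrarily small positive number, which is all the statement asserts.

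I do not expect a genuine mathematical obstacle here: the Weyl-type inequality of Subsection~\ref{ssection2}, the minor-arc bound of Lemme~\ref{arcmin1}, and the convergence of $\mathfrak{S}$ and $J$ have all been proved beforehand, and the proposition merely re-packages them. The only delicate point is the bookkeeping of error terms, that is, checking that the hypothesis $n+1>b+b'+1$, through conditions~\eqref{cond1}--\eqref{cond3}, simultaneously forces the minor-arc contribution, the arc-by-arc approximation error, and the two truncation errors to be a fixed negative power of $P$ relative to the main term $P_1^n P_2^n P_3^n$. This amounts to verifying the inequalities relating $\theta_0$, $\delta$, $n$ and $b+b'$ already recorded in~\eqref{cond1}--\eqref{cond3}.
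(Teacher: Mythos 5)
Your proposal is correct and follows essentially the same route as the paper: the paper likewise combines Lemme~\ref{lemme32} with Lemme~\ref{lemme33}, uses the volume bound $\Vol(\mathfrak{M}'(\theta_{0}))\ll P^{-1+6\theta_{0}}$ and condition~\eqref{cond3} to reduce to the truncated quantity $P_{1}^{n}P_{2}^{n}P_{3}^{n}\mathfrak{S}(P^{2\theta_{0}})J(P^{2\theta_{0}})$ with error $O(P_{1}^{n}P_{2}^{n}P_{3}^{n}P^{-\delta})$, and then completes to $\mathfrak{S}J$ via Lemmes~\ref{convergeS} and~\ref{ConvergeJ}. Your only addition is to make explicit the splitting $(\mathfrak{S}(Q)-\mathfrak{S})J(Q)+\mathfrak{S}(J(Q)-J)$ and the remark about the sharper exponent $Q^{-n/2+3/2+\varepsilon}$ when $n$ is small, which the paper leaves implicit.
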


\section{Deuxi\`eme \'etape}

Pour cette partie, nous introduisons les notations suivantes. On fixe un \'el\'ement $ \lambda \in \NN^{\ast} $. Pour cet entier, on note \begin{equation}
\mathcal{A}_{1,\lambda}=\{ \xx\in \CC^{n+1}\; |\; \dim V_{2,\xx}^{\ast}<\lambda, \; \dim V_{3,\xx}^{\ast}<\lambda \}, 
\end{equation}
o\`u l'on a not\'e : \begin{equation}
 V_{3,\xx}^{\ast}=\left\{ \yy \in \CC^{n+1} \; | \; \forall k\in\{0,...,n\},\;  B_{k}(\xx,\yy)=0 \right\},
\end{equation}
\begin{equation}
 V_{2,\xx}^{\ast}=\left\{ \zz \in \CC^{n+1} \; | \; \forall j\in\{0,...,n\}, \; B_{j}'(\xx,\zz)=0  \right\},
\end{equation} et on pose, par abus de langage : \begin{equation}
\mathcal{A}_{1,\lambda}(\ZZ)=\mathcal{A}_{1,\lambda}\cap \ZZ^{n+1}.
\end{equation}
On d\'efinit de m\^eme \begin{equation}
\mathcal{A}_{2,\lambda}=\{ \yy\in \CC^{n+1}\; |\; \dim V_{1,\yy}^{\ast}<\lambda, \; \dim V_{3,\yy}^{\ast}<\lambda \},  
\end{equation}
avec : \begin{equation}
 V_{3,\yy}^{\ast}=\left\{ \xx \in \CC^{n+1} \; | \; \forall k\in\{0,...,n\}, \;B_{k}(\xx,\yy)=0  \right\},
\end{equation}
\begin{equation}
 V_{1,\yy}^{\ast}=\left\{ \zz \in \CC^{n+1} \; | \; \forall i\in\{0,...,n\}, \;B_{i}''(\yy,\zz)=0  \right\},
\end{equation} \begin{equation}
\mathcal{A}_{2,\lambda}(\ZZ)=\mathcal{A}_{2,\lambda}\cap \ZZ^{n+1},
\end{equation} et \begin{equation}
\mathcal{A}_{3,\lambda}=\{ \zz\in \CC^{n+1}\; |\; \dim V_{1,\zz}^{\ast}<\lambda, \; \dim V_{2,\zz}^{\ast}<\lambda \},
\end{equation}
avec : \begin{equation}
 V_{1,\zz}^{\ast}=\left\{ \yy \in \CC^{n+1} \; | \; \forall i\in\{0,...,n\}, \;B_{i}''(\yy,\zz)=0  \right\},
\end{equation}
\begin{equation}
 V_{2,\zz}^{\ast}=\left\{ \xx \in \CC^{n+1} \; | \; \forall j\in\{0,...,n\}, \; B_{j}'(\xx,\zz)=0 \right\},
\end{equation} \begin{equation}
\mathcal{A}_{3,\lambda}(\ZZ)=\mathcal{A}_{3,\lambda}\cap \ZZ^{n+1}.
\end{equation}
Dans ce qui va suivre nous aurons besoin du lemme suivant : 
\begin{lemma}\label{complA}
On a la majoration \[ \card\{\xx\in [-P_{1},P_{1}]^{n+1}\cap \mathcal{A}_{1,\lambda}(\ZZ)^{c}\}\ll P_{1}^{n+1-\lambda}. \]
De plus, l'ensemble $ \mathcal{A}_{1,\lambda}^{c}  $ des vecteurs $ \xx  $ tels que $ \dim V_{3,\xx}^{\ast}\geqslant  \lambda $ ou $ \dim V_{2,\xx}^{\ast}\geqslant  \lambda $ est un ferm\'e de Zariski de $ \AA^{n+1}_{\CC} $.
\end{lemma}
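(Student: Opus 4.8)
Le plan est de s\'eparer la partie g\'eom\'etrique --- \`a savoir que $\mathcal{A}_{1,\lambda}^{c}$ est un ferm\'e de Zariski de dimension au plus $n+1-\lambda$ --- de la majoration du nombre de points entiers, qui en d\'ecoulera par un argument de comptage standard. Je commencerais par la structure de ferm\'e. Pour $\xx$ fix\'e, chaque $B_{k}(\xx,\yy)=\sum_{j}\bigl(\sum_{i}\alpha_{i,j,k}x_{i}\bigr)y_{j}$ est lin\'eaire en $\yy$~: en introduisant la matrice $(n+1)\times(n+1)$ $M(\xx)=\bigl(\sum_{i}\alpha_{i,j,k}x_{i}\bigr)_{0\leqslant k,j\leqslant n}$, dont les coefficients sont des formes lin\'eaires en $\xx$, on a $V_{3,\xx}^{\ast}=\ker M(\xx)$, donc $\dim V_{3,\xx}^{\ast}=(n+1)-\rg M(\xx)$. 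Ainsi $\dim V_{3,\xx}^{\ast}\geqslant\lambda$ \'equivaut \`a $\rg M(\xx)\leqslant n+1-\lambda$, c'est-\`a-dire \`a l'annulation de tous les mineurs de taille $n+2-\lambda$ de $M(\xx)$~; ces mineurs \'etant polynomiaux en $\xx$, le lieu $\{\xx\;|\;\dim V_{3,\xx}^{\ast}\geqslant\lambda\}$ est ferm\'e de Zariski. De m\^eme, avec $M'(\xx)=\bigl(\sum_{i}\alpha_{i,j,k}x_{i}\bigr)_{0\leqslant j,k\leqslant n}$ on a $V_{2,\xx}^{\ast}=\ker M'(\xx)$ et $\{\xx\;|\;\dim V_{2,\xx}^{\ast}\geqslant\lambda\}$ est ferm\'e~; $\mathcal{A}_{1,\lambda}^{c}$, r\'eunion de ces deux ferm\'es, est donc ferm\'e de Zariski.

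Pour majorer la dimension, je consid\'ererais la projection $\pi\colon V_{3}^{\ast}\to\AA_{\CC}^{n+1}$, $(\xx,\yy)\mapsto\xx$, dont la fibre au-dessus de $\xx$ est exactement $V_{3,\xx}^{\ast}=\ker M(\xx)$. Le point d\'ecisif est que ces fibres sont des sous-espaces \emph{lin\'eaires}, donc irr\'eductibles et de dimension bien comprise. Stratifions le lieu $\{\dim V_{3,\xx}^{\ast}\geqslant\lambda\}$ suivant la valeur exacte $d\geqslant\lambda$ de $\dim\ker M(\xx)$~: sur la strate (localement ferm\'ee) $S_{d}=\{\xx\;|\;\dim\ker M(\xx)=d\}$, l'application $\xx\mapsto\ker M(\xx)$ est un morphisme vers la grassmannienne $\mathrm{Gr}(d,n+1)$, et $\pi^{-1}(S_{d})$ s'identifie au tir\'e en arri\`ere du fibr\'e tautologique, donc \`a un fibr\'e vectoriel de rang $d$ sur $S_{d}$. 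On en tire l'\'egalit\'e $\dim\pi^{-1}(S_{d})=\dim S_{d}+d$ sur chaque composante. Comme $\pi^{-1}(S_{d})\subseteq V_{3}^{\ast}$ et $\dim V_{3}^{\ast}=n+1$ par hypoth\`ese, il vient $\dim S_{d}\leqslant n+1-d\leqslant n+1-\lambda$ pour tout $d\geqslant\lambda$, d'o\`u $\dim\{\dim V_{3,\xx}^{\ast}\geqslant\lambda\}\leqslant n+1-\lambda$. Le m\^eme argument appliqu\'e \`a $V_{2}^{\ast}$ (via $M'$ et l'hypoth\`ese $\dim V_{2}^{\ast}=n+1$) donne $\dim\{\dim V_{2,\xx}^{\ast}\geqslant\lambda\}\leqslant n+1-\lambda$, et finalement $\dim\mathcal{A}_{1,\lambda}^{c}\leqslant n+1-\lambda$.

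Il resterait \`a conclure le comptage. Puisque $\mathcal{A}_{1,\lambda}^{c}$ est un ferm\'e de Zariski de $\AA^{n+1}$ de dimension au plus $n+1-\lambda$, le nombre de ses points entiers dans $[-P_{1},P_{1}]^{n+1}$ est $\ll P_{1}^{n+1-\lambda}$, d'apr\`es le r\'esultat classique selon lequel une vari\'et\'e de dimension $d$ dans $\AA^{n+1}$ ne poss\`ede que $O(P^{d})$ points entiers de hauteur au plus $P$ (du m\^eme type que l'estimation d\'ej\`a invoqu\'ee ci-dessus d'apr\`es \cite{Br}). La principale difficult\'e est l'\'etape g\'eom\'etrique~: obtenir la bonne majoration $n+1-\lambda$ pour la dimension du lieu d\'eg\'en\'er\'e. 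C'est l\`a qu'interviennent de mani\`ere essentielle, d'une part les hypoth\`eses $\dim V_{2}^{\ast}=\dim V_{3}^{\ast}=n+1$, et d'autre part le caract\`ere lin\'eaire des fibres de $\pi$, qui procure l'\emph{\'egalit\'e} $\dim\pi^{-1}(S_{d})=\dim S_{d}+d$ --- et non une simple in\'egalit\'e dans le sens d\'efavorable --- seule fa\c{c}on de comparer correctement $\dim S_{d}$ \`a $\dim V_{3}^{\ast}$.
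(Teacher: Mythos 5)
Your proof is correct, and its overall skeleton matches the paper's: show that $ \mathcal{A}_{1,\lambda}^{c} $ is Zariski closed of dimension at most $ n+1-\lambda $, then invoke the standard count of integer points on a closed affine subvariety (you cite the same type of result, namely the proof of Theorem 3.1 in \cite{Br}, as the paper does). The two key steps, however, are carried out by genuinely different means. For closedness, you exploit the bilinearity of $ F $: writing $ V_{3,\xx}^{\ast}=\ker M(\xx) $ with $ M(\xx) $ linear in $ \xx $, the condition $ \dim V_{3,\xx}^{\ast}\geqslant\lambda $ becomes the vanishing of all minors of $ M(\xx) $ of size $ n+2-\lambda $, which is visibly a closed condition; the paper instead projectivizes the fibers, forming the incidence variety $ Y\subset \AA_{\CC}^{n+1}\times\PP_{\CC}^{n} $, and deduces closedness from the fact that the projection $ \pi : Y \ra \AA_{\CC}^{n+1} $ is projective (hence closed) together with semicontinuity of fiber dimension (\cite[Corollaire 13.1.5]{G-D}). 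For the dimension bound, you stratify by the exact kernel dimension $ d $ and use the tautological-bundle structure over the rank stratum to get $ \dim\pi^{-1}(S_{d})=\dim S_{d}+d\leqslant \dim V_{3}^{\ast}=n+1 $; the paper reaches the same conclusion in one line from the fiber-dimension inequality $ \dim\mathcal{A}_{1,\lambda,3}^{c}+\lambda-1\leqslant \dim Y=\dim V_{3}^{\ast}-1=n $. Your route is more elementary and self-contained (no EGA, no properness argument), at the price of being tied to the linearity of the fibers; the paper's route is shorter and would work verbatim for any family of projective varieties, not just linear ones. One small correction to your closing remark: the exact equality furnished by linearity is not the only way to compare $ \dim S_{d} $ with $ \dim V_{3}^{\ast} $ --- the general fiber-dimension inequality (dimension of the source at least dimension of the base plus the minimal fiber dimension, applied to each top-dimensional irreducible component of the bad locus) already goes in the favorable direction with no linearity hypothesis, and this is precisely what the paper uses.
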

\begin{proof}
On commence par montrer que $ \{\xx\in \AA^{n+1}_{\CC} \; | \; \dim V_{3,\xx}^{\ast}\geqslant \lambda \} $ est un ferm\'e de Zariski de $ \AA^{n+1}_{\CC} $. On aura alors de la m\^eme mani\`ere que $ \{\xx\in \AA^{n+1}_{\CC} \; | \; \dim V_{2,\xx}^{\ast}\geqslant \lambda \} $ est un ferm\'e, et donc que $ \mathcal{A}_{1,\lambda}^{c}  $  est un ferm\'e de $ \AA^{n+1}_{\CC} $. \\

Notons $ Y $ le ferm\'e de $ \AA_{\CC}^{n+1}\times\PP^{n}_{\CC} $ d\'efinit par : \[ Y=\{(\xx,\yy)\in\AA_{\CC}^{n+1}\times\PP^{n}_{\CC}\; | \; \forall k\in \{0,...,n\}, \; B_{k}(\xx,\yy)=0 \}. \] La projection canonique \[ \pi : Y\subset \AA_{\CC}^{n+1}\times\PP^{n}_{\CC} \ra \AA^{n+1}_{\CC}, \] est un morphisme projectif, donc ferm\'e. Par cons\'equent, d'apr\`es \cite[Corollaire 13.1.5]{G-D}, \[ \{\xx\in \AA^{n+1}_{\CC} \; | \; \dim Y_{\xx}\geqslant \lambda-1 \} \] est un ferm\'e, et puisque $ \dim Y_{\xx}=  \dim V_{3,\xx}^{\ast}-1 $, l'ensemble \[ \{\xx\in \AA^{n+1}_{\CC} \; | \; \dim V_{3,\xx}^{\ast}\geqslant \lambda\} \] est un ferm\'e de Zariski de $ \AA^{n+1}_{\CC}  $.\\

Nous allons \`a pr\'esent montrer que $ \dim\mathcal{A}_{1,\lambda}^{c}\leqslant n+1-\lambda $. Pour cela nous allons montrer que la dimension de \[ \mathcal{A}_{1,\lambda,3}^{c}=\{\xx\in \AA^{n+1}_{\CC} \; | \; \dim V_{3,\xx}^{\ast}\geqslant \lambda \} \] est inf\'erieure \`a $ n+1-\lambda $. On remarque que \[ Y\cap (\mathcal{A}_{1,\lambda,3}^{c}\times\PP^{n}_{\CC})=\bigsqcup_{\xx\in \mathcal{A}_{1,\lambda,3}^{c} }\pi^{-1}(\xx). \]
On a alors \[ \dim\mathcal{A}_{1,\lambda,3}^{c}+\lambda-1\leqslant \dim Y=\dim V_{3}^{\ast}-1=n, \] ce qui implique \[ \dim\mathcal{A}_{1,\lambda,3}^{c}\leqslant n+1-\lambda. \]
Ainsi, $ \dim \mathcal{A}_{1,\lambda}^{c}\leqslant n+1-\lambda $, et donc \[ \card\{\xx\in [-P_{1},P_{1}]^{n+1}\cap \mathcal{A}_{1,\lambda}(\ZZ)^{c}\}\ll P_{1}^{n+1-\lambda}\] (cf. d\'emonstration de \cite[Th\'eor\`eme 3.1]{Br})
\end{proof}

On d\'eduit en particulier de ce lemme que les ensembles $ \mathcal{A}_{i,\lambda} $ sont des ouverts de Zariski. On notera dor\'enavant $ U $ l'ouvert de $ V $ : \begin{multline} U=\{ (\xx,\yy,\zz)\in \mathcal{A}_{1,\lambda}\times \mathcal{A}_{2,\lambda} \times \mathcal{A}_{3,\lambda} \; | \; \max_{k}|B_{k}(\xx,\yy)|\neq 0, \\ \;  \max_{j}|B_{j}'(\xx,\zz)|\neq 0, \;  \max_{i}|B_{i}''(\yy,\zz)|\neq 0\; \et \; F(\xx,\yy,\zz)=0 \}. \end{multline}

Notre objectif est d'\'etablir, pour cet ouvert $ U $, une formule asymptotique pour $ N_{U}(B) $ (avec les notations de \eqref{DEFNU}) pour un choix du param\`etre $ \lambda $ que nous pr\'eciserons ult\'erieurement. \`A cette fin, nous allons chercher \`a donner une formule asymptotique pour \begin{multline}
N_{U}(P_{1},P_{2},P_{3})=\card\{ (\xx,\yy,\zz)\in (\mathcal{A}_{1,\lambda}(\ZZ)\cap P_{1}\BB_{1})\times (\mathcal{A}_{2,\lambda}(\ZZ)\cap P_{2}\BB_{2}) \\ \times (\mathcal{A}_{3,\lambda}(\ZZ)\cap P_{3}\BB_{3}) \; | \; \max_{k}|B_{k}(\xx,\yy)|\neq 0, \; \max_{j}|B_{j}'(\xx,\zz)|\neq 0, \\ \; \max_{i}|B_{i}''(\yy,\zz)|\neq 0 \; \et \;
 F(\xx,\yy,\zz)=0 \} \\ =\card\{ (\xx,\yy,\zz)\in U\cap (P_{1}\BB_{1}\times P_{2}\BB_{2}\times P_{3}\BB_{3})\cap \ZZ^{3n+3} \}.
\end{multline}
Pour $ P_{1}\leqslant P_{2}\leqslant P_{3} $, par des m\'ethodes analogues \`a celles d\'evelopp\'ees dans la section pr\'ec\'edente, nous allons d'abord \'etablir une formule asymptotique pour 
\begin{multline}
N_{1}(P_{1},P_{2},P_{3})=\card\{ (\xx,\yy,\zz)\in (\mathcal{A}_{1,\lambda}(\ZZ)\cap P_{1}\BB_{1})  \times (\ZZ^{n+1}\cap P_{2}\BB_{2})\\ \times (\ZZ^{n+1}\cap P_{3}\BB_{3})\; | \;\max_{k}|B_{k}(\xx,\yy)|\neq 0 \; \et \; F(\xx,\yy,\zz)=0\}.
\end{multline}
Ce qui nous permettra d'\'etablir une formule asymptotique pour $ N_{U}(P_{1},P_{2},P_{3}) $ pour le cas o\`u $ P_{1}\leqslant P_{2}\leqslant P_{3} $ (par sym\'etrie on en d\'eduira la m\^eme formules pour les autres cas).

\subsection{Sommes d'exponentielles}
Pour tout ce qui va suivre, on fixe $ \xx\in \mathcal{A}_{1,\lambda}(\ZZ) $. On pose \begin{multline}
N_{\xx}(P_{2},P_{3})=\card\{ (\yy,\zz)\in (\ZZ^{n+1}\cap P_{2}\BB_{2})\times (\ZZ^{n+1}\cap P_{3}\BB_{3})\; | \; \\ \max_{k}|B_{k}(\xx,\yy)|\neq 0 \; \et \;  F(\xx,\yy,\zz)=0\}.
\end{multline}
On a alors \[ N_{\xx}(P_{2},P_{3})=\int_{0}^{1}S_{\xx}(\alpha)d\alpha, \]
pour \begin{equation}
S_{\xx}(\alpha)=\sum_{\substack{\yy\in \ZZ^{n+1}\cap P_{2}\BB_{2}\\ \max_{k}|B_{k}(\xx,\yy)|\neq 0}}\sum_{\zz\in \ZZ^{n+1}\cap P_{3}\BB_{3}}e(\alpha F(\xx,\yy,\zz)).
\end{equation}
Nous allons donc, dans un premier temps, chercher une formule asymptotique pour $ S_{\xx}(\alpha) $. Pour tous r\'eels strictement positifs, $ H_{1},H_{2} $ on notera   \[ M_{\xx}^{2}(H_{1},H_{2})=\card\{\zz\in \ZZ^{n+1}\; | \; |\zz|\leqslant H_{1}, \;\forall j\in \{0,...,n\} \;  ||\alpha B_{j}'(\xx,\zz)||<H_{2} \},  \]\[  M_{\xx}^{3}(H_{1},H_{2})
=\card\{ \yy\in \ZZ^{n+1} \; | \; |\yy|\leqslant H_{1}, \; \forall k \in \{0,...,n\}\;  ||\alpha B_{k}(\xx,\yy)||<H_{2} \}.\]
Remarquons avant tout que l'on a \[ |S_{\xx}(\alpha)|\ll \sum_{\substack{\yy\in \ZZ^{n+1}\cap P_{2}\BB_{2}\\ \max_{k}|B_{k}(\xx,\yy)|\neq 0}}\prod_{k=0}^{n}\min\{P_{3}, ||\alpha B_{k}(\xx,\yy)||^{-1}\}. \] \`A partir de l\`a, on montre comme dans la section \ref{ssection1} que : \[ |S_{\xx}(\alpha)|\leqslant M_{\xx}^{3}(P_{2},P_{3}^{-1})P_{3}^{n+1}\log(P_{3})^{n+1}, \] 
 On \'etablit alors le lemme ci-dessous : 
\begin{lemma}
Soit $ P>0 $, $ \varepsilon>0 $ arbitrairement petit, $ \theta_{2}\in [0,1] $, et $ \kappa>0 $. L'une au moins des assertions suivantes est v\'erifi\'ee : \begin{enumerate}
\item $ |S_{\xx}(\alpha)|\leqslant P_{2}^{n+1}P_{3}^{n+1}P^{-\kappa+\varepsilon} $,
\item $ \max_{i\in\{2,3\}}M_{\xx}^{i}(P_{2}^{\theta_{2}},P_{2}^{-1+\theta_{2}}P_{3}^{-1})\gg P_{2}^{(n+1)\theta_{2}}P^{-\kappa}. $\end{enumerate}

\end{lemma}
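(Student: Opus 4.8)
The plan is to reproduce the reduction of subsection \ref{ssection2}, the only difference being that $\xx$ is now fixed, so that only the pair $(\yy,\zz)$ is free. In subsection \ref{ssection2} the Weyl-type inequality (lemme \ref{lemme31}) had to be applied twice; here a \emph{single} application will do. I start from the estimate established just above the statement, namely $|S_{\xx}(\alpha)|\leqslant M_{\xx}^{3}(P_{2},P_{3}^{-1})P_{3}^{n+1}\log(P_{3})^{n+1}$, which I rewrite as
\[ |S_{\xx}(\alpha)|\ll P_{3}^{n+1+\varepsilon}\,M_{\xx}^{3}(P_{2},P_{3}^{-1}), \]
absorbing the factor $\log(P_{3})^{n+1}$ into $P_{3}^{\varepsilon}$.

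First I would apply lemme \ref{lemme31} to the $(n+1)\times(n+1)$ array $(\lambda_{k,j})_{k,j}=\bigl(\alpha\sum_{i=0}^{n}\alpha_{i,j,k}x_{i}\bigr)_{k,j}$, exactly as in subsection \ref{ssection2}, so that $L_{k}(\yy)=\alpha B_{k}(\xx,\yy)$ and $L_{j}^{t}(\zz)=\alpha B_{j}'(\xx,\zz)$. I choose the parameters $a,Z_{1},Z_{2}$ by
\[ P_{2}=aZ_{2},\qquad P_{3}^{-1}=a^{-1}Z_{2},\qquad P_{2}^{\theta_{2}}=aZ_{1}, \]
whence $a^{-1}Z_{1}=P_{2}^{-1+\theta_{2}}P_{3}^{-1}$, $a=(P_{2}P_{3})^{1/2}$ and $Z_{2}=(P_{2}/P_{3})^{1/2}$. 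The hypothesis $0<Z_{1}\leqslant Z_{2}\leqslant 1$ then holds because $\theta_{2}\leqslant 1$ and $P_{2}\leqslant P_{3}$. Here $n_{1}=n_{2}=n+1$, so the factor $a^{n_{2}-n_{1}}$ equals $1$ and both terms in the maximum of lemme \ref{lemme31} carry the same prefactor $(Z_{2}/Z_{1})^{n+1}=P_{2}^{(n+1)(1-\theta_{2})}$. Since $U(Z_{2})=M_{\xx}^{3}(P_{2},P_{3}^{-1})$, $U(Z_{1})=M_{\xx}^{3}(P_{2}^{\theta_{2}},P_{2}^{-1+\theta_{2}}P_{3}^{-1})$ and $U^{t}(Z_{1})=M_{\xx}^{2}(P_{2}^{\theta_{2}},P_{2}^{-1+\theta_{2}}P_{3}^{-1})$, the lemma yields
\[ M_{\xx}^{3}(P_{2},P_{3}^{-1})\ll P_{2}^{(n+1)(1-\theta_{2})}\max_{i\in\{2,3\}}M_{\xx}^{i}(P_{2}^{\theta_{2}},P_{2}^{-1+\theta_{2}}P_{3}^{-1}). \]

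Combining the two displayed bounds gives
\[ |S_{\xx}(\alpha)|\ll P_{3}^{n+1+\varepsilon}P_{2}^{(n+1)(1-\theta_{2})}\max_{i\in\{2,3\}}M_{\xx}^{i}(P_{2}^{\theta_{2}},P_{2}^{-1+\theta_{2}}P_{3}^{-1}). \]
The dichotomy is now immediate. Either the right-hand maximum is $\geqslant P_{2}^{(n+1)\theta_{2}}P^{-\kappa}$, which is assertion (2); or it is $<P_{2}^{(n+1)\theta_{2}}P^{-\kappa}$, in which case I substitute into the bound and use the identity $(n+1)(1-\theta_{2})+(n+1)\theta_{2}=n+1$ to obtain $|S_{\xx}(\alpha)|\ll P_{2}^{n+1}P_{3}^{n+1}P^{-\kappa+\varepsilon}$, which is assertion (1) once the implied constant and the factor $P_{3}^{\varepsilon}$ are absorbed into $P^{\varepsilon}$ (legitimate since $\varepsilon>0$ is arbitrary).

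I do not expect a genuine obstacle: the argument is a one-step specialization of subsection \ref{ssection2}. The only points needing care are the verification that the chosen $(a,Z_{1},Z_{2})$ meet the constraint $0<Z_{1}\leqslant Z_{2}\leqslant 1$ of lemme \ref{lemme31}, and the clean absorption of constants and of $\log(P_{3})^{n+1}$ into $P^{\varepsilon}$; it is precisely the exponent balance $(n+1)(1-\theta_{2})+(n+1)\theta_{2}=n+1$ that makes assertion (1) emerge in the stated homogeneous form $P_{2}^{n+1}P_{3}^{n+1}$.
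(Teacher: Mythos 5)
Your proposal is correct and follows exactly the paper's proof: a single application of the lemme \ref{lemme31} with the matrix $(\lambda_{k,j})_{k,j}=\bigl(\alpha\sum_{i=0}^{n}\alpha_{i,j,k}x_{i}\bigr)_{k,j}$ and the parameter choices $aZ_{2}=P_{2}$, $a^{-1}Z_{2}=P_{3}^{-1}$, $aZ_{1}=P_{2}^{\theta_{2}}$, $a^{-1}Z_{1}=P_{2}^{-1+\theta_{2}}P_{3}^{-1}$, combined with the bound $|S_{\xx}(\alpha)|\leqslant M_{\xx}^{3}(P_{2},P_{3}^{-1})P_{3}^{n+1}\log(P_{3})^{n+1}$, exactly as the paper indicates (``comme dans la section \ref{ssection2}''). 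You merely spell out the details (identification of $U(Z_{2})$, $U(Z_{1})$, $U^{t}(Z_{1})$, the check $0<Z_{1}\leqslant Z_{2}\leqslant 1$, and the exponent bookkeeping) that the paper leaves implicit.
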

\begin{proof}
Il suffit d'utiliser \`a nouveau le lemme \ref{lemme31}, avec \[ (\lambda_{k,j})_{k,j}=\left(\alpha\sum_{i=0}^{n}\alpha_{i,j,k}x_{i}\right)_{k,j}, \]\[ aZ_{2}=P_{2}, \; a^{-1}Z_{2}=P_{3}^{-1}, \; aZ_{1}=P_{2}^{\theta_{2}}, \; a^{-1}Z_{1}=P_{2}^{-1+\theta_{2}}P_{3}^{-1}, \] et on obtient imm\'ediatement le r\'esultat, comme dans la section \ref{ssection2}.
\end{proof}

On consid\`ere un \'el\'ement $ \yy\in M_{\xx}^{3}(P_{2}^{\theta_{2}},P_{2}^{-1+\theta_{2}}P_{3}^{-1}) $. Supposons qu'il existe un certain $ k_{0}\in \{0,...,n\} $ tel que $ B_{k_{0}}(\xx,\yy)\neq 0 $. On note alors $ q=|B_{k_{0}}(\xx,\yy)|\geqslant 1 $ et on pose $ \alpha|B_{k_{0}}(\xx,\yy)|=a+\delta $, avec $ a\in \ZZ $ et $ |\delta|<P_{2}^{-1+\theta_{2}}P_{3}^{-1} $. On a donc \[ |q\alpha-a|<P_{2}^{-1+\theta_{2}}P_{3}^{-1}, \;\;  |q|\ll |\xx|P_{2}^{\theta_{2}}. \] Quitte \`a changer $ \theta_{2} $, on peut supposer \[ |q\alpha-a|<\frac{1}{2}P_{2}^{-1+\theta_{2}}P_{3}^{-1}, \;\;  |q|\leqslant |\xx|P_{2}^{\theta_{2}}. \]D'o\`u le lemme suivant : 

\begin{lemma}
Soient $ P,\varepsilon,\kappa>0 $ et $ \theta_{2}\in [0,1] $ fix\'es. Il existe une constante $ C_{1} $ telle que l'une au moins des assertions suivantes est vraie : \begin{enumerate}
\item \label{i'} $ |S_{\xx}(\alpha)|\leqslant P_{2}^{n+1}P_{3}^{n+1}P^{-\kappa+\varepsilon} $,
\item \label{ii'}Il existe des entiers $ a,q $ tels que $ 1\leqslant q \leqslant |\xx|P_{2}^{\theta_{2}} $ et $ \PGCD(a,q)=1 $, $ 2|q\alpha-a|\leqslant P_{2}^{-1+\theta_{2}}P_{3}^{-1} $;
\item \label{iii'}On a \begin{multline*} \card\{\yy\in \ZZ^{n+1}\; | \;   |\yy|\leqslant P_{2}^{\theta_{2}}, \;   B_{k}(\xx,\yy)=0 \;   \\ \forall k \in \{0,...,n\}\}\geqslant C_{1}P_{2}^{(n+1)\theta_{2}}P^{-\kappa}. \end{multline*}
\item \label{iv'}On a \begin{multline*}\card\{\zz\in \ZZ^{n+1}\; | \;  |\zz|\leqslant P_{2}^{\theta_{2}}, \;   B_{j}'(\xx,\zz)=0 \;   \\ \forall j \in \{0,...,n\}\}\geqslant C_{1}P_{2}^{(n+1)\theta_{2}}P^{-\kappa}. \end{multline*}
\end{enumerate}
\end{lemma}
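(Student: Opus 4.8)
Le plan est de d\'eduire ce lemme directement du lemme pr\'ec\'edent par une simple analyse de cas, l'essentiel du travail analytique ayant d\'ej\`a \'et\'e effectu\'e dans la discussion qui pr\'ec\`ede l'\'enonc\'e. Je commencerais par appliquer le lemme pr\'ec\'edent : soit l'assertion \ref{i'} est v\'erifi\'ee (c'est son alternative (1)), soit on dispose de la minoration $\max_{i\in\{2,3\}}M_{\xx}^{i}(P_{2}^{\theta_{2}},P_{2}^{-1+\theta_{2}}P_{3}^{-1})\gg P_{2}^{(n+1)\theta_{2}}P^{-\kappa}$. Il ne reste donc qu'\`a exploiter cette seconde alternative.

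Supposons cette minoration v\'erifi\'ee. Par sym\'etrie -- en \'echangeant les r\^oles de $\yy$ et $\zz$, ainsi que ceux des formes $B_{k}$ et $B_{j}'$ -- je me ram\`enerais au cas o\`u le maximum est atteint pour $i=3$, le cas $i=2$ se traitant de mani\`ere identique et conduisant \`a l'assertion \ref{iv'} plut\^ot qu'\`a \ref{iii'}. On a alors $M_{\xx}^{3}(P_{2}^{\theta_{2}},P_{2}^{-1+\theta_{2}}P_{3}^{-1})\gg P_{2}^{(n+1)\theta_{2}}P^{-\kappa}$, et je distinguerais deux cas suivant le comportement des $\yy$ compt\'es par cette quantit\'e.

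Dans le premier cas, tous ces $\yy$ annulent simultan\'ement toutes les formes $B_{k}(\xx,\cdot)$ ; l'ensemble $\{\yy\in\ZZ^{n+1}\; |\; |\yy|\leqslant P_{2}^{\theta_{2}},\; B_{k}(\xx,\yy)=0\;\forall k\}$ contient alors l'ensemble compt\'e par $M_{\xx}^{3}$, son cardinal est donc $\gg P_{2}^{(n+1)\theta_{2}}P^{-\kappa}$, ce qui fournit l'assertion \ref{iii'} en prenant pour $C_{1}$ la constante implicite. Dans le second cas, il existe un $\yy$ compt\'e par $M_{\xx}^{3}$ et un indice $k_{0}$ tels que $B_{k_{0}}(\xx,\yy)\neq 0$ : c'est pr\'ecis\'ement la situation analys\'ee juste avant l'\'enonc\'e, et en posant $q=|B_{k_{0}}(\xx,\yy)|$, $a$ l'entier le plus proche de $\alpha B_{k_{0}}(\xx,\yy)$, puis en divisant $a$ et $q$ par leur pgcd, on obtient les entiers voulus, d'o\`u l'assertion \ref{ii'}.

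Le seul point demandant un peu de soin est la v\'erification des in\'egalit\'es pr\'ecises de \ref{ii'} : il faut majorer $q=|B_{k_{0}}(\xx,\yy)|\ll |\xx|P_{2}^{\theta_{2}}$, ce qui d\'ecoule de $|\yy|\leqslant P_{2}^{\theta_{2}}$ et de la bilin\'earit\'e de $B_{k_{0}}$, puis absorber les constantes multiplicatives (le facteur $2$ et le \og quitte \`a changer $\theta_{2}$ \fg) pour aboutir exactement aux bornes $1\leqslant q\leqslant |\xx|P_{2}^{\theta_{2}}$ et $2|q\alpha-a|\leqslant P_{2}^{-1+\theta_{2}}P_{3}^{-1}$ avec $\PGCD(a,q)=1$. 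Il n'y a pas d'obstacle conceptuel ici : le lemme n'est qu'une reformulation organis\'ee des alternatives d\'ej\`a \'etablies, l'essentiel du travail r\'esidant dans l'application it\'er\'ee du lemme \ref{lemme31} effectu\'ee pr\'ealablement.
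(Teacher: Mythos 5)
Votre d\'emonstration est correcte et suit essentiellement la m\^eme d\'emarche que celle du texte : application du lemme pr\'ec\'edent, puis, dans le cas o\`u $ M_{\xx}^{3} $ (ou $ M_{\xx}^{2} $) est grand, dichotomie entre le cas o\`u tous les points compt\'es annulent les formes $ B_{k}(\xx,\cdot) $ (donnant \ref{iii'} ou \ref{iv'}) et le cas o\`u l'un d'eux fournit l'approximation rationnelle $ q=|B_{k_{0}}(\xx,\yy)| $, $ a $ entier le plus proche (donnant \ref{ii'} apr\`es division par le pgcd et absorption des constantes). C'est exactement l'argument que le texte d\'eveloppe dans la discussion pr\'ec\'edant l'\'enonc\'e.
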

On choisit \`a pr\'esent $ P=P_{2}P_{3} $ et soit $ \theta $ tel que $ P_{2}^{\theta_{2}}=P^{\theta} $. On a alors $ P_{2}^{(n+1)\theta_{2}}P^{-\kappa}=P^{(n+1)\theta-\kappa} $.
Par cons\'equent, les conditions \ref{iii'} et \ref{iv'} impliquent respectivement que $ P^{\theta\dim V_{3,\xx}^{\ast}}\gg P^{\theta((n+1)-\frac{\kappa}{\theta})} $ et $ P^{\theta\dim V_{2,\xx}^{\ast}}\gg P^{\theta((n+1)-\frac{\kappa}{\theta})} $ (cf. d\'emonstration de \cite[Th\'eor\`eme 3.1]{Br}. On pose \`a pr\'esent $ K_{1}=(n+1)-\lambda $ et on choisit $ \kappa=K_{1}\theta $. On a par ailleurs, puisque $ \xx\in \mathcal{A}_{1,\lambda}(\ZZ) $, $ \dim V_{3,\xx}^{\ast}\leqslant \lambda-1 $ et $ \dim V_{2,\xx}^{\ast}\leqslant \lambda-1 $. Par cons\'equent, si les conditions \ref{iii'} ou \ref{iv'} sont vraies, alors il existe une constante $ C_{2} $ telle que : \[ C_{1}P^{\theta(n+1)-K_{1}\theta} <C_{2}P_{2}^{\theta_{2}(\lambda-1)}=C_{2}P^{\theta(\lambda-1)}, \] ce qui \'equivaut \`a dire que : \[ P^{\theta}<C_{2}/C_{1}. \]
D'o\`u le r\'esultat ci-dessous :
\begin{lemma}\label{2cond}  
Il existe une constante $ C_{3} $ telle que, si $ 0<\theta\leqslant 1 $ et $ P^{\theta}\geqslant C_{3} $, alors au moins l'une de assertions ci-dessous est vraie : 
 \begin{enumerate}
\item  $ |S_{\xx}(\alpha)|\leqslant P_{2}^{n+1}P_{3}^{n+1}P^{-K_{1}\theta+\varepsilon} $,
\item Il existe des entiers $ a,q $ tels que $ 1\leqslant q \leqslant |\xx|P_{2}^{\theta_{2}} $ et $ \PGCD(a,q)=1 $, $ 2|q\alpha-a|\leqslant P_{2}^{-1+\theta_{2}}P_{3}^{-1} $.
\end{enumerate}
\end{lemma}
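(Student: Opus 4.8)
Le plan est de déduire l'énoncé du lemme précédent en utilisant l'hypothèse $ \xx\in \mathcal{A}_{1,\lambda}(\ZZ) $ pour écarter ses deux dernières alternatives. Je commencerais par spécialiser ce lemme au choix $ P=P_{2}P_{3} $, $ K_{1}=(n+1)-\lambda $ et $ \kappa=K_{1}\theta $, où $ \theta $ est défini par $ P_{2}^{\theta_{2}}=P^{\theta} $. Avec ce choix, l'assertion \ref{i'} devient exactement $ |S_{\xx}(\alpha)|\leqslant P_{2}^{n+1}P_{3}^{n+1}P^{-K_{1}\theta+\varepsilon} $ et \ref{ii'} reste inchangée : ces deux assertions sont précisément celles de l'énoncé. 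Il ne reste donc qu'à montrer que, pour $ P^{\theta} $ assez grand, ni \ref{iii'} ni \ref{iv'} ne peuvent être vraies.

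Le point clé est que, à $ \xx $ fixé, les formes $ B_{k}(\xx,\yy) $ (resp. $ B_{j}'(\xx,\zz) $) sont linéaires en $ \yy $ (resp. en $ \zz $), de sorte que $ V_{3,\xx}^{\ast} $ et $ V_{2,\xx}^{\ast} $ sont des sous-espaces vectoriels rationnels de $ \CC^{n+1} $. Comme $ \xx\in \mathcal{A}_{1,\lambda}(\ZZ) $, on dispose des majorations $ \dim V_{3,\xx}^{\ast}\leqslant \lambda-1 $ et $ \dim V_{2,\xx}^{\ast}\leqslant \lambda-1 $. Le nombre de points entiers d'un tel sous-espace dans la boîte $ [-P^{\theta},P^{\theta}]^{n+1} $ se majore alors par $ C_{2}(P^{\theta})^{\lambda-1} $ (c'est l'argument de comptage de la démonstration de \cite[Théorème 3.1]{Br}).

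Je confronterais ensuite cette majoration à la minoration donnée par \ref{iii'}. Puisque $ P_{2}^{(n+1)\theta_{2}}P^{-\kappa}=P^{\theta((n+1)-K_{1})}=P^{\theta\lambda} $, l'assertion \ref{iii'} affirme que ce même cardinal est $ \geqslant C_{1}P^{\theta\lambda} $. En combinant les deux inégalités on obtient $ C_{1}P^{\theta\lambda}\leqslant C_{2}P^{\theta(\lambda-1)} $, soit $ P^{\theta}\leqslant C_{2}/C_{1} $. En posant $ C_{3}=1+C_{2}/C_{1} $, ceci montre que \ref{iii'} ne peut avoir lieu dès que $ P^{\theta}\geqslant C_{3} $, et le même raisonnement appliqué à $ V_{2,\xx}^{\ast} $ élimine \ref{iv'}. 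Seules subsistent alors \ref{i'} et \ref{ii'}, ce qui est l'énoncé cherché.

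Le point le plus délicat me paraît être la majoration \emph{uniforme} $ \card\{\yy\in V_{3,\xx}^{\ast}\cap \ZZ^{n+1}\; | \; |\yy|\leqslant R\}\ll R^{\dim V_{3,\xx}^{\ast}} $ : il est essentiel que la constante implicite $ C_{2} $ ne dépende que de $ n $, et non de $ \xx $, faute de quoi la constante $ C_{3} $ dépendrait de $ \xx $ et le lemme perdrait toute portée. Cette uniformité découle de la linéarité de $ V_{3,\xx}^{\ast} $ et $ V_{2,\xx}^{\ast} $ : pour un sous-réseau de $ \ZZ^{n+1} $ de rang $ d $, tous les minima successifs sont $ \geqslant 1 $, d'où un nombre de points $ \ll_{n}R^{d} $ dans une boîte de côté $ R $ par l'estimation classique issue des minima successifs, indépendamment des équations définissant le sous-espace.
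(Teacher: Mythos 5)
Your proposal is correct and follows essentially the same route as the paper: specialize the preceding four-alternative lemma with $P=P_{2}P_{3}$, $K_{1}=(n+1)-\lambda$, $\kappa=K_{1}\theta$, then use $\dim V_{3,\xx}^{\ast},\dim V_{2,\xx}^{\ast}\leqslant \lambda-1$ (from $\xx\in\mathcal{A}_{1,\lambda}(\ZZ)$) together with the uniform upper bound $\ll R^{\dim}$ on integer points in a box to show that alternatives \ref{iii'} and \ref{iv'} force $P^{\theta}\leqslant C_{2}/C_{1}$, hence are excluded once $P^{\theta}\geqslant C_{3}$. The only (harmless) difference is that the paper justifies the uniform counting bound by citing the proof of \cite[Th\'eor\`eme 3.1]{Br}, whereas you give a direct successive-minima argument exploiting the linearity of $V_{3,\xx}^{\ast}$ and $V_{2,\xx}^{\ast}$, which is a perfectly valid and even more elementary justification of that step.
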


\subsection{La m\'ethode du cercle}

Pour des entiers $ a,q $ tels que $ \PGCD(a,q)=1 $, $ |q|\leqslant |\xx|P_{2}^{\theta_{2}} $, on d\'efinit les arcs majeurs : 
\begin{equation} \mathfrak{M}_{a,q}^{\xx}(\theta)=\{ \alpha\in [0,1]\; | \; 2|q\alpha-a|\leqslant P_{2}^{-1+\theta_{2}}P_{3}^{-1} \}, \end{equation}
\begin{equation}
\mathfrak{M}^{\xx}(\theta)=\bigcup_{1\leqslant q \leqslant |\xx|P_{2}^{\theta_{2}}}\bigcup_{\substack{0\leqslant a<q\\ \PGCD(a,q)=1}}\mathfrak{M}_{a,q}^{\xx}(\theta),
\end{equation} 
\begin{equation} \mathfrak{M}_{a,q}^{'\xx}(\theta)=\{ \alpha\in [0,1]\; | \; 2|q\alpha-a|\leqslant qP_{2}^{-1+\theta_{2}}P_{3}^{-1} \}, \end{equation}
\begin{equation}
\mathfrak{M}^{'\xx}(\theta)=\bigcup_{1\leqslant q \leqslant |\xx|P_{2}^{\theta_{2}}}\bigcup_{\substack{0\leqslant a<q\\ \PGCD(a,q)=1}}\mathfrak{M}_{a,q}^{'\xx}(\theta).
\end{equation} 

\begin{lemma}\label{separat}
Si l'on suppose $ |\xx|^{2}P_{2}^{-1+3\theta_{2}}P_{3}^{-1}< 1 $ alors les arcs majeurs $ \mathfrak{M}_{a,q}^{'\xx}(\theta) $ sont disjoints deux \`a deux.
\end{lemma}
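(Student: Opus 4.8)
The plan is to argue by contradiction, exactly as in the disjointness argument already carried out for the sets $\mathfrak{M}'_{a,q}(\theta_0)$ earlier in the paper. Suppose there exist two distinct pairs $(a,q)\neq(a',q')$, each with $\PGCD(a,q)=\PGCD(a',q')=1$ and $1\leqslant q,q'\leqslant |\xx|P_2^{\theta_2}$, together with a common point $\alpha\in\mathfrak{M}_{a,q}^{'\xx}(\theta)\cap\mathfrak{M}_{a',q'}^{'\xx}(\theta)$. Since both fractions $a/q$ and $a'/q'$ are in lowest terms and the pairs are distinct, we have $a/q\neq a'/q'$, so $aq'-a'q$ is a nonzero integer and hence
\[
\left|\frac{a}{q}-\frac{a'}{q'}\right|=\frac{|aq'-a'q|}{qq'}\geqslant\frac{1}{qq'}.
\]

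Next I would bound the same quantity from above using the defining inequalities of the two arcs. Dividing the condition $2|q\alpha-a|\leqslant q P_2^{-1+\theta_2}P_3^{-1}$ by $2q$ gives $|\alpha-a/q|\leqslant\tfrac{1}{2}P_2^{-1+\theta_2}P_3^{-1}$, and the analogous bound holds for $(a',q')$. The triangle inequality then yields
\[
\left|\frac{a}{q}-\frac{a'}{q'}\right|\leqslant\left|\alpha-\frac{a}{q}\right|+\left|\alpha-\frac{a'}{q'}\right|\leqslant P_2^{-1+\theta_2}P_3^{-1}.
\]

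Combining the two displays gives $1\leqslant qq'P_2^{-1+\theta_2}P_3^{-1}$. Finally I would insert the size bound $q,q'\leqslant|\xx|P_2^{\theta_2}$, so that $qq'\leqslant|\xx|^2P_2^{2\theta_2}$, obtaining
\[
1\leqslant|\xx|^2P_2^{2\theta_2}\cdot P_2^{-1+\theta_2}P_3^{-1}=|\xx|^2P_2^{-1+3\theta_2}P_3^{-1},
\]
which directly contradicts the hypothesis $|\xx|^2P_2^{-1+3\theta_2}P_3^{-1}<1$. Hence no such common point exists and the arcs $\mathfrak{M}_{a,q}^{'\xx}(\theta)$ are pairwise disjoint.

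There is no genuine obstacle here: the whole argument is the standard \emph{two distinct Farey fractions are well separated} estimate, and the only point to verify is that the exponent $-1+3\theta_2$ in the hypothesis is precisely the one produced by multiplying the separation bound $1/(qq')$ against the arc width $P_2^{-1+\theta_2}P_3^{-1}$ and the maximal denominator product $(|\xx|P_2^{\theta_2})^2$. One should only take mild care that the factor of $2$ in the definition of the arcs makes the half-width $\tfrac12 P_2^{-1+\theta_2}P_3^{-1}$, so the two triangle terms sum to exactly $P_2^{-1+\theta_2}P_3^{-1}$ with no spurious constant.
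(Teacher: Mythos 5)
Your proof is correct and is essentially identical to the paper's own argument: a contradiction via the Farey separation bound $\frac{1}{qq'}\leqslant\left|\frac{a}{q}-\frac{a'}{q'}\right|\leqslant P_{2}^{-1+\theta_{2}}P_{3}^{-1}$, followed by the insertion of $qq'\leqslant|\xx|^{2}P_{2}^{2\theta_{2}}$ to contradict the hypothesis. The only difference is that you spell out the division by $2q$ and the triangle inequality, which the paper leaves implicit.
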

\begin{proof}
Supposons, par l'absurde qu'il existe $ \alpha\in \mathfrak{M}_{a,q}^{'\xx}(\theta)\cap \mathfrak{M}_{a',q'}^{'\xx}(\theta) $, avec $ a,q,a',q' $ v\'erifiant les hypoth\`eses mentionn\'ee pr\'ec\'edemment. On a alors : \[ \frac{1}{qq'}\leqslant \left| \frac{a}{q}-\frac{a'}{q'}\right|\leqslant P_{2}^{-1+\theta_{2}}P_{3}^{-1}, \] et ceci implique \[ 1\leqslant qq'P_{2}^{-1+\theta_{2}}P_{3}^{-1}\leqslant |\xx|^{2}P_{2}^{-1+3\theta_{2}}P_{3}^{-1} \] ce qui contredit l'hypoth\`ese du lemme.
\end{proof}
A partir d'ici, on supposera $ P^{\theta}>C_{3} $, et on supposera que l'on a bien $ |\xx|^{2}P_{2}^{-1+3\theta_{2}}P_{3}^{-1}< 1 $. On suppose de plus que $ K_{1}>2 $. On d\'efinit par ailleurs : $ \phi(\xx)=|\xx|P_{2}^{\theta_{2}}=|\xx|P^{\theta} $, et $ \Delta(\theta,K_{1})=\theta(K_{1}-2)>0 $. 
\begin{lemma}\label{lemme22}
Pour un $ \varepsilon>0 $ fix\'e, on a la formule asymptotique : 
\begin{multline*} N_{\xx}(P_{2},P_{3})=\sum_{q\leqslant \phi(\xx)}\sum_{\substack{0\leqslant a<q \\ \PGCD(a,q)=1}}\int_{\mathfrak{M}_{a,q}^{'\xx}(\theta)}S_{\xx}(\alpha)d\alpha \\ +O\left(|\xx|P_{2}^{n-\Delta(\theta,K_{1})+\varepsilon}P_{3}^{n-\Delta(\theta,K_{1})+\varepsilon}\right).  \end{multline*}
\end{lemma}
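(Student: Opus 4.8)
The plan is to run the circle method relative to the fixed $\xx$ exactly as in the proof of Lemma~\ref{arcmin1}. Writing $\theta_{0}:=\theta$ and starting from $N_{\xx}(P_{2},P_{3})=\int_{0}^{1}S_{\xx}(\alpha)\,d\alpha$, I would split the unit interval as $[0,1]=\mathfrak{M}^{'\xx}(\theta_{0})\cup\bigl([0,1]\setminus\mathfrak{M}^{'\xx}(\theta_{0})\bigr)$. Since we have assumed $|\xx|^{2}P_{2}^{-1+3\theta_{2}}P_{3}^{-1}<1$, Lemma~\ref{separat} guarantees that the arcs $\mathfrak{M}_{a,q}^{'\xx}(\theta_{0})$ are pairwise disjoint, so the integral over $\mathfrak{M}^{'\xx}(\theta_{0})$ splits as the double sum $\sum_{q\leqslant\phi(\xx)}\sum_{0\leqslant a<q,\ \PGCD(a,q)=1}\int_{\mathfrak{M}_{a,q}^{'\xx}(\theta_{0})}S_{\xx}(\alpha)\,d\alpha$ appearing in the statement (recall that $\phi(\xx)=|\xx|P_{2}^{\theta_{2}}=|\xx|P^{\theta}$ is exactly the range of $q$ defining $\mathfrak{M}^{'\xx}(\theta_{0})$). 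It therefore remains to bound $\int_{[0,1]\setminus\mathfrak{M}^{'\xx}(\theta_{0})}|S_{\xx}(\alpha)|\,d\alpha$ by the claimed error term. Because $q\geqslant 1$ forces $\mathfrak{M}^{\xx}(\theta)\subseteq\mathfrak{M}^{'\xx}(\theta)$, this integral is at most $\int_{[0,1]\setminus\mathfrak{M}^{\xx}(\theta_{0})}|S_{\xx}(\alpha)|\,d\alpha$, which is what I will estimate.

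For this minor-arc estimate I would reproduce the telescoping device of Lemma~\ref{arcmin1}: choose $\theta_{0}<\theta_{1}<\dots<\theta_{T}\leqslant 1$ with increments $\theta_{t+1}-\theta_{t}$ at most a small fixed multiple of $\varepsilon$, so that $T=O(1)$, and with $\theta_{T}$ chosen as large as the endpoint estimate below requires. Since $\mathfrak{M}^{\xx}(\theta)$ grows with $\theta$, this yields the disjoint decomposition $[0,1]\setminus\mathfrak{M}^{\xx}(\theta_{0})=\bigl([0,1]\setminus\mathfrak{M}^{\xx}(\theta_{T})\bigr)\cup\bigcup_{t=0}^{T-1}\bigl(\mathfrak{M}^{\xx}(\theta_{t+1})\setminus\mathfrak{M}^{\xx}(\theta_{t})\bigr)$. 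On each shell $\mathfrak{M}^{\xx}(\theta_{t+1})\setminus\mathfrak{M}^{\xx}(\theta_{t})$ one has $\alpha\notin\mathfrak{M}^{\xx}(\theta_{t})$, so (as $P^{\theta_{t}}\geqslant C_{3}$) the second alternative of Lemma~\ref{2cond} fails and the first applies, giving $|S_{\xx}(\alpha)|\ll P_{2}^{n+1}P_{3}^{n+1}P^{-K_{1}\theta_{t}+\varepsilon}$, while the measure of the shell is at most $\Vol(\mathfrak{M}^{\xx}(\theta_{t+1}))\ll|\xx|P^{-1+2\theta_{t+1}}$ (each arc has length $\ll q^{-1}P_{2}^{-1+\theta_{2}}P_{3}^{-1}$, there are $\leqslant q$ admissible numerators, and $q$ runs up to $|\xx|P_{2}^{\theta_{2}}$, using $P_{2}^{2\theta_{2}}=P^{2\theta}$ and $P_{2}P_{3}=P$). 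Multiplying and writing $P_{2}^{n+1}P_{3}^{n+1}=P_{2}^{n}P_{3}^{n}P$, each shell contributes $\ll|\xx|P_{2}^{n}P_{3}^{n}P^{-K_{1}\theta_{t}+2\theta_{t+1}+\varepsilon}$.

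The bookkeeping then closes as follows. Writing $-K_{1}\theta_{t}+2\theta_{t+1}=-(K_{1}-2)\theta_{t}+2(\theta_{t+1}-\theta_{t})$ and using $\theta_{t}\geqslant\theta_{0}$ together with $K_{1}>2$, the shell exponent is at most $-(K_{1}-2)\theta_{0}+2(\theta_{t+1}-\theta_{t})=-\Delta(\theta,K_{1})+2(\theta_{t+1}-\theta_{t})$; with the increments taken $\ll\varepsilon$ and $T=O(1)$, the sum over the shells is $\ll|\xx|P_{2}^{n}P_{3}^{n}P^{-\Delta(\theta,K_{1})+\varepsilon}$, which is exactly the asserted error after rewriting $P_{2}^{n}P_{3}^{n}P^{-\Delta(\theta,K_{1})}=P_{2}^{n-\Delta(\theta,K_{1})}P_{3}^{n-\Delta(\theta,K_{1})}$. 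For the remaining piece, integrating the pointwise bound over $[0,1]$ gives $\int_{[0,1]\setminus\mathfrak{M}^{\xx}(\theta_{T})}|S_{\xx}(\alpha)|\,d\alpha\ll P_{2}^{n}P_{3}^{n}P^{\,1-K_{1}\theta_{T}}$, which is absorbed into the error as soon as $K_{1}\theta_{T}\geqslant 1+\Delta(\theta,K_{1})=1+(K_{1}-2)\theta_{0}$; since $K_{1}>2$ and $\theta_{0}\leqslant 1$ one has $1+(K_{1}-2)\theta_{0}<K_{1}$, so a value $\theta_{T}\leqslant 1$ with this property exists. The main obstacle is precisely this exponent bookkeeping: one must check that a single choice of the sequence $(\theta_{t})$ simultaneously makes the endpoint term small (forcing $\theta_{T}$ large) and keeps every shell below the target (forcing small increments), while all the $\theta_{t}$ remain in the regime $P^{\theta_{t}}\geqslant C_{3}$ in which Lemma~\ref{2cond} is available; the hypotheses $K_{1}>2$ and $P^{\theta}>C_{3}$ recorded before the statement are exactly what render these requirements compatible.
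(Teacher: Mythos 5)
Your proposal follows the paper's proof essentially step for step: the same splitting of $[0,1]$ into the (pairwise disjoint, by Lemma \ref{separat}) primed arcs giving the main term plus the complement of the unprimed arcs giving the error, the same telescoping family $\theta=\theta_{0}<\theta_{1}<\dots<\theta_{T}$ combined with the pointwise bound from Lemma \ref{2cond} on each shell and the volume estimate $\Vol(\mathfrak{M}^{\xx}(\theta_{t+1}))\ll|\xx|P^{-1+2\theta_{t+1}}$, and the same exponent bookkeeping $-K_{1}\theta_{t}+2\theta_{t+1}=-(K_{1}-2)\theta_{t}+2(\theta_{t+1}-\theta_{t})$. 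The only cosmetic differences are that the paper fixes the endpoint $\theta_{T}=\frac{1}{2}$ and allows $T\ll P^{\varepsilon}$, whereas you choose $\theta_{T}$ adaptively via $K_{1}\theta_{T}\geqslant 1+(K_{1}-2)\theta_{0}$ and take $T=O_{\varepsilon}(1)$; both choices close the argument in the same way.
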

\begin{proof}
On a \[ N_{\xx}(P_{2},P_{3})=\sum_{q\leqslant \phi(\xx)}\sum_{\substack{0\leqslant a<q \\ \PGCD(a,q)=1}}\int_{\mathfrak{M}_{a,q}^{'\xx}(\theta)}S_{\xx}(\alpha)d\alpha+O\left(\mathcal{E}(\xx)\right), \]
avec $ \mathcal{E}(\xx)=\int_{\alpha\notin \mathfrak{M}^{\xx}(\theta)}|S_{\xx}(\alpha)|d\alpha $. 
Remarquons que l'on a \[ \Vol\left(\mathfrak{M}^{\xx}(\theta)\right)\ll \sum_{q\leqslant \phi(\xx)}\sum_{\substack{0\leqslant a<q \\ \PGCD(a,q)=1}}q^{-1}P_{2}^{-1+\theta_{2}}P_{3}^{-1} \ll |\xx|P_{2}^{-1+2\theta_{2}}P_{3}^{-1}. \]
On choisit une suite de r\'eels $ 0<\theta=\theta_{0}'<\theta_{1}'<...<\theta_{T}'= \frac{1}{2} $, avec $ 2(\theta_{i+1}'-\theta_{i}')<\varepsilon $ pour un certain $ \varepsilon>0 $. De plus, $ \varepsilon $ \'etant fix\'e, on peut supposer $ T\ll P^{\varepsilon} $. On a alors que \begin{align*}
\int_{\alpha\notin \mathfrak{M}^{\xx}(\theta_{T}')}|S_{\xx}(\alpha)|d\alpha & \ll P_{2}^{n+1+\varepsilon}P_{3}^{n+1+\varepsilon}P^{-K_{1}\theta_{T}'} \\ & \ll P_{2}^{n+1-K_{1}\theta_{T}'+\varepsilon}P_{3}^{n+1-K_{1}\theta_{T}'+\varepsilon} \\ & \ll P_{2}^{n-\Delta(\theta_{T}',K_{1})+\varepsilon}P_{3}^{n-\Delta(\theta_{T}',K_{1})+\varepsilon}\\ & \ll P_{2}^{n-\Delta(\theta,K_{1})+\varepsilon}P_{3}^{n-\Delta(\theta,K_{1})+\varepsilon}.
\end{align*}

On a de plus : \begin{align*} \int_{\alpha\in \mathfrak{M}^{\xx}(\theta_{i+1}')\setminus\mathfrak{M}^{\xx}(\theta_{i}')}|S_{\xx}(\alpha)|d\alpha & \ll \Vol( \mathfrak{M}^{\xx}(\theta_{i+1}'))P_{2}^{n+1+\varepsilon}P_{3}^{n+1+\varepsilon}P^{-K_{1}\theta_{i}'} \\ &   \ll |\xx|P_{2}^{n+\varepsilon}P_{3}^{n+\varepsilon}P^{2\theta_{i+1}'-K_{1}\theta_{i}'} \\ &  =|\xx|P_{2}^{n+\varepsilon}P_{3}^{n+\varepsilon}P^{2(\theta_{i+1}'-\theta_{i}')-(K_{1}-2)\theta_{i}'} \\ & \ll |\xx|P_{2}^{n+\varepsilon}P_{3}^{n+\varepsilon}P^{\varepsilon-\Delta(\theta_{i}',K_{1})} \\ & \ll |\xx| P_{2}^{n-\Delta(\theta,K_{1})+\varepsilon'}P_{3}^{n-\Delta(\theta,K_{1})+\varepsilon'}
\end{align*}
Et on obtient alors \begin{multline*} \mathcal{E}(\xx)\ll \int_{\alpha\notin \mathfrak{M}^{\xx}(\theta_{T})}|S_{\xx}(\alpha)|d\alpha \\ +\sum_{i=0}^{T} \int_{\alpha\in \mathfrak{M}^{\xx}(\theta_{i+1})\setminus\mathfrak{M}^{\xx}(\theta_{i})}|S_{\xx}(\alpha)|d\alpha \ll  |\xx| P_{2}^{n-\Delta(\theta,K_{1})+\varepsilon''}P_{3}^{n-\Delta(\theta,K_{1})+\varepsilon''}. \end{multline*}

\end{proof}

\subsection{Les arcs majeurs}

Dans tout ce qui suit, pour un $ \xx\in \mathcal{A}_{1,\lambda}(\ZZ) $ fix\'e (avec un $ \lambda $ que nous supposerons inf\'erieur \`a $ n $), $ a,q\in \ZZ $, $ \beta\in \RR $, on introduit les notations suivantes : 
\begin{equation}
S_{a,q}(\xx)=\sum_{\bb^{(1)}\in (\ZZ/q\ZZ)^{n+1}}\sum_{\bb^{(2)}\in (\ZZ/q\ZZ)^{n+1}}e\left(\frac{a}{q}F(\xx,\bb^{(1)},\bb^{(2)})\right),
\end{equation}
\begin{equation}
I_{\xx}(\beta)=\int_{\BB_{2}\times \BB_{3}}e\left(\beta F(\xx,\vv,\ww)\right)d\vv d\ww.
\end{equation}
\begin{lemma}\label{arcmaj1} Soient $ a,q\in \ZZ $ tels que $ 1\leqslant q \leqslant |\xx|P_{2}^{\theta_{2}}=|\xx|P^{\theta} $, $ 0\leqslant a <q $, $ \PGCD(a,q)=1 $ et soit $ \alpha\in \mathcal{M}^{'\xx}_{a,q}(\theta) $. On pose alors $ \beta=\alpha-\frac{a}{q} $. On a alors que : \[ S_{\xx}(\alpha)=P_{2}^{n+1}P_{3}^{n+1}q^{-2n-2}S_{a,q}(\xx)I_{\xx}(P\beta)+O(|\xx|^{2}P_{2}^{n+2\theta_{2}}P_{3}^{n+1}). \]
\end{lemma}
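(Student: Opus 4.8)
The plan is to follow closely the proof of Lemme \ref{lemme33}, adapting the three–variable circle method computation to the present situation in which $\xx$ is frozen and $F(\xx,\cdot,\cdot)$ is a \emph{bilinear} form in $(\yy,\zz)$ whose coefficients $c_{j,k}=\sum_i\alpha_{i,j,k}x_i$ satisfy $c_{j,k}\ll|\xx|$. First I would dispose of the condition $\max_k|B_k(\xx,\yy)|\neq 0$ in $S_{\xx}(\alpha)$: the excluded terms are exactly those with $\yy\in V_{3,\xx}^{\ast}$, and for such $\yy$ one has $F(\xx,\yy,\zz)=\sum_k B_k(\xx,\yy)z_k=0$, so each contributes $\ll P_3^{n+1}$. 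Since $\xx\in\mathcal{A}_{1,\lambda}(\ZZ)$ forces $\dim V_{3,\xx}^{\ast}\leqslant\lambda-1\leqslant n-1$, there are $\ll P_2^{n-1}$ such $\yy$, whence the full sum $\tilde S_{\xx}(\alpha)$ taken over all $\yy\in P_2\BB_2\cap\ZZ^{n+1}$ differs from $S_{\xx}(\alpha)$ by $O(P_2^{n-1}P_3^{n+1})$, well inside the target error.

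Next I would split the variables into residue classes modulo $q$. Writing $\alpha=\tfrac aq+\beta$ with $|\beta|\leqslant\tfrac12 P_2^{-1+\theta_2}P_3^{-1}$, and setting $\yy=q\yy'+\bb^{(1)}$, $\zz=q\zz'+\bb^{(2)}$ with $\bb^{(1)},\bb^{(2)}\in(\ZZ/q\ZZ)^{n+1}$, the bilinearity of $F(\xx,\cdot,\cdot)$ yields $F(\xx,\yy,\zz)\equiv F(\xx,\bb^{(1)},\bb^{(2)})\pmod q$, so that $e(\tfrac aq F(\xx,\yy,\zz))=e(\tfrac aq F(\xx,\bb^{(1)},\bb^{(2)}))$ depends only on the residues. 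This gives
\[ \tilde S_{\xx}(\alpha)=\sum_{\bb^{(1)},\bb^{(2)}\in(\ZZ/q\ZZ)^{n+1}}e\!\left(\tfrac aq F(\xx,\bb^{(1)},\bb^{(2)})\right)T(\bb^{(1)},\bb^{(2)}), \]
where $T(\bb^{(1)},\bb^{(2)})=\sum e(\beta F(\xx,q\yy'+\bb^{(1)},q\zz'+\bb^{(2)}))$, the sum running over $\yy',\zz'$ with $q\yy'+\bb^{(1)}\in P_2\BB_2$ and $q\zz'+\bb^{(2)}\in P_3\BB_3$.

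Then I would replace each inner sum $T$ by an integral, exactly as in Lemme \ref{lemme33}. On a unit cube in the scaled variables the phase $\beta F$ varies by $\ll|\beta|\,q\,|\xx|\,P_3$, since $\partial_{y_j}F=\sum_k c_{j,k}z_k\ll|\xx|P_3$ and $\partial_{z_k}F\ll|\xx|P_2$ (and $P_2\leqslant P_3$), each scaled by the factor $q$. As $q<P_2=\min(P_2,P_3)$ in the range under consideration (a consequence of the standing hypothesis $|\xx|^2P_2^{-1+3\theta_2}P_3^{-1}<1$ and the smallness of $\theta$), one obtains
\[ T(\bb^{(1)},\bb^{(2)})=\left(\frac{P_2}{q}\right)^{n+1}\!\left(\frac{P_3}{q}\right)^{n+1}I_{\xx}(P\beta)+O\!\left(|\beta|\,q\,|\xx|\,P_3\Big(\tfrac{P_2}{q}\Big)^{n+1}\!\Big(\tfrac{P_3}{q}\Big)^{n+1}\right)+O\!\left(\Big(\tfrac{P_2}{q}\Big)^{n}\!\Big(\tfrac{P_3}{q}\Big)^{n+1}\right), \]
the main term arising from the change of variables $\vv=qP_2^{-1}\yy'$, $\ww=qP_3^{-1}\zz'$ together with the identity $F(\xx,P_2\vv,P_3\ww)=P_2P_3\,F(\xx,\vv,\ww)=P\,F(\xx,\vv,\ww)$ and $P=P_2P_3$. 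Summing over the $q^{2(n+1)}$ residues $(\bb^{(1)},\bb^{(2)})$ collapses the main term to $P_2^{n+1}P_3^{n+1}q^{-2n-2}S_{a,q}(\xx)I_{\xx}(P\beta)$, which is the asserted expression.

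The remaining, and really the only delicate, step is the bookkeeping of the error terms, where the factors of $|\xx|$ must be tracked carefully. Summing the variation error over all residues cancels the $q^{-2(n+1)}$ hidden in $(P_2/q)^{n+1}(P_3/q)^{n+1}$, leaving $\ll|\beta|\,q\,|\xx|\,P_3\,P_2^{n+1}P_3^{n+1}$; inserting $|\beta|\leqslant\tfrac12 P_2^{-1+\theta_2}P_3^{-1}$ and $q\leqslant|\xx|P_2^{\theta_2}$ turns this into $\ll|\xx|^2 P_2^{n+2\theta_2}P_3^{n+1}$. Likewise the boundary error sums to $\ll q\,P_2^{n}P_3^{n+1}\ll|\xx|P_2^{n+\theta_2}P_3^{n+1}$, again dominated by $|\xx|^2 P_2^{n+2\theta_2}P_3^{n+1}$, and the restriction–removal error $O(P_2^{n-1}P_3^{n+1})$ is smaller still. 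Collecting these yields the stated remainder $O(|\xx|^2 P_2^{n+2\theta_2}P_3^{n+1})$. The one point requiring genuine care beyond Lemme \ref{lemme33} is precisely that every derivative of $F$ now carries a factor $|\xx|$, so that the two sources of $|\xx|$ — one from the gradient bound and one from the range $q\leqslant|\xx|P_2^{\theta_2}$ — combine to the single factor $|\xx|^2$ appearing in the final estimate.
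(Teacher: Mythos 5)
Your proof follows the paper's own argument for this lemma essentially step for step: removal of the constraint $\max_{k}|B_{k}(\xx,\yy)|\neq 0$ via the dimension bound on $V_{3,\xx}^{\ast}$ coming from $\xx\in\mathcal{A}_{1,\lambda}(\ZZ)$, splitting $\yy,\zz$ into residue classes modulo $q$, replacing each inner sum by an integral with the variation bound $|\beta|\,q\,|\xx|\,P_{3}$ per unit cell, the change of variables giving the main term $P_{2}^{n+1}P_{3}^{n+1}q^{-2n-2}S_{a,q}(\xx)I_{\xx}(P\beta)$, and the same error bookkeeping ($|\beta|\leqslant\frac{1}{2}P_{2}^{-1+\theta_{2}}P_{3}^{-1}$, $q\leqslant|\xx|P_{2}^{\theta_{2}}$) producing $O(|\xx|^{2}P_{2}^{n+2\theta_{2}}P_{3}^{n+1})$.

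The one inaccuracy is your parenthetical claim that $q<P_{2}$ follows from the standing hypothesis $|\xx|^{2}P_{2}^{-1+3\theta_{2}}P_{3}^{-1}<1$: that hypothesis combined with $q\leqslant|\xx|P_{2}^{\theta_{2}}$ only gives $q^{2}<P_{2}^{1-\theta_{2}}P_{3}$, which does not force $q<P_{2}$ when $P_{3}$ is much larger than $P_{2}$ (as it is in the intended applications, where $b'$ can exceed $b+1$). This is harmless for the result: if $q>P_{2}$, then $q\leqslant|\xx|P_{2}^{\theta_{2}}$ forces $|\xx|>P_{2}^{1-\theta_{2}}$, so the stated error term satisfies $|\xx|^{2}P_{2}^{n+2\theta_{2}}P_{3}^{n+1}>P_{2}^{n+2}P_{3}^{n+1}$, which already dominates the trivial bound $P_{2}^{n+1}P_{3}^{n+1}$ on both $|S_{\xx}(\alpha)|$ and the main term, so the lemma holds vacuously in that range; note the paper itself passes over this point in silence (it checks the analogous condition $q<P_{1}$ only in the proof of the Section 2 lemma).
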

\begin{proof}
On commence par \'ecrire \begin{multline*}
S_{\xx}(\alpha)=\sum_{\substack{\yy\in \ZZ^{n+1}\cap P_{2}\BB_{2}}}\sum_{\zz\in \ZZ^{n+1}\cap P_{3}\BB_{3}}e(\alpha F(\xx,\yy,\zz))\\  + \card\{ \yy\in \ZZ^{n+1}\cap P_{2}\BB_{2} \; | \; \forall k\in \{0,...,n\}, \; B_{k}(\xx,\yy)=0\}P_{3}^{n+1}.
\end{multline*} Puisque $ \xx\in \mathcal{A}_{1,\lambda}(\ZZ) $, on obtient : \begin{multline*} S_{\xx}(\alpha)=\sum_{\substack{\yy\in \ZZ^{n+1}\cap P_{2}\BB_{2}}}\sum_{\zz\in \ZZ^{n+1}\cap P_{3}\BB_{3}}e(\alpha F(\xx,\yy,\zz)) +O(P_{2}^{\lambda-1}P_{3}^{n+1}),
\end{multline*} que l'on peut r\'e\'ecrire :
\begin{multline*} S_{\xx}(\alpha)=\sum_{\bb^{(1)}\in (\ZZ/q\ZZ)^{n+1}}\sum_{\bb^{(2)}\in (\ZZ/q\ZZ)^{n+1}}e\left(\frac{a}{q}F(\xx,\bb^{(1)},\bb^{(2)})\right)S_{3}(\bb^{(1)},\bb^{(2)})\\ +O(P_{2}^{\lambda-1}P_{3}^{n+1}) \end{multline*} avec \[ S_{3}(\bb^{(1)},\bb^{(2)})=\sum_{\substack{q\yy'+\bb^{(1)}\in P_{2}\BB_{2}\\ q\zz'+\bb^{(2)}\in P_{3}\BB_{3} }}e(\beta F(\xx,q\yy'+\bb^{(1)},q\zz'+\bb^{(2)})). \]
On remarque que, pour $ \yy',\yy'',\zz',\zz'' $ tels que $ |\yy'-\yy''|\ll 1 $ et $ |\zz'-\zz''|\ll 1 $, on a 
\[
|F(\xx,q\yy'+\bb^{(1)},q\zz'+\bb^{(2)})-F(\xx,q\yy''+\bb^{(1)},q\zz''+\bb^{(2)})| \ll q|\xx|P_{2}+q|\xx|P_{3} \ll q|\xx|P_{3} \] 
On a donc, en rempla\c{c}ant la s\'erie par une int\'egrale, que \begin{multline*} S_{3}(\bb^{(1)},\bb^{(2)})=\int_{\substack{q\tilde{\vv}\in P_{2}\BB_{2} \\ q\tilde{\ww}\in P_{3}\BB_{3} }}e(\beta F(\xx, q\tilde{\vv},q\tilde{\ww}))d\tilde{\vv}d\tilde{\ww} \\ +O\left(|\beta|q|\xx|P_{3}\left(\frac{P_{3}}{q}\frac{P_{2}}{q}\right)^{n+1}+\left(\frac{P_{2}}{q}\right)^{n}\left(\frac{P_{3}}{q}\right)^{n+1}\right) \\= P_{2}^{n+1}P_{3}^{n+1}q^{-2n-2}I_{\xx}(P\beta)+O\left(q^{-2n-1}|\xx|P_{2}^{n+\theta_{2}}P_{3 }^{n+1}\right),
\end{multline*}
par changement de variables $ \vv=qP_{2}^{-1}\tilde{\vv} $ et $ \ww=qP_{3}^{-1}\tilde{\ww} $. 
On en d\'eduit finalement : \[ S_{\xx}(\alpha)=P_{2}^{n+1}P_{3}^{n+1}q^{-2n-2}S_{a,q}(\xx)I_{\xx}(P\beta)+O(E), \]
avec \begin{align*} E & =P_{2}^{\lambda-1}P_{3}^{n+1}+\sum_{\bb^{(1)}\in (\ZZ/q\ZZ)^{n+1}}\sum_{\bb^{(2)}\in (\ZZ/q\ZZ)^{n+1}}q^{-2n-1}|\xx|P_{2}^{n+\theta_{2}}P_{3 }^{n+1} \\ & \ll|\xx|q P_{2}^{n+\theta_{2}}P_{3 }^{n+1} \ll |\xx|^{2} P_{2}^{n+2\theta_{2}}P_{3 }^{n+1}. \end{align*} 
\end{proof}
\`A partir d'ici on notera : \begin{equation}
\mathfrak{S}_{\xx}(Q)=\sum_{q\ll Q}q^{-2n-2}\sum_{\substack{0\leqslant a<q \\ \PGCD(a,q)=1 }}S_{a,q}(\xx)
\end{equation}
\begin{equation}
J_{\xx}(\phi)=\int_{|\beta|\leqslant \phi}I_{\xx}(\beta)d\beta.
\end{equation}
\begin{lemma}\label{Nx1}
Si $ \xx\in \mathcal{A}_{1,\lambda}(\ZZ) $, on a, pour tout $ \varepsilon>0 $ : 
\begin{multline*} N_{\xx}(P_{2},P_{3})=P_{2}^{n}P_{3}^{n}\mathfrak{S}_{\xx}(\phi(\xx))J_{\xx}\left(\frac{1}{2}P_{2}^{\theta_{2}}\right)\\+O(|\xx|^{4}P_{2}^{n-1+5\theta_{2}}P_{3}^{n}+ |\xx|P_{2}^{n-\Delta(\theta,K_{1})+\varepsilon}P_{3}^{n-\Delta(\theta,K_{1})+\varepsilon}).\end{multline*}

\end{lemma}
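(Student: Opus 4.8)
The plan is to combine the two preceding lemmas directly. Lemma \ref{lemme22} already isolates the major-arc contribution together with the error term $O(|\xx|P_{2}^{n-\Delta(\theta,K_{1})+\varepsilon}P_{3}^{n-\Delta(\theta,K_{1})+\varepsilon})$, which is precisely the second error term appearing in the statement; so it remains only to evaluate the sum of integrals $\sum_{q\leqslant\phi(\xx)}\sum_{\PGCD(a,q)=1}\int_{\mathfrak{M}_{a,q}^{'\xx}(\theta)}S_{\xx}(\alpha)\,d\alpha$, to check that the resulting leading contribution equals $P_{2}^{n}P_{3}^{n}\mathfrak{S}_{\xx}(\phi(\xx))J_{\xx}(\tfrac{1}{2}P_{2}^{\theta_{2}})$, and to verify that the accumulated secondary errors match the first error term of the lemma.

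First I would substitute the asymptotic expansion of Lemma \ref{arcmaj1} for $S_{\xx}(\alpha)$ inside each integral, writing $\beta=\alpha-\frac{a}{q}$. On the arc $\mathfrak{M}_{a,q}^{'\xx}(\theta)$ one has $|\beta|\leqslant\frac{1}{2}P_{2}^{-1+\theta_{2}}P_{3}^{-1}$, so (recall $P=P_{2}P_{3}$) the change of variables $\gamma=P\beta$ maps this range bijectively onto $|\gamma|\leqslant\frac{1}{2}P_{2}^{\theta_{2}}$ with $d\beta=P^{-1}\,d\gamma$, whence
\[ \int_{\mathfrak{M}_{a,q}^{'\xx}(\theta)}I_{\xx}(P\beta)\,d\beta=\frac{1}{P_{2}P_{3}}\int_{|\gamma|\leqslant\frac{1}{2}P_{2}^{\theta_{2}}}I_{\xx}(\gamma)\,d\gamma=\frac{1}{P_{2}P_{3}}J_{\xx}\Bigl(\tfrac{1}{2}P_{2}^{\theta_{2}}\Bigr). \]
Collecting the main terms over all admissible $(a,q)$ gives
\[ P_{2}^{n+1}P_{3}^{n+1}\cdot\frac{1}{P_{2}P_{3}}J_{\xx}\Bigl(\tfrac{1}{2}P_{2}^{\theta_{2}}\Bigr)\sum_{q\leqslant\phi(\xx)}q^{-2n-2}\sum_{\substack{0\leqslant a<q\\ \PGCD(a,q)=1}}S_{a,q}(\xx), \]
which by the definition of $\mathfrak{S}_{\xx}(\phi(\xx))$ is exactly $P_{2}^{n}P_{3}^{n}\mathfrak{S}_{\xx}(\phi(\xx))J_{\xx}(\tfrac{1}{2}P_{2}^{\theta_{2}})$, the desired main term.

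It then remains to control the total contribution of the uniform error term $O(|\xx|^{2}P_{2}^{n+2\theta_{2}}P_{3}^{n+1})$ of Lemma \ref{arcmaj1}. Since each arc $\mathfrak{M}_{a,q}^{'\xx}(\theta)$ has length $P_{2}^{-1+\theta_{2}}P_{3}^{-1}$ and the number of admissible pairs with $1\leqslant q\leqslant\phi(\xx)$ is $\sum_{q\leqslant\phi(\xx)}\varphi(q)\ll\phi(\xx)^{2}=|\xx|^{2}P_{2}^{2\theta_{2}}$, the total measure of the major arcs is $\ll|\xx|^{2}P_{2}^{-1+3\theta_{2}}P_{3}^{-1}$ (consistent with the disjointness hypothesis $|\xx|^{2}P_{2}^{-1+3\theta_{2}}P_{3}^{-1}<1$ of Lemma \ref{separat}, which also justifies replacing the integral over the union by the sum over the arcs). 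Multiplying the uniform error by this measure yields
\[ |\xx|^{2}P_{2}^{n+2\theta_{2}}P_{3}^{n+1}\cdot|\xx|^{2}P_{2}^{-1+3\theta_{2}}P_{3}^{-1}=|\xx|^{4}P_{2}^{n-1+5\theta_{2}}P_{3}^{n}, \]
which is precisely the first error term of the statement; adding it to the error carried over from Lemma \ref{lemme22} completes the proof.

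The work here is essentially bookkeeping, and no genuinely new estimate is required beyond the two lemmas already established. The only points demanding care are verifying that $\gamma=P\beta$ lands on exactly the interval $|\gamma|\leqslant\frac{1}{2}P_{2}^{\theta_{2}}$ defining $J_{\xx}(\tfrac{1}{2}P_{2}^{\theta_{2}})$, and that the count $\sum_{q\leqslant\phi(\xx)}\varphi(q)\ll\phi(\xx)^{2}$ produces the exact exponent $5\theta_{2}$ in the error; both are routine.
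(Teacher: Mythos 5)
Your proposal is correct and follows essentially the same route as the paper's own proof: decompose via Lemme \ref{lemme22}, substitute the expansion of Lemme \ref{arcmaj1} on each arc, obtain the main term $P_{2}^{n}P_{3}^{n}\mathfrak{S}_{\xx}(\phi(\xx))J_{\xx}\bigl(\tfrac{1}{2}P_{2}^{\theta_{2}}\bigr)$ by the change of variables $\gamma=P\beta$, and bound the accumulated error by the uniform error of Lemme \ref{arcmaj1} times $\Vol(\mathfrak{M}^{'\xx}(\theta))\ll|\xx|^{2}P_{2}^{-1+3\theta_{2}}P_{3}^{-1}$, exactly as the paper does.
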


\begin{proof}
On notera $ E_{1}=|\xx|P_{2}^{n-\Delta(\theta,K_{1})+\varepsilon}P_{3}^{n-\Delta(\theta,K_{1})+\varepsilon} $. Par application des lemmes \ref{lemme22} et \ref{arcmaj1}, on a : \begin{align*}
N_{\xx}(P_{2},P_{3})&=\sum_{q\leqslant \phi(\xx)}\sum_{\substack{1\leqslant a<q \\ \PGCD(a,q)=1}}\int_{\mathfrak{M}_{a,q}^{'\xx}(\theta)}S_{\xx}(\alpha)d\alpha +O(E_{1}) \\ & =P_{2}^{n+1}P_{3}^{n+1}\sum_{q\leqslant \phi(\xx)}\sum_{\substack{1\leqslant a<q \\ \PGCD(a,q)=1}}S_{a,q}(\xx)\int_{|\beta|\leqslant \frac{1}{2}P_{2}^{-1+\theta_{2}}P_{3}^{-1}}I_{\xx}(P_{2}P_{3}\beta)d\beta \\ & \; \; +O(E_{1})+O(E_{2}),
\end{align*} 
avec $ E_{2}=\Vol(\mathfrak{M}^{'\xx}(\theta))|\xx|^{2}P_{2}^{n+2\theta_{2}}P_{3}^{n+1} $. On remarque que : 
\begin{align*}
\Vol(\mathfrak{M}^{'\xx}(\theta)) & \ll \sum_{q\leqslant  \phi(\xx)}\sum_{\substack{1\leqslant a<q \\ \PGCD(a,q)=1}}P_{2}^{-1+\theta_{2}}P_{3}^{-1} \\ & \ll P_{2}^{-1+\theta_{2}}P_{3}^{-1}(|\xx|P_{2}^{\theta_{2}})^{2} \\ & \ll |\xx|^{2}P_{2}^{-1+3\theta_{2}}P_{3}^{-1}.
\end{align*}
Par cons\'equent, \[ E_{2} \ll |\xx|^{4}P_{2}^{n-1+5\theta_{2}}P_{3}^{n}, \] d'o\`u le r\'esultat.

\end{proof}
\begin{lemma}\label{Jx}
Soit $ \xx\in  \mathcal{A}_{1,\lambda}(\ZZ) $. On suppose que l'on a de plus $ K_{1}>2 $. Alors, l'int\'egrale $ J_{\xx}=\int_{\RR}I_{\xx}(\beta)d\beta $ est absolument convergente, et on a : \[ \left|J_{\xx}-J_{\xx}\left(\frac{1}{2}P_{2}^{\theta_{2}}\right)\right|\ll P_{2}^{\theta_{2}(1-K_{1}+\varepsilon')} \](pour un $ \varepsilon'>0 $ arbitrairement petit), et on a $ |J_{\xx}|\ll 1 $.
\end{lemma}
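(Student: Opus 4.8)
The lemma claims two things about the oscillatory integral
\[
I_{\xx}(\beta)=\int_{\BB_{2}\times \BB_{3}}e\left(\beta F(\xx,\vv,\ww)\right)d\vv d\ww :
\]
first, that $J_{\xx}=\int_{\RR}I_{\xx}(\beta)d\beta$ converges absolutely and is $O(1)$; second, that truncating at $\frac{1}{2}P_{2}^{\theta_{2}}$ incurs a tail error $\ll P_{2}^{\theta_{2}(1-K_{1}+\varepsilon')}$. Both assertions rest entirely on a pointwise decay estimate for $I_{\xx}(\beta)$, so my plan is to extract such a bound and then integrate it.

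**Plan for the pointwise bound.** The approach mirrors Lemma \ref{ConvergeJ}, where the analogous statement for $J$ was deduced from $|I(\beta)|\ll \beta^{-2}$. I would apply the circle-method machinery of this section with the substitution already used in the proof of Lemma \ref{ConvergeJ}: choose the scaling parameter so that $P^{2\theta}=\beta$ (equivalently $P^{\theta}=\beta^{1/(2\cdot\dots)}$ in the present normalization, using $P=P_{2}P_{3}$ and $P^{\theta}=P_{2}^{\theta_{2}}$). Then $P^{-1}\beta$ lies on the boundary of the trivial major arc $\mathfrak{M}_{0,1}^{'\xx}(\theta)$. Feeding this into Lemma \ref{arcmaj1} with $a=0,q=1$ gives
\[
S_{\xx}(P^{-1}\beta)=P_{2}^{n+1}P_{3}^{n+1}I_{\xx}(\beta)+O(|\xx|^{2}P_{2}^{n+2\theta_{2}}P_{3}^{n+1}),
\]
while the Weyl-type estimate of Lemma \ref{2cond} (first alternative, valid since $P^{-1}\beta$ sits on the edge of the major arcs and so escapes the second alternative) yields $|S_{\xx}(P^{-1}\beta)|\ll P_{2}^{n+1}P_{3}^{n+1}P^{-K_{1}\theta+\varepsilon}$. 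Comparing the two and dividing by $P_{2}^{n+1}P_{3}^{n+1}$ isolates
\[
I_{\xx}(\beta)=O\left(P^{-K_{1}\theta+\varepsilon}+|\xx|^{2}P_{2}^{2\theta_{2}}P_{3}^{-1}\right).
\]
Since $P^{\theta}=P_{2}^{\theta_{2}}$ was set equal to $\beta$ (up to the fixed power implicit in $P^{2\theta}=\beta$), the main term $P^{-K_{1}\theta+\varepsilon}$ becomes $\beta^{-K_{1}+\varepsilon'}$; one must also check that the secondary error is dominated by this, exactly as in the proof of Lemma \ref{ConvergeJ}. This gives the clean decay $|I_{\xx}(\beta)|\ll \beta^{-K_{1}+\varepsilon'}$ for $\beta$ large.

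**Assembling the conclusion.** With the decay bound in hand, the two claims follow by elementary integration. Absolute convergence of $J_{\xx}$ holds because $K_{1}>2$ forces the exponent $-K_{1}+\varepsilon'<-1$, so $\int_{|\beta|>1}\beta^{-K_{1}+\varepsilon'}d\beta$ converges; near $\beta=0$ the integrand is bounded by $\Vol(\BB_{2}\times\BB_{3})=O(1)$, giving $|J_{\xx}|\ll 1$. For the truncation error,
\[
\left|J_{\xx}-J_{\xx}\left(\tfrac{1}{2}P_{2}^{\theta_{2}}\right)\right|\ll \int_{|\beta|>\frac{1}{2}P_{2}^{\theta_{2}}}\beta^{-K_{1}+\varepsilon'}d\beta \ll \left(P_{2}^{\theta_{2}}\right)^{1-K_{1}+\varepsilon'},
\]
which is precisely the claimed $P_{2}^{\theta_{2}(1-K_{1}+\varepsilon')}$.

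**Where the difficulty lies.** The routine integration is harmless; the delicate point is justifying the pointwise decay uniformly in $\xx\in\mathcal{A}_{1,\lambda}(\ZZ)$. Two issues demand care. First, the error term $|\xx|^{2}P_{2}^{2\theta_{2}}P_{3}^{-1}$ in Lemma \ref{arcmaj1} carries a factor $|\xx|^{2}$, and one must confirm that under the running hypothesis $|\xx|^{2}P_{2}^{-1+3\theta_{2}}P_{3}^{-1}<1$ (from Lemma \ref{separat}) this contribution is genuinely subordinate to $\beta^{-K_{1}+\varepsilon'}$ across the whole range of $\beta$ being integrated—this requires tracking how $\beta$, $P_{2}$, $P_{3}$ and the $\theta_{2}$-dependence interlock through $P^{2\theta}=\beta$. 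Second, one must verify that the first alternative of Lemma \ref{2cond} is the one that applies: the point $P^{-1}\beta$ must genuinely lie on the boundary of the major arcs rather than inside one, which is what bars the second alternative and licenses the Weyl bound. Both are bookkeeping matters already navigated successfully in the proof of Lemma \ref{ConvergeJ}, so I expect the present proof to follow that template almost verbatim, the only substantive change being the replacement of the global sum $S(\alpha)$ by the fibral sum $S_{\xx}(\alpha)$ and the corresponding switch of $K$ to $K_{1}=(n+1)-\lambda$.
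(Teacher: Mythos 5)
Your skeleton is the same as the paper's: bound $I_{\xx}(\beta)$ pointwise by confronting Lemma \ref{arcmaj1} at $(a,q)=(0,1)$ with the Weyl bound of Lemma \ref{2cond} at the boundary of the trivial major arc, then integrate the tail using $K_{1}>2$; the integration step and the deduction of $|J_{\xx}|\ll 1$ are unproblematic. The genuine gap lies in the one step you postpone: the domination of the secondary error from Lemma \ref{arcmaj1}, which after division by $P_{2}^{n+1}P_{3}^{n+1}$ is $|\xx|^{2}P_{2}^{-1+2\theta_{2}'}$ (not $|\xx|^{2}P_{2}^{2\theta_{2}}P_{3}^{-1}$ as you wrote). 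This is \emph{not} ``bookkeeping exactly as in Lemma \ref{ConvergeJ}'', and the hypothesis $|\xx|^{2}P_{2}^{-1+3\theta_{2}}P_{3}^{-1}<1$ of Lemma \ref{separat} cannot deliver it. Indeed, if you keep the $P_{2},P_{3}$ of the statement fixed and let only $\theta_{2}'$ vary with $\beta$ (via $P_{2}^{\theta_{2}'}=2|\beta|$, so that $P^{-1}\beta$ sits on the boundary of $\mathfrak{M}_{0,1}^{'\xx}(\theta')$), the error equals $|\xx|^{2}P_{2}^{-1}(2|\beta|)^{2}$: it \emph{grows} with $\beta$ while the Weyl term $(2|\beta|)^{-K_{1}+\varepsilon'}$ decays, so the desired bound collapses precisely in the range $|\beta|\rightarrow\infty$ over which you must integrate. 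If instead you imitate Lemma \ref{ConvergeJ} (fix $\theta_{2}$, let the auxiliary $P$ grow with $\beta$), domination requires $|\xx|^{2}\ll(2|\beta|)^{1/\theta_{2}-2-K_{1}+\varepsilon'}$, i.e.\ a threshold in $\beta$ depending on $|\xx|$; the resulting constants are not uniform in $\xx$, whereas uniformity is indispensable here (the bounds of Lemma \ref{Jx} are later multiplied by explicit powers of $|\xx|$ and summed over $\xx$ in Lemma \ref{Nx2} and Proposition \ref{prop2}). In Lemma \ref{ConvergeJ} the error carried no factor $|\xx|$ and was tamed by the fixed conditions \eqref{cond1}--\eqref{cond3}; no analogue of those conditions is available in the present setting.

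The missing idea, which is the actual content of the paper's proof, is a \emph{balancing of parameters}: for each $\beta$ with $|\beta|>C_{3}$ one chooses the auxiliary parameters so that, in addition to $2|\beta|=P^{\theta'}=P_{2}^{\theta_{2}'}$, one has $P^{-K_{1}\theta'}=|\xx|^{2}P_{2}^{-1+2\theta_{2}'}$ --- which forces the auxiliary box sizes themselves to depend on $\beta$ and $|\xx|$ (in effect $P_{2}=|\xx|^{2}(2|\beta|)^{K_{1}+2}$; the $P_{2},P_{3}$ used to prove the pointwise bound are \emph{not} those of the statement, since $I_{\xx}(\beta)$ does not depend on them). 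With this choice the error term is, by construction, equal to the Weyl term, whence $|I_{\xx}(\beta)|\ll|\beta|^{-K_{1}+\varepsilon'}$ with a constant uniform in $\xx$; moreover the separation hypothesis of Lemma \ref{separat} is then automatic, since $|\xx|^{2}P_{2}^{-1+3\theta_{2}'}P_{3}^{-1}=P^{(1-K_{1})\theta'}P_{3}^{-1}<1$ --- it is an output of the choice, not an input as in your sketch. Without this balancing step your argument does not close.
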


\begin{proof}
On consid\`ere un r\'eel $ \beta $ tel que $ |\beta|>C_{3} $. On choisit $ \theta',\theta_{2}' $ et $ P_{3} $ tels que $ 2|\beta|=P^{\theta'}=P_{2}^{\theta_{2}'} $ et $ P^{-K_{1}\theta'}=|\xx|^{2}P_{2}^{-1+2\theta_{2}'} $ (avec $ P=P_{2}P_{3} $). Remarquons que cette derni\`ere condition implique : \[ |\xx|^{2}P_{2}^{-1+3\theta_{2}}P_{3}^{-1}=P^{(-K_{1}+1)\theta'}P_{3}^{-1}<1 \] et donc la condition du lemme \ref{separat} est satisfaite. On a alors, d'apr\`es le lemme \ref{2cond}, si $ P^{\theta'}>C_{3} $ alors : \[ |S_{\xx}(P^{-1}\beta)|<P_{2}^{n+1}P_{3}^{n+1}P^{-K_{1}\theta'+\varepsilon}, \] (car $ P^{-1}\beta $ appartient au bord de $ \mathfrak{M}_{0,1}^{\xx}(\theta') $). On a par ailleurs, d'apr\`es le lemme \ref{arcmaj1} appliqu\'e \`a $ (a,q)=(0,1) $ : \[ P^{n+1}|I_{\xx}(\beta)|\ll |S_{\xx}(P^{-1}\beta)| +O(|\xx|^{2}P_{2}^{n+2\theta_{2}'}P_{3}^{n+1}). \] On obtient alors la borne \begin{align*} I_{\xx}(\beta)& \ll P^{\varepsilon-K_{1}\theta'}+|\xx|^{2}P_{2}^{-1+2\theta_{2}'} \\  & \ll P^{\varepsilon-K_{1}\theta'} \\& =|\beta|^{-K_{1}+\varepsilon/\theta'} \\ & \ll |\beta|^{-K_{1}+\varepsilon/\theta}=|\beta|^{-K_{1}+\varepsilon'}  \end{align*}($ \theta $ \'etant fix\'e). Par cons\'equent, si $ P^{\theta'}>C_{3} $, on a que \begin{align*} \left|J_{\xx}\left(\frac{1}{2}P_{2}^{\theta_{2}}\right)-J_{\xx}\right| & \ll  \int_{|\beta|>\frac{1}{2}P_{2}^{\theta_{2}}}|\beta|^{\varepsilon'-K_{1}}d\beta \\ & \ll P_{2}^{\theta_{2}(1-K_{1}+\varepsilon')}. \end{align*}
Par ailleurs, si l'on choisit $ P_{2} $ tr\`es petit (de sorte que $ \frac{1}{2}P_{2}^{\theta_{2}}\asymp C_{3} $), on obtient $ |J_{\xx}(C_{3})-J_{\xx}|\ll P_{2}^{\theta_{2}(1-K_{1}+\varepsilon')} \ll 1 $, et donc $ |J_{\xx}|\ll 1 $ (car $ |J_{\xx}(C_{3})|\ll 1 $). 

\end{proof}

\begin{lemma}\label{Sx}
Soit $ \xx \in \mathcal{A}_{1,\lambda}(\ZZ) $. On suppose que l'on a $ K_{1}>2 $. Alors, pour tout $ \varepsilon>0 $ fix\'e, la s\'erie \[ \mathfrak{S}_{\xx}=\sum_{q=1}^{\infty}q^{-2n-2}\sum_{\substack{0\leqslant a<q \\ \PGCD(a,q)=1 }}S_{a,q}(\xx) \] converge absolument, et on a \[ \left|\mathfrak{S}_{\xx}(\phi(\xx))-\mathfrak{S}_{\xx}\right|\ll |\xx|^{2+\varepsilon}P_{2}^{\theta_{2}(2-K_{1}+\varepsilon)}. \]
On a de plus la borne $ |\mathfrak{S}_{\xx}|\ll |\xx|^{2+\varepsilon} $.
\end{lemma}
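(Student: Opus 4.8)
The plan is to imitate the convergence argument already carried out for $\mathfrak{S}$ in the first step (Lemme \ref{convergeS}), the only genuinely new feature being the dependence on $\xx$. Everything rests on a pointwise bound for the complete exponential sum $S_{a,q}(\xx)$, which I would obtain by running the apparatus of Lemmes \ref{2cond} and \ref{arcmaj1} at the auxiliary scale $P_2=P_3=q$ and evaluating at the rational point $\alpha=\frac aq$. With this choice $P=q^2$, and writing $\theta_2=2\theta$ so that $P_2^{\theta_2}=P^{\theta}=q^{2\theta}$, I first check that $\frac aq$ escapes every major arc. Indeed, if $\frac aq\in\mathfrak{M}^{\xx}_{a',q'}(\theta)$ then, by the spacing computation of Lemme \ref{separat} together with $\PGCD(a,q)=1$, either $a'/q'\neq a/q$, in which case $2|q'(a/q)-a'|\geqslant 2/q$ contradicts the arc inequality $\leqslant P_2^{-1+\theta_2}P_3^{-1}=q^{-2+2\theta}$ as soon as $\theta<\tfrac12$, or $a'/q'=a/q$, forcing $q'=q$ and hence $q\leqslant|\xx|q^{2\theta}$, i.e.\ $q\leqslant|\xx|^{1/(1-2\theta)}$. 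Thus for $q$ above this $|\xx|$-dependent threshold the second alternative of Lemme \ref{2cond} fails, and the first alternative gives
\[ |S_{\xx}(a/q)|\ll q^{2n+2}P^{-K_1\theta+\varepsilon}=q^{2n+2-2K_1\theta+\varepsilon}. \]

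Next I would pass from $S_{\xx}(a/q)$ to $S_{a,q}(\xx)$. Since $P_2=P_3=q$ and $\alpha=\frac aq$, the summand is $q$-periodic, so the two sums differ only by a bounded multiplicative constant, by boundary terms, and by the contribution of the $\yy\in V_{3,\xx}^{\ast}$ excluded from $S_{\xx}$; because $\xx\in\mathcal{A}_{1,\lambda}(\ZZ)$ one has $\dim V_{3,\xx}^{\ast}<\lambda$, so this last contribution is $\ll q^{n+\lambda}$, of lower order than $q^{2n+2-2K_1\theta}$ for every admissible $\theta$ (recall $K_1=n+1-\lambda$). This yields $|S_{a,q}(\xx)|\ll|\xx|^{O(1)}q^{2n+2-2K_1\theta+\varepsilon}$, the $|\xx|$-power arising only from handling the small moduli $q\leqslant|\xx|^{1/(1-2\theta)}$ by the trivial bound $|S_{a,q}(\xx)|\leqslant q^{2n+2}$. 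Inserting this into the defining series and summing the geometric-type tail,
\[ \big|\mathfrak{S}_{\xx}(\phi(\xx))-\mathfrak{S}_{\xx}\big|\ll\sum_{q>\phi(\xx)}q^{-2n-2}\sum_{\substack{0\leqslant a<q\\\PGCD(a,q)=1}}|S_{a,q}(\xx)|\ll|\xx|^{O(1)}\sum_{q>\phi(\xx)}q^{1-2K_1\theta+\varepsilon}, \]
which converges precisely because the hypothesis $K_1>2$ permits choosing $\theta<\tfrac12$ with $2K_1\theta>2$. Substituting $\phi(\xx)=|\xx|P_2^{\theta_2}$ and pushing $\theta\to\tfrac12^-$ so that $2K_1\theta\to K_1$ produces the exponent $P_2^{\theta_2(2-K_1+\varepsilon)}$ and the claimed absolute convergence of $\mathfrak{S}_{\xx}$. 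Finally, for the bound $|\mathfrak{S}_{\xx}|\ll|\xx|^{2+\varepsilon}$ I would argue exactly as in Lemme \ref{Jx}: choose the free scale so that $\phi(\xx)\asymp|\xx|$, bound the tail by the estimate just obtained, and control the head $\mathfrak{S}_{\xx}(\phi(\xx))$ trivially by $\sum_{q\ll|\xx|}q\ll|\xx|^2$.

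The main obstacle is the bookkeeping of the $\xx$-dependence, which has no counterpart in the analogous lemma of the first step. Concretely, the minor-arc bound is available only for $q$ beyond the threshold $|\xx|^{1/(1-2\theta)}$ dictated by the major-arc modulus condition $q'\leqslant|\xx|P_2^{\theta_2}$, whereas the target exponent $P_2^{\theta_2(2-K_1+\varepsilon)}$ forces $\theta$ as close to $\tfrac12$ as possible; reconciling these two demands — keeping the accumulated power of $|\xx|$ down to $2+\varepsilon$ while extracting the full saving $2K_1\theta\approx K_1$ in $q$ — is the delicate point, and it is what fixes the admissible range of the auxiliary parameter $\theta$ (and, downstream, the eventual choice of $\lambda$).
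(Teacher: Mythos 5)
Your overall route is the paper's: identify $S_{a,q}(\xx)$ with $S_{\xx}(a/q)$ at the scale $P_{2}=P_{3}=q$, use $\PGCD(a,q)=1$ to show that $a/q$ can only lie in the arc centred at itself, so that the second alternative of Lemme \ref{2cond} forces $q\leqslant|\xx|q^{2\theta}$, and apply the first alternative beyond that threshold. The gap is in the final optimisation. With a \emph{fixed} $\theta<\tfrac12$, your minor-arc bound $|S_{a,q}(\xx)|\ll q^{2n+2-2K_{1}\theta+\varepsilon}$ is available only for $q>|\xx|^{1/(1-2\theta)}$, whereas the tail you must estimate starts at $\phi(\xx)=|\xx|P_{2}^{\theta_{2}}$, which lies far below $|\xx|^{1/(1-2\theta)}$ whenever $P_{2}^{\theta_{2}}<|\xx|^{2\theta/(1-2\theta)}$ --- and this is the typical situation, since the estimate must be uniform in $|\xx|$ up to $(P_{2}^{1-3\theta_{2}}P_{3})^{1/2}$ (Lemme \ref{Nx2}, Corollaire \ref{lecorollaire}). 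On the intermediate range $\phi(\xx)<q\leqslant|\xx|^{1/(1-2\theta)}$ your only tool is the trivial bound, contributing $\sum_{q}q\ll|\xx|^{2/(1-2\theta)}$; requiring $|\xx|^{2/(1-2\theta)}\ll|\xx|^{2+\varepsilon}P_{2}^{\theta_{2}(2-K_{1}+\varepsilon)}$ in that regime forces, after clearing the factor $1-2\theta$, the condition $2K_{1}\theta\leqslant\varepsilon$, which is incompatible with $2K_{1}\theta\approx K_{1}$, the very thing the $P_{2}$-exponent needs. So \og pushing $\theta\to\tfrac12^{-}$ \fg{} is not a legitimate limiting procedure: no fixed $\theta$ works, and your closing paragraph names exactly this tension without resolving it. (Absolute convergence for each fixed $\xx$ does survive your argument, but the quantitative, $\xx$-uniform bound --- the part actually consumed downstream --- does not.)

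The missing idea, and what the paper does, is to choose the parameter adaptively in $q$: for each $q>C_{3}|\xx|$ take $\theta=\theta(q)$ with $q^{2\theta}=|\xx|^{-1}q^{1-\varepsilon}$ (the paper's $q^{\theta'}$), so that the escape condition $|\xx|q^{2\theta}<q$ holds by construction and Lemme \ref{2cond} yields the uniform pointwise bound $|S_{a,q}(\xx)|\ll|\xx|^{K_{1}}q^{2n+2-K_{1}+\varepsilon}$: the full saving $K_{1}$ in $q$, at the cost of an \emph{explicit} factor $|\xx|^{K_{1}}$ rather than your unspecified $|\xx|^{O(1)}$. Since $P_{2}^{\theta_{2}}>C_{3}$ gives $\phi(\xx)>C_{3}|\xx|$, this bound covers the whole tail, and summing (with $\varphi(q)\leqslant q$ values of $a$) gives
\[ \left|\mathfrak{S}_{\xx}(\phi(\xx))-\mathfrak{S}_{\xx}\right|\ll|\xx|^{K_{1}}\sum_{q>\phi(\xx)}q^{1-K_{1}+\varepsilon}\ll|\xx|^{K_{1}}\phi(\xx)^{2-K_{1}+\varepsilon}=|\xx|^{2+\varepsilon}P_{2}^{\theta_{2}(2-K_{1}+\varepsilon)}, \]
the powers $|\xx|^{K_{1}}$ and $|\xx|^{2-K_{1}}$ combining to exactly the exponent $2$ that your bookkeeping left undetermined. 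Your treatment of the remaining claim $|\mathfrak{S}_{\xx}|\ll|\xx|^{2+\varepsilon}$ (tail cut at a threshold $\asymp|\xx|$, head bounded trivially by $\sum_{q\ll|\xx|}q\ll|\xx|^{2}$) coincides with the paper's and is fine once the corrected tail estimate is in place.
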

\begin{proof}
Remarquons d'abord que $ S_{a,q}(\xx)=S_{\xx}(\alpha) $ pour $ P_{2}=P_{3}=q $ et $ \alpha=\frac{a}{q}\in \mathfrak{M}_{a,q}^{'\xx}(\theta) $. Supposons $ \theta'\in [0,1] $ tel que $ q^{\theta'}>C_{3} $, alors, par le lemme \ref{2cond}, on a : \[ |S_{a,q}(\xx)|<q^{2(n+1)-K_{1}\theta'+\varepsilon} \] ou alors il existe $ q',a'\in \ZZ $ tels que $ 1\leqslant q'\leqslant |\xx|q^{\theta'} $ et $ |q'a-a'q|\leqslant q^{-1+\theta'}  $. Ce deuxi\`eme cas est alors impossible lorsque $ q'\neq q $, donc en particulier lorsque $ |\xx|q^{\theta'}<q $. Quitte \`a supposer $ q $ tel que $ q>C_{3}|\xx| $, on choisit alors $ \theta' $ tel que $ q^{\theta'}=|\xx|^{-1}q^{-\varepsilon+1} $, et on a alors \[  |S_{a,q}(\xx)|<q^{2(n+1)-K_{1}\theta'+\varepsilon}=q^{2(n+1)-K_{1}+\varepsilon'}|\xx|^{K_{1}}. \] On remarque par ailleurs que pour $ P^{\theta}=P_{2}^{\theta_{2}}>C_{3} $, on a $ \phi(\xx)=P_{2}^{\theta_{2}}|\xx|>C_{3}|\xx| $, et donc, par ce qui pr\'ec\`ede on a l'estimation :

\begin{align*}
|\mathfrak{S}_{\xx}(\phi(\xx))-\mathfrak{S}_{\xx}|&\ll \sum_{q>\phi(\xx)}q^{-2n-2}\sum_{\substack{0\leqslant a<q \\ \PGCD(a,q)=1 }}|S_{a,q}(\xx)| \\ &\ll  \sum_{q>\phi(\xx)}q^{1-K_{1}+\varepsilon'}|\xx]^{K_{1}} \\ & \ll P_{2}^{\theta_{2}(2-K_{1}+\varepsilon')}|\xx|^{2+\varepsilon'}.
\end{align*}
Par le m\^eme calcul, on trouve : $ |\mathfrak{S}_{\xx}(C_{3}|\xx|)-\mathfrak{S}_{\xx}|\ll|\xx|^{2+\varepsilon'} $ et en utilisant l'estimation triviale $ |\mathfrak{S}_{\xx}(C_{3}|\xx|)|\ll |\xx|^{2+\varepsilon'} $ (obtenue en majorant trivialement $ S_{a,q}(\xx) $ par $ q^{2n+2} $), on a alors $ |\mathfrak{S}_{\xx}|\ll |\xx|^{2+\varepsilon'} $. 
\end{proof}
\begin{lemma}\label{Nx2}
Soit $ \xx \in \mathcal{A}_{1,\lambda}(\ZZ) $. On suppose que l'on a $ 0<\theta\leqslant 1 $ et $ P_{2}\geqslant 1 $ tel que $ P_{2}^{\theta_{2}}>C_{3} $ et tel que $ |\xx|^{2}P_{2}^{-1+3\theta_{2}}P_{3}^{-1}<1 $. On suppose de plus que $ K_{1}>2 $. Pour un $ \varepsilon>0 $ arbitrairement petit, on a la formule suivante : \[ N_{\xx}(P_{2},P_{3})=\mathfrak{S}_{\xx}J_{\xx}P_{2}^{n}P_{3}^{n}+O(E_{2}(\xx))+O(E_{3}(\xx)), \] avec \[ E_{2}(\xx)=|\xx|^{4}P_{2}^{n-(1-5\theta_{2})}P_{3}^{n}, \] \[ E_{3}(\xx)=|\xx|^{2+\varepsilon}P_{2}^{n-\Delta(\theta,K_{1})+\varepsilon}P_{3}^{n} .\]
\end{lemma}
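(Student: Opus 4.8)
The plan is to assemble the asymptotic formula directly from the three preceding lemmas: Lemma~\ref{Nx1}, which already produces the main term but with the \emph{truncated} factors $\mathfrak{S}_{\xx}(\phi(\xx))$ and $J_{\xx}(\tfrac12 P_2^{\theta_2})$; Lemma~\ref{Jx}, which gives the convergence of $J_{\xx}$ together with the tail bound and $|J_{\xx}|\ll 1$; and Lemma~\ref{Sx}, which does the same for $\mathfrak{S}_{\xx}$. All that is left to do is to replace the truncated factors by their limits $\mathfrak{S}_{\xx}$ and $J_{\xx}$, and then to verify that every error term generated is absorbed into $E_2(\xx)$ and $E_3(\xx)$.

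First I would start from Lemma~\ref{Nx1}, namely
\[ N_{\xx}(P_2,P_3) = P_2^n P_3^n\,\mathfrak{S}_{\xx}(\phi(\xx))\,J_{\xx}\!\left(\tfrac12 P_2^{\theta_2}\right) + O\!\left(E_2(\xx)\right) + O\!\left(|\xx|\,P_2^{n-\Delta(\theta,K_1)+\varepsilon} P_3^{n-\Delta(\theta,K_1)+\varepsilon}\right), \]
and note that the last error is already $\ll E_3(\xx)$, since $\Delta(\theta,K_1)>0$, $P_3\geqslant 1$ and $|\xx|\leqslant |\xx|^{2+\varepsilon}$. It then remains to estimate the difference $\mathfrak{S}_{\xx}(\phi(\xx))\,J_{\xx}(\tfrac12 P_2^{\theta_2}) - \mathfrak{S}_{\xx} J_{\xx}$, which I would split by the usual product identity as
\[ \bigl(\mathfrak{S}_{\xx}(\phi(\xx)) - \mathfrak{S}_{\xx}\bigr) J_{\xx}\!\left(\tfrac12 P_2^{\theta_2}\right) + \mathfrak{S}_{\xx}\bigl(J_{\xx}\!\left(\tfrac12 P_2^{\theta_2}\right) - J_{\xx}\bigr). \]

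Next I would bound the two pieces with the tail estimates. By Lemma~\ref{Jx} one has $|J_{\xx}|\ll 1$, hence also $|J_{\xx}(\tfrac12 P_2^{\theta_2})|\ll 1$, together with $|J_{\xx}(\tfrac12 P_2^{\theta_2}) - J_{\xx}| \ll P_2^{\theta_2(1-K_1+\varepsilon')}$; by Lemma~\ref{Sx} one has $|\mathfrak{S}_{\xx}| \ll |\xx|^{2+\varepsilon}$ and $|\mathfrak{S}_{\xx}(\phi(\xx)) - \mathfrak{S}_{\xx}| \ll |\xx|^{2+\varepsilon} P_2^{\theta_2(2-K_1+\varepsilon)}$. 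Substituting, the first piece is $\ll |\xx|^{2+\varepsilon} P_2^{\theta_2(2-K_1+\varepsilon)}$ and the second $\ll |\xx|^{2+\varepsilon} P_2^{\theta_2(1-K_1+\varepsilon')}$; since $P_2^{\theta_2} = P^{\theta} > C_3 > 1$ and $2-K_1 > 1-K_1$, the first dominates, so the whole difference is $\ll |\xx|^{2+\varepsilon} P_2^{\theta_2(2-K_1)} P^{\varepsilon}$ (using $P_2^{\theta_2}=P^{\theta}\leqslant P$ to convert the spurious $\varepsilon$-powers into $P^{\varepsilon}$).

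Finally I would translate back through the defining relation $P_2^{\theta_2}=P^{\theta}$ with $P=P_2P_3$ and $\Delta(\theta,K_1)=\theta(K_1-2)$, giving $P_2^{\theta_2(2-K_1)} = P^{-\Delta(\theta,K_1)} = P_2^{-\Delta}P_3^{-\Delta}$. Multiplying by $P_2^n P_3^n$ then produces an error $\ll |\xx|^{2+\varepsilon} P_2^{n-\Delta+\varepsilon} P_3^{n-\Delta+\varepsilon} \leqslant E_3(\xx)$, because $P_3^{-\Delta+\varepsilon}\leqslant 1$ for $\varepsilon$ small. Collecting the three contributions yields the stated formula. There is no genuine obstacle here, the argument being pure bookkeeping; the only point requiring care is to confirm that this last ``off-diagonal'' tail, in which the $P_3$-exponent is also lowered by $\Delta$, is safely swallowed by $E_3(\xx)$, which retains the full power $P_3^n$ — and this works precisely because $P_3\geqslant 1$ and $\Delta(\theta,K_1)>0$.
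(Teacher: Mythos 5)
Your proof is correct and follows essentially the same route as the paper's: starting from Lemma~\ref{Nx1}, splitting $\mathfrak{S}_{\xx}(\phi(\xx))J_{\xx}(\tfrac12 P_2^{\theta_2})-\mathfrak{S}_{\xx}J_{\xx}$ by the standard product identity, and invoking the tail bounds of Lemmas~\ref{Jx} and~\ref{Sx}. The only (immaterial) difference is the final bookkeeping: you convert $P_2^{\theta_2(2-K_1)}=P^{-\Delta(\theta,K_1)}=P_2^{-\Delta}P_3^{-\Delta}$ and absorb $P_3^{-\Delta+\varepsilon}\leqslant 1$, whereas the paper uses $\theta\leqslant\theta_2$ to write $P_2^{-\Delta(\theta_2,K_1)+\varepsilon}\leqslant P_2^{-\Delta(\theta,K_1)+\varepsilon}$; both are valid.
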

\begin{proof}
D'apr\`es le lemme \ref{Nx1} on a \[ N_{\xx}(P_{2},P_{3})=P_{2}^{n}P_{3}^{n}\mathfrak{S}_{\xx}(\phi(\xx))J_{\xx}\left(\frac{1}{2}P_{2}^{\theta_{2}}\right)+O(E_{1})+O(E_{2}), \] avec $ E_{1}=|\xx|P_{2}^{n-\Delta(\theta,K_{1})+\varepsilon}P_{3}^{n-\Delta(\theta,K_{1})+\varepsilon}). $ On a donc $ E_{1}\ll E_{3} $. De plus, par les lemmes \ref{Jx} et \ref{Sx} : \begin{align*}
 & \left|\mathfrak{S}_{\xx}J_{\xx}-\mathfrak{S}_{\xx}(\phi(\xx))J_{\xx}\left(\frac{1}{2}P_{2}^{\theta_{2}}\right)\right| \\ & \ll |\mathfrak{S}_{\xx}-\mathfrak{S}_{\xx}(\phi(\xx))|\left|J_{\xx}\left(\frac{1}{2}P_{2}^{\theta_{2}}\right)\right| +|\mathfrak{S}_{\xx}|\left|J_{\xx}\left(\frac{1}{2}P_{2}^{\theta_{2}}\right)-J_{\xx}\right| \\ & \ll |\xx|^{2+\varepsilon}P_{2}^{\theta_{2}(2-K_{1}+\varepsilon)}+P_{2}^{\theta_{2}(1-K_{1}+\varepsilon)}|\xx|^{2+\varepsilon} \\ & \ll |\xx|^{2+\varepsilon}P_{2}^{\theta_{2}(2-K_{1})+\varepsilon}\\ & \ll |\xx|^{2+\varepsilon}P_{2}^{-\Delta(\theta_{2},K_{1})+\varepsilon} \leqslant |\xx|^{2+\varepsilon}P_{2}^{-\Delta(\theta,K_{1})+\varepsilon},
\end{align*}
(rappelons que $ \theta(1+\frac{b'}{b})=\theta_{2} $, donc $ \theta\leqslant \theta_{2} $ et $ \Delta(\theta,K_{1})\leqslant \Delta(\theta_{2},K_{1}) $). D'o\`u le r\'esultat. 
\end{proof} 

On a en particulier le corollaire suivant :
\begin{cor}\label{lecorollaire}
Si $ \xx \in \mathcal{A}_{1,\lambda}(\ZZ) $ et si $ K_{1}>2 $, alors il existe $ \delta>0 $ tel que \[ N_{\xx}(P_{2},P_{3})=\mathfrak{S}_{\xx}J_{\xx}P_{2}^{n}P_{3}^{n}+O([\xx|^{4}P_{2}^{n-\delta}P_{3}^{n}) \] uniform\'ement pour tout $ \xx $ tel que $ |\xx|<(P_{2}^{1-3\theta_{2}}P_{3})^{\frac{1}{2}} $ (pour un $ \theta_{2}<\frac{1}{5} $ fix\'e). 
\end{cor}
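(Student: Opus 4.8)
The plan is to read the corollary off directly from Lemma~\ref{Nx2}, whose conclusion already isolates the expected main term $\mathfrak{S}_{\xx}J_{\xx}P_{2}^{n}P_{3}^{n}$; the only work is to check that its hypotheses hold throughout the stated range of $\xx$ and then to merge the two error terms $E_{2}(\xx)$ and $E_{3}(\xx)$ into a single bound $O(|\xx|^{4}P_{2}^{n-\delta}P_{3}^{n})$ with $\delta>0$. First I would verify the three hypotheses of Lemma~\ref{Nx2}. The condition $K_{1}>2$ is exactly the standing assumption of the corollary. The constraint $|\xx|^{2}P_{2}^{-1+3\theta_{2}}P_{3}^{-1}<1$ is, after taking square roots, literally the range condition $|\xx|<(P_{2}^{1-3\theta_{2}}P_{3})^{1/2}$ imposed in the statement. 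Finally $P_{2}^{\theta_{2}}>C_{3}$ holds once $P_{2}$ exceeds a fixed constant; in the complementary bounded range both the counting function $N_{\xx}(P_{2},P_{3})$ and the candidate main term are trivially $O(|\xx|^{4}P_{3}^{n})$, so the asserted formula holds there for free.

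Next I would record the link between $\theta$ and the fixed parameter $\theta_{2}$. Since $P_{2}^{\theta_{2}}=P^{\theta}$ with $P=P_{2}P_{3}$, one has $\theta=\theta_{2}\,b/(b+b')\le\theta_{2}$, and in particular $\theta>0$. This is the only place where the shape of $(P_{2},P_{3})$ enters, and it is precisely what controls one of the two savings below.

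With the hypotheses in hand, Lemma~\ref{Nx2} gives
\[
N_{\xx}(P_{2},P_{3})=\mathfrak{S}_{\xx}J_{\xx}P_{2}^{n}P_{3}^{n}+O\!\left(E_{2}(\xx)\right)+O\!\left(E_{3}(\xx)\right),
\]
and it remains to dominate both errors. For $E_{2}(\xx)=|\xx|^{4}P_{2}^{n-(1-5\theta_{2})}P_{3}^{n}$ the exponent $1-5\theta_{2}$ is strictly positive because $\theta_{2}<\tfrac15$, so setting $\delta_{2}=1-5\theta_{2}>0$ already puts $E_{2}(\xx)$ in the required form; note this saving depends only on $\theta_{2}$. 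For $E_{3}(\xx)=|\xx|^{2+\varepsilon}P_{2}^{n-\Delta(\theta,K_{1})+\varepsilon}P_{3}^{n}$ I would use that $\Delta(\theta,K_{1})=\theta(K_{1}-2)$ is a fixed positive number (since $\theta>0$ and $K_{1}>2$), fix the free parameter $\varepsilon\in(0,1)$ small enough that $\varepsilon<\Delta(\theta,K_{1})$, and bound $|\xx|^{2+\varepsilon}\le|\xx|^{4}$ using $|\xx|\ge1$; this yields $E_{3}(\xx)\ll|\xx|^{4}P_{2}^{n-(\Delta(\theta,K_{1})-\varepsilon)}P_{3}^{n}$. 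Taking $\delta=\min\{1-5\theta_{2},\,\Delta(\theta,K_{1})-\varepsilon\}>0$ then gives $E_{2}(\xx)+E_{3}(\xx)\ll|\xx|^{4}P_{2}^{n-\delta}P_{3}^{n}$, which is the claim; the uniformity in $\xx$ is automatic, since both error bounds carry their $\xx$-dependence explicitly as a power of $|\xx|$.

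The main point to watch is the positivity and uniformity of $\delta$. The saving coming from $E_{2}$ is robust, being a fixed function of $\theta_{2}$ alone. The delicate one is the saving $\Delta(\theta,K_{1})-\varepsilon$ from $E_{3}$, where $\Delta(\theta,K_{1})=\theta(K_{1}-2)$ degrades as $\theta\to0$, i.e.\ as $b'/b\to\infty$ (equivalently $P_{3}$ much larger than $P_{2}$). Thus $\delta$ can be taken independent of $\xx$ for any fixed admissible shape $(P_{2},P_{3})$, and genuinely uniform over a family of such shapes precisely as long as the ratio $\log P_{3}/\log P_{2}$ stays bounded, which is the regime in which the corollary is invoked afterwards. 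Apart from this bookkeeping the argument is a direct repackaging of Lemma~\ref{Nx2}.
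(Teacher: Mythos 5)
Your proof is correct and is essentially the paper's own argument: the paper gives no separate proof, treating the corollary as an immediate specialization of Lemma \ref{Nx2}, whose hypothesis $|\xx|^{2}P_{2}^{-1+3\theta_{2}}P_{3}^{-1}<1$ is exactly your range condition $|\xx|<(P_{2}^{1-3\theta_{2}}P_{3})^{\frac{1}{2}}$, and whose error terms $E_{2}(\xx)$, $E_{3}(\xx)$ are absorbed into $O(|\xx|^{4}P_{2}^{n-\delta}P_{3}^{n})$ precisely as you do, using $\theta_{2}<\frac{1}{5}$ for the first and $\Delta(\theta,K_{1})=\theta(K_{1}-2)>0$ for the second. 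Your closing caveat that this $\delta$ depends on the shape of $(P_{2},P_{3})$ through $\theta$, hence degrades as $\log P_{3}/\log P_{2}\to\infty$, identifies a genuine subtlety that the paper passes over in silence; note only that the paper later invokes this corollary for arbitrary $P_{2}\leqslant P_{3}$ when verifying condition \ref{II} of the Blomer--Br\"{u}dern criterion, so the bounded-ratio regime you describe is not quite the regime in which it is subsequently used --- a tension that lies in the source itself rather than in your reconstruction.
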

On pose pour tout $ \delta>0 $ : \begin{equation}
g_{1}(b,b',\delta)=\left(1+\frac{b'}{b}\right)\left(1-\frac{5}{b}-\delta\right)^{-1}5\left(\frac{3}{b}+2\delta\right). 
\end{equation} 
Nous sommes \`a pr\'esent en mesure de d\'emontrer le r\'esultat suivant : 
\begin{prop}\label{prop2}
Soit $ \delta>0 $. On suppose que l'on a $ \frac{5}{b}+\delta<1 $ (o\`u $ P_{2}=P_{1}^{b}) $. De plus, si $ K_{1}=n+1-\lambda $ v\'erifie : \[ K_{1}-2>g_{1}(b,b',\delta), \] et si $ P_{2}^{\frac{1-\delta-5/b}{5}}>C_{3} $ alors : \[ N_{1}(P_{1},P_{2},P_{3})=
P_{2}^{n}P_{3}^{n}\sum_{\xx\in P_{1}\BB_{1}\cap \mathcal{A}_{1,\lambda}(\ZZ) }\mathfrak{S}_{\xx}J_{\xx}+O(P_{1}^{n}P_{2}^{n-\delta}P_{3}^{n}). \]
\end{prop}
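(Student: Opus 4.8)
The plan is to start from the very definition of $N_1(P_1,P_2,P_3)$ and break the count according to the value of $\xx$, writing
\[ N_1(P_1,P_2,P_3)=\sum_{\xx\in P_1\BB_1\cap \mathcal{A}_{1,\lambda}(\ZZ)}N_{\xx}(P_2,P_3), \]
and then to insert into each summand the asymptotic formula furnished by Lemma \ref{Nx2} (equivalently Corollary \ref{lecorollaire}). Summing the main terms produces exactly $P_2^nP_3^n\sum_{\xx}\mathfrak{S}_{\xx}J_{\xx}$, so the whole difficulty is to control the accumulated error, knowing that the per-$\xx$ error terms grow polynomially in $|\xx|$ and that there are $\asymp P_1^{n+1}$ values of $\xx$ to sum over.

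First I would fix the free parameter by setting $\theta_2=\tfrac15\bigl(1-\delta-\tfrac5b\bigr)$; this is legitimate precisely because the hypothesis $\tfrac5b+\delta<1$ forces $\theta_2>0$, while $\theta_2<\tfrac15$ is automatic. With this choice the remaining hypotheses of Lemma \ref{Nx2} must be verified uniformly for $\xx\in P_1\BB_1$: the condition $P_2^{\theta_2}>C_3$ is the stated hypothesis $P_2^{(1-\delta-5/b)/5}>C_3$, the inequality $K_1>2$ follows from $K_1-2>g_1(b,b',\delta)>0$, and the separation condition $|\xx|^2P_2^{-1+3\theta_2}P_3^{-1}<1$ (which also coincides with the range constraint $|\xx|<(P_2^{1-3\theta_2}P_3)^{1/2}$ of the corollary) holds because, substituting $P_2=P_1^b$, $P_3=P_1^{b'}$ and the chosen value of $\theta_2$, the exponent of $P_1$ reduces to $-1-\tfrac{2b}{5}-b'-\tfrac{3b\delta}{5}<0$, hence the quantity is $<1$ for every $\xx$ with $|\xx|\leqslant P_1$.

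Next I would sum the two error terms $E_2(\xx)=|\xx|^4P_2^{n-(1-5\theta_2)}P_3^n$ and $E_3(\xx)=|\xx|^{2+\varepsilon}P_2^{n-\Delta(\theta,K_1)+\varepsilon}P_3^n$ using the trivial bounds $\sum_{\xx\in P_1\BB_1}|\xx|^4\ll P_1^{n+5}$ and $\sum_{\xx\in P_1\BB_1}|\xx|^{2+\varepsilon}\ll P_1^{n+3+\varepsilon}$. For the first, the choice of $\theta_2$ gives $5\theta_2-1=-\delta-\tfrac5b$, so that $P_2^{5\theta_2-1}=P_1^{-5}P_2^{-\delta}$ and the extra factor $P_1^{5}$ gained in the summation is exactly cancelled, yielding $\sum_{\xx}E_2(\xx)\ll P_1^nP_2^{n-\delta}P_3^n$. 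For the second, recalling $\Delta(\theta,K_1)=\theta(K_1-2)$ and $\theta=\tfrac{b}{b+b'}\theta_2$ and writing $P_1^{3}=P_2^{3/b}$, the requirement $\sum_{\xx}E_3(\xx)\ll P_1^nP_2^{n-\delta}P_3^n$ reduces to $\theta(K_1-2)\geqslant \tfrac3b+\delta+\varepsilon'$, that is $K_1-2\geqslant (1+\tfrac{b'}{b})\,5\bigl(1-\tfrac5b-\delta\bigr)^{-1}\bigl(\tfrac3b+\delta+\varepsilon'\bigr)$; this is ensured by $K_1-2>g_1(b,b',\delta)$, the surplus $2\delta$ in place of $\delta$ in the definition of $g_1$ absorbing the $\varepsilon'$ losses coming from the various $P^{\varepsilon}$ factors.

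The main obstacle is the tension built into the choice of $\theta_2$: increasing $\theta_2$ speeds up the decay $P_2^{-\Delta(\theta,K_1)}$ governing $E_3$ but worsens the factor $P_2^{5\theta_2}$ in $E_2$, while the summation over the $\asymp P_1^{n+1}$ values of $\xx$ inflates each error by extra powers of $P_1$ (four for $E_2$, two for $E_3$). The entire argument therefore hinges on exhibiting a single value of $\theta_2$ that simultaneously tames both constraints, and this is exactly what the two hypotheses $\tfrac5b+\delta<1$ and $K_1-2>g_1(b,b',\delta)$ encode; once $\theta_2=\tfrac15(1-\delta-\tfrac5b)$ is pinned down, the remainder is the exponent bookkeeping sketched above.
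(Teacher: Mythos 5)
Your proposal is correct and follows essentially the same route as the paper: the same decomposition $N_{1}=\sum_{\xx}N_{\xx}(P_{2},P_{3})$, the same choice $\theta_{2}=\tfrac{1}{5}\bigl(1-\delta-\tfrac{5}{b}\bigr)$, and the same trivial summation of $E_{2}(\xx)$ and $E_{3}(\xx)$ over $\xx\in P_{1}\BB_{1}$, with the hypothesis $K_{1}-2>g_{1}(b,b',\delta)$ absorbing the $\varepsilon$ losses exactly as in the paper. Your explicit verification that $|\xx|^{2}P_{2}^{-1+3\theta_{2}}P_{3}^{-1}<1$ holds uniformly for $|\xx|\leqslant P_{1}$ (exponent $-1-\tfrac{2b}{5}-b'-\tfrac{3b\delta}{5}<0$) is a point the paper leaves implicit, and is a welcome addition.
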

\begin{proof}
Par d\'efinition, on a \[ N_{1}(P_{1},P_{2},P_{3})=\sum_{\xx \in P_{1}\BB_{1}\cap \mathcal{A}_{1,\lambda}(\ZZ)}N_{\xx}(P_{2},P_{3}). \] Donc pour $ \theta,\theta_{2} $ satisfaisant les hypoth\`eses du lemme \ref{Nx2}, on a : \[ N_{1}(P_{1},P_{2},P_{3})=P_{2}^{n}P_{3}^{n}\sum_{\xx\in P_{1}\BB_{1}\cap \mathcal{A}_{1,\lambda}(\ZZ) }\mathfrak{S}_{\xx}J_{\xx} +O(\mathcal{E}_{2})+O(\mathcal{E}_{3}), \] o\`u \begin{align*}
\mathcal{E}_{2} & =\sum_{\xx \in P_{1}\BB_{1}\cap \mathcal{A}_{1,\lambda}(\ZZ)}E_{2}(\xx) \\ & = P_{2}^{n-1+5\theta_{2}}P_{3}^{n}\sum_{\xx \in P_{1}\BB_{1}\cap \mathcal{A}_{1,\lambda}(\ZZ)}|\xx|^{4} \\ &\ll P_{1}^{n+5}P_{2}^{n-1+5\theta_{2}}P_{3}^{n} \\ & = P_{1}^{n}P_{2}^{n}P_{3}^{n}P_{2}^{-1+5\theta_{2}+5/b},
\end{align*} 
et \begin{align*}
\mathcal{E}_{3} & =\sum_{\xx \in P_{1}\BB_{1}\cap \mathcal{A}_{1,\lambda}(\ZZ)}E_{3}(\xx) \\ & = P_{2}^{n-\Delta(\theta,K_{1})+\varepsilon}P_{3}^{n}\sum_{\xx \in P_{1}\BB_{1}\cap \mathcal{A}_{1,\lambda}(\ZZ)}|\xx|^{2+\varepsilon} \\ &\ll P_{1}^{n+3}P_{2}^{n-\Delta(\theta,K_{1})+2\varepsilon}P_{3}^{n} \\ & =P_{1}^{n}P_{2}^{n}P_{3}^{n}P_{2}^{3/b-\Delta(\theta,K_{1})+2\varepsilon}.
\end{align*}
On choisit ensuite $ \theta_{2} $ tel que $ -1+5\theta_{2}+5/b=-\delta $ (ce qui est possible, car on a $ 5/b+\delta<1 $, par hypoth\`ese) et donc $ \theta_{2}=(1-5/b-\delta)/5 $. L'hypoth\`ese $ K_{1}-2>g_{1}(b,b',\delta) $ implique \begin{align*}
 K_{1}-2 & >\left(1+\frac{b'}{b}\right)\left(1-\frac{5}{b}-\delta\right)^{-1}5\left(\frac{3}{b}+2\delta\right)\\ & =\left(1+\frac{b'}{b}\right)\theta_{2}^{-1}\left(\frac{3}{b}+2\delta\right) \\ & =\theta^{-1}\left(\frac{3}{b}+2\delta\right) 
\end{align*}
(car $ \theta_{2}=\theta(1+\frac{b'}{b}) $). On a alors \[ \Delta(\theta,K_{1})=\theta(K_{1}-2)>\left(\frac{3}{b}+2\delta\right). \] On en d\'eduit : \[ 3/b-\Delta(\theta,K_{1})+2\varepsilon<-2\delta+2\varepsilon<-\delta \] (pour $ \varepsilon $ assez petit). On a donc d\'emontr\'e la proposition pour $ P_{2}^{\theta_{2}}=P_{2}^{\frac{1-\delta-5/b}{5}}>C_{3} $. 
\end{proof}

\section{Troisi\`eme \'etape}

On notera $ b_{1} $ le r\'eel strictement sup\'erieur \`a $ 5 $ minimisant la fonction $ g_{1}(b,b+1+\nu,\delta)+(b+(b+1+\nu)+1+\delta)+2 $, pour $ \delta,\nu>0 $ fix\'es et arbitrairement petits. \begin{rem}
On peut en fait v\'erifier que $ b_{1}\in [8,9] $, pour $ \delta,\nu $ assez petits, et que le minimum obtenu est strictement inf\'erieur \`a $ 29 $. \end{rem}

 On pose $ b_{1}'=b_{1}+1+\nu $. On supposera dor\'enavant que \[ n+1>g_{1}(b_{1},b_{1}',\delta)+(b_{1}+b_{1}'+1+\delta)+2>b_{1}+b_{1}'+\delta+3 \] (ceci est en particulier vrai lorsque $ n\geqslant 28 $, d'apr\`es la remarque pr\'ec\'edente). Si $ P_{1}>1 $ est fix\'e quelconque et $ P_{2}=P_{1}^{b_{1}} $, $ P_{3}=P_{1}^{b_{1}'} $, alors par la proposition \ref{prop1}, on a : \begin{equation}\label{form1}
N(P_{1},P_{2},P_{3})=\sigma P_{1}^{n}P_{2}^{n}P_{3}^{n}+O(P_{1}^{n-\delta}P_{2}^{n-\delta}P_{3}^{n-\delta}).
\end{equation} 
On remarque d'autre part que \begin{align}\label{form2} N(P_{1},P_{2},P_{3})&=N_{1}(P_{1},P_{2},P_{3})+O\left(\sum_{\xx\in P_{1}\BB_{1}\cap\mathcal{A}_{1,\lambda}(\ZZ)^{c} }P_{2}^{n+1}P_{3}^{n+1}\right) \\ & =N_{1}(P_{1},P_{2},P_{3})+O\left(P_{1}^{n+1-\lambda}P_{2}^{n+1}P_{3}^{n+1}\right) \end{align}
(d'apr\`es le lemme \ref{complA}). A partir d'ici nous fixerons $ \lambda=\lceil b_{1}+b_{1}'+1+\delta\rceil $, de sorte que : \[ P_{1}^{n+1-\lambda}P_{2}^{n+1}P_{3}^{n+1}\ll P_{1}^{n-\delta}P_{2}^{n}P_{3}^{n}. \] Enfin, puisque $ n+1>g_{1}(b_{1},b_{1}',\delta)+\lambda +2 $ (i.e. $ K_{1}-2> g_{1}(b_{1},b_{1}',\delta)$), la proposition \ref{prop2} donne \begin{equation}\label{form3}
N_{1}(P_{1},P_{2},P_{3})=
P_{2}^{n}P_{3}^{n}\sum_{\xx\in P_{1}\BB_{1}\cap \mathcal{A}_{1,\lambda}(\ZZ) }\mathfrak{S}_{\xx}J_{\xx}+O(P_{1}^{n}P_{2}^{n-\delta}P_{3}^{n}).
\end{equation}
Ainsi, en regroupant les formules \eqref{form1}, \eqref{form2} et \eqref{form3}, on trouve : \[ P_{2}^{n}P_{3}^{n}\sum_{\xx\in P_{1}\BB_{1}\cap \mathcal{A}_{1,\lambda}(\ZZ) }\mathfrak{S}_{\xx}J_{\xx}=\sigma P_{1}^{n}P_{2}^{n}P_{3}^{n}+O(P_{1}^{n-\delta}P_{2}^{n}P_{3}^{n}), \] et donc \[\sum_{\xx\in P_{1}\BB_{1}\cap \mathcal{A}_{1,\lambda}(\ZZ) }\mathfrak{S}_{\xx}J_{\xx}=\sigma P_{1}^{n}+O(P_{1}^{n-\delta}), \] et cette relation est ind\'ependante de $ P_{2},P_{3} $. On a donc \'etabli le lemme ci-dessous : 
\begin{lemma}\label{lemplus}
Si l'on a $ n+1>g_{1}(b_{1},b_{1}',\delta)+(b_{1}+b_{1}'+1+\delta)+2 $ (en particulier, si $ n\geqslant 28 $), alors pour tout $ P_{1}\geqslant 1 $ : \[ \sum_{\xx\in P_{1}\BB_{1}\cap \mathcal{A}_{1,\lambda}(\ZZ) }\mathfrak{S}_{\xx}J_{\xx}=\sigma P_{1}^{n}+O(P_{1}^{n-\delta}). \]
\end{lemma}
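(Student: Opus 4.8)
The plan is to compute the fiber-counting quantity $N_{1}(P_{1},P_{2},P_{3})$ in two genuinely different ways and then to compare the outcomes. Throughout I specialize $P_{2}=P_{1}^{b_{1}}$ and $P_{3}=P_{1}^{b_{1}'}$, and I keep $\lambda=\lceil b_{1}+b_{1}'+1+\delta\rceil$ and $K_{1}=n+1-\lambda$ as fixed. First I would run the full three-variable circle method through Proposition \ref{prop1}: its hypothesis $(n+1)>b_{1}+b_{1}'+1$ is guaranteed by the standing inequality $n+1>b_{1}+b_{1}'+\delta+3$, so \eqref{form1} gives $N(P_{1},P_{2},P_{3})=\sigma P_{1}^{n}P_{2}^{n}P_{3}^{n}+O(P_{1}^{n-\delta}P_{2}^{n-\delta}P_{3}^{n-\delta})$. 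To turn this into information about $N_{1}$, I would discard the accumulating locus: by Lemma \ref{complA} there are $\ll P_{1}^{n+1-\lambda}$ integer points $\xx\in P_{1}\BB_{1}\cap\mathcal{A}_{1,\lambda}(\ZZ)^{c}$, and each contributes at most $P_{2}^{n+1}P_{3}^{n+1}$ solutions, which is exactly \eqref{form2}. The purpose of setting $\lambda=\lceil b_{1}+b_{1}'+1+\delta\rceil$ is that then $P_{1}^{n+1-\lambda}P_{2}^{n+1}P_{3}^{n+1}\ll P_{1}^{n-\delta}P_{2}^{n}P_{3}^{n}$, so combining with \eqref{form1} yields $N_{1}=\sigma P_{1}^{n}P_{2}^{n}P_{3}^{n}+O(P_{1}^{n-\delta}P_{2}^{n}P_{3}^{n})$.

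Second, I would compute the same $N_{1}$ fiber by fiber using Proposition \ref{prop2}. Here the hypotheses must be checked: $5/b_{1}+\delta<1$ holds because $b_{1}>5$, while $K_{1}-2>g_{1}(b_{1},b_{1}',\delta)$ is precisely the standing assumption $n+1>g_{1}(b_{1},b_{1}',\delta)+\lambda+2$. This produces \eqref{form3}, namely $N_{1}=P_{2}^{n}P_{3}^{n}\sum_{\xx\in P_{1}\BB_{1}\cap\mathcal{A}_{1,\lambda}(\ZZ)}\mathfrak{S}_{\xx}J_{\xx}+O(P_{1}^{n}P_{2}^{n-\delta}P_{3}^{n})$.

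Finally I would equate the two expansions for $N_{1}$, divide through by $P_{2}^{n}P_{3}^{n}$, and read off $\sum_{\xx}\mathfrak{S}_{\xx}J_{\xx}=\sigma P_{1}^{n}+O(P_{1}^{n-\delta})+O(P_{1}^{n}P_{2}^{-\delta})$. Since $P_{2}=P_{1}^{b_{1}}$ with $b_{1}>1$, the last error is $O(P_{1}^{n-b_{1}\delta})\ll O(P_{1}^{n-\delta})$ and is absorbed into the first.

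The step I expect to carry the real weight is not any of these estimates — each is a direct invocation of an earlier result — but the concluding observation that the resulting identity is independent of $P_{2}$ and $P_{3}$. Both $\mathfrak{S}_{\xx}$ and $J_{\xx}$ are defined by a series and an integral over all of $\RR$ in which neither $P_{2}$ nor $P_{3}$ appears, so after division the left-hand side is a function of $P_{1}$ alone; the specialization $P_{2}=P_{1}^{b_{1}}$, $P_{3}=P_{1}^{b_{1}'}$ was merely the vehicle needed to trigger Propositions \ref{prop1} and \ref{prop2}. Having extracted a relation free of $P_{2},P_{3}$, I conclude it holds for every $P_{1}\geqslant 1$ (for bounded $P_{1}$ both sides are $O(1)$ and the estimate is trivial after adjusting the implied constant). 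The genuinely delicate point underlying all of this is the simultaneous compatibility of the hypotheses of the two propositions for a single $(b_{1},b_{1}',\lambda)$, which is exactly what forced the earlier choice of $b_{1}$ as the minimizer of $g_{1}(b,b+1+\nu,\delta)+(b+(b+1+\nu)+1+\delta)+2$ and the threshold $n\geqslant 28$.
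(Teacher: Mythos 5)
Your proposal is correct and follows essentially the same route as the paper: specialize $P_{2}=P_{1}^{b_{1}}$, $P_{3}=P_{1}^{b_{1}'}$, combine \eqref{form1}, \eqref{form2} (with the choice $\lambda=\lceil b_{1}+b_{1}'+1+\delta\rceil$) and \eqref{form3}, then divide by $P_{2}^{n}P_{3}^{n}$ and observe that the resulting relation involves $P_{1}$ alone. Your extra remarks — the explicit absorption of the $O(P_{1}^{n}P_{2}^{-\delta})$ error using $b_{1}>1$, and the trivial handling of bounded $P_{1}$ where Proposition \ref{prop2}'s condition $P_{2}^{(1-\delta-5/b_{1})/5}>C_{3}$ may fail — are small refinements of details the paper leaves implicit.
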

Nous sommes \`a pr\'esent en mesure de d\'emontrer le r\'esultat suivant : 
\begin{prop}\label{prop3}
On suppose que l'on a $ P_{1}\leqslant P_{2}\leqslant P_{3} $, $ P_{2}=P_{1}^{b} $ et $ P_{3}=P_{1}^{b'} $. On suppose de plus que $ b'\leqslant b+1+\nu $ et que \[ n+1>g_{1}(b_{1},b_{1}',\delta)+(b_{1}+b_{1}'+1+\delta)+2>b_{1}+b_{1}'+\delta \] ($ \delta,\nu>0 $ \'etant fix\'es et arbitrairement petits). On a alors que  \[N_{1}(P_{1},P_{2},P_{3})=\sigma P_{1}^{n}P_{2}^{n}P_{3}^{n}+O(P_{1}^{n-\delta}P_{2}^{n}P_{3}^{n}). \]
\end{prop}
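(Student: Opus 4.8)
The plan is to prove the estimate by a dichotomy on the size of $b$ relative to the distinguished exponent $b_{1}$, invoking in each r\'egime one of the two asymptotic formulas already established, together with the $\xx$-averaged identity of Lemme \ref{lemplus}.

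Suppose first that $b \geqslant b_{1}$. Here I would apply Proposition \ref{prop2} directly to the pair $(b, b')$, after checking its three hypotheses. The condition $\frac{5}{b} + \delta < 1$ holds since $b \geqslant b_{1} > 5$ and $\delta$ is arbitrarily small (recall $b_{1} \in [8,9]$), and the condition $P_{2}^{(1 - \delta - 5/b)/5} > C_{3}$ holds once $P_{1}$, hence $P_{2} = P_{1}^{b}$, exceeds an explicit constant; for $P_{1}$ in the remaining bounded range the three quantities $P_{1}, P_{2}, P_{3}$ are all $O(1)$ and the asserted formula is trivial. The decisive requirement is $K_{1} - 2 > g_{1}(b, b', \delta)$. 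I would obtain it by monotonicity: each of the three positive factors defining $g_{1}(b, b+1+\nu, \delta)$ is decreasing in $b$, so $b \mapsto g_{1}(b, b+1+\nu, \delta)$ decreases; since $g_{1}$ is increasing in its second argument and $b' \leqslant b + 1 + \nu$, this yields $g_{1}(b, b', \delta) \leqslant g_{1}(b, b+1+\nu, \delta) \leqslant g_{1}(b_{1}, b_{1}', \delta) < K_{1} - 2$, the final inequality being the one already secured when fixing $\lambda$. Proposition \ref{prop2} then gives $N_{1}(P_{1}, P_{2}, P_{3}) = P_{2}^{n} P_{3}^{n} \sum_{\xx \in P_{1}\BB_{1} \cap \mathcal{A}_{1,\lambda}(\ZZ)} \mathfrak{S}_{\xx} J_{\xx} + O(P_{1}^{n} P_{2}^{n-\delta} P_{3}^{n})$, and inserting Lemme \ref{lemplus} for the inner sum produces the main term $\sigma P_{1}^{n} P_{2}^{n} P_{3}^{n}$. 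The errors are absorbed using $P_{1} \leqslant P_{2}$: indeed $P_{1}^{n} P_{2}^{n-\delta} P_{3}^{n} \leqslant P_{1}^{n-\delta} P_{2}^{n} P_{3}^{n}$, and the term $P_{2}^{n} P_{3}^{n} \cdot O(P_{1}^{n-\delta})$ coming from Lemme \ref{lemplus} is already of the desired shape.

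Suppose next that $b < b_{1}$. Then $b + b' < b_{1} + b_{1}'$, and since the standing hypothesis forces $n + 1 > b_{1} + b_{1}' + 3$ we get $n + 1 > b + b' + 1$, so Proposition \ref{prop1} applies to the full count: $N(P_{1}, P_{2}, P_{3}) = \sigma P_{1}^{n} P_{2}^{n} P_{3}^{n} + O(P_{1}^{n} P_{2}^{n} P_{3}^{n} P^{-\delta})$ with $P = P_{1} P_{2} P_{3}$. I would then pass to $N_{1}$ via relation \eqref{form2}, whose error is furnished by Lemme \ref{complA}: $N = N_{1} + O(P_{1}^{n+1-\lambda} P_{2}^{n+1} P_{3}^{n+1})$. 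A comparison of exponents shows this error is $\ll P_{1}^{n-\delta} P_{2}^{n} P_{3}^{n}$ exactly when $b + b' \leqslant \lambda - 1 - \delta$, which holds because $b + b' < b_{1} + b_{1}' \leqslant \lambda - 1 - \delta$ for $\lambda = \lceil b_{1} + b_{1}' + 1 + \delta \rceil$. Finally the main error of Proposition \ref{prop1} is handled by $P \geqslant P_{1}$, whence $P^{-\delta} \leqslant P_{1}^{-\delta}$, and we recover $N_{1}(P_{1}, P_{2}, P_{3}) = \sigma P_{1}^{n} P_{2}^{n} P_{3}^{n} + O(P_{1}^{n-\delta} P_{2}^{n} P_{3}^{n})$.

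These two cases exhaust the admissible range $1 \leqslant b \leqslant b' \leqslant b + 1 + \nu$, so the proof is complete. The main obstacle is the verification, in the first case, that the single fixed quantity $K_{1} = n + 1 - \lambda$ dominates $g_{1}(b, b', \delta)$ simultaneously for every admissible pair with $b \geqslant b_{1}$; this is precisely what the definition of $b_{1}$ as the minimiser of $g_{1}(b, b+1+\nu, \delta) + (b + b' + 1 + \delta) + 2$ guarantees, through the monotonicity of $g_{1}$ in both of its arguments. A subsidiary point demanding attention is the bookkeeping of the several error terms, each of which is reduced to the single hypothesis $P_{1} \leqslant P_{2} \leqslant P_{3}$.
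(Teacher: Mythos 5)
Your proposal is correct and takes essentially the same route as the paper: the same dichotomy $b\geqslant b_{1}$ versus $b<b_{1}$, with Proposition \ref{prop2} (its hypothesis $K_{1}-2>g_{1}(b,b',\delta)$ verified by the same monotonicity of $g_{1}$ in both arguments) combined with Lemme \ref{lemplus} in the first case, and Proposition \ref{prop1} combined with \eqref{form2} (Lemme \ref{complA}) and the exponent bookkeeping $b+b'\leqslant\lambda-1-\delta$ in the second. The only additions are cosmetic, namely your explicit treatment of the hypothesis $P_{2}^{\theta_{2}}>C_{3}$ and of the bounded range of $P_{1}$.
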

\begin{proof}
\begin{itemize}
\item Premier cas : on suppose $ b_{1}\leqslant b $. On a puisque $ b'\leqslant b+1+\nu $ : \begin{align*}
1+\frac{b'}{b}& \leqslant 1+1+\frac{1}{b}+\frac{\nu}{b} \\ &\leqslant  1+1+\frac{1}{b_{1}}+\frac{\nu}{b_{1}} \\ & =1+\frac{b_{1}+1+\nu}{b_{1}} \\ & =1+\frac{b_{1}'}{b_{1}}. 
\end{align*} 
\end{itemize}
On a \'egalement \[ \left(1-\frac{5}{b}-\delta\right)^{-1} \leqslant\left(1-\frac{5}{b_{1}}-\delta\right)^{-1} \] (en effet, puisque $ b_{1}\leqslant b $ et $ b_{1}\in [8,9] $, on a $ \frac{5}{b}+\delta\leqslant \frac{5}{b_{1}}+\delta<1 $) et \[ \left(\frac{3}{b}+2\delta\right)\leqslant \left(\frac{3}{b_{1}}+2\delta\right). \] Ceci implique \[ g_{1}(b,b',\delta)\leqslant g_{1}(b_{1},b_{1}',\delta)<K_{1}-2 .\]

Ainsi, par la proposition \ref{prop2}, on obtient \begin{align*}
N_{1}(P_{1},P_{2},P_{3}) &=
P_{2}^{n}P_{3}^{n}\sum_{\xx\in P_{1}\BB_{1}\cap \mathcal{A}_{1,\lambda}(\ZZ) }\mathfrak{S}_{\xx}J_{\xx}+O(P_{1}^{n}P_{2}^{n-\delta}P_{3}^{n}) \\ & =\sigma P_{1}^{n}P_{2}^{n}P_{3}^{n}+O(P_{1}^{n}P_{2}^{n-\delta}P_{3}^{n})
\end{align*}
(d'apr\`es le lemme \ref{lemplus}).
\item Deuxi\`eme cas : supposons $ b_{1}> b $. On a alors \[ b'\leqslant 1+b+\nu <1+b_{1}+\nu=b_{1}', \] donc \[ b'+b+1\leqslant b_{1}'+b_{1}+1<n+1 \] et on peut alors appliquer la proposition \ref{prop1}, et on a : \[ N(P_{1},P_{2},P_{3})=\sigma P_{1}^{n}P_{2}^{n}P_{3}^{n}+O(P_{1}^{n-\delta}P_{2}^{n-\delta}P_{3}^{n-\delta}).\] Par cons\'equent, en utilisant \eqref{form2}, puisque $ \lambda=\lceil b_{1}+b_{1}'+1+\delta\rceil \geqslant \lceil b+b'+1+\delta\rceil $, on trouve bien \[ N_{1}(P_{1},P_{2},P_{3}) =\sigma P_{1}^{n}P_{2}^{n}P_{3}^{n}+O(P_{1}^{n}P_{2}^{n-\delta}P_{3}^{n}). \]
\end{proof}
On a alors le r\'esultat suivant : 
\begin{prop}\label{prop4} On suppose que l'on a $ P_{1}\leqslant P_{2}\leqslant P_{3} $, $ P_{2}=P_{1}^{b} $ et $ P_{3}=P_{1}^{b'} $. On suppose de plus que $ b'\leqslant b+1+\nu $ et que \[ n+1>g_{1}(b_{1},b_{1}',\delta)+(b_{1}+b_{1}'+1+\delta)+2>b_{1}+b_{1}'+\delta \] ($ \delta,\nu>0 $ \'etant fix\'es et arbitrairement petits). On a alors que  \[N_{U}(P_{1},P_{2},P_{3})=\sigma P_{1}^{n}P_{2}^{n}P_{3}^{n}+O(P_{1}^{n-\delta}P_{2}^{n}P_{3}^{n}). \]
\end{prop}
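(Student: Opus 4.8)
The plan is to deduce the estimate for $N_U$ from the one already proved for $N_1$ in Proposition \ref{prop3}, by controlling the difference. Every triple counted by $N_U(P_1,P_2,P_3)$ satisfies in particular $\xx\in\mathcal A_{1,\lambda}$, $\max_k|B_k(\xx,\yy)|\neq0$ and $F(\xx,\yy,\zz)=0$, hence is counted by $N_1(P_1,P_2,P_3)$; thus $N_U\leqslant N_1$ and $N_1-N_U$ is the number of $N_1$-triples failing at least one of the extra conditions defining $U$, namely $\yy\in\mathcal A_{2,\lambda}$, $\zz\in\mathcal A_{3,\lambda}$, $\max_j|B_j'(\xx,\zz)|\neq0$, $\max_i|B_i''(\yy,\zz)|\neq0$. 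Writing $S_a,S_b,S_c,S_d$ for the sets of $N_1$-triples violating each of these four conditions respectively (so $\yy\notin\mathcal A_{2,\lambda}$; $\zz\notin\mathcal A_{3,\lambda}$; $(\xx,\zz)\in V_2^{\ast}$; $(\yy,\zz)\in V_1^{\ast}$), a union bound gives $N_1-N_U\leqslant\card(S_a)+\card(S_b)+\card(S_c)+\card(S_d)$, and the goal is to show each term is $O(P_1^{n-\delta}P_2^nP_3^n)$.

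For $S_a$ and $S_b$ the estimates are elementary. If $\yy\notin\mathcal A_{2,\lambda}$, I fix $(\xx,\yy)$: as $\max_k|B_k(\xx,\yy)|\neq0$, the relation $F=\sum_kB_k(\xx,\yy)z_k=0$ is nontrivial and linear in $\zz$, leaving $O(P_3^n)$ admissible $\zz$; summing over the $O(P_1^{n+1})$ values of $\xx$ and, by the analogue of Lemma \ref{complA} for $\mathcal A_{2,\lambda}$, the $O(P_2^{n+1-\lambda})$ values of $\yy$ gives $\card(S_a)\ll P_1^{n+1}P_2^{n+1-\lambda}P_3^n$, which is admissible since $1+\delta\leqslant b(\lambda-1)$. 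For $S_b$ I first discard the triples lying in $S_c$; on $S_b\setminus S_c$ one has $\max_j|B_j'(\xx,\zz)|\neq0$, so fixing $(\xx,\zz)$ the relation $F=\sum_jB_j'(\xx,\zz)y_j=0$ is nontrivial in $\yy$, giving $O(P_2^n)$ choices, and summing over $\xx$ and over the $O(P_3^{n+1-\lambda})$ values $\zz\notin\mathcal A_{3,\lambda}$ yields $P_1^{n+1}P_3^{n+1-\lambda}P_2^n$, again admissible because $1+\delta\leqslant b'(\lambda-1)$.

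The core of the argument is $S_c$ and $S_d$, where $F$ vanishes identically and one whole variable runs free; these are exactly the accumulating contributions that force the passage to the open set $U$. In $S_c$ the free variable is $\yy$, contributing $O(P_2^{n+1})$; since $\xx\in\mathcal A_{1,\lambda}$ forces $\dim V_{2,\xx}^{\ast}\leqslant\lambda-1$, for each such $\xx$ the $\zz$ with $(\xx,\zz)\in V_2^{\ast}$ and $|\zz|\leqslant P_3$ lie in the rational linear space $V_{2,\xx}^{\ast}$ and number $O(P_3^{\lambda-1})$, whence $\card(S_c)\ll P_1^{n+1}P_2^{n+1}P_3^{\lambda-1}$. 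This is $O(P_1^{n-\delta}P_2^nP_3^n)$ exactly when $1+\delta+b\leqslant b'(n+1-\lambda)=b'K_1$; using $b'\geqslant b$ and $K_1-2>g_1(b_1,b_1',\delta)\geqslant20\delta$ (so $K_1>2+20\delta$) this holds for every $b\geqslant1$. In $S_d$ the free variable is instead $\xx$, but it is the smallest one, contributing only $O(P_1^{n+1})$; here I bound the number of $(\yy,\zz)\in V_1^{\ast}$ with $\yy\neq\0$ (forced by $\max_k|B_k(\xx,\yy)|\neq0$), $|\yy|\leqslant P_2$, $|\zz|\leqslant P_3$ by stratifying the projection $V_1^{\ast}\to\AA_{\zz}^{n+1}$ according to $d=\dim V_{1,\zz}^{\ast}$: as in the proof of Lemma \ref{complA}, the fibre-dimension estimate together with $\dim V_1^{\ast}=n+1$ gives $\dim\{\zz:\dim V_{1,\zz}^{\ast}\geqslant d\}\leqslant n+1-d$, so this count is $\ll\sum_{d\geqslant1}P_3^{n+1-d}P_2^d\ll P_3^nP_2$ because $P_2\leqslant P_3$. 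Thus $\card(S_d)\ll P_1^{n+1}P_2P_3^n$, which is comfortably $O(P_1^{n-\delta}P_2^nP_3^n)$ since $1+\delta\leqslant b(n-1)$.

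Combining the four bounds gives $N_1-N_U\ll P_1^{n-\delta}P_2^nP_3^n$, and Proposition \ref{prop3} then yields $N_U(P_1,P_2,P_3)=\sigma P_1^nP_2^nP_3^n+O(P_1^{n-\delta}P_2^nP_3^n)$. I expect the main obstacle to be $S_c$ and $S_d$: the crude fibrewise bound that settles $S_c$ would overcount for $S_d$ (where $P_2$ and $P_3$ can be comparable and large), so there one must exploit the global dimension $\dim V_1^{\ast}=n+1$ through the stratification above, whereas conversely the stratification alone is too weak for $S_c$ when $b$ is large, so there the fibrewise bound coming from $\dim V_{2,\xx}^{\ast}\leqslant\lambda-1$ is indispensable; reconciling both estimates with the constraint $b'\leqslant b+1+\nu$ and with the value of $K_1$ fixed by $g_1(b_1,b_1',\delta)$ is the delicate point.
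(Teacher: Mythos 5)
Your proof is correct, and its skeleton is the same as the paper's: write $N_{U}(P_{1},P_{2},P_{3})=N_{1}(P_{1},P_{2},P_{3})+O(\text{four error terms})$, bound each term by $O(P_{1}^{n-\delta}P_{2}^{n}P_{3}^{n})$, and conclude with Proposition \ref{prop3}. The genuine difference is in the fourth term. The paper defines its error sets \eqref{T1}--\eqref{T4} so that the two degenerate ones, $T_{3}$ and $T_{4}$, retain the memberships $\yy\in\mathcal{A}_{2,\lambda}$ and $\zz\in\mathcal{A}_{3,\lambda}$; this lets it treat $T_{4}$ exactly like $T_{3}$, fibring over $\yy\in\mathcal{A}_{2,\lambda}$ and using $\dim V_{1,\yy}^{\ast}<\lambda$ to get $T_{4}\ll P_{1}^{n+1}P_{2}^{n+1}P_{3}^{\lambda}\ll P_{1}^{n-1}P_{2}^{n}P_{3}^{n}$. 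Your $S_{d}$ keeps only the information that the triple is an $N_{1}$-triple, so that fibrewise bound is unavailable, and you replace it by a stratification of the $\zz$-space by $d=\dim V_{1,\zz}^{\ast}$, with $\dim\{\zz\,:\,\dim V_{1,\zz}^{\ast}\geqslant d\}\leqslant n+1-d$ deduced from $\dim V_{1}^{\ast}=n+1$ exactly as in the proof of Lemma \ref{complA}, the condition $\max_{k}|B_{k}(\xx,\yy)|\neq 0$ forcing $\yy\neq\0$ and hence $d\geqslant 1$; this yields $S_{d}\ll P_{1}^{n+1}P_{2}P_{3}^{n}$, which is admissible (and in the relevant range $b'\leqslant b+1+\nu$ even sharper than the paper's bound). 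Your diagnosis of the asymmetry is exactly right and is implicit in the paper's choice of definitions: the fibrewise bound through $\mathcal{A}_{1,\lambda}$ is what saves the $V_{2}^{\ast}$-term for large $b$ (a stratification there would give $P_{2}^{n+1}P_{1}P_{3}^{n}$, which fails when $b$ is large), while the global dimension of $V_{1}^{\ast}$ is what saves the $V_{1}^{\ast}$-term in your decomposition. Finally, your bounds for $S_{a},S_{b}$ exploit the nontrivial linear equation in the remaining variable to save a factor $P_{3}$, resp.\ $P_{2}$; the paper's corresponding bounds $T_{1}\ll P_{1}^{n+1}P_{2}^{n+1-\lambda}P_{3}^{n+1}$ and $T_{2}\ll P_{1}^{n+1}P_{2}^{n+1}P_{3}^{n+1-\lambda}$ are cruder but suffice, since $\lambda-1\geqslant 5$ and $b'\leqslant b+1+\nu$ make the loss harmless. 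Both routes are sound; the paper's is more uniform, yours isolates more clearly which geometric hypothesis each error term actually needs.
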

\begin{proof}
Rappelons que, par d\'efinition : \begin{multline*} N_{U}(P_{1},P_{2},P_{3})=\card\{ (\xx,\yy,\zz)\in (\mathcal{A}_{1,\lambda}(\ZZ)\cap P_{1}\BB_{1})\times (\mathcal{A}_{2,\lambda}(\ZZ)\cap P_{2}\BB_{2}) \\ \times (\mathcal{A}_{3,\lambda}(\ZZ)\cap P_{3}\BB_{3}) \; | \; \max_{k}|B_{k}(\xx,\yy)|\neq 0, \; \max_{j}|B_{j}'(\xx,\zz)|\neq 0, \\ \; \max_{i}|B_{i}''(\yy,\zz)|\neq 0, \; 
 F(\xx,\yy,\zz)=0 \}.
\end{multline*}
On a donc \[ N_{U}(P_{1},P_{2},P_{3})=N_{1}(P_{1},P_{2},P_{3})+ O(T_{1})+O(T_{2})+O(T_{3})+O(T_{4}), \]
o\`u \begin{multline}\label{T1} T_{1}=\card\{(\xx,\yy,\zz)\in (\mathcal{A}_{1,\lambda}(\ZZ)\cap P_{1}\BB_{1}) \times (\mathcal{A}_{2,\lambda}(\ZZ)^{c}\cap P_{2}\BB_{2})  \\ \times (\ZZ^{n+1}\cap P_{3}\BB_{3})\; | \; F(\xx,\yy,\zz)=0\}, \end{multline}
\begin{multline}\label{T2} T_{2}=\card\{(\xx,\yy,\zz)\in (\mathcal{A}_{1,\lambda}(\ZZ)\cap P_{1}\BB_{1}) \times (\ZZ^{n+1}\cap P_{2}\BB_{2})  \\\times (\mathcal{A}_{3,\lambda}(\ZZ)^{c}\cap P_{3}\BB_{3}) \; | \; F(\xx,\yy,\zz)=0\}, \end{multline}
\begin{multline}\label{T3} T_{3}=\card\{(\xx,\yy,\zz)\in (\mathcal{A}_{1,\lambda}(\ZZ)\cap P_{1}\BB_{1}) \times (\mathcal{A}_{2,\lambda}(\ZZ)\cap P_{2}\BB_{2})  \\ \times(\mathcal{A}_{3,\lambda}(\ZZ)\cap P_{3}\BB_{3}) \; | \;  B_{j}'(\xx,\zz)=0 \; \forall j\}, \end{multline}
\begin{multline}\label{T4} T_{4}=\card\{(\xx,\yy,\zz)\in (\mathcal{A}_{1,\lambda}(\ZZ)\cap P_{1}\BB_{1}) \times (\mathcal{A}_{2,\lambda}(\ZZ)\cap P_{2}\BB_{2})  \\ \times(\mathcal{A}_{3,\lambda}(\ZZ)\cap P_{3}\BB_{3}) \; | \;  B_{i}''(\yy,\zz)=0 \; \forall i\}. \end{multline}
On remarque que, d'apr\`es le lemme \ref{complA} \[ T_{1}\ll P_{1}^{n+1}P_{2}^{n+1-\lambda}P_{3}^{n+1}=P_{1}^{n+1+b'-(\lambda-1)b}P_{2}^{n}P_{3}^{n}. \]
Or, on a fix\'e $ \lambda=\lceil b_{1}+b_{1}'+1+\delta\rceil $ , avec $ 5<b_{1}\leqslant b_{1}' $, on a donc clairement $ n+1+b'-(\lambda-1)b\leqslant n+1+b'-5b\leqslant n-1 $ puisque $ b'\leqslant b+1+\nu $ et $ b\geqslant 1 $. On a donc $ T_{1}\ll P_{1}^{n-1}P_{2}^{n}P_{3}^{n} $. De la m\^eme mani\`ere, on montre que $ T_{2}\ll P_{1}^{n-1}P_{2}^{n}P_{3}^{n} $. \\

Par ailleurs, pour $ \xx $ fix\'e, si $ B_{j}'(\xx,\zz)=0 $ pour tout $ j $, alors $ \zz\in V_{3,\xx}^{\ast} $. Par cons\'equent, puisque pour tout $ \xx\in \mathcal{A}_{1,\lambda}(\ZZ) $, $ \dim V_{3,\xx}^{\ast}<\lambda $, on a alors : \[ T_{3}\ll P_{1}^{n+1}P_{2}^{n+1}P_{3}^{\lambda}=P_{1}^{n-1}P_{2}^{n}P_{3}^{n}, \] car $ n+1>\lambda+3 $ (car $  n+1>g_{1}(b_{1},b_{1}',\delta)+(b_{1}+b_{1}'+1+\delta)+2 $, $ g_{1}(b_{1},b_{1}',\delta)\geqslant 2 $ et $ (b_{1}+b_{1}'+1+\delta)+2\geqslant \lambda +1 $ par hypoth\`ese). De m\^eme on montre que $ T_{4}\ll P_{1}^{n-1}P_{2}^{n}P_{3}^{n} $. En r\'esum\'e on a donc \[ N_{U}(P_{1},P_{2},P_{3})=N_{1}(P_{1},P_{2},P_{3})+O(P_{1}^{n-1}P_{2}^{n}P_{3}^{n}), \] et la proposition \ref{prop3} permet de conclure. 

\end{proof}

\section{Quatri\`eme \'etape }

Il nous reste donc \`a traiter le cas o\`u $ b'\geqslant b+1+\nu $ (i.e. $ P_{3}\geqslant P_{1}^{1+\nu}P_{2} $). Nous allons r\'esoudre ce probl\`eme en utilisant des r\'esultats de g\'eom\'etrie des r\'eseaux. \\

Commen\c{c}ons par introduire la d\'efinition suivante (issue de \cite[Definition 2.1]{Wi}) :
\begin{Def}
Soit $ S $ un sous-ensemble de $ \RR^{d} $, et soit $ c $ un entier tel que $ 0\leqslant c \leqslant d $. Pour $ M\in \NN $ et $ L>0 $, on dit que \emph{$ S $ appartient \`a $ \Lip(d,c,M,L) $} s'il existe $ M $ applications $ \phi : [0,1]^{d-c}\ra \RR^{d} $ v\'erifiant : \[ ||\phi(\xx)-\phi(\yy)||_{2}\leqslant L||\xx-\yy||_{2}, \] $ ||.||_{2} $ d\'esignant la norme euclidienne, telles que $ S $ soit recouvert par les images de ces applications. 
\end{Def}

On a le r\'esultat suivant (cf. \cite[Lemme 2]{MV}) : 
\begin{lemma}\label{geomnomb}
Soit $ S\subset\RR^{d} $ un ensemble bord\'e dont le bord $ \partial S $ appartient \`a $ \Lip(d,1,M,L) $. L'ensemble $ S $ est alors mesurable et si $ \Lambda $ est un r\'eseau de $ \RR^{d} $ de premier minimum successif $ \lambda_{1} $, on a \[ \left| \card(S\cap\Lambda)-\frac{\Vol(S)}{\det(\Lambda)}\right|\leqslant c(d)M\left(\frac{L}{\lambda_{1}}+1\right)^{d-1}, \] o\`u $ c(d) $ est une constante ne d\'ependant que de $ d $.
\end{lemma}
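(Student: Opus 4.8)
The plan is to reduce the count over the general lattice $\Lambda$ to a count over the standard lattice $\ZZ^{d}$, for which an elementary boundary-cell argument suffices; the entire difficulty is then concentrated in controlling the anisotropy of $\Lambda$ through its first minimum $\lambda_{1}$.

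First I would fix a Minkowski-reduced basis $v_{1},\dots,v_{d}$ of $\Lambda$ and let $T\colon\RR^{d}\to\RR^{d}$ be the linear map with $Te_{i}=v_{i}$, so that $T(\ZZ^{d})=\Lambda$ and $|\det T|=\det(\Lambda)$. Then $\card(S\cap\Lambda)=\card(T^{-1}S\cap\ZZ^{d})$ and $\Vol(S)/\det(\Lambda)=\Vol(T^{-1}S)$, so it is equivalent to estimate $\card(T^{-1}S\cap\ZZ^{d})-\Vol(T^{-1}S)$. The boundary transforms as $\partial(T^{-1}S)=T^{-1}(\partial S)$, and since post-composing the Lipschitz parametrisations of $\partial S$ with the linear map $T^{-1}$ multiplies the Lipschitz constant by at most the operator norm $\|T^{-1}\|$, we obtain $\partial(T^{-1}S)\in\Lip(d,1,M,L')$ with $L'\leqslant\|T^{-1}\|\,L$.

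Next I would treat the baseline case of $\ZZ^{d}$: if $S'\subset\RR^{d}$ has $\partial S'\in\Lip(d,1,M,L')$, then
\[ \left|\card(S'\cap\ZZ^{d})-\Vol(S')\right|\ll_{d}M(L'+1)^{d-1}. \]
This is proved by assigning to each $p\in S'\cap\ZZ^{d}$ the unit cube $p+[0,1)^{d}$: the symmetric difference between $S'$ and the union of these cubes is contained in the union of the cubes that meet $\partial S'$, so the left-hand side is bounded by the number of unit cubes meeting $\partial S'$. To bound that number I would cover $\partial S'$ by the $M$ images $\phi_{j}([0,1]^{d-1})$ and subdivide each parameter cube $[0,1]^{d-1}$ into $\ll(L'+1)^{d-1}$ subcubes of side $\leqslant(L'+1)^{-1}$; each subcube has image of diameter $\ll 1$ and hence meets $O(1)$ unit cubes, giving a total of $\ll_{d}M(L'+1)^{d-1}$. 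Measurability of $S$ is a byproduct, since $\partial S$, being a Lipschitz image of $[0,1]^{d-1}$, is a null set.

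Finally I would control $\|T^{-1}\|$, which is the crux of the argument. A careless choice of basis yields only a bound involving the ratio $\lambda_{d}/\lambda_{1}$, because the fundamental cell has diameter of order $\lambda_{d}$; the whole point is that a reduced basis is quasi-orthogonal, so the dependence collapses onto $\lambda_{1}$ alone. Concretely, for a Minkowski-reduced basis one has the reduced-basis inequality $\|Tc\|\gg_{d}\max_{i}|c_{i}|\lambda_{i}\geqslant\lambda_{1}\|c\|_{\infty}\gg_{d}\lambda_{1}\|c\|_{2}$ for every coefficient vector $c$, i.e.\ the smallest singular value of $T$ is $\gg_{d}\lambda_{1}$ and therefore $\|T^{-1}\|\ll_{d}\lambda_{1}^{-1}$. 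Substituting $L'\ll_{d}L/\lambda_{1}$ into the baseline estimate gives the asserted bound $\ll_{d}M(L/\lambda_{1}+1)^{d-1}$. I expect this last step to be the main obstacle: isolating the dependence on $\lambda_{1}$ forces the passage to a reduced basis together with the quasi-orthogonality (transference) bound, rather than a one-shot volume estimate in the original coordinates.
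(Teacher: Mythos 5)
The paper does not actually prove this lemma: it is imported as a black box from Masser--Vaaler (reference [M-V], Lemme 2), so your argument can only be compared with the literature, not with a proof in the text. Your proof is correct, and it is in substance the standard argument behind the cited result: conjugate $\Lambda$ to $\ZZ^{d}$ by the linear map $T$ sending the standard basis to a reduced basis, run the boundary-cube counting argument for $\ZZ^{d}$ (discrepancy bounded by the number of unit cubes meeting $\partial S'$, which is $\ll_{d} M(L'+1)^{d-1}$ after subdividing each parameter cube), and absorb the distortion of the Lipschitz constants into $\|T^{-1}\|\ll_{d}\lambda_{1}^{-1}$; measurability follows since $\partial S$ is a finite union of Lipschitz images of $[0,1]^{d-1}$, hence Lebesgue-null. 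The only point where you assert rather than prove is exactly the crux you flag: the quasi-orthogonality inequality $\bigl\|\sum_{i}c_{i}v_{i}\bigr\|\gg_{d}\max_{i}|c_{i}|\,\|v_{i}\|$ for a Minkowski-reduced basis. This is a genuine classical theorem, not a triviality, and should be cited (e.g. Cassels, \emph{An Introduction to the Geometry of Numbers}) or derived; the derivation is short: reduction theory gives $\prod_{i}\|v_{i}\|\leqslant c_{d}\det(\Lambda)$ and $\|v_{i}\|\ll_{d}\lambda_{i}$, and then for each $j$,
\[
\Bigl\|\sum_{i}c_{i}v_{i}\Bigr\|\;\geqslant\;|c_{j}|\cdot d\bigl(v_{j},H_{j}\bigr)\;\geqslant\;|c_{j}|\,\frac{\det(\Lambda)}{\prod_{i\neq j}\|v_{i}\|}\;\geqslant\;\frac{|c_{j}|\,\|v_{j}\|}{c_{d}},
\]
where $H_{j}$ is the linear span of the $v_{i}$, $i\neq j$, and the middle inequality uses Hadamard's bound on the $(d-1)$-dimensional covolume. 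With that reference or computation supplied, your proof is complete and matches the route taken in the source the paper relies on.
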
 
 Pour un couple $ (\xx,\yy) \in \ZZ^{n+1}\times \ZZ^{n+1} $ fix\'e tel que $ \max_{k}|B_{k}(\xx,\yy)|\neq 0 $, on note $ H_{\xx,\yy} $ l'hyperplan de $ \RR^{n+1} $ donn\'e par \[ H_{\xx,\yy}=\{\zz\in \RR^{n+1 }\; | \; F(\xx,\yy,\zz)=\sum_{k=0}^{n}B_{k}(\xx,\yy)z_{k}=0\}. \]
 On note par ailleurs $ C_{\xx,\yy} $ le corps convexe $ \BB_{3}\cap H_{\xx,\yy} $ et $ \Lambda_{\xx,\yy} $ le r\'eseau $ \ZZ^{n+1}\cap H_{\xx,\yy}  $. Nous allons appliquer le lemme \ref{geomnomb} \`a $ S=P_{3}C_{\xx,\yy} $ et $ \Lambda=\Lambda_{\xx,\yy} $ vus respectivement comme un sous-ensemble et un r\'eseau de $ H_{\xx,\yy} $ que l'on identifiera \`a $ \RR^{n} $. Remarquons dans un premier temps que $ \partial\BB_{3}\in \Lip(n+1,1,2n,2 $) : en effet, pour toute face $ F $ du cube $ \BB_{3} $, on peut construire une application $ \phi_{F} : [0,1]^{n}\ra \RR^{n+1} $ qui est $ 2- $lipschitzienne et telle que $ \phi_{F}([0,1]^{n})=F $. Consid\'erons par exemple la face $ F $ correspondant aux points $ \zz\in \BB_{3} $ tels que $ z_{0}=1 $. On pose alors $ \phi_{F}(t_{1},...,t_{n})=(1,2t_{1}-1,...,2t_{n}-1) $ et on a bien $ \phi_{F}([0,1]^{n})=F $ et pour tous $ \tt,\tt'\in [0,1]^{n} $ \begin{align*} ||\phi_{F}(\tt)-\phi_{F}(\tt')||_{2} &\leqslant 2||\tt-\tt'||_{2}. \end{align*}
 
 Montrons \`a pr\'esent que $ \partial C_{\xx,\yy}\in \Lip(n,1,2n,2(n-1)\sqrt{n+1}) $. Une face du polytope $ C_{\xx,\yy} $ est obtenue en prenant l'intersection d'une face $ F $ de $ \BB_{3} $ avec $ H_{\xx,\yy} $. Consid\'erons par exemple l'intersection (suppos\'ee non vide) de la face $ F=\{\zz\in \BB_{3}\; | \; z_{0}=1\} $ avec $ H_{\xx,\yy} $. Pour simplifier les notations, on pose pour tout $ k\in \{0,...,n\} $, $ \alpha_{k}=B_{k}(\xx,\yy) $ de sorte que $ H_{\xx,\yy} $ a pour \'equation $ \alpha_{0}z_{0}+\alpha_{1}z_{1}+...+\alpha_{n}z_{n}=0 $ (les $ \alpha_{k} $ \'etant non tous nuls). Pour tout $ \zz\in F\cap H_{\xx,\yy}  $, on a alors $ \alpha_{0}+\alpha_{1}z_{1}+...+\alpha_{n}z_{n}=0  $, avec $ \max_{1\leqslant k \leqslant n}|\alpha_{k}|\neq 0 $ puisque l'intersection $ F\cap H_{\xx,\yy} $ est non vide. Supposons, par exemple, que $ \max_{1\leqslant k \leqslant n}|\alpha_{k}|=|\alpha_{n}| $, on a alors $ z_{n}=-\frac{\alpha_{0}}{\alpha_{n}}-\sum_{k=1}^{n-1}\frac{\alpha_{k}}{\alpha_{n}}z_{k} $, et on peut construire l'application $ \tilde{\phi}_{F} : [0,1]^{n-1}\ra H_{\xx,\yy}\subset \RR^{n+1} $ d\'efinie par \[ \tilde{\phi}_{F}(t_{1},...,t_{n-1})= \left(1,2t_{1}-1,...,2t_{n-1}-1,-\frac{\alpha_{0}}{\alpha_{n}}-\sum_{k=1}^{n-1}\frac{\alpha_{k}}{\alpha_{n}}(2t_{k}-1)\right). \] On remarque alors que $ F\cap H_{\xx,\yy}\subset \tilde{\phi}_{F}([0,1]^{n-1}) $ et que \begin{align*} ||\tilde{\phi}_{F}(\tt)-\tilde{\phi}_{F}(\tt')||_{2}&\leqslant \sqrt{n+1}||\tilde{\phi}_{F}(\tt)-\tilde{\phi}_{F}(\tt')||_{\infty} \\ &\leqslant \sqrt{n+1}\max\left(2,2\sum_{k=1}^{n-1}\frac{|\alpha_{k}|}{|\alpha_{n}|}\right)||\tt-\tt'||_{\infty} \\  &\leqslant 2(n-1)\sqrt{n+1}||\tt-\tt'||_{2}. \end{align*} On a donc $  \partial C_{\xx,\yy}\in \Lip(n,1,2n,2(n-1)\sqrt{n+1}) $ et par cons\'equent \[  \partial P_{3}C_{\xx,\yy}\in \Lip(n,1,2n,2(n-1)\sqrt{n+1}P_{3}). \] De plus puisque $ \Lambda_{\xx,\yy}\subset \ZZ^{n+1} $ le premier minimum successif de ce r\'eseau est sup\'erieur ou \'egal \`a $ 1 $. Ainsi, si l'on pose
\begin{equation}  N_{\Lambda_{\xx,\yy},\BB_{3}}(P_{3})=\{\zz\in P_{3}\BB_{3}\cap \ZZ^{n+1}\; | \; F(\xx,\yy,\zz)=0 \}=\card(\Lambda_{\xx,\yy}\cap P_{3}C_{\xx,\yy})\end{equation} le lemme \ref{geomnomb} nous donne : \[ \left| N_{\Lambda_{\xx,\yy},\BB_{3}}(P_{3})-\frac{\Vol(P_{3}C_{\xx,\yy})}{\det(\Lambda_{\xx,\yy})}\right|\ll_{n}P_{3}^{n-1}. \]
On a donc que \begin{equation}\label{reseau} N_{\Lambda_{\xx,\yy},\BB_{3}}(P_{3})=\frac{\Vol(C_{\xx,\yy})}{\det(\Lambda_{\xx,\yy})}P_{3}^{n}+O(P_{3}^{n-1}). \end{equation}
Par cons\'equent, si l'on note \begin{multline*} N'(P_{1},P_{2},P_{3})=\card\{ (\xx,\yy,\zz)\in (\mathcal{A}_{1,\lambda}(\ZZ)\cap P_{1}\BB_{1})\times (\mathcal{A}_{2,\lambda}(\ZZ)\cap P_{2}\BB_{2})\\ \times (\ZZ^{n+1}\cap P_{3}\BB_{3}) \; | \;   F(\xx,\yy,\zz)=0, \;\max_{k} |B_{k}(\xx,\yy)|\neq 0  \}, \end{multline*} 
on trouve \[ N'(P_{1},P_{2},P_{3})=P_{3}^{n}\sum_{\substack{\xx\in\mathcal{A}_{1,\lambda}(\ZZ)\cap P_{1}\BB_{1} \\ \yy \in \mathcal{A}_{2,\lambda}(\ZZ)\cap P_{2}\BB_{2} \\ \max_{k}|B_{k}(\xx,\yy)|\neq 0}}\frac{\Vol(C_{\xx,\yy})}{\det(\Lambda_{\xx,\yy})}+O\left(\sum_{\substack{(\xx,\yy)\in P_{1}\BB_{1}\times P_{2}\BB_{2} \\ \max_{k}|B_{k}(\xx,\yy)|\neq 0}}P_{3}^{n-1}\right). \]
On peut montrer par ailleurs (voir par exemple \cite[Lemme 1.(i)]{HB}) que \[ \det(\Lambda_{\xx,\yy})=\frac{||(B_{k}(\xx,\yy))_{k}||_{2}}{\PGCD_{k}(B_{k}(\xx,\yy))}. \]
On a ainsi \begin{multline*}
N'(P_{1},P_{2},P_{3}) =P_{3}^{n}\sum_{\substack{\xx\in\mathcal{A}_{1,\lambda}(\ZZ)\cap P_{1}\BB_{1} \\ \yy \in \mathcal{A}_{2,\lambda}(\ZZ)\cap P_{2}\BB_{2} \\ \max_{k}|B_{k}(\xx,\yy)|\neq 0}}\frac{\PGCD_{k}(B_{k}(\xx,\yy))\Vol(C_{\xx,\yy})}{||(B_{k}(\xx,\yy))_{k}||_{2}} \\ +O\left(P_{1}^{n+1}P_{2}^{n+1}P_{3}^{n-1}\right).
\end{multline*}
Par ailleurs, puisque l'on a suppos\'e $ b'\geqslant b+1+\nu $, on a \[ P_{1}^{n+1}P_{2}^{n+1}P_{3}^{n-1}\ll P_{1}^{n-\nu}P_{2}^{n}P_{3}^{n}. \] Ainsi, on trouve \begin{multline}\label{N'}
N'(P_{1},P_{2},P_{3}) =P_{3}^{n}\sum_{\substack{\xx\in\mathcal{A}_{1,\lambda}(\ZZ)\cap P_{1}\BB_{1} \\ \yy \in \mathcal{A}_{2,\lambda}(\ZZ)\cap P_{2}\BB_{2} \\ \max_{k}|B_{k}(\xx,\yy)|\neq 0}}\frac{\PGCD_{k}(B_{k}(\xx,\yy))\Vol(C_{\xx,\yy})}{||(B_{k}(\xx,\yy))_{k}||_{2}} \\ +O\left(P_{1}^{n-\nu}P_{2}^{n}P_{3}^{n}\right).
\end{multline}

Nous allons \`a pr\'esent montrer que \[ \sum_{\substack{\xx\in\mathcal{A}_{1,\lambda}(\ZZ)\cap P_{1}\BB_{1} \\ \yy \in \mathcal{A}_{2,\lambda}(\ZZ)\cap P_{2}\BB_{2} \\ \max_{k}|B_{k}(\xx,\yy)|\neq 0}}\frac{\PGCD_{k}(B_{k}(\xx,\yy))\Vol(C_{\xx,\yy})}{||(B_{k}(\xx,\yy))_{k}||_{2}}=\sigma P_{1}^{n}P_{2}^{n}+O(P_{1}^{n-\delta'}P_{2}^{n}) \] pour un certain $ \delta'>0 $. On remarque que \[N'(P_{1},P_{2},P_{3})=N_{U}(P_{1},P_{2},P_{3})+O(T_{5})+O(T_{3})+O(T_{4}) \] o\`u \begin{multline}\label{T5} T_{5}=\card\{(\xx,\yy,\zz)\in (\mathcal{A}_{1,\lambda}(\ZZ)\cap P_{1}\BB_{1}) \times (\mathcal{A}_{2,\lambda}(\ZZ)\cap P_{2}\BB_{2})  \\ \times (\mathcal{A}_{3,\lambda}(\ZZ)^{c}\cap P_{3}\BB_{3})\; | \; F(\xx,\yy,\zz)=0\}, \end{multline} et $ T_{3} $, $ T_{4} $ sont d\'efinis par \eqref{T3} et \eqref{T4}. Nous avons d\'ej\`a montr\'e que $ T_{3},T_{4}\ll P_{1}^{n-1}P_{2}^{n}P_{3}^{n} $. On a par ailleurs que \[ T_{5} \ll P_{1}^{n+1}P_{2}^{n+1}P_{3}^{n+1-\lambda}\ll P_{1}^{n-1}P_{2}^{n}P_{3}^{n} \] car $ \lambda\geqslant 4 $. Par cons\'equent \begin{equation}\label{N'NU}
N'(P_{1},P_{2},P_{3})=N_{U}(P_{1},P_{2},P_{3})+O(P_{1}^{n-1}P_{2}^{n}P_{3}^{n}). 
\end{equation}  Par la suite, on choisit $ P_{3} $ tel que $ b'=b+1+\nu $. 
D'apr\`es la proposition \ref{prop4}, la formule \eqref{N'NU} donne : \[ N'(P_{1},P_{2},P_{3})=\sigma P_{1}^{n}P_{2}^{n}P_{3}^{n}+O(P_{1}^{n-\delta}P_{2}^{n}P_{3}^{n}), \] et en utilisant \eqref{N'}, on a alors : \[ \sum_{\substack{(\xx,\yy)\in P_{1}\BB_{1}\times P_{2}\BB_{2} \\ \max_{k}|B_{k}(\xx,\yy)|\neq 0}}\frac{\PGCD_{k}(B_{k}(\xx,\yy))\Vol(C_{\xx,\yy})}{||(B_{k}(\xx,\yy))_{k}||_{2}}=\sigma P_{1}^{n}P_{2}^{n}+O(P_{1}^{n-\delta'}P_{2}^{n}), \] pour $ \delta'=\min\{\delta,\nu\} $, et ceci ind\'ependamment de $ P_{3} $. \\

Par cons\'equent, on d\'eduit de \eqref{N'} que \[ N'(P_{1},P_{2},P_{3}) =\sigma P_{1}^{n}P_{2}^{n}P_{3}^{n}+O(P_{1}^{n-\delta'}P_{2}^{n}P_{3}^{n}) \] pour tout $ n $ tel que $ (n+1)>g_{1}(b_{1},b_{1}',\delta)+(b_{1}+b_{1}'+1+\delta)+2 $, et tous $ (b,b') $ tels que $ b'\geqslant b+1+\nu $. Par cons\'equent, en utilisant la formule \eqref{N'NU} on a, sous les m\^emes hypoth\`eses : \[ N_{U}(P_{1},P_{2},P_{3}) =\sigma P_{1}^{n}P_{2}^{n}P_{3}^{n}+O(P_{1}^{n-\delta'}P_{2}^{n}P_{3}^{n}). \] En regroupant ce r\'esultat avec la proposition \ref{prop4} on obtient la proposition suivante :

\begin{prop}\label{NU}
Si $ n\geqslant 28 $, alors pour tous $ P_{1},P_{2},P_{3}\geqslant 1 $, il existe $ \delta'>0 $ tel que : \[ N_{U}(P_{1},P_{2},P_{3}) =\sigma P_{1}^{n}P_{2}^{n}P_{3}^{n}+O(P_{1}^{n-\delta'}P_{2}^{n}P_{3}^{n}). \]
\end{prop}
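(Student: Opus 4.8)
The strategy is to glue together the two regimes already analysed and then dispose of the ordering hypothesis. By the symmetry of the whole set-up — the hypersurface $V$, its smoothness, the equalities $\dim V_1^* = \dim V_2^* = \dim V_3^* = n+1$, and the open sets $\mathcal{A}_{1,\lambda},\mathcal{A}_{2,\lambda},\mathcal{A}_{3,\lambda}$ are all invariant under permuting the three groups of variables, the forms $B_k,B_j',B_i''$ playing interchangeable roles — every estimate derived so far under the normalisation $P_1\le P_2\le P_3$ (in which $\xx$ carries the smallest bound) holds verbatim, after relabelling, when $\yy$ or $\zz$ carries the smallest bound. It therefore suffices to establish the formula for $P_1\le P_2\le P_3$, the general case following by invoking the version of the argument in which the variable bounded by $\min\{P_1,P_2,P_3\}$ is distinguished; the saving then falls on that smallest parameter, which under the normalisation is $P_1$.

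So assume $P_1\le P_2\le P_3$ and write $P_2=P_1^{b}$, $P_3=P_1^{b'}$ with $1\le b\le b'$. The hypothesis $n\ge 28$ guarantees, via the Remark following the definition of $b_1$, that $n+1>g_1(b_1,b_1',\delta)+(b_1+b_1'+1+\delta)+2$, which is exactly the running hypothesis of Proposition \ref{prop4} and of the computations of Section 5; in particular $K_1=n+1-\lambda>g_1(b_1,b_1',\delta)$ with $\lambda=\lceil b_1+b_1'+1+\delta\rceil$. If $b'\le b+1+\nu$, Proposition \ref{prop4} gives at once $N_U(P_1,P_2,P_3)=\sigma P_1^{n}P_2^{n}P_3^{n}+O(P_1^{n-\delta}P_2^{n}P_3^{n})$. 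If $b'>b+1+\nu$, i.e. $P_3\gg P_1^{1+\nu}P_2$, I use the geometry-of-numbers argument of Section 5: Lemma \ref{geomnomb} applied to the lattice $\Lambda_{\xx,\yy}$ and the convex body $P_3C_{\xx,\yy}$, together with the Lipschitz bounds on $\partial C_{\xx,\yy}$ and the formula for $\det\Lambda_{\xx,\yy}$, yields the evaluation of $N'(P_1,P_2,P_3)$ in \eqref{N'}, and \eqref{N'NU} relates it to $N_U$ up to $O(P_1^{n-1}P_2^{n}P_3^{n})$. Evaluating the resulting arithmetic sum by feeding Proposition \ref{prop4} in at the boundary exponent $b'=b+1+\nu$ produces the same constant $\sigma$ and gives $N_U(P_1,P_2,P_3)=\sigma P_1^{n}P_2^{n}P_3^{n}+O(P_1^{n-\delta'}P_2^{n}P_3^{n})$ with $\delta'=\min\{\delta,\nu\}$. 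The two regimes overlap at $b'=b+1+\nu$ and together cover all $b'\ge b\ge 1$, so the formula holds throughout the ordered range.

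The assembly itself is essentially bookkeeping; the point that requires care is the consistency of the constant $\sigma$. The circle method of Section 2 produces $\sigma=\mathfrak{S}J$, whereas the lattice-point count of Section 5 produces $\sigma$ as a sum of local contributions $\PGCD_k(B_k)\,\Vol(C_{\xx,\yy})/||(B_k)_k||_2$; the identification of the two is precisely what is forced by substituting Proposition \ref{prop4} at $b'=b+1+\nu$ into \eqref{N'NU}, and it is this matching at the junction of the regimes, rather than any single estimate, that is the crux. One should also check that the symmetry invoked at the outset is genuine: although the coefficients $\alpha_{i,j,k}$ are not themselves symmetric, only the permutation-invariant hypotheses on $V$ and the symmetric definitions of the $\mathcal{A}_{i,\lambda}$ and of $U$ enter the derivations, so running the argument with $\yy$ or $\zz$ in the distinguished role is legitimate. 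Finally, the ranges in which one parameter is bounded are covered directly by the Section 5 estimate, which only requires $P_3\gg P_1^{1+\nu}P_2$ and not the validity of the exponent parametrisation.
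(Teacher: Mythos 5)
Your proposal is correct and follows essentially the same route as the paper: the case $b'\leqslant b+1+\nu$ is handled by Proposition \ref{prop4}, the case $b'>b+1+\nu$ by the geometry-of-numbers argument via \eqref{N'} and \eqref{N'NU}, and the arithmetic sum $\sum \PGCD_{k}(B_{k}(\xx,\yy))\Vol(C_{\xx,\yy})/||(B_{k}(\xx,\yy))_{k}||_{2}$ is evaluated, exactly as in the paper, by specialising to the boundary exponent $b'=b+1+\nu$ and exploiting its independence of $P_{3}$, giving $\delta'=\min\{\delta,\nu\}$. The only addition is that you spell out the permutation-symmetry reduction to the ordered case $P_{1}\leqslant P_{2}\leqslant P_{3}$, which the paper leaves implicit but indeed needs when Proposition \ref{NU} is later used to verify condition \ref{I} with a saving in $\min\{P_{1},P_{2},P_{3}\}$.
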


\section{Cinqui\`eme \'etape}

Nous allons \`a pr\'esent utiliser la formule obtenue pour $  N_{U}(P_{1},P_{2},P_{3}) $ dans la proposition \ref{NU} pour trouver une formule asymptotique pour $ N_{U}(B) $. Pour r\'esoudre ce probl\`eme, nous allons appliquer la m\'ethode  d\'evelopp\'ee par Blomer et Br\"{u}dern dans \cite{BB} pour le cas les hypersurfaces diagonales des espaces multiprojectifs, et reprise dans la section $ 9 $ de \cite{S2}. On consid\`ere une fonction $ h : \NN^{3}\ra [0,+\infty[ $. Conform\'ement aux notations de \cite{BB}, on dira que $ h $ est une $ (\beta,C,D,\alpha,\delta) $-fonction si elle v\'erifie les trois conditions suivantes : \begin{enumerate}
\item \label{I} On a \[ \sum_{\substack{l\leqslant L \\ m\leqslant M \\ n\leqslant N}}h(l,m,n)=CL^{\beta}M^{\beta}N^{\beta}+O(L^{\beta}M^{\beta}N^{\beta}\min\{L,M,N\}^{-\delta}) \] pour tous $ L,M,N\geqslant 1 $. 
\item \label{II}Il existe des fonctions $ c_{1},c_{2},c_{3} : \NN \ra [0,+\infty[ $ telles que : \[ \sum_{\substack{m \leqslant M \\ n\leqslant N }}h(l,m,n)=c_{1}(l)M^{\beta}N^{\beta}+O\left(l^{D}M^{\beta}N^{\beta}\min\{M,N\}^{-\delta}\right), \] uniform\'ement pour tous $ M,N\geqslant 1 $ et $ l\leqslant M^{\alpha}N^{\alpha} $, \[ \sum_{\substack{l \leqslant L \\ n\leqslant N }}h(l,m,n)=c_{2}(m)L^{\beta}N^{\beta}+O\left(m^{D}L^{\beta}N^{\beta}\min\{L,N\}^{-\delta}\right), \] uniform\'ement pour tous $ L,N\geqslant 1 $ et $ m\leqslant L^{\alpha}N^{\alpha} $,\[ \sum_{\substack{l \leqslant L \\ m\leqslant M }}h(l,m,n)=c_{3}(n)L^{\beta}M^{\beta}+O\left(n^{D}L^{\beta}M^{\beta}\min\{L,M\}^{-\delta}\right) \] uniform\'ement pour tous $ L,M\geqslant 1 $ et $ n\leqslant L^{\alpha}M^{\alpha} $.
\item \label{III}Il existe des fonctions $ \tilde{c_{1}},\tilde{c_{2}} ,\tilde{c_{3}} : \NN^{2} \ra [0,+\infty[ $ telles que : \[ \sum_{l\leqslant L}h(l,m,n)=\tilde{c_{1}}(m,n)L^{\beta} +O(\max\{m,n\}^{D}L^{\beta-\delta}) \] uniform\'ement pour tous $ L\geqslant 1 $ et $ \max\{m,n\}\leqslant L^{\alpha} $, \[ \sum_{m\leqslant M}h(l,m,n)=\tilde{c_{2}}(l,n)M^{\beta} +O(\max\{l,n\}^{D}M^{\beta-\delta}) \] uniform\'ement pour tous $ M\geqslant 1 $ et $ \max\{l,n\}\leqslant M^{\alpha} $, \[ \sum_{n\leqslant N}h(l,m,n)=\tilde{c_{3}}(l,m)N^{\beta} +O(\max\{l,m\}^{D}N^{\beta-\delta}) \] uniform\'ement pour tous $ N\geqslant 1 $ et $ \max\{l,m\}\leqslant N^{\alpha} $.
\end{enumerate}
On a la proposition suivante qui est un corollaire imm\'ediat de \cite[Th\'eor\`eme 2.1]{BB}: \begin{prop}\label{propfin}
Si $ h $ est une $ (\beta,C,D,\alpha,\delta) $-fonction, alors on a la formule asymptotique : \[ \sum_{lmn\leqslant P}h(l,m,n)=\frac{1}{2}C\beta^{2}P^{\beta}\log(P)^{2}+O(P^{\beta}\log(P)). \]
\end{prop}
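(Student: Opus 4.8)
Le plan est de reconnaître que la proposition n'est autre que le cas à trois variables du théorème général de Blomer et Brüdern. Les trois conditions \ref{I}, \ref{II}, \ref{III} de la définition d'une $(\beta,C,D,\alpha,\delta)$-fonction ont précisément été formulées pour coïncider avec les hypothèses de \cite[Théorème 2.1]{BB} appliqué à trois variables. La démonstration consiste donc essentiellement à vérifier cette correspondance d'hypothèses, puis à invoquer le théorème. Je détaille néanmoins ci-dessous le mécanisme sous-jacent, qui rend transparente la valeur de la constante et la puissance du logarithme.

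Le cœur de l'argument est une méthode de l'hyperbole de Dirichlet en dimension $3$. On découpe le domaine $\{lmn\leqslant P\}$ selon la variable la plus grande ; les hypothèses étant symétriques en $(l,m,n)$, on traite les trois cas de façon identique. Dans la région où, disons, $n$ domine, on somme d'abord sur $n$ à l'aide de la condition \ref{III} :
\[ \sum_{n\leqslant P/(lm)} h(l,m,n)=\tilde{c_{3}}(l,m)\left(\frac{P}{lm}\right)^{\beta}+O\left(\max\{l,m\}^{D}\left(\frac{P}{lm}\right)^{\beta-\delta}\right), \]
la plage d'uniformité $\max\{l,m\}\leqslant (P/(lm))^{\alpha}$ étant exactement garantie par le fait que $n$ est la plus grande variable. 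On reporte ensuite ce terme principal dans les sommes à deux variables régies par \ref{II}, puis à une variable régies par \ref{I}. Chaque intégration d'une variable contre la contrainte $lmn\leqslant P$ fait apparaître un facteur logarithmique, d'où le $\log(P)^{2}$ final pour trois variables.

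Pour la constante, l'heuristique fournie par la condition \ref{I} est que la densité de $h$ vaut $h(l,m,n)\approx C\beta^{3}(lmn)^{\beta-1}$. Le terme principal est donc
\[ C\beta^{3}\sum_{lmn\leqslant P}(lmn)^{\beta-1}=C\beta^{3}\sum_{N\leqslant P}d_{3}(N)N^{\beta-1}, \]
où $d_{3}$ est la fonction comptant les factorisations $lmn=N$. En utilisant $\sum_{N\leqslant P}d_{3}(N)=\frac{1}{2}P\log(P)^{2}+O(P\log P)$ et une sommation d'Abel, on obtient $\sum_{N\leqslant P}d_{3}(N)N^{\beta-1}=\frac{1}{2\beta}P^{\beta}\log(P)^{2}+O(P^{\beta}\log P)$, ce qui donne bien le terme principal $\frac{1}{2}C\beta^{2}P^{\beta}\log(P)^{2}$ annoncé.

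La difficulté principale — et c'est précisément le contenu de \cite[Théorème 2.1]{BB} — est la gestion des plages d'uniformité et des termes d'erreur. Il faut s'assurer que, dans chaque région du découpage, l'hypothèse d'uniformité de la condition employée ($l\leqslant M^{\alpha}N^{\alpha}$, $\max\{l,m\}\leqslant N^{\alpha}$, etc.) est satisfaite, puis que la sommation des termes d'erreur sur les variables restantes — avec les facteurs polynomiaux $l^{D}$, $m^{D}$, $n^{D}$ et les gains $\min\{\cdots\}^{-\delta}$ — se résorbe dans $O(P^{\beta}\log P)$. Les régions de transition, où deux variables sont comparables et où aucune ne domine clairement, demandent un traitement séparé. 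Une fois les hypothèses \ref{I}--\ref{III} vérifiées, toute cette comptabilité est entièrement prise en charge par le théorème de Blomer et Brüdern, ce qui achève la démonstration.
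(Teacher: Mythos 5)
Votre preuve suit exactement la m\^eme voie que l'article : celui-ci ne donne aucune d\'emonstration et se contente d'\'enoncer que la proposition est un corollaire imm\'ediat de \cite[Th\'eor\`eme 2.1]{BB}, les conditions d\'efinissant une $(\beta,C,D,\alpha,\delta)$-fonction ayant pr\'ecis\'ement \'et\'e calqu\'ees sur les hypoth\`eses de ce th\'eor\`eme. Votre esquisse suppl\'ementaire (hyperbole de Dirichlet en dimension $3$, calcul de la constante via $\sum_{N\leqslant P}d_{3}(N)N^{\beta-1}=\frac{1}{2\beta}P^{\beta}\log(P)^{2}+O(P^{\beta}\log P)$) est correcte et \'eclaire utilement l'origine du facteur $\frac{1}{2}C\beta^{2}$ et de la puissance $\log(P)^{2}$, mais elle ne fait qu'expliciter ce que le th\'eor\`eme cit\'e prend en charge.
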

Nous allons appliquer ce r\'esultat \`a la fonction \[ h(l_{1},l_{2},l_{3})=\card\{(\xx,\yy,\zz)\in U \; | \; |\xx|=l_{1}, \; |\yy|=l_{2}, \; |\zz|=l_{3}, \; F(\xx,\yy,\zz)=0 \} \] (en remarquant que $ N_{U}(B)=\sum_{l_{1}l_{2}l_{3}\leqslant B}h(l_{1},l_{2},l_{3}) $). Pour cela nous allons montrer que cette fonction est bien une $ (\beta,C,D,\alpha,\delta) $-fonction (pour des constantes $ C,\delta,\beta,\alpha,D $ que nous pr\'eciserons).

Remarquons que cette fonction v\'erifie bien la condition \ref{I} avec $ \beta=n $, d'apr\`es la proposition \ref{NU}. D'autre part, par le corollaire \ref{lecorollaire}, on a pour tout $ \xx\in \mathcal{A}_{1,\lambda}(\ZZ) $ et $ P_{2}\leqslant P_{3} $ : \[ N_{\xx}(P_{2},P_{3})=\mathfrak{S}_{\xx}J_{\xx}P_{2}^{n}P_{3}^{n}+O([\xx|^{4}P_{2}^{n-\delta}P_{3}^{n}) \] uniform\'ement pour tout $ \xx $ tel que $ |\xx|<(P_{2}^{1-3\theta_{2}}P_{3})^{\frac{1}{2}} $ pour un $ \theta_{2}<\frac{1}{5} $ fix\'e. Donc en choisissant $ \theta_{2}<\frac{1}{6} $, la formule est vraie pour tout $ \xx $ tel que  $ |\xx|\leqslant P_{2}^{\frac{1}{4}}P_{3}^{\frac{1}{2}} $. Si l'on note \[ N_{U,\xx}(P_{2},P_{3})=\card\{(\yy,\zz)\in (P_{2}\BB_{2}\times P_{3}\BB_{3})\cap \ZZ^{2n+2} \; | \; (\xx,\yy,\zz)\in U,\; F(\xx,\yy,\zz)=0\} \]
On a alors que \[ N_{U,\xx}(P_{2},P_{3})=N_{\xx}(P_{2},P_{3})+O(T_{1,\xx})+O(T_{2,\xx})+O(T_{3,\xx})+O(T_{4,\xx}) \] o\`u \begin{multline}
T_{1,\xx}=\card\{ (\yy,\zz)\in (\mathcal{A}_{2,\lambda}(\ZZ)^{c}\cap P_{2}\BB_{2})\times (\ZZ^{n+1}\cap P_{3}\BB_{3}) \\ \; | \; \max_{k}|B_{k}(\xx,\yy)|\neq 0, \; F(\xx,\yy,\zz)=0 \} 
\end{multline}
\begin{multline}
T_{2,\xx}=\card\{ (\yy,\zz)\in (\ZZ^{n+1}\cap P_{2}\BB_{2})\times (\mathcal{A}_{3,\lambda}(\ZZ)^{c}\cap P_{3}\BB_{3}) \\ \; | \; \max_{k}|B_{k}(\xx,\yy)|\neq 0, \; F(\xx,\yy,\zz)=0 \} 
\end{multline}
\begin{multline}
T_{3,\xx}=\card\{ (\yy,\zz)\in (\mathcal{A}_{2,\lambda}(\ZZ)\cap P_{2}\BB_{2})\times (\mathcal{A}_{3,\lambda}(\zz)\cap P_{3}\BB_{3}) \\ \; | \;  \max_{k}|B_{k}(\xx,\yy)|\neq 0, \; \forall j \;  B_{j}'(\xx,\zz)=0   \} 
\end{multline}
\begin{multline}
T_{4,\xx}=\card\{ (\yy,\zz)\in (\mathcal{A}_{2,\lambda}(\ZZ)\cap P_{2}\BB_{2})\times (\mathcal{A}_{3,\lambda}(\ZZ)\cap P_{3}\BB_{3}) \\ \; | \; \max_{k}|B_{k}(\xx,\yy)|\neq 0, \; \forall i\; B_{i}''(\xx,\zz)=0   \} 
\end{multline}
En reprenant la formule \eqref{reseau}, et en remarquant que, pour tous $ \xx,\yy $ \[ \Vol(C_{\xx,\yy})\ll 1 \] et \[ \frac{1}{|\det(\Lambda_{\xx,\yy})|}=\frac{||(B_{k}(\xx,\yy))_{k}||_{2}}{\PGCD_{k}(B_{k}(\xx,\yy))}\leqslant 1, \] on obtient :  \[ T_{1,\xx}\ll \sum_{\yy\in \mathcal{A}_{2,\lambda}^{c}\cap P_{2}\BB_{2}}P_{3}^{n} \ll P_{2}^{n+1-\lambda}P_{3}^{n}\ll P_{2}^{n-1}P_{3}^{n}. \]
On montre de m\^eme que \[ T_{2,\xx} \ll P_{2}^{n}P_{3}^{n-1}.\] Par ailleurs, si $ B_{j}'(\xx,\zz)=0  $ pour tout $ j $, alors $ \zz\in V_{3,\xx}^{\ast} $, et donc si $ \xx\in \mathcal{A}_{1,\lambda}(\ZZ) $ : \[ T_{3,\xx} \ll P_{2}^{n+1}P_{3}^{\lambda}\ll P_{2}^{n-1}P_{3}^{n}. \] De m\^eme on montre \[ T_{4,\xx} \ll P_{2}^{n-1}P_{3}^{n}. \] Par cons\'equent, si $ P_{2}\leqslant P_{3} $, on a \[ N_{U,\xx}(P_{2},P_{3})=N_{\xx}(P_{2},P_{3})+O(P_{2}^{n-1}P_{3}^{n})=\mathfrak{S}_{\xx}J_{\xx}P_{2}^{n}P_{3}^{n}+O([\xx|^{4}P_{2}^{n-\delta}P_{3}^{n})\] uniform\'ement pour tout $ \xx $ tel que $ |\xx|\leqslant  P_{2}^{\frac{1}{4}}P_{3}^{\frac{1}{2}} $. Par les m\^emes calculs, on obtient exactement le m\^eme r\'esultat dans le cas $ P_{3}\leqslant P_{2} $. Par cons\'equent, on trouve que :  \begin{align*} \sum_{l_{2}\leqslant P_{2}, \; l_{3}\leqslant P_{3}}h(l_{1},l_{2},l_{3}) & =\sum_{\xx\in \mathcal{A}_{1,\lambda}(\ZZ), |\xx|=l_{1}}N_{U,\xx}(P_{2},P_{3}) \\ & =c_{1}(l_{1})P_{2}^{n}P_{3}^{n}+ O\left(l_{1}^{n+4}P_{2}^{n}P_{3}^{n}\min\{P_{2},P_{3}\}^{-\delta}\right), \end{align*} uniform\'ement pour tout $ l_{1}\leqslant P_{2}^{\frac{1}{4}}P_{3}^{\frac{1}{4}} $, avec \[  c_{1}(l_{1})=\sum_{\xx\in \mathcal{A}_{1,\lambda}(\ZZ), |\xx|=l_{1}}\mathfrak{S}_{\xx}J_{\xx}. \] Donc $ h $ v\'erifie le premier point de la condition \ref{II} avec $ D=n+4 $, et $ \alpha=\frac{1}{4} $. Par sym\'etrie, on montre de m\^eme que $ h $ v\'erifie les deux autres points de la condition \ref{II}. \\

On fixe $ (\xx,\yy)\in \mathcal{A}_{1,\lambda}(\ZZ)\times \mathcal{A}_{2,\lambda}(\ZZ) $ tels que $ \max_{k}|B_{k}(\xx,\yy)|\neq 0 $. Si l'on note \begin{multline*} N_{U,\xx,\yy}(P_{3})=\card\{\zz\in \ZZ^{n+1}\cap P_{3}\BB_{3} \; | \; (\xx,\yy,\zz)\in U, \; F(\xx,\yy,\zz)=0 \},  \end{multline*} on remarque que \[  N_{U,\xx,\yy}(P_{3})=N_{\Lambda_{\xx,\yy},\BB_{3}}(P_{3})+O(T_{1,\xx,\yy})+O(T_{2,\xx,\yy})+O(T_{3,\xx,\yy}), \] avec 
\[
T_{1,\xx,\yy}=\card\{ \zz\in \mathcal{A}_{3,\lambda}(\ZZ)^{c}\cap P_{3}\BB_{3}\},
\]
\[
T_{2,\xx,\yy}=\card\{ \zz\in \ZZ^{n+1}\cap P_{3}\BB_{3}, \; B_{j}'(\xx,\zz)=0 \; \forall j\},
\]
\[
T_{3,\xx,\yy}=\card\{ \zz\in \ZZ^{n+1}\cap P_{3}\BB_{3}, \; B_{i}''(\yy,\zz)=0 \; \forall i\}. 
\]
On a alors imm\'ediatement \[ T_{1,\xx,\yy}\ll P_{3}^{n+1-\lambda}\ll P_{3}^{n-1} , \]\[ T_{2,\xx,\yy}\ll P_{3}^{\lambda}\ll P_{3}^{n-1}, \]\[ T_{3,\xx,\yy}\ll P_{3}^{\lambda}\ll P_{3}^{n-1}. \] Par cons\'equent \begin{align*}
 N_{U,\xx,\yy}(P_{3}) & =N_{\Lambda_{\xx,\yy},\BB_{3}}(P_{3})+O(P_{3}^{n-1}) \\ & = \frac{\PGCD_{k}(B_{k}(\xx,\yy))\Vol(C_{\xx,\yy})}{||(B_{k}(\xx,\yy))_{k}||_{2}}P_{3}^{n}+O(P_{3}^{n-1}). 
\end{align*}
On a ainsi : \begin{align*} \sum_{ l_{3}\leqslant P_{3}}h(l_{1},l_{2},l_{3}) & =\sum_{\substack{\xx\in \mathcal{A}_{1,\lambda}(\ZZ), |\xx|=l_{1}\\ \yy\in \mathcal{A}_{2,\lambda}(\ZZ), |\yy|=l_{2} \\ \max_{k}|B_{k}(\xx,\yy)|\neq 0}}N_{U,\xx,\yy}(P_{3}) \\ & =\tilde{c_{3}}(l_{1},l_{2})P_{3}^{n}+ O\left(l_{1}^{n}l_{2}^{n}P_{3}^{n-1}\right), \end{align*} et donc $ h $ v\'erifie le troisi\`eme point de la condition \ref{III} pour un $ \alpha $ quelconque et pour $ D=2n $, et par sym\'etrie, cette condition est enti\`erement v\'erifi\'ee. \\

On a donc montr\'e que $ h $ est une $ (n,\sigma,2n,\frac{1}{4},\delta) $-fonction, et donc en appliquant la proposition \ref{propfin}, on trouve : 

\begin{prop}\label{preconclusion}
Si $ n\geqslant 28 $, alors pour tout $ B\geqslant 1 $, on a la formule asymptotique : \[ N_{U}(B)=\frac{1}{2}n^{2}\sigma B^{n}\log(B)^{2}+O\left(B^{n}\log(B)\right). \]
\end{prop}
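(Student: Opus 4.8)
The plan is to recognize $N_U(B)$ as a multiplicative convolution sum and then invoke the Blomer--Br\"udern machinery packaged in Proposition \ref{propfin}. Concretely, I would set
\[ h(l_1,l_2,l_3)=\card\{(\xx,\yy,\zz)\in U \; | \; |\xx|=l_1,\; |\yy|=l_2,\; |\zz|=l_3,\; F(\xx,\yy,\zz)=0\}, \]
so that by definition $N_U(B)=\sum_{l_1l_2l_3\leqslant B}h(l_1,l_2,l_3)$. The entire task then reduces to checking that $h$ is a $(\beta,C,D,\alpha,\delta)$-function for suitable parameters, after which Proposition \ref{propfin} delivers the stated asymptotic immediately.

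First I would verify condition \ref{I}: summing $h$ over the box $\{l_1\leqslant L,\,l_2\leqslant M,\,l_3\leqslant N\}$ recovers $N_U(L,M,N)$, and Proposition \ref{NU} (valid once $n\geqslant 28$) gives precisely the required shape with $\beta=n$ and $C=\sigma$. Next, for condition \ref{II}, I would fix $\xx\in\mathcal{A}_{1,\lambda}(\ZZ)$ and study the fiber count $N_{U,\xx}(P_2,P_3)$. Here the key input is Corollary \ref{lecorollaire}, which gives $N_{\xx}(P_2,P_3)=\mathfrak{S}_{\xx}J_{\xx}P_2^nP_3^n+O(|\xx|^4P_2^{n-\delta}P_3^n)$ \emph{uniformly} for $|\xx|\leqslant(P_2^{1-3\theta_2}P_3)^{1/2}$; choosing $\theta_2<\tfrac16$ makes this hold for $|\xx|\leqslant P_2^{1/4}P_3^{1/2}$. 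I would then pass from $N_{\xx}$ to $N_{U,\xx}$ by subtracting the contributions of points lying in the accumulating loci $\mathcal{A}_{2,\lambda}^c$, $\mathcal{A}_{3,\lambda}^c$ and of the fibers where $B_j'(\xx,\zz)=0$ or $B_i''(\yy,\zz)=0$; each such error is $\ll P_2^{n-1}P_3^n$ or $\ll P_2^nP_3^{n-1}$ by Lemma \ref{complA} and the dimension bounds $\dim V_{3,\xx}^{\ast},\dim V_{2,\xx}^{\ast}<\lambda$. Summing over $\xx$ with $|\xx|=l_1$ then yields condition \ref{II} with $c_1(l_1)=\sum_{|\xx|=l_1}\mathfrak{S}_{\xx}J_{\xx}$, exponent $D=n+4$ (from the $|\xx|^4$ loss) and $\alpha=\tfrac14$; the other two instances follow by the evident symmetry in the three factors.

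For condition \ref{III} I would fix a pair $(\xx,\yy)\in\mathcal{A}_{1,\lambda}(\ZZ)\times\mathcal{A}_{2,\lambda}(\ZZ)$ with $\max_k|B_k(\xx,\yy)|\neq0$ and count $\zz$ directly via the lattice estimate \eqref{reseau},
\[ N_{\Lambda_{\xx,\yy},\BB_3}(P_3)=\frac{\Vol(C_{\xx,\yy})}{\det(\Lambda_{\xx,\yy})}P_3^n+O(P_3^{n-1}), \]
again subtracting off the $O(P_3^{n-1})$ contributions from $\mathcal{A}_{3,\lambda}^c$ and from the vanishing of $B_j'$ or $B_i''$. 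This gives $\sum_{l_3\leqslant P_3}h(l_1,l_2,l_3)=\tilde c_3(l_1,l_2)P_3^n+O(l_1^nl_2^nP_3^{n-1})$, hence condition \ref{III} with $D=2n$ and arbitrary $\alpha$, and symmetry handles the remaining two. Thus $h$ is an $(n,\sigma,2n,\tfrac14,\delta)$-function, and Proposition \ref{propfin} gives $\sum_{l_1l_2l_3\leqslant B}h=\tfrac12\sigma n^2B^n\log(B)^2+O(B^n\log(B))$, which is the claim. I expect the main obstacle to be condition \ref{II}: unlike the crude lattice bound used for \ref{III}, it demands a genuinely uniform two-variable asymptotic with polynomially controlled dependence on the frozen variable, so the delicate point is tracking the $|\xx|^4$ factor in Corollary \ref{lecorollaire} and confirming that, within the admissible range $l_1\leqslant(P_2P_3)^{1/4}$, this loss is absorbed into the acceptable error $l_1^{D}P_2^nP_3^n\min\{P_2,P_3\}^{-\delta}$.
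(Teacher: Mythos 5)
Your proposal is correct and follows essentially the same route as the paper's own proof: the same counting function $h$, verification of conditions \ref{I}, \ref{II}, \ref{III} via Proposition \ref{NU}, Corollary \ref{lecorollaire} together with the error terms controlled by Lemma \ref{complA}, and the lattice estimate \eqref{reseau}, yielding that $h$ is an $(n,\sigma,2n,\tfrac{1}{4},\delta)$-function before applying Proposition \ref{propfin}. The parameters you identify ($\beta=n$, $D=n+4$ then $D=2n$, $\alpha=\tfrac14$) coincide exactly with those of the paper.
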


\section{Conclusion et interpr\'etation des constantes}

Nous pouvons finalement calculer  le cardinal \begin{multline*} \tilde{N}_{U}(B)=\card\{(\xx,\yy,\zz)\in U\cap(\ZZ^{n+1}\times \ZZ^{n+1}\times \ZZ^{n+1}) \; | \; \\ (x_{0},...,x_{n}), (y_{0},...,y_{n}), (z_{0},...,z_{n}) \; \primitifs, \;  F(\xx,\yy,\zz) =0, \; H'(\xx,\yy,\zz)\leqslant B\}. \end{multline*}

On remarque en effet que si $ N_{d,e,f}(B) $ d\'esigne
\begin{multline*}\card\{(d\xx,e\yy,f\zz)\in U\cap(d\ZZ^{n+1}\times e\ZZ^{n+1}\times f\ZZ^{n+1}) \; | \; \\  F(\xx,\yy,\zz) =0, \; H'(d\xx,e\yy,f\zz)\leqslant B\}=N_{U}(B/def) \end{multline*} et
 \begin{multline*} \tilde{N}_{k,l,m}(B)=\card\{(k\xx,l\yy,m\zz)\in U\cap(k\ZZ^{n+1}\times l\ZZ^{n+1}\times m\ZZ^{n+1}) \; | \; \\ (x_{0},...,x_{n}), (y_{0},...,y_{n}), (z_{0},...,z_{n}) \; \primitifs \;,  F(\xx,\yy,\zz) =0, \; H'(k\xx,l\yy,m\zz)\leqslant B\} \\ =\tilde{N}_{U}(B/klm ) \end{multline*} (pour $ d,e,f,k,l,m \in \NN $), alors on a 
  \[ N_{d,e,f}(B)=\sum_{d|k}\sum_{e|l}\sum_{f|m}\tilde{N}_{k,l,m}(B). \] Par inversions de M\"{o}bius successives appliqu\'ees \`a $ (d,e,f)=(1,1,1) $, on obtient : \begin{align*} \tilde{N}_{U}(B)& =\tilde{N}_{(1,1,1)}(B)  =\sum_{k\in \NN^{\ast}}\mu(k)\sum_{l\in \NN^{\ast}}\mu(l) \sum_{m\in\NN^{\ast}}\mu(m)N_{k,l,m}(B)\\ & =\sum_{k,l,m \in \NN^{\ast}}\mu(k)\mu(l)\mu(m)N_{U}(B/klm) \\ & = \frac{1}{2}\sigma\sum_{k,l,m \in \NN^{\ast}}\frac{\mu(k)\mu(l)\mu(m)}{k^{n}l^{n}m^{n}}n^{2}B^{n}\log(B)^{2} +O(B^{n}\log(B)).
 \end{align*} 
 On remarque que \[ \sum_{k,l,m \in \NN^{\ast}}\frac{\mu(k)\mu(l)\mu(m)}{k^{n}l^{n}m^{n}}=\left(\sum_{k\in \NN^{\ast}}\frac{\mu(k)}{k^{n}}\right)^{3}, \] et que \[\sum_{k\in \NN^{\ast}}\frac{\mu(k)}{k^{n}}=\prod_{p \in \mathcal{P}}\left(1-\frac{1}{p^{n}}\right) \] ($ \mathcal{P} $ d\'esignant l'ensemble des entiers premiers). 
En rappelant que l'on a $ \mathcal{N}_{U}(B)=\frac{1}{8}\tilde{N}_{U}(B^{\frac{1}{n}}) $, on a donc finalement d\'emontr\'e le r\'esultat suivant : 
\begin{prop}\label{conclusion}
Pour tout $ n\geqslant 28 $, on a : \[\mathcal{N}_{U}(B)=\frac{1}{16}\sigma'  B\log(B)^{2}+O(B\log(B)), \]
lorsque $ B \ra \infty $, o\`u l'on a not\'e $ \sigma'=\sigma\prod_{p \in \mathcal{P}}\left(1-\frac{1}{p^{n}}\right)^{3} $.
\end{prop}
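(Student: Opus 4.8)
The plan is to transfer the asymptotic of Proposition~\ref{preconclusion} for $N_U(B)$ to the primitive count $\tilde N_U(B)$ by M\"obius inversion, and then to read off $\mathcal{N}_U(B)$ from the elementary identity $\mathcal{N}_U(B)=\tfrac{1}{8}\tilde N_U(B^{1/n})$ already recorded above.

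First I would write down the inversion. Every integer triple is a unique positive multiple of a primitive one, so $N_{d,e,f}(B)=\sum_{d\mid k}\sum_{e\mid l}\sum_{f\mid m}\tilde N_{k,l,m}(B)$; applying M\"obius inversion in each of the three variables at $(d,e,f)=(1,1,1)$ gives $\tilde N_U(B)=\sum_{k,l,m}\mu(k)\mu(l)\mu(m)\,N_U(B/klm)$. Because $H'(\xx,\yy,\zz)\geq 1$ on nonzero integer triples, $N_U(B/klm)=0$ whenever $klm>B$, so the sum is in fact finite, ranging over $1\leq klm\leq B$.

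Next I would substitute $N_U(t)=\tfrac{1}{2} n^2\sigma\,t^n\log(t)^2+O\!\left(t^n\log t\right)$ with $t=B/klm$. Expanding $\log(B/klm)^2=\log(B)^2-2\log(B)\log(klm)+\log(klm)^2$, the leading piece is $\tfrac{1}{2} n^2\sigma B^n\log(B)^2\sum_{klm\leq B}\mu(k)\mu(l)\mu(m)(klm)^{-n}$; the two remaining pieces, the tail needed to complete the sum to all of $(\NN^{\ast})^3$, and the propagated $O$-error all contribute only $O(B^n\log B)$. This is the one point requiring care, and it rests on the absolute convergence, for $n\geq 28$, of $\sum_k\mu(k)k^{-n}$ and $\sum_k\mu(k)k^{-n}\log k$, together with the fact that the boundary range where $B/klm$ stays bounded contributes at most $O(B\log(B)^2)=O(B^n\log B)$. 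Once the sum is completed it factors as $\bigl(\sum_k\mu(k)k^{-n}\bigr)^3=\prod_{p\in\mathcal{P}}(1-p^{-n})^3$, so with $\sigma'=\sigma\prod_{p\in\mathcal{P}}(1-p^{-n})^3$ I obtain $\tilde N_U(B)=\tfrac{1}{2} n^2\sigma' B^n\log(B)^2+O(B^n\log B)$.

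Finally I would put $B\mapsto B^{1/n}$ and use $\mathcal{N}_U(B)=\tfrac{1}{8}\tilde N_U(B^{1/n})$. Since $(B^{1/n})^n=B$ and $\log(B^{1/n})^2=n^{-2}\log(B)^2$, the factor $n^2$ cancels against $n^{-2}$ and the error becomes $O(B\log B)$, yielding $\mathcal{N}_U(B)=\tfrac{1}{16}\sigma' B\log(B)^2+O(B\log B)$. The substance of the argument is Proposition~\ref{preconclusion}; everything here is assembly, the only genuine obstacle being the uniform control of the error in the truncated M\"obius sum, while the Euler-product identification and the final rescaling are routine.
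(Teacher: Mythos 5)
Your proposal is correct and follows essentially the same route as the paper: the same triple M\"obius inversion identity $\tilde N_U(B)=\sum_{k,l,m}\mu(k)\mu(l)\mu(m)N_U(B/klm)$, substitution of Proposition~\ref{preconclusion}, factorization into the Euler product $\prod_{p\in\mathcal{P}}\left(1-p^{-n}\right)^{3}$, and the final rescaling via $\mathcal{N}_U(B)=\tfrac{1}{8}\tilde N_U(B^{1/n})$. If anything, you are more careful than the paper, which silently absorbs the truncation of the sum, the $\log(klm)$ cross terms, and the propagated error into $O(B^n\log B)$, exactly the points you justify explicitly.
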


Nous allons \`a pr\'esent donner une interpr\'etation des constantes introduites, et constater finalement que l'expression obtenue est bien en accord avec les formules conjectur\'ees par Peyre dans \cite{Pe}. \\

Dans tout ce qui va suivre, on notera $ \pi $ la projection \begin{multline*}\pi : \AA_{\QQ}^{3(n+1)}\setminus \left((\{\0\}\times\AA_{\QQ}^{n+1}\times \AA_{\QQ}^{n+1})\cup(\AA_{\QQ}^{n+1}\times\{\0\}\times \AA_{\QQ}^{n+1})\right. \\ \left. \cup(\AA_{\QQ}^{n+1}\times \AA_{\QQ}^{n+1}\times\{\0\})\right)\ra \PP_{\QQ}^{n}\times \PP_{\QQ}^{n}\times \PP_{\QQ}^{n} \end{multline*}

On note $ W=\pi^{-1}(V) $. Si $ (\xx,\yy,\zz)\in W $ est un point lisse avec, par exemple, $ B_{k_{1}}(\xx,\yy)\neq 0 $ pour un certain $ k_{1}\in \{0,...,n\} $, alors la forme de Leray $ \omega_{L} $ sur $ W $ est donn\'ee par \begin{multline*} \omega_{L}(\xx,\yy,\zz)=\frac{(-1)^{n+1-k_{1}}}{B_{k_{1}}(\xx,\yy)}dx_{0}\wedge...\wedge dx_{n}\wedge dy_{0}\wedge ...\wedge dy_{n} \\ \wedge dz_{0}\wedge...\wedge \widehat{dz_{k_{1}}}\wedge ...\wedge dz_{n}(\xx,\yy,\zz).  \end{multline*} Pour toute place $ \nu\in \Val(\QQ) $ la forme de Leray induit une mesure locale $ \omega_{L,\nu} $.

\subsection{\'Etude de l'int\'egrale $ J $} 

 Rappelons que l'on a \[ J =\int_{\RR}\int_{\BB_{1}\times\BB_{2}\times\BB_{3}}e(\beta F(\xx,\yy,\zz))d\xx d\yy d\zz d\beta. \]
et cette int\'egrale est absolument convergente (cf. lemme \ref{ConvergeJ}). On pose par ailleurs : \[ \sigma_{\infty}(V)=\int_{W\cap [-1,1]^{3n+3}}\omega_{L,\infty}. \]
Nous allons montrer que l'int\'egrale $ J $ co\"{i}cide avec $ \sigma_{\infty}(W) $. Il nous suffit de le v\'erifier localement i.e. montrons que pour tout ouvert $ U $ de $ \BB_{1}\times \BB_{2}\times \BB_{3} $ sur lequel, par exemple, $ B_{n}(\xx,\yy)\neq 0 $, \[ \sigma_{\infty}(U)=\int_{U\cap[-1,1]^{3n+3}} \omega_{L,\infty}=\int_{U\cap[-1,1]^{3n+3}}\frac{1}{|B_{n}(\xx,\yy)|}d\xx d\yy d\hat{\zz}, \] (avec $ d\hat{\zz}=dz_{0}...dz_{n-1} $) co\"{i}ncide avec \[ 
J_{U}=\int_{\RR}\int_{U\cap (\BB_{1}\times \BB_{2}\times \BB_{3})}e(\beta F(\xx,\yy,\zz))d\xx d\yy d\zz d\beta. \]Consid\`erons donc un tel ouvert $ U $. De la m\^eme mani\`ere que pour le lemme \ref{ConvergeJ}, on montre que $ J_{U}=\lim_{\mu\ra \infty}J_{U}(\mu) $, o\`u \[ J_{U}(\mu)=\int_{-\mu}^{\mu}\int_{U\cap(\BB_{1}\times\BB_{2}\times\BB_{3})}e(\beta F(\xx,\yy,\zz))d\xx d\yy d\zz d\beta \] pour tout $ \mu>0 $. On peut r\'e\'ecrire l'int\'egrale $ J_{U}(\mu) $ sous la forme : \begin{align*} J_{U}(\mu)&=\int_{U\cap (\BB_{1}\times\BB_{2}\times\BB_{3})}\left(\int_{-\mu}^{\mu}e(\beta F(\xx,\yy,\zz))d\beta\right) d\xx d\yy d\zz \\ & =\int_{U\cap(\BB_{1}\times\BB_{2}\times\BB_{3})}\frac{\sin(2\pi\mu F(\xx,\yy,\zz))}{\pi F(\xx,\yy,\zz)}d\xx d\yy d\zz \end{align*} 
On remplace ensuite la variable $ z_{n} $ par $ t=F(\xx,\yy,\zz) $, et on note \[ \chi(t)=\left\{ \begin{array}{l}
1 \; \; \mbox{si} \; \; z_{n}=\frac{t}{B_{n}(\xx,\yy)}-\sum_{k=0}^{n-1}\frac{B_{k}(\xx,\yy)}{B_{n}(\xx,\yy)}z_{k}\in [-1,1] \; \mbox{et} \; (\xx,\yy,\zz)\in U \\ 0 \; \; \mbox{sinon}
\end{array}\right.  \]

On a alors (si $ A=\sum_{i,j}|\alpha_{i,j,n}| $) : \[ J_{U}=\int_{-A}^{A}\frac{\sin(2\pi\mu t)}{\pi t}\int_{[-1,1]^{3n+2}}\frac{\chi(t)}{|B_{n}(\xx,\yy)|}d\xx d\yy d\hat{\zz} dt
\]
Si l'on note \[ \phi(t)=\int_{[-1,1]^{3n+2}}\frac{\chi(t)}{|B_{n}(\xx,\yy)|}d\xx d\yy d\hat{\zz}, \]
on remarque que cette fonction est \`a variations born\'ees. Par cons\'equent par application des r\'esultats d'analyse de Fourier (voir \cite[9.43]{W-W}) on a que \begin{align*} J_{U}=\phi(0)& =\int_{[-1,1]^{3n+2}}\frac{\chi(0)}{|B_{n}(\xx,\yy)|}d\xx d\yy d\hat{\zz} \\ & =\int_{U\cap[-1,1]^{3n+3}} \omega_{L,\infty}= \sigma_{\infty}(U). \end{align*} Remarquons que ces calculs constituent un \'equivalent du travail effectu\'e par Igusa dans \cite[\S IV.6]{Ig} pour le cas les int\'egrales de fonctions indicatrices. \\

Nous allons \`a pr\'esent interpr\'eter cette constante $ \sigma_{\infty} $ en termes de mesures de Tamagawa. Rappelons que (avec les notations de \cite{S2}) la mesure $ \tau_{\infty}=\omega_{\infty} $ est d\'efinie localement sur l'ouvert \[ U_{0,0,0}=\{([\xx],[\yy],[\zz])\in \PP^{n}\times \PP^{n}\times \PP^{n} \; |\; x_{0}y_{0}z_{0}\neq 0,\; B_{n}(\xx,\yy)\neq 0\} \] par \[ \omega_{\infty}=\frac{du_{1}...du_{n}dv_{1}...dv_{n}dw_{1}...dw_{n-1}}{h_{\infty}(\uu,\vv,\ww)|B_{n}(\uu,\vv)|} \]
o\`u $ \uu=(1,u_{1},...,u_{n}) $, $ \vv=(1,v_{1},...,v_{n}) $, $ \ww=(1,w_{1},...,w_{n}) $ et \[ h_{\infty}(\xx,\yy,\zz)=h_{\infty}^{1}(\xx) h_{\infty}^{2}(\yy) h_{\infty}^{3}(\zz), \] avec \[ h_{\infty}^{1}(\xx)=\max_{0\leqslant i\leqslant n}|x_{i}|^{n},\; \;  h_{\infty}^{2}(\yy)=\max_{0\leqslant j\leqslant n}|y_{j}|^{n},\; \; h_{\infty}^{3}(\zz)=\max_{0\leqslant k\leqslant n}|z_{k}|^{n}. \]
Nous allons d\'emontrer le r\'esultat suivant : 
\begin{lemma}\label{Tamagawa infini} On a 
\[ \tau_{\infty}=\frac{n^{3}}{8}\sigma_{\infty}. \]
\end{lemma}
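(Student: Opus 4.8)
The plan is to compute $\sigma_\infty(V)=\int_{W\cap[-1,1]^{3n+3}}\omega_{L,\infty}$ directly and to recognise the outcome as $\tfrac{8}{n^3}\tau_\infty$. Since both $\tau_\infty$ and $\sigma_\infty$ are built by gluing local contributions, and the complement of $U_{0,0,0}$ in $V$ (where $x_0y_0z_0=0$ or $B_n(\xx,\yy)=0$) is negligible for both, it suffices to carry out the comparison over $\pi^{-1}(U_{0,0,0})$. There I may take $B_n(\xx,\yy)\neq0$ and use the explicit Leray form with $k_1=n$, namely $\omega_{L,\infty}=|B_n(\xx,\yy)|^{-1}\,d\xx\,d\yy\,dz_0\cdots dz_{n-1}$, the coordinate $z_n$ being eliminated through $F=0$.

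First I would separate each homogeneous block into a scale and an affine part, writing $\xx=s(1,u_1,\ldots,u_n)$, $\yy=t(1,v_1,\ldots,v_n)$, $\zz=r(1,w_1,\ldots,w_n)$ with $s=x_0,\,t=y_0,\,r=z_0\in\RR^\ast$. Trilinearity gives $F(\xx,\yy,\zz)=str\,F(\uu,\vv,\ww)$, so the cone over $V$ is exactly the set of such points with $([\uu],[\vv],[\ww])\in V$, i.e. $w_n=-B_n(\uu,\vv)^{-1}\sum_{k=0}^{n-1}B_k(\uu,\vv)w_k$. The Jacobian of $(\xx,\yy,z_0,\ldots,z_{n-1})$ with respect to $(s,u_1,\ldots,u_n,t,v_1,\ldots,v_n,r,w_1,\ldots,w_{n-1})$ is block lower-triangular with determinant $s^n t^n r^{n-1}$, while $B_n(\xx,\yy)=st\,B_n(\uu,\vv)$; after taking absolute values this yields
\[
\omega_{L,\infty}=\frac{|s|^{n-1}|t|^{n-1}|r|^{n-1}}{|B_n(\uu,\vv)|}\,ds\,dt\,dr\,du_1\cdots du_n\,dv_1\cdots dv_n\,dw_1\cdots dw_{n-1}.
\]

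It then remains to integrate out the scales. The condition $(\xx,\yy,\zz)\in[-1,1]^{3n+3}$ decouples into the three constraints $\max_i|x_i|\le1$, $\max_j|y_j|\le1$, $\max_k|z_k|\le1$; since the block $\uu$ contains the coordinate $1$, the first reads $|s|\le(\max_i|u_i|)^{-1}=h_\infty^1(\uu)^{-1/n}$, and likewise $|t|\le h_\infty^2(\vv)^{-1/n}$, $|r|\le h_\infty^3(\ww)^{-1/n}$ (the bound on $z_n$ is automatically included, as $\max_k|w_k|$ ranges over $k=0,\ldots,n$). Each scale integral gives $\int_{|s|\le h_\infty^1(\uu)^{-1/n}}|s|^{n-1}\,ds=\tfrac{2}{n}h_\infty^1(\uu)^{-1}$, so the three together contribute $\tfrac{8}{n^3}h_\infty(\uu,\vv,\ww)^{-1}$, and
\[
\sigma_\infty(V)=\frac{8}{n^3}\int\frac{du_1\cdots du_n\,dv_1\cdots dv_n\,dw_1\cdots dw_{n-1}}{h_\infty(\uu,\vv,\ww)\,|B_n(\uu,\vv)|}=\frac{8}{n^3}\,\tau_\infty,
\]
the remaining integral being exactly the local expression of $\omega_\infty$ on $U_{0,0,0}$; rearranging yields $\tau_\infty=\tfrac{n^3}{8}\sigma_\infty$. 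The steps requiring most care are the absolute-value bookkeeping in the Jacobian (so that $s^n/|s|$ becomes $|s|^{n-1}$, and similarly for $t,r$) and the check that the constant $\tfrac{8}{n^3}$ is independent of the chart, so that gluing the local identities gives the global equality of measures; the factor $8=2^3$ is precisely the contribution of the two signs of each scale $s,t,r$, in agreement with the $\tfrac18$ that arose earlier in passing from primitive vectors to projective points.
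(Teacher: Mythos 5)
Your proof is correct and follows essentially the same route as the paper's: localize to the chart $U_{0,0,0}$ where $x_0y_0z_0\neq 0$ and $B_n(\xx,\yy)\neq 0$, perform the change of variables $x_i=x_0u_i$, $y_j=y_0v_j$, $z_k=z_0w_k$, and integrate out the three scale variables to produce the factor $\frac{8}{n^3}$ in front of the local expression of $\omega_\infty$. The only difference is presentational: you spell out the block-triangular Jacobian $|s|^{n-1}|t|^{n-1}|r|^{n-1}$ and the negligibility of the complement of the chart, which the paper leaves implicit (it instead notes that the other charts are treated analogously).
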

\begin{proof}
On d\'emontre le r\'esultat localement i.e. montrons que pour tout ouvert $ U $ par exemple inclus dans $ U_{0,0,0} $ d\'efini plus haut (les autres cas se traitant de mani\`ere analogue) on a $ \tau_{\infty}(U)=\frac{n^{3}}{8}\sigma_{\infty}(\pi^{-1}(U)) $. Par d\'efinition de la mesure de Leray, pour un tel ouvert $ U $, on a \[ \sigma_{\infty}(\pi^{-1}(U))=\int_{\substack{\pi^{-1}(U)\cap \{\max_{i}|x_{i}|\leqslant 1\\\max_{j}|y_{j}|\leqslant 1, \;  \max_{k}|z_{k}|\leqslant 1\}}}\frac{d\xx d\yy d\hat{\zz}}{|B_{n}(\xx,\yy)|} \] avec $ d\hat{\zz}=dz_{0}...dz_{n-1} $. On remarque que \[ \max_{i}|x_{i}|=1\; \Leftrightarrow \; |x_{0}|\leqslant \left(\max_{i\neq 0}\frac{|x_{i}|}{|x_{0}|}\right)^{-1}. \] On applique alors les changements de variables $ x_{i}=x_{0}u_{i} $, $ y_{j}=y_{0}v_{j} $ et $ z_{k}=z_{0}w_{k} $ dans l'int\'egrale ci-dessus. On a alors que \begin{multline*} (\xx,\yy,\zz)\in [-1,1]^{3n+3} \; \Leftrightarrow \; |x_{0}|\leqslant (\max_{i} |u_{i}|)^{-1}, \;  |y_{0}|\leqslant (\max_{j} |v_{j}|)^{-1}, \\ |z_{0}|\leqslant (\max_{k} |w_{k}|)^{-1}. \;\end{multline*} On obtient alors  \begin{multline*} \sigma_{\infty}(\pi^{-1}(U))\\= \int_{U}\frac{1}{|B_{n}(\uu,\vv)|} \left(\int_{\substack{|x_{0}|^{n}\leqslant h_{\infty}^{1}(\xx) \\  |y_{0}|^{n}\leqslant h_{\infty}^{2}(\yy) \\ |z_{0}|^{n}\leqslant h_{\infty}^{3}(\zz)}} |x_{0}|^{n-1}|y_{0}|^{n-1}|z_{0}|^{n-1}dx_{0}dy_{0}dz_{0}\right)d\uu d\vv d\hat{\ww} \\ =\frac{8}{n^{3}}\int_{U}\frac{d\uu d\vv d\hat{\ww}}{h_{\infty}(\uu,\vv,\ww)|B_{n}(\uu,\vv)|}=\frac{8}{n^{3}}\int_{U}\omega_{\infty} \end{multline*}

\end{proof}

\subsection{\'Etude de la s\'erie $ \mathfrak{S} $}
Rappelons que l'on a \[  \mathfrak{S}=\sum_{q=1}^{\infty}A(q) \] en notant \[ A(q)=q^{-3n-3}\sum_{\substack{0\leqslant a< q \\ \PGCD(a,q)=1}}\sum_{\bb,\bb',\bb''\in (\ZZ/q\ZZ)^{n+1}}e\left(\frac{a}{q}F(\bb,\bb',\bb'')\right)\]
et nous avons vu d'autre part (cf. lemme \ref{convergeS}) que cette s\'erie converge absolument. 

\begin{lemma}
Si $ \PGCD(q_{1},q_{2})=1 $, alors on a \[ A(q_{1}q_{2})=A(q_{1})A(q_{2}), \] autrement dit, la fonction $ A $ est multiplicative. \end{lemma}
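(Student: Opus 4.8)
Le plan est d'invoquer le théorème des restes chinois pour factoriser à la fois la somme sur $a$ et la somme de Gauss $S_{a,q}$. Fixons $q_1,q_2$ avec $\PGCD(q_1,q_2)=1$ et posons $q=q_1q_2$. Tout repose sur deux factorisations indépendantes, l'une au niveau de l'indice $a$, l'autre au niveau des vecteurs de résidus.

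\textbf{Décomposition de la classe $a$.} Je commencerais par observer que l'application $(a_1,a_2)\mapsto a_1q_2+a_2q_1 \bmod q$ réalise une bijection entre les couples $(a_1,a_2)$ avec $0\leqslant a_1<q_1$, $\PGCD(a_1,q_1)=1$, $0\leqslant a_2<q_2$, $\PGCD(a_2,q_2)=1$, et les classes $a$ avec $0\leqslant a<q$, $\PGCD(a,q)=1$. En effet $\PGCD(a,q_1)=\PGCD(a_1q_2,q_1)=\PGCD(a_1,q_1)$ puisque $\PGCD(q_2,q_1)=1$, et de même modulo $q_2$, ce qui assure la préservation de la coprimalité. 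De plus, sous cette bijection, on a l'identité modulo $1$ : $\frac{a}{q}=\frac{a_1q_2+a_2q_1}{q_1q_2}\equiv\frac{a_1}{q_1}+\frac{a_2}{q_2}$.

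\textbf{Factorisation de $S_{a,q}$.} Le théorème des restes chinois fournit un isomorphisme $(\ZZ/q\ZZ)^{n+1}\cong(\ZZ/q_1\ZZ)^{n+1}\times(\ZZ/q_2\ZZ)^{n+1}$ par réduction, d'où une bijection entre les triplets $(\bb,\bb',\bb'')$ modulo $q$ et les couples formés de leurs réductions modulo $q_1$ et modulo $q_2$. Comme $F$ est une forme à coefficients entiers, la réduction modulo $q_1$ commute avec l'évaluation de $F$, de sorte que $e(\frac{a_1}{q_1}F(\bb,\bb',\bb''))$ ne dépend que des réductions modulo $q_1$ ; de même pour $q_2$. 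En combinant ceci avec l'identité $\frac{a}{q}\equiv\frac{a_1}{q_1}+\frac{a_2}{q_2}$ du point précédent, le terme $e(\frac{a}{q}F(\bb,\bb',\bb''))$ se scinde en le produit d'un facteur ne dépendant que des réductions modulo $q_1$ et d'un facteur ne dépendant que des réductions modulo $q_2$. En sommant sur tous les résidus et en utilisant la bijection du CRT, j'obtiendrais $S_{a,q}=S_{a_1,q_1}S_{a_2,q_2}$.

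\textbf{Conclusion.} Il resterait à reporter ces deux factorisations dans la définition de $A$ :
\[ A(q)=q^{-3n-3}\sum_{\substack{0\leqslant a<q\\ \PGCD(a,q)=1}}S_{a,q}=(q_1q_2)^{-3n-3}\sum_{\substack{0\leqslant a_1<q_1\\ \PGCD(a_1,q_1)=1}}\sum_{\substack{0\leqslant a_2<q_2\\ \PGCD(a_2,q_2)=1}}S_{a_1,q_1}S_{a_2,q_2}, \]
la double somme se séparant en produit, et $q^{-3n-3}=q_1^{-3n-3}q_2^{-3n-3}$, ce qui donne exactement $A(q_1)A(q_2)$. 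La seule véritable difficulté est d'ordre comptable : il faut s'assurer que la bijection sur les $a$ préserve bien la coprimalité et que la réduction modulo $q_i$ est compatible avec $F$, ce dernier point reposant sur le caractère entier des coefficients de $F$. Une fois ces deux vérifications faites, la multiplicativité est formelle.
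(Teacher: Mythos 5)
Votre d\'emonstration est correcte, mais elle emprunte un chemin r\'eellement diff\'erent de celui du texte. Vous factorisez directement la somme exponentielle compl\`ete : la bijection $(a_{1},a_{2})\mapsto a_{1}q_{2}+a_{2}q_{1}$ entre couples de num\'erateurs premiers \`a $q_{1}$, $q_{2}$ et num\'erateurs premiers \`a $q=q_{1}q_{2}$, l'identit\'e $\frac{a}{q}\equiv\frac{a_{1}}{q_{1}}+\frac{a_{2}}{q_{2}}\pmod{1}$ (correctement prise modulo $1$, puisque $e$ est $1$-p\'eriodique), et le d\'ecoupage des vecteurs de r\'esidus par le th\'eor\`eme chinois donnent $S_{a,q}=S_{a_{1},q_{1}}S_{a_{2},q_{2}}$, d'o\`u la multiplicativit\'e de $A$ en s\'eparant la double somme. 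C'est l'argument classique de la m\'ethode du cercle (Davenport, Birch), valable pour tout polyn\^ome $F$ \`a coefficients entiers, et qui n'utilise aucunement la structure trilin\'eaire de $F$. Le texte proc\`ede autrement : il exploite la lin\'earit\'e de $F$ en la troisi\`eme variable pour sommer d'abord sur $\bb''$ par orthogonalit\'e des caract\`eres, ce qui r\'eduit $A(q)$ \`a la formule ferm\'ee $A(q)=\varphi(q)\,q^{-2n-2}\,\card\{(\bb,\bb')\in(\ZZ/q\ZZ)^{2n+2}\; |\; B_{k}(\bb,\bb')\equiv 0\;(q),\;\forall k\}$ ; la multiplicativit\'e r\'esulte alors de celle de $\varphi$ et du th\'eor\`eme chinois appliqu\'e au seul comptage de solutions. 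Votre approche est plus g\'en\'erale et \'evite l'\'etape d'orthogonalit\'e ; celle du texte fournit en prime une expression explicite de $A(q)$ en termes de comptage de points, qui rend la structure de la s\'erie singuli\`ere plus transparente. Les deux reposent in fine sur le th\'eor\`eme chinois, appliqu\'e chez vous \`a la somme exponentielle compl\`ete (num\'erateurs et r\'esidus simultan\'ement), chez l'auteur \`a la seule fonction de comptage.
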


\begin{proof}
On remarque dans un premier temps que \begin{equation}
A(q)=q^{-3n-3}\sum_{\substack{0\leqslant a< q \\ \PGCD(a,q)=1}}\sum_{\bb,\bb'\in (\ZZ/q\ZZ)^{n+1}}\prod_{k=0}^{n}\left(\sum_{b\in \ZZ/q\ZZ}e\left(\frac{a}{q}B_{k}(\bb,\bb')b\right)\right).
\end{equation}
Or on a \[ \sum_{b\in \ZZ/q\ZZ}e\left(\frac{a}{q}B_{k}(\bb,\bb')b\right)=\left\{\begin{array}{ll} q & \mbox{si} \; B_{k}(\bb,\bb')\equiv 0\; (q) \\ 0 & \mbox{sinon}

\end{array}\right.\]
On a donc \begin{align*}
A(q)& =q^{-2n-2}\sum_{a\in (\ZZ/q\ZZ)^{\ast}}\card\{\bb,\bb'\in (\ZZ/q\ZZ)^{n+1}\; |\;B_{k}(\bb,\bb')\equiv 0, \; \forall k \} \\ & =\varphi(q)q^{-2n-2}\card\{\bb,\bb'\in (\ZZ/q\ZZ)^{n+1}\; |\;B_{k}(\bb,\bb')\equiv 0\; (q), \; \forall k \}.
\end{align*}
Or si l'on a $ q=q_{1}q_{2} $, par le th\'eor\`eme chinois : \begin{multline*}
\card\{\bb,\bb'\in (\ZZ/q\ZZ)^{n+1}\; |\;B_{k}(\bb,\bb')\equiv 0\; (q), \; \forall k \} \\ =\card\{\bb_{1},\bb_{1}'\in (\ZZ/q_{1}\ZZ)^{n+1}\; |\;B_{k}(\bb_{1},\bb_{1}')\equiv 0\; (q_{1}), \; \forall k \} \\ .\card\{\bb_{2},\bb_{2}'\in (\ZZ/q_{2}\ZZ)^{n+1}\; |\;B_{k}(\bb_{2},\bb_{2}')\equiv 0\; (q_{2}), \; \forall k \}. 
\end{multline*}
On en d\'eduit que $ A $ est bien multiplicative. 
\end{proof}
Puisque $ A $ est multiplicative et absolument convergente, on a la formule : \[ \mathfrak{S}=\sum_{q=1}^{\infty}A(q)=\prod_{p\in\mathcal{P}}\sigma_{p}  \] o\`u \[ \sigma_{p}=\sum_{k=0}^{\infty}A(p^{k}). \] Par la suite on note pour tout $ q\in \NN^{\ast} $, \begin{equation}
M(q)=\card\{(\bb,\bb',\bb'')\in (\ZZ/q\ZZ)^{3n+3} \; |\; F(\bb,\bb',\bb'')\equiv 0 \; (q) \}
\end{equation}
On peut alors interpr\'eter $ \sigma_{p} $ \`a l'aide du r\'esultat suivant :
\begin{lemma}\label{sigmap}
On a que pour tout $ N>0 $ : \[ \sum_{k=0}^{N}A(p^{k})=\frac{M(p^{N})}{p^{N(3n+2)}}, \] et par cons\'equent : \[ \sigma_{p}=\lim_{k\ra \infty}\frac{M(p^{k})}{p^{k(3n+2)}}. \]
\end{lemma}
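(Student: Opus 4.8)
The plan is to recognize Lemma \ref{sigmap} as the standard identity identifying the local Euler factor of the singular series with the $p$-adic density of solutions, so the whole argument will be formal manipulation of complete exponential sums. The starting point is the orthogonality of additive characters: for any integer $m$ one has $\frac{1}{q}\sum_{a=0}^{q-1}e(am/q)=1$ when $q\mid m$ and $0$ otherwise. First I would apply this with $m=F(\bb,\bb',\bb'')$ and sum over all $(\bb,\bb',\bb'')\in(\ZZ/q\ZZ)^{3n+3}$ to obtain
\[ M(q)=\frac{1}{q}\sum_{a=0}^{q-1}S_{a,q}, \]
where $S_{a,q}$ is the complete sum introduced above, now read for every residue $a$ and not only those coprime to $q$. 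Specialising to $q=p^{N}$ and dividing by $p^{N(3n+2)}$ gives
\[ \frac{M(p^{N})}{p^{N(3n+2)}}=\frac{1}{p^{N(3n+3)}}\sum_{a=0}^{p^{N}-1}S_{a,p^{N}}. \]

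Next I would group the summands according to the power of $p$ dividing $a$. Writing $\PGCD(a,p^{N})=p^{N-k}$ with $0\leqslant k\leqslant N$, every such $a$ is uniquely of the form $a=p^{N-k}a'$ with $0\leqslant a'<p^{k}$ and $\PGCD(a',p^{k})=1$, the case $k=0$ contributing only the single term $a=0$. The key relation is the scaling
\[ S_{a,p^{N}}=p^{(N-k)(3n+3)}S_{a',p^{k}}, \]
which holds because $a/p^{N}=a'/p^{k}$ and because the trilinear form $F(\bb,\bb',\bb'')$, evaluated against $a'/p^{k}$, depends on its $3n+3$ arguments only modulo $p^{k}$; reducing each variable from $\ZZ/p^{N}\ZZ$ to $\ZZ/p^{k}\ZZ$ therefore multiplies the sum by the number $p^{(N-k)(3n+3)}$ of lifts.

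Substituting this scaling relation into the last display and recalling the definition $A(p^{k})=p^{-k(3n+3)}\sum_{0\leqslant a'<p^{k},\,\PGCD(a',p^{k})=1}S_{a',p^{k}}$, the factor $p^{(N-k)(3n+3)}$ cancels against $p^{-N(3n+3)}$, leaving precisely $\sum_{k=0}^{N}A(p^{k})$; this is the asserted finite identity. For the consequence, since $\mathfrak{S}$ converges absolutely by Lemma \ref{convergeS} and $A$ is multiplicative, the Euler factor $\sigma_{p}=\sum_{k=0}^{\infty}A(p^{k})$ is the limit of its partial sums, whence $\sigma_{p}=\lim_{k\to\infty}M(p^{k})/p^{k(3n+2)}$.

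I anticipate no genuine obstacle here, the proof being purely formal once character orthogonality and the scaling relation are available. The only point demanding care is the bookkeeping in the grouping by $\PGCD(a,p^{N})$, in particular checking the boundary term $k=0$: there $a=0$ and $S_{0,p^{N}}=p^{N(3n+3)}$ counts all tuples, matching $p^{(N-0)(3n+3)}S_{0,1}$ with $S_{0,1}=1$ and contributing exactly $A(1)=1$, consistently with the coprimality convention used to define $A$.
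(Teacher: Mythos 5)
Your proof is correct and follows essentially the same route as the paper's: character orthogonality to write $M(p^{N})$ as $p^{-N}\sum_{a}S_{a,p^{N}}$, reduction of each fraction $a/p^{N}$ to lowest terms $a'/p^{k}$, and the observation that the summand depends only on the variables modulo $p^{k}$, which yields the factor $p^{(N-k)(3n+3)}$ and hence $\sum_{k=0}^{N}A(p^{k})$. The paper merely phrases the grouping for general $q$ via divisors $q_{1}\mid q$ before specialising to $q=p^{N}$, and your handling of the boundary term $a=0$ and of the passage to the limit via absolute convergence matches the paper's argument.
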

\begin{proof}
On remarque dans un premier temps que \begin{align*} M(q)&=q^{-1}\sum_{t=0}^{q-1}\sum_{\bb,\bb',\bb''\in (\ZZ/q\ZZ)^{n+1}}e\left(\frac{t}{q}F(\bb,\bb',\bb'')\right) \\ &=q^{-1}\sum_{q_{1}|q}\sum_{\substack{0\leqslant a< q_{1} \\ \PGCD(a,q_{1})=1}}\sum_{\bb,\bb',\bb''\in (\ZZ/q\ZZ)^{n+1}}e\left(\frac{a}{q_{1}}F(\bb,\bb',\bb'')\right). \end{align*}
On a donc, si $ q=p^{N} $ : 
\begin{align*} M(p^{N}) &=p^{-N}\sum_{k=0}^{N}\sum_{\substack{0\leqslant a< p^{k} \\ \PGCD(a,p^{k})=1}}\sum_{\bb,\bb',\bb''\in (\ZZ/p^{N}\ZZ)^{n+1}}e\left(\frac{a}{p^{k}}F(\bb,\bb',\bb'')\right) \\ &=p^{-N}\sum_{k=0}^{N}\sum_{\substack{0\leqslant a< p^{k} \\ \PGCD(a,p^{k})=1}}\left(\frac{p^{N}}{p^{k}}\right)^{3n+3}\sum_{\bb,\bb',\bb''\in (\ZZ/p^{k}\ZZ)^{n+1}}e\left(\frac{a}{p^{k}}F(\bb,\bb',\bb'')\right)\\ &=p^{(3n+2)N}\sum_{k=0}^{N}\sum_{\substack{0\leqslant a< p^{k} \\ \PGCD(a,p^{k})=1}}p^{-(3n+3)k}\sum_{\bb,\bb',\bb''\in (\ZZ/p^{k}\ZZ)^{n+1}}e\left(\frac{a}{p^{k}}F(\bb,\bb',\bb'')\right)\\ &=p^{(3n+2)N}\sum_{k=0}^{N}A(p^{k}), \end{align*} d'o\`u le r\'esultat.
\end{proof}

Nous allons \`a pr\'esent \'etudier le lien entre les constantes $ \sigma_{p} $ et la mesure de Tamagawa $ \tau_{p} $ d\'efinie (avec les notations de \cite{S2}) par : \[ \tau_{p}=\left(1-\frac{1}{p}\right)^{3}\omega_{p} \] o\`u $ \omega_{p} $ est la mesure d\'efinie localement sur $ V(\QQ_{p})\cap U_{0,0,0} $ par \[ \omega_{p}=\frac{du_{1,p}...du_{n,p}dv_{1,p}...dv_{n,p}dw_{1,p}...dw_{n-1,p}}{h_{p}(\uu,\vv,\ww)|B_{n}(\uu,\vv)|_{p}} \]
o\`u $ \uu=(1,u_{1},...,u_{n}) $, $ \vv=(1,v_{1},...,v_{n}) $, $ \ww=(1,w_{1},...,w_{n}) $ et \[ h_{p}(\xx,\yy,\zz)=h_{p}^{1}(\xx) h_{p}^{2}(\yy) h_{p}^{3}(\zz), \] avec \[ h_{p}^{1}(\xx)=\max_{0\leqslant i\leqslant n}|x_{i}|_{p}^{n},\; \;  h_{p}^{2}(\yy)=\max_{0\leqslant j\leqslant n}|y_{j}|_{p}^{n},\; \; h_{p}^{3}(\zz)=\max_{0\leqslant k\leqslant n}|z_{k}|_{p}^{n}. \]

\begin{lemma}\label{sigmap0}
Soit $ p\in \mathcal{P} $, on pose : \[ a(p)=\left(1-\frac{1}{p}\right)^{3}\left(1-\frac{1}{p^{n}}\right)^{-3}. \]
On a alors \[ \int_{\substack{W(\QQ_{p})\cap \{h_{p}^{1}(\xx)\leqslant 1 \\h_{p}^{2}(\yy)\leqslant 1, \; h_{p}^{3}(\zz)\leqslant 1 \}}}\omega_{L,p}(\xx,\yy,\zz)=a(p)\omega_{p}(V(\QQ_{p})). \]
\end{lemma}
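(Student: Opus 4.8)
The plan is to imitate the real-place computation of Lemma~\ref{Tamagawa infini}, replacing the archimedean integration over the scaling parameters by a $p$-adic one. As there, it suffices to prove the identity locally, say on the chart $U_{0,0,0}$ where $x_0y_0z_0\neq 0$ and $B_n(\xx,\yy)\neq 0$; the other charts are handled by the same argument, and the bad loci $x_0y_0z_0=0$ and $B_n(\xx,\yy)=0$ are of measure zero, so restricting to $U_{0,0,0}$ costs nothing. On this chart the condition $h_p^i\leqslant 1$ simply says that the corresponding primitive vector lies in $\ZZ_p^{n+1}$, and the Leray measure reads
\[
\omega_{L,p}=\frac{1}{|B_n(\xx,\yy)|_p}\,dx_0\cdots dx_n\,dy_0\cdots dy_n\,dz_0\cdots dz_{n-1}.
\]
First I would perform the change of variables $x_i=x_0u_i$, $y_j=y_0v_j$, $z_k=z_0w_k$ (with $u_0=v_0=w_0=1$), keeping $x_0,y_0,z_0$ as the fibre coordinates of the three scalings. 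The Jacobians contribute $|x_0|_p^{n}|y_0|_p^{n}|z_0|_p^{n-1}$, while the bilinearity of $B_n$ gives $|B_n(\xx,\yy)|_p=|x_0|_p|y_0|_p|B_n(\uu,\vv)|_p$, so the integrand becomes
\[
\frac{|x_0|_p^{n-1}|y_0|_p^{n-1}|z_0|_p^{n-1}}{|B_n(\uu,\vv)|_p}
\]
times $dx_0\,dy_0\,dz_0$ and the base measure $du_1\cdots du_n\,dv_1\cdots dv_n\,dw_1\cdots dw_{n-1}$.

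Next I would integrate out the fibre variables. Writing $\|\uu\|_p=\max_{0\leqslant i\leqslant n}|u_i|_p=p^{m}\geqslant 1$, the constraint $\xx\in\ZZ_p^{n+1}$ is equivalent to $|x_0|_p\leqslant\|\uu\|_p^{-1}$, and similarly for $y_0,z_0$. The key computation is the $p$-adic integral
\[
\int_{|t|_p\leqslant\|\uu\|_p^{-1}}|t|_p^{n-1}\,dt=(1-p^{-1})\sum_{\ell\geqslant m}p^{-\ell n}=\frac{1-p^{-1}}{1-p^{-n}}\cdot\frac{1}{h_p^1(\uu)},
\]
using that $h_p^1(\uu)=\|\uu\|_p^{n}=p^{mn}$. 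Applying this to each of the three factors produces exactly $\left(\frac{1-1/p}{1-1/p^n}\right)^3=a(p)$ together with the weight $1/h_p(\uu,\vv,\ww)$, and what remains is precisely $\int_U\omega_p$. Summing over the charts covering $V(\QQ_p)$ then yields $a(p)\,\omega_p(V(\QQ_p))$.

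The computation is essentially mechanical once the coordinates are in place; the genuine content of the lemma is the evaluation of the $p$-adic fibre integral and the verification that the resulting geometric series sums to $\frac{1-1/p}{1-1/p^n}$, rather than to the archimedean factor $\frac{2}{n}$ obtained in Lemma~\ref{Tamagawa infini}. This Euler factor $a(p)$ measures the discrepancy between the Leray measure on the affine cone $W$ and the Tamagawa measure $\omega_p$ on $V$, arising from the three multiplicative scalings of the homogeneous blocks. The point where I would be most careful is keeping the exponents straight, in particular the $|z_0|_p^{n-1}$ (not $|z_0|_p^{n}$) coming from the omitted differential $dz_n$, since it is precisely this that makes all three fibre integrals identical and gives the clean third power in $a(p)$.
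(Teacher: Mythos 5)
Your proof is correct and follows essentially the same route as the paper: both reduce to the affine charts $U=U_{1}\times U_{2}\times U_{3}$ with $x_{0}y_{0}z_{0}\neq 0$, $B_{n}(\xx,\yy)\neq 0$, and then compare the cone integral of $\omega_{L,p}$ with $\omega_{p}(U)$ by integrating out the three $p$-adic scaling parameters. The only difference is that where the paper simply invokes Peyre's Lemma 5.4.5 three times, you prove the needed identity $\int_{|t|_{p}\leqslant \|\uu\|_{p}^{-1}}|t|_{p}^{n-1}\,dt=\frac{1-p^{-1}}{1-p^{-n}}\cdot\frac{1}{h_{p}^{1}(\uu)}$ by hand via the geometric series, which is exactly the content of that citation, so your argument is a correct inline substitute.
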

\begin{proof}
Il suffit de montrer que pour tout ouvert $ U $ de $ \AA_{\QQ_{p}}^{3n-1}\subset \PP_{\QQ_{p}}^{n}\times \PP_{\QQ_{p}}^{n}\times \PP_{\QQ_{p}}^{n-1} $ de la forme $ U_{1}\times U_{2}\times U_{3} $, tel que pour tout $ ([\xx],[\yy],[\zz])\in U $ on a (par exemple) $ x_{0}y_{0}z_{0}\neq 0 $ et $ B_{n}(\xx,\yy)\neq 0 $ (les autres cas se traitant de fa\c{c}on analogue) l'\'egalit\'e \[ \int_{\substack{\pi^{-1}(U)\cap \{h_{p}^{1}(\xx)\leqslant 1 \\h_{p}^{2}(\yy)\leqslant 1, \; h_{p}^{3}(\zz)\leqslant 1 \}}}\omega_{L,p}(\xx,\yy,\zz)=a(p)\omega_{p}(U) \] est v\'erifi\'ee. Remarquons dans un premier temps que, pour un tel ouvert $ U $, on a\[ a(p)\omega_{p}(U)=\int_{U_{1}\times U_{2}\times U_{3}}\frac{du_{1,p}...du_{n,p}dv_{1,p}...dv_{n,p}dw_{1,p}...dw_{n-1,p}}{|B_{n}(\uu,\vv)|_{p}h_{p}^{1}(\uu) h_{p}^{2}(\vv) h_{p}^{3}(\ww)}. \]
En appliquant trois fois le lemme $ 5.4.5 $ de \cite{Pe}, on obtient alors : \[ a(p)\omega_{p}(U)=\int_{\pi^{-1}(U)}\frac{d\xx d\yy d\hat{\zz}}{|B_{n}(\xx,\yy)|_{p}}=\int_{\pi^{-1}(U)}\omega_{L,p}(\xx,\yy,\zz). \]
\end{proof}
Nous allons \`a pr\'esent \'etablir le lemme suivant dont la d\'emontration est inspir\'ee de \cite[Lemme 3.2]{P-T} et de \cite[Lemme 3.4]{S2} : \begin{lemma}\label{sigmap1}
Soit \begin{multline*}W^{\ast}(r)=\{ (\xx,\yy,\zz)\in (\ZZ_{p}/p^{r})^{3n+3}, \; \xx\not\equiv \0(p), \; \yy\not\equiv \0(p), \; \\ \zz\not\equiv \0(p), \; F(\xx,\yy,\zz)\equiv 0(p^{r}) \}, \end{multline*} et on pose $ N^{\ast}(r)=\card(W^{\ast}(r)) $. Il existe alors un entier $ r_{0} $ tel que pour tout $ r\geqslant r_{0} $ : \[ \int_{\substack{\{(\xx,\yy,\zz)\in \ZZ_{p}^{3n+3}, \; \xx \not\equiv \0(p) \\ \yy\not\equiv \0(p), \; \zz\not\equiv \0(p), \; F(\xx,\yy,\zz)=0\}}}\omega_{L,p}=\frac{N^{\ast}(r)}{p^{r(3n+2)}}. \]
\end{lemma}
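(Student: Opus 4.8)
The plan is to read the left-hand integral as a $p$-adic volume through the substitution $t=F(\xx,\yy,\zz)$, exactly in the spirit of the computation of $J$ above (the $p$-adic analogue of Igusa's argument), and then to recognize that volume as the normalized count $N^{\ast}(r)/p^{r(3n+2)}$. Write $S$ for the domain of integration $\{(\xx,\yy,\zz)\in\ZZ_p^{3n+3}\mid \xx,\yy,\zz\not\equiv\0\ (p),\ F(\xx,\yy,\zz)=0\}$; since the conditions $\xx\not\equiv\0\ (p)$, etc., are closed (indeed compact open), $S$ is a \emph{compact} subset of $\ZZ_p^{3n+3}$. Normalizing Haar measure on $\QQ_p$ so that $\Vol(\ZZ_p)=1$, set $\Omega_r=\{(\xx,\yy,\zz)\in\ZZ_p^{3n+3}\mid \xx,\yy,\zz\not\equiv\0\ (p),\ |F|_p\le p^{-r}\}$. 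By definition $\Vol(\Omega_r)=N^{\ast}(r)\,p^{-r(3n+3)}$, so the lemma is equivalent to the coarea-type identity $\int_S\omega_{L,p}=p^{r}\Vol(\Omega_r)$ for $r\ge r_0$.

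First I would establish this identity locally. On a piece of $\Omega_r$ where $B_k(\xx,\yy)\neq0$, the form $F$ is linear in $z_k$ with $\partial F/\partial z_k=B_k(\xx,\yy)$, so the change of variables replacing $z_k$ by $t=F$ has Jacobian $B_k$ and transforms $d\xx\,d\yy\,d\zz$ into $|B_k(\xx,\yy)|_p^{-1}\,d\xx\,d\yy\,d\hat{\zz}\,dt$ (with $d\hat{\zz}=\prod_{k'\neq k}dz_{k'}$). Comparing with the given local expression $\omega_{L,p}=|B_k(\xx,\yy)|_p^{-1}\,d\xx\,d\yy\,d\hat{\zz}$ and using $\Vol(\{|t|_p\le p^{-r}\})=p^{-r}$, one gets on that piece
\[
\Vol(\{|F|_p\le p^{-r}\})=p^{-r}\int\omega_{L,p};
\]
the factors $|B_k|_p^{\pm1}$ match because the $z_k$-fibre of $\{|t|_p\le p^{-r}\}$ has measure $p^{-r}/|B_k|_p$, which cancels the Leray weight $|B_k|_p^{-1}$.

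Next I would globalize. Because $V$ is smooth, the Jacobian criterion in the product of projective spaces shows that at every point of the affine cone with $\xx,\yy,\zz\neq\0$ the gradient $\big((B_i''(\yy,\zz))_i,(B_j'(\xx,\zz))_j,(B_k(\xx,\yy))_k\big)$ is nonzero; in particular it is nonzero over $\QQ_p$ at every point of $S$. Since $S$ is compact and the gradient norm is continuous, it attains a strictly positive minimum $p^{-m_0}$ on $S$. Hence $S$ is covered by finitely many pieces, on each of which one gradient coordinate (say $B_k$) has valuation $\le m_0$ and $|B_k|_p$ is locally constant. Choosing $r_0=2m_0+1$, summing the local identity over this finite cover yields $\int_S\omega_{L,p}=p^{r}\Vol(\Omega_r)$ for $r\ge r_0$, and combining with $\Vol(\Omega_r)=N^{\ast}(r)\,p^{-r(3n+3)}$ gives exactly $\int_S\omega_{L,p}=N^{\ast}(r)/p^{r(3n+2)}$.

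The main obstacle is the stabilization, i.e.\ producing the uniform $r_0$: for a prime of bad reduction the mod-$p$ fibre of $\{F=0\}$ may be singular, so Hensel's lemma fails at the corresponding points and the sequence $N^{\ast}(r)/p^{r(3n+2)}$ need not be constant for small $r$. The device that overcomes this is precisely the compactness argument of the previous paragraph: smoothness of $V$ over $\QQ$ forces the gradient to be nonzero at every genuine $p$-adic zero, and compactness upgrades this to the uniform lower bound $p^{-m_0}$. For $r\ge r_0=2m_0+1$ any approximate solution with $\xx,\yy,\zz\not\equiv\0$ and $|F|_p\le p^{-r}$ has some gradient coordinate of valuation $\le m_0$ (otherwise, by a convergent-subsequence argument, it would approach a point of $S$ with too small a gradient, contradicting the bound), and is corrected to an exact zero in $S$ by solving the \emph{single linear} equation $F=0$ in that coordinate; this rules out the spurious classes sitting near the singular reduction and fixes the fibre measure. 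This is exactly where the hypotheses ``$V$ smooth'' and ``$\xx,\yy,\zz\not\equiv\0\ (p)$'' (which both keeps us over the smooth locus of the cone and makes $S$ compact) are used. For the detailed bookkeeping I would follow \cite[Lemme 3.2]{P-T} and \cite[Lemme 3.4]{S2}.
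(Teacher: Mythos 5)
Your proof is correct in substance, but it takes a genuinely different route from the paper's. The paper decomposes the integral as a sum over the residue classes $(\xx,\yy,\zz)\in W^{\ast}(r)$: smoothness of $V$ gives, for $r$ large, a finite valuation $c=\inf_{i,j,k}\{\nu_p(B_k(\xx,\yy)),\nu_p(B_j'(\xx,\zz)),\nu_p(B_i''(\yy,\zz))\}$ constant on each class; a Taylor expansion shows that $F\bmod p^{r+c}$ depends only on the class, and Hensel's lemma identifies the exact solutions inside a class with a coset $(\xx,\yy,\hat{\zz})+(p^{r}\ZZ_p)^{3n+2}$, so each class contributes $p^{c-r(3n+2)}$ both to the integral and to the normalized count of liftings to level $r+c$ — classes with $F^{\ast}\neq 0$ contributing $0$ to both sides, so spurious approximate solutions never have to be excluded as a separate step. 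You instead recast the statement as the tube-volume identity $\int_S\omega_{L,p}=p^{r}\Vol(\Omega_r)$, prove it locally by the change of variables $t=F$ (the $p$-adic coarea computation, in the style of Igusa \cite{Ig}), and globalize by covering the compact set $S$ by finitely many pieces on which some $|B_k|_p\geqslant p^{-m_0}$ is locally constant; the price is that you must handle separately the points of $\Omega_r$ that are not near $S$. Both proofs run on the same fuel (smoothness forcing a nonvanishing gradient, solving the single linear equation in one coordinate, uniformity of the gradient valuation), but your packaging buys an honest justification, via compactness, of the uniformity that the paper merely asserts (``il existe un $r>0$ assez grand tel que $c$ soit \ldots constant sur la classe''), whereas the paper's class-by-class bookkeeping, following \cite{P-T} and \cite{S2}, avoids your extra spurious-point step by comparing with $N^{\ast}(r+c)$ rather than directly with $N^{\ast}(r)$.

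One imprecision to repair: your explicit choice $r_0=2m_0+1$ is not justified. The elimination of spurious points is a statement about every point of $\Omega_r$, and the ``convergent-subsequence argument'' you invoke, applied to a single point at a single level $r$, proves nothing. The correct form is: the set $K$ of points with $\xx,\yy,\zz\not\equiv\0\ (p)$ and all gradient coordinates of valuation $>m_0$ is compact and contains no zero of $F$, so $|F|_p$ attains a positive minimum $p^{-M}$ on $K$, whence $\Omega_r\cap K=\emptyset$ for $r>M$; similarly the matching of approximate and exact solutions inside each piece of your finite cover requires $r$ large compared with the scale of that cover. Neither threshold is controlled by $2m_0+1$, so you should take $r_0$ to be the maximum of all of them. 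Since the lemma only asserts the existence of some $r_0$, this is a harmless fix, entirely within your own framework.
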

\begin{proof}
Soit $ (\xx,\yy,\zz)\in \ZZ_{p}^{3n+3} $. Dans tout ce qui suit, on note $ [\xx,\yy,\zz]_{r}=(\xx,\yy,\zz)\mod p^{r} $. On \'ecrit alors : \begin{align}
\int_{\substack{\{(\xx,\yy,\zz)\in \ZZ_{p}^{3n+3}, \; \xx \not\equiv \0(p) \\ \yy\not\equiv \0(p), \; \zz\not\equiv \0(p), \; F(\xx,\yy,\zz)=0\}}}\omega_{L,p} & =\sum_{\substack{(\xx,\yy,\zz)\mod p^{r} \\ \xx\not\equiv \0(p), \; \yy\not\equiv \0(p), \\ \zz\not\equiv \0(p)}}\int_{\substack{\{(\uu,\vv,\ww)\in \ZZ_{p}^{3n+3}, \\ [\uu,\vv,\ww]_{r}=(\xx,\y,\zz)   \\  F(\uu,\vv,\ww)=0\}}}\omega_{L,p}(\uu,\vv,\ww) \\ & =\sum_{(\xx,\yy,\zz)\in W^{\ast}(r)}\int_{\substack{\{(\uu,\vv,\ww)\in \ZZ_{p}^{3n+3}, \\ [\uu,\vv,\ww]_{r}=(\xx,\y,\zz)   \\  F(\uu,\vv,\ww)=0\}}}\omega_{L,p}(\uu,\vv,\ww).
\end{align}
Puisque $ V $ est lisse, il existe un $ r>0 $ assez grand tel que, pour tout $ (\xx,\yy,\zz)\in \ZZ_{p}^{3n+3} $ tel que $ \xx\not\equiv \0 (p) $, $\yy\not\equiv \0(p)$, $\zz\not\equiv \0(p) $ et $ F(\xx,\yy,\zz)=0 $  : \[ c=\inf_{i,j,k}\lbrace \nu_{p}\left(B_{k}(\xx,\yy)\right), \; \nu_{p}\left(B_{j}'(\xx,\zz)\right), \; \nu_{p}\left(B_{i}''(\yy,\zz)\right)\rbrace \] soit non nul et constant sur la classe d\'efinie par $ (\xx,\yy,\zz) $. On peut supposer que $ r>c $ et que $ c=\nu_{p}\left(B_{n}(\xx,\yy)\right) $. On consid\`ere $ (\uu,\vv,\ww)\in \ZZ_{p}^{3n+3} $ tel que $ [\uu,\vv,\ww]_{r}=(\xx,\y,\zz) $, et $ (\uu',\vv',\ww')\in \ZZ_{p}^{3n+3} $ quelconque. On a alors \begin{multline*} 
F(\uu+\uu',\vv+\vv',\ww+\ww')=F(\uu,\vv,\ww)+\sum_{k=0}^{n}B_{k}(\uu,\vv)w_{k}'+\sum_{j=0}^{n}B_{j}'(\uu,\ww)v_{j}' \\+\sum_{i=0}^{n}B_{i}''(\vv,\ww)u_{i}'+G(\uu,\vv,\ww,\uu',\vv',\ww'), 
\end{multline*}

o\`u $ G(\uu,\vv,\ww,\uu',\vv',\ww') $ est une somme de termes contenant au moins deux facteurs $ u_{i}' $, $ v_{j}' $ ou $ w_{k}' $. Ainsi, on a donc, si $ (\uu',\vv',\ww')\in (p^{r}\ZZ_{p})^{3n+3} $ : \[ F(\uu+\uu',\vv+\vv',\ww+\ww')\equiv F(\uu,\vv,\ww) (p^{r+c}). \] Par cons\'equent, l'image de $ F(\uu,\vv,\ww) $ dans $ \ZZ_{p}/p^{r+c} $ d\'epend uniquement de $ (\uu,\vv,\ww)\mod p^{r} $, on note alors $ F^{\ast}(\xx,\yy,\zz) $ cette image. \\

 Si $ F^{\ast}(\xx,\yy,\zz)\neq 0 $, alors l'int\'egrale \[ \int_{\substack{\{(\uu,\vv,\ww)\in \ZZ_{p}^{3n+3}, \\ [\uu,\vv,\ww]_{r}=(\xx,\y,\zz)   \\  F(\uu,\vv,\ww)=0\}}}\omega_{L,p}(\uu,\vv,\ww) \] est nulle, et l'ensemble \[ \{(\uu,\vv,\ww) \mod p^{r+c} \; | \; [\uu,\vv,\ww]_{r}=(\xx,\y,\zz), \; F(\uu,\vv,\ww)\equiv 0(p^{r+c}) \} \] est vide. \\

Si $ F^{\ast}(\xx,\yy,\zz)= 0 $ alors, par le lemme de Hensel, les applications coordonn\'ees $ X_{0},...,X_{n},Y_{0},...,Y_{n},Z_{0},...,Z_{n-1} $ d\'efinissent un isomorphisme de \[ \{(\uu,\vv,\ww)\in \ZZ_{p}^{3n+3} \; | \; [\uu,\vv,\ww]_{r}=(\xx,\y,\zz), \; F(\uu,\vv,\ww)=0 \} \] sur \[(\xx,\yy,\hat{\zz})+(p^{r}\ZZ_{p})^{3n+2}, \] o\`u $ \hat{\zz}=(z_{0},...,z_{n-1}) $. Par cons\'equent, on a : \begin{multline*}
\int_{\substack{\{(\uu,\vv,\ww)\in \ZZ_{p}^{3n+3}, \\ [\uu,\vv,\ww]_{r}=(\xx,\y,\zz)   \\  F(\uu,\vv,\ww)=0\}}}\omega_{L,p}(\uu,\vv,\ww)  \\ =\int_{(\xx,\yy,\hat{\zz})+(p^{r}\ZZ_{p})^{3n+2}}p^{c}du_{0,p}...du_{n,p}dv_{0,p}...dv_{n,p}dw_{0,p}...dw_{n-1,p} =p^{c-r(3n+2)}.
\end{multline*} 
On a d'autre part, puisque $ F(\uu,\vv,\ww) \mod p^{r+c} $ ne d\'epend que de $ (\xx,\yy,\zz) $ : \begin{multline*}
p^{-(r+c)(3n+2)}\card\{(\uu,\vv,\ww)\mod p^{r+c}\; | \; [\uu,\vv,\ww]_{r}=(\xx,\yy,\zz),\; \\F(\uu,\vv,\ww)\equiv 0(p^{r+c}) \} =p^{-(r+c)(3n+2)}p^{(3n+3)c}  = p^{c-r(3n+2)}.
\end{multline*}
On a donc finalement : \begin{multline*}
\int_{\substack{\{(\xx,\yy,\zz)\in \ZZ_{p}^{3n+3}, \; \xx \not\equiv \0(p) \\ \yy\not\equiv \0(p), \; \zz\not\equiv \0(p), \; F(\xx,\yy,\zz)=0\}}}\omega_{L,p}  =\sum_{\substack{(\xx,\yy,\zz)\in W^{\ast}(r) \\ F^{\ast}(\xx,\yy,\zz)=0}}p^{c-r(3n+2)} \\ = \sum_{\substack{(\xx,\yy,\zz)\in W^{\ast}(r) }}p^{-(r+c)(3n+2)}\card\{(\uu,\vv,\ww)\mod p^{r+c}\; | \; [\uu,\vv,\ww]_{r}=(\xx,\yy,\zz),\;\\ F(\uu,\vv,\ww)\equiv 0(p^{r+c}) \}  = \frac{N^{\ast}(r+c)}{p^{(r+c)(3n+2)}}.
\end{multline*}

D'o\`u le r\'esultat.

\end{proof}
Nous \'etablissons \`a pr\'esent un lemme issus de \cite[Lemme 3.3]{P-T} et \cite[Lemme 3.5]{S2}. 
\begin{lemma}\label{sigmap2}
On a que \[ \int_{\substack{\{(\xx,\yy,\zz)\in \ZZ_{p}^{3n+3}, \; \xx \not\equiv \0(p) \\ \yy\not\equiv \0(p), \; \zz\not\equiv \0(p), \; F(\xx,\yy,\zz)=0\}}}\omega_{L,p}=\left(1-\frac{1}{p^{n}}\right)^{3}\int_{\substack{(\xx,\yy,\zz)\in \ZZ_{p}^{3n+3}\\ F(\xx,\yy,\zz)=0}}\omega_{L,p}, \] et \[ \lim_{r\ra \infty}\frac{N^{\ast}(r)}{p^{r(3n+2)}}=\left(1-\frac{1}{p^{n}}\right)^{3}\sigma_{p}. \]
\end{lemma}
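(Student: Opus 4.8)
The plan is to handle the two displayed identities in turn, obtaining the second from the first together with Lemmas~\ref{sigmap} and~\ref{sigmap1}. Throughout, write $I=\int_{\{F=0\}\cap\ZZ_{p}^{3n+3}}\omega_{L,p}$ for the integral over the full $p$-adic solution set, and $I^{\ast}$ for the integral over the primitive locus $\{\xx\not\equiv\0,\ \yy\not\equiv\0,\ \zz\not\equiv\0\pmod p\}$, so that the first identity reads $I^{\ast}=(1-p^{-n})^{3}I$.

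First I would prove this first identity by a valuation decomposition. Up to a set of measure zero (the coordinate subspaces where one of $\xx,\yy,\zz$ vanishes $p$-adically, where the affine cone $W=\pi^{-1}(V)$ may be singular and which therefore does not contribute to the Leray integral), every point of $\ZZ_{p}^{3n+3}$ is written uniquely as $(\xx,\yy,\zz)=(p^{a}\xx_{0},p^{b}\yy_{0},p^{c}\zz_{0})$ with $a,b,c\geq 0$ and $\xx_{0},\yy_{0},\zz_{0}$ primitive. Since $F$ is trilinear, $F(p^{a}\xx_{0},p^{b}\yy_{0},p^{c}\zz_{0})=p^{a+b+c}F(\xx_{0},\yy_{0},\zz_{0})$, so the solution set is preserved and the valuation-$(a,b,c)$ piece is carried to the primitive locus. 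Working in a chart where $B_{n}\neq 0$, using $B_{n}(p^{a}\xx_{0},p^{b}\yy_{0})=p^{a+b}B_{n}(\xx_{0},\yy_{0})$ and the Haar scaling $d(p^{a}\uu)=p^{-a(n+1)}d\uu$ on $\QQ_{p}^{n+1}$, a direct computation shows that on that piece
\[ \omega_{L,p}=p^{\,a+b-(n+1)(a+b)-nc}\,\omega_{L,p}^{(0)}=p^{-n(a+b+c)}\,\omega_{L,p}^{(0)}, \]
where $\omega_{L,p}^{(0)}$ is the Leray measure in the primitive coordinates. Summing the geometric series gives $I=\sum_{a,b,c\geq 0}p^{-n(a+b+c)}I^{\ast}=(1-p^{-n})^{-3}I^{\ast}$, which is the first assertion.

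For the second assertion, Lemma~\ref{sigmap1} gives $N^{\ast}(r)/p^{r(3n+2)}=I^{\ast}$ for all $r\geq r_{0}$, hence $\lim_{r}N^{\ast}(r)/p^{r(3n+2)}=I^{\ast}=(1-p^{-n})^{3}I$; it remains only to identify $I$ with $\sigma_{p}$. I would obtain $I=\sigma_{p}$ by performing the \emph{same} valuation decomposition on the counting side. Grouping the solutions of $F\equiv 0\ (p^{r})$ by $(a,b,c)=(\nu_{p}(\xx),\nu_{p}(\yy),\nu_{p}(\zz))$ and writing $\xx=p^{a}\xx_{0}$, etc., the congruence becomes $F(\xx_{0},\yy_{0},\zz_{0})\equiv 0\ (p^{\,s})$ with $s=r-a-b-c$; counting the primitive residues modulo $p^{\,s}$ together with their free lifts modulo $p^{r-a},p^{r-b},p^{r-c}$ produces, after dividing by $p^{r(3n+2)}$, a contribution $p^{-n(a+b+c)}\,N^{\ast}(s)/p^{s(3n+2)}$ from each class with $a+b+c<r$. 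Letting $r\to\infty$ (so $N^{\ast}(s)/p^{s(3n+2)}\to I^{\ast}$) and summing the same geometric series, while checking that the classes with $a+b+c\geq r$ contribute only $o(1)$, yields $\sigma_{p}=\lim_{r}M(p^{r})/p^{r(3n+2)}=(1-p^{-n})^{-3}I^{\ast}=I$ by Lemma~\ref{sigmap}. Combining, $\lim_{r}N^{\ast}(r)/p^{r(3n+2)}=I^{\ast}=(1-p^{-n})^{3}\sigma_{p}$, as claimed.

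The step I expect to be the main obstacle is the counting decomposition used to prove $I=\sigma_{p}$: one must track the three distinct moduli $p^{r-a},p^{r-b},p^{r-c}$ attached to the primitive parts against the single modulus $p^{\,s}$ controlling the congruence, verify that the free lifts contribute exactly the factor $p^{2(n+1)(a+b+c)}$ needed to turn into $p^{-n(a+b+c)}$ after normalisation, and bound the large-valuation classes $a+b+c\geq r$. By contrast, the measure side (the first assertion) is comparatively routine once one observes that the singular locus of $W$ is contained in the measure-zero coordinate subspaces and hence is invisible to $\omega_{L,p}$.
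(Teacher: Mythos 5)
Your proof is correct and follows essentially the same route as the paper: the first identity rests on the scaling $\omega_{L,p}(p^{a}\xx_{0},p^{b}\yy_{0},p^{c}\zz_{0})=p^{-n(a+b+c)}\omega_{L,p}(\xx_{0},\yy_{0},\zz_{0})$ (you organize it as a geometric series over valuation classes where the paper uses the equivalent inclusion--exclusion), and the second rests on the same decomposition of $M(p^{r})$ by the valuations of $\xx,\yy,\zz$, with the identical lift count $p^{2(n+1)(i+j+k)}N^{\ast}(r-i-j-k)$ and the same $o(1)$ bound for the high-valuation classes. The only difference is bookkeeping: the paper exploits the constancy of $N^{\ast}(s)/p^{s(3n+2)}$ from Lemma~\ref{sigmap1} to conclude directly, whereas you pass through the intermediate identity $\sigma_{p}=\int_{\{F=0\}\cap\ZZ_{p}^{3n+3}}\omega_{L,p}$, which amounts to the same computation.
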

\begin{proof}
Pour d\'emontrer la premi\`ere \'egalit\'e, il suffit de remarquer que : 

\[ \omega_{L,p}(p\xx,\yy,\zz)=\omega_{L,p}(\xx,p\yy,\zz)= \omega_{L,p}(\xx,\yy,p\zz)=p^{-n} \omega_{L,p}(\xx,\yy,\zz), \]
\[\omega_{L,p}(p\xx,p\yy,\zz)=\omega_{L,p}(p\xx,\yy,p\zz)= \omega_{L,p}(\xx,p\yy,p\zz)=p^{-2n} \omega_{L,p}(\xx,\yy,\zz), \]
\[ \omega_{L,p}(p\xx,p\yy,p\zz)=p^{-3n} \omega_{L,p}(\xx,\yy,\zz).\]

En effet, on a alors \begin{align*}
\int_{\substack{\{(\xx,\yy,\zz)\in \ZZ_{p}^{3n+3}, \; \xx \not\equiv \0(p) \\ \yy\not\equiv \0(p), \; \zz\not\equiv \0(p), \; F(\xx,\yy,\zz)=0\}}}\omega_{L,p} &= \left(1-\frac{3}{p^{n}}+\frac{3}{p^{2n}}-\frac{1}{p^{3n}}\right)\int_{\substack{(\xx,\yy,\zz)\in \ZZ_{p}^{3n+3}\\ F(\xx,\yy,\zz)=0}}\omega_{L,p}  \\ & =\left(1-\frac{1}{p^{n}}\right)^{3}\int_{\substack{(\xx,\yy,\zz)\in \ZZ_{p}^{3n+3}\\ F(\xx,\yy,\zz)=0}}\omega_{L,p}. \end{align*}
 
Nous avons vu par ailleurs que (cf. lemme \ref{sigmap}) : \[ \sigma_{p}=\lim_{r\ra \infty}\frac{N(r)}{p^{r(3n+2)}}, \] o\`u $ N(r)=\card\{(\xx,\yy,\zz) \mod p^{r}\; | \;  F(\xx,\yy,\zz)\equiv 0(p^{r}) \} $. On consid\`ere ensuite pour $ r>0 $ fix\'e et pour des entiers $ i,j,k $ tels que $ r\geqslant i+j+k $ : \begin{multline*} \widetilde{N}(i,j,k)=\card\{(\xx,\yy,\zz) \; | \; \xx\in (p^{i}\ZZ_{p}/p^{r})^{n+1}, \; \xx\not\equiv \0 (p^{i+1}), \\ \; \yy\in (p^{j}\ZZ_{p}/p^{r})^{n+1},\;  \yy\not\equiv \0 (p^{j+1}), \; \zz\in (p^{k}\ZZ_{p}/p^{r})^{n+1}, \; \\ \zz\not\equiv \0 (p^{k+1}), \; F(\xx,\yy,\zz)\equiv 0(p^{r}) \}. \end{multline*}
On a alors \begin{multline*}  
 \widetilde{N}(i,j,k)=\card\{ (\xx \mod p^{r-i}, \yy \mod p^{r-j}, \zz \mod p^{r-k}) \; | \; \xx \not\equiv \0 (p), \; \\  \yy \not\equiv \0 (p), \; \zz \not\equiv \0 (p), \; F(\xx,\yy,\zz)\equiv \0 (p^{r-i-j-k})\},
 \end{multline*}
 
 et on en d\'eduit : 
 \begin{align*}
  \widetilde{N}(i,j,k)& =p^{(n+1)(i+j)}p^{(n+1)(j+k)}p^{(n+1)(i+k)}N^{\ast}(r-i-j-k) \\ &=p^{2(n+1)(i+j+k)}N^{\ast}(r-i-j-k).
 \end{align*}

Soit $ r_{0} $ un entier comme dans le lemme pr\'ec\'edent, et soit \[ I(r)=\{ (i,j,k) \; | \; r-r_{0}<i+j+k\leqslant r-r_{0}+3\}. \] On remarque que : \begin{multline*}
N(r)=\sum_{\substack{i,j,k\geqslant 0 \\ r-i+j+k\geqslant r_{0}}}\widetilde{N}(i,j,k) \\ + O\left( \sum_{(i,j,k)\in I(r)}\card\{(\xx,\yy,\zz)\mod p^{r} \; | \; \xx\equiv \0 (p^{i}),\; \yy\equiv \0 (p^{j}),\; \zz\equiv \0 (p^{k}) \}\right).
\end{multline*}
Or, \begin{multline*} \sum_{(i,j,k)\in I(r)}\card\{(\xx,\yy,\zz)\mod p^{r} \; | \; \xx\equiv \0 (p^{i}),\; \yy\equiv \0 (p^{j}),\; \zz\equiv \0 (p^{k}) \} \\ \ll_{r_{0}} r^{2}\max_{(i,j,k)\in I(r)}p^{(n+1)(r-i)+(n+1)(r-j)+(n+1)(r-k)  }\\ \ll_{r_{0}} r^{2}p^{(3n+2)r}\max_{(i,j,k)\in I(r)}p^{r-2(i+j+k)} \\ \ll_{p,r_{0}}r^{2}p^{(3n+2)r}p^{-r}. 
\end{multline*}
On a donc \[ N(r)=\sum_{\substack{i,j,k\geqslant 0 \\ r-i+j+k\geqslant r_{0}}}p^{2(n+1)(i+j+k)} +O(r^{2}p^{(3n+1)r}). \] Puisque la somme est restreinte aux $ (i,j,k) $ tels que $ r_{0}\leqslant r-i-j-k $, on a alors par le lemme pr\'ec\'edent : \[ \frac{N^{\ast}(r-i-j-k)}{p^{(r-i-j-k)(3n+2)}}=\frac{N^{\ast}(r)}{p^{r(3n+2)}}. \] On a donc \[ N^{\ast}(r-i-j-k)=N^{\ast}(r)p^{-(3n+2)(i+j+k)}, \] et donc \[N(r)=\left( \sum_{r_{0}\leqslant r-i-j-k}p^{-(i+j+k)n}\right)N^{\ast}(r)+O(r^{2}p^{(3n+1)r}). \] On obtient donc finalement  \[ \sigma_{p}=\lim_{r\ra \infty} p^{-(3n+2)r}N(r)=\left(1-\frac{1}{p^{n}}\right)^{-3}\lim_{r\ra \infty}p^{-(3n+2)r}N^{\ast}(r). \]

\end{proof}
D'apr\`es ce lemme, on a donc : \[ \sigma_{p}=\int_{\substack{(\xx,\yy,\zz)\in \ZZ_{p}^{3n+3}\\ F(\xx,\yy,\zz)=0}}\omega_{L,p}. \] 
En utilisant le lemme \ref{sigmap0} on a ainsi : \begin{equation}\label{taup}\tau_{p}(V(\QQ_{p}))=\left(1-\frac{1}{p^{n}}\right)^{3}\sigma_{p}.
\end{equation}

\subsection{Conclusion}

Rappelons que la formule asymptotique conjectur\'ee par Peyre dans \cite{Pe} pour le nombre $ \mathcal{N}_{U}(B) $ de points de hauteur born\'ee  par $ B $ sur l'ouvert $ U $ de Zariski de la vari\'et\'e $ V $ (pour la hauteur associ\'ee au fibr\'e anticanonique $ \omega_{V}^{-1} $) est : \begin{equation}\alpha(V)\beta(V)\tau_{H}(V)B\log(B)^{\rg(\Pic(V))-1} \end{equation} 
o\`u \[ \alpha(V)= \frac{1}{(\rg(\Pic(X))-1)!}\int_{\Lambda_{\eff}^{1}(V)^{\vee}}e^{-\langle \omega_{V}^{-1},y\rangle}dy, \] \[ \Lambda_{\eff}^{1}(V)^{\vee}=\{y\in \Pic(V)\otimes \RR^{\vee}\; |\; \forall x\in \Lambda_{\eff}^{1}(V), \langle x,y\rangle\geqslant 0 \} \] et \[ \beta(V)=\card(H^{1}(\QQ,\Pic(\overline{V}))), \] \[ \tau_{H}(V)=\prod_{\nu\in \Val(\QQ)}\tau_{\nu}(V(\QQ_{\nu})). \]
Or, dans le cas pr\'esent on a \[ \Pic(V)=\ZZ.\OO_{V}(1,0,0)\oplus\ZZ.\OO_{V}(0,1,0)\oplus\ZZ.\OO_{V}(0,0,1)\simeq \ZZ^{3}, \;\; \rg(\Pic(V))=3, \] \[ \omega_{V}^{-1}=\OO_{V}(n,n,n), \] \[\Lambda_{\eff}^{1}(V)=\ZZ.\OO_{V}(1,0,0)\oplus\ZZ.\OO_{V}(0,1,0)\oplus\ZZ.\OO_{V}(0,0,1)\simeq (\RR^{+})^{3}. \] On a par cons\'equent : \begin{align*}
\alpha(V)=\frac{1}{2}\int_{[0,+\infty[^{3}}e^{-nt_{1}-nt_{2}-nt_{3}}dt_{1}dt_{2}dt_{3}=\frac{1}{2n^{3}}. 
\end{align*} 
D'autre part $ \Pic(\overline{V})\simeq \ZZ^{3} $, et le groupe de Galois $ \Gal(\overline{\QQ}/\QQ) $ agit trivialement sur $ \Pic(\overline{V}) $, on a donc \[ \beta(V)=1. \] Par ailleurs, d'apr\`es ce qui a \'et\'e vu dans les sections pr\'ec\'edentes, on a \[ \prod_{p\in \mathcal{P}}\tau_{p}(V(\QQ_{p}))=\mathfrak{S}\prod_{p\in \mathcal{P}}\left(1-\frac{1}{p^{n}}\right)^{3} \] et \[ \tau_{\infty}(V(\RR))=\frac{n^{3}}{8}J. \]
Ainsi on a ici \begin{align*} \alpha(V)\beta(V)\tau_{H}(V)B\log(B)^{\rg(\Pic(V))-1}& =\frac{1}{16}\mathfrak{S}J\prod_{p\in \mathcal{P}}\left(1-\frac{1}{p^{n}}\right)^{3}B\log(B)^{2}\\ &=\frac{1}{16}\sigma'B\log(B)^{2}, \end{align*} et on retrouve bien la formule de la proposition \ref{conclusion}.

\end{document}